
\documentclass{amsart}
\usepackage{stmaryrd}
\usepackage{amssymb}
\usepackage{mathrsfs}
\usepackage{cases}
\usepackage{amsmath}
\usepackage{amsfonts}
\usepackage{pifont}
\usepackage{graphicx,amssymb,mathrsfs,amsmath}

\vfuzz2pt 

 \newtheorem{thm}{Theorem}[section]
 \newtheorem{cor}[thm]{Corollary}
 \newtheorem{lem}[thm]{Lemma}
 \newtheorem{prop}[thm]{Proposition}
 \theoremstyle{definition}
 \newtheorem{defn}[thm]{Definition}
 \newtheorem{example}[thm]{Example}
 
 \theoremstyle{remark}
 \newtheorem{rem}[thm]{Remark}
 
 \numberwithin{equation}{section}
 \DeclareMathOperator{\RE}{Re}
 \DeclareMathOperator{\IM}{Im}

\allowdisplaybreaks

\begin{document}

\title[Besov spaces associated with operators]{Besov spaces associated with non-negative operators on Banach spaces}
\thanks{2020 Mathematics Subject Classification: 30H25; 46E40; 47B12; 47A60}
\thanks{Keywords: Non-negative operators, Fractional powers, Besov spaces, Interpolation spaces}
\thanks{This work was partially supported by the 
	China Scholarship Council (Grant No. 201906240015) 
	and the 
NNSF of China (Grant No. 11971327)}
\date{}

\author{ Charles Batty }
\address{St. John's College \\
University of Oxford \\ Oxford OX1 3JP \\ UK}
\email{charles.batty@sjc.ox.ac.uk}

\author{ Chuang Chen }
\address{Department of Mathematics \\
Sichuan University \\ Chengdu 610064 \\ P. R. China 
} 
\email{ccmath@scu.edu.cn}

\maketitle

\begin{abstract}
Motivated by a variety of representations of fractional powers of operators, we develop the theory of abstract Besov spaces $B^{ s, A }_{ q, X }$ for 
non-negative operators $A$ on Banach spaces $X$ with a full range of indices $s \in \mathbb{R}$ and $0 < q \leq \infty$. The approach we use is the dyadic decomposition of resolvents for non-negative operators, an analogue of the Littlewood-Paley decomposition in the construction of the classical Besov spaces. 
In particular, by using the 
reproducing formulas for fractional powers of operators and explicit quasi-norms estimates for
Besov spaces 
we discuss
the connections between the smoothness of Besov spaces associated with
operators and the boundedness of fractional powers of the underlying operators.

\end{abstract}

\tableofcontents

\medskip


\section{Introduction}\label{S:Introduction}

This paper is devoted to the theory of Besov spaces from the operator-theoretic point of view and extending some recent work on Besov spaces associated with operators (see, for example, Besov spaces associated with heat kernels \cite{BDY2012} 
or sectorial operators \cite{Haase2005} or, in particular, sectorial operators of angle zero \cite{Kriegler and Weis2016}). A new point in our development of the Besov space $B^{ s, A }_{ q, X }$ associated with an operator $A$ is that we allow the operator $A$ to be a general non-negative operator (not necessarily defined densely or with dense range) on a Banach space $X$ and allow the space $B^{ s, A }_{ q, X }$ to admit a full range of the smoothness index $s \in \mathbb{R}$ and size index $0 < q \le \infty$. 

The story originates from the theory of fractional powers of operators.
Roughly speaking, the concept of fractional powers of operators is a continuous analogue of the discrete scale of regularity associated with operators in the theory of abstract Cauchy problems and PDEs 
(see \cite[Chapters 3 and 5]{MartinezM2001} for more information on the history and devolopment of fractional powers of operators).
In general, it is difficult to describe accurately the domains of fractional powers (i.e., the fractional domains for short) for non-negative operators on Banach spaces. To some extent, an effective substitute for the fractional domain is 
the so-called
interpolation space, especially in the study of regularity questions (see \cite{Bergh and Lofstrom M} for the theory of
interpolation spaces and \cite{LunardiM1995,YagiM2010} for the application of interpolation spaces in the regularity theory of abstract Cauchy problems).

In a series of papers \cite{Komatsu1966}-\cite{Komatsu1972-73}, H. Komatsu developed the theory of fractional powers of non-negative operators on Banach spaces.
In particular, in \cite
{Komatsu1967} H. Komatsu revealed that the real interpolation space $( X, D ( A^\alpha ))_{ \theta, q }$ between the fractional domain $( D ( A^\alpha ), \left\| A^\alpha \cdot \right\|  )$ and the underlying space $(X, \left\| \cdot \right\| )$ can be characterized via a novel space $D^s_q ( A )$.
More precisely, let $A$ be a non-negative operator defined densely on a Banach space $(X, \left\| \cdot \right\| )$. For $s > 0$ and $1 \le q \le \infty$, the so-called Komatsu space $D^s_q ( A )$ is given by 
\begin{align}\label{d:Komatsu spaces}
D^s_q ( A ) :=  \left\{ x \in X: \int_{0}^{\infty} \left\| t^{ s } A^n ( t + A )^{ - n } x \right\|^q \, \frac{ d t }{ t } < \infty \right\}
\end{align}
endowed with the norm 
\begin{align*}
\left\| x \right\|_{ D^s_q ( A ) } := \| x \| + \left\{ \int_{0}^{\infty} \left\| t^{ s } A^n ( t + A )^{ - n } x \right\|^q \, \frac{ d t }{ t } \right\}^{ 1 / q }  
\end{align*}
(with the usual modification if $ q = \infty $), where $n$ is an integer greater than $s$. It can be verified that the norm $\left\| \cdot \right\|_{ D^s_q ( A ) }$ is independent of integers $n > s$ in the sense of equivalent norms \cite[Proposition 1.2]{Komatsu1967}, so that $D^s_q ( A )$ is well-defined as a Banach space (in the sense of equivalent norms). 
Furthermore, it also can be verified that the interpolation space $( X, D ( A^\alpha ))_{ \theta, q }$ coincides with $D^{ \theta  \alpha }_q ( A )$ for $\alpha > 0$ and $0 < \theta < 1$ (see \cite[Theorems 3.1 and 3.2]{Komatsu1967}).


From the operator-theoretic point of view, the Komatsu spaces $D^s_q ( A )$ yield a pioneering style of abstract (inhomogeneous) Besov spaces.
Subsequently, T. Muramatu \cite{MuramatuM1985} discussed the inhomogeneous Besov spaces $D^\varphi_q ( A )$ with $0 < q \leq \infty$ and $\varphi ( s )$, a weight function on $\mathbb{R}_+$, for non-negative operators $A$ on quasi-Banach spaces $X$, where the Besov quasi-norm was essentially the same as that of the Komatsu space $D_q^s ( A )$ with $s > 0$. An alternative improvement of the abstract inhomogeneous Besov spaces $D^s_q ( A )$ with the smoothness index extended from $s > 0$ to 
$s \in \mathbb{R}$ was given by T. Kobayashi and T. Muramatu \cite{Kobayashi and Muramatu1992} while the size index is still restricted in the range of $1 \leq q \leq \infty$. Furthermore, abstract (homogeneous) Besov spaces $\dot{B}_q^\varphi ( A )$ with $1 \leq q \leq \infty$ and $\varphi$, the weight function on $\mathbb{R}_+$ mentioned above, were discussed 
by T. Matsumoto and T. Ogawa \cite{Matsumoto and Ogawa2010} for non-negative operators $A	$ on Banach spaces. 

Recall that a closed linear operator on a Banach space is non-negative if and only if it is sectorial (see \cite[Proposition 1.2.1]{MartinezM2001}). 
Thanks to sectoriality, an alternative approach to fractional powers of non-negative operators on Banach spaces is the so-called functional calculus (see, for example, \cite[Chapter 3]{HaaseM2006}). Using the language of functional calculus, M. Haase \cite[Section 7]{Haase2005} revealed that the Komatsu spaces $D^s_q ( A )$ mentioned above follow a common pattern, i.e.,
\begin{align*}
D^s_q ( A ) = \left\{ x \in X: \int_{0}^{\infty} \left\| t^{ - s } \psi ( t A ) x \right\|^q \, \frac{ d t }{ t } < \infty \right\} 
\end{align*}
in the sense of equivalent norms 
\begin{align*}
\left\| x \right\|_{ D^s_q ( A ) } \simeq \left\| x \right\| + \left\{ \int_{0}^{\infty} \left\| t^{ - s } \psi ( t A ) x \right\|^q \, \frac{ d t }{ t } \right\}^{ 1 / q }
\end{align*}
(with the usual modification if $q = \infty$) for $s > 0$ and $1 \leq q \leq \infty$ , where $\psi$ is a bounded holomorphic function satisfying an appropriate decay estimate on some sector relating to $A$. In other words, the Komatsu spaces can be characterized by using the natural functional calculus for sectorial operators on Banach spaces. In particular, applying $\psi ( z ) = z^\alpha ( 1 + z )^{ - \alpha }$ with $0 < s < \RE \alpha$ to the functional calculus of $A$ yields the Komatsu space $D^s_q ( A )$, immediately (also, see \cite[Definition 11.3.1]{MartinezM2001}). We refer the reader to \cite[Chapter 6]{HaaseM2006} for more information on this topic. 

More recent work due to C. Kriegler and L. Weis \cite[Section 5]{Kriegler and Weis2016} reveals that the Komatsu spaces can also be characterized via the Mihlin functional calculus for sectorial operators with angle zero. More precisely, let $A$ be a sectorial operator of angle zero on a Banach space $X$ as defined in \cite[Section 5]{Kriegler and Weis2016}, where an additional assumption that $\overline{ R ( A ) } = X$ is posed in the definition, so that $A$ is injective whenever it is sectorial of angle zero. Then the Komatsu spaces $D^s_q ( A )$ admit the so-called Littlewood-Paley decomposition, i.e., 
\begin{align*}
D^s_q ( A ) = B^s_q ( A ) := \left\{ x \in \dot{X}_{ - N } + \dot{X}_N: \sum_{ n = 0 }^\infty \left\| 2^{ n s } \varphi_n ( A ) x \right\|^q < \infty \right\}  
\end{align*}
in the sense of equivalent norms 
\begin{align*}
\left\| x \right\|_{ D^s_q ( A ) } \simeq \left\| x \right\|_{ B^s_q ( A ) } := \left\{ \sum_{ n = 0 }^\infty \left\| 2^{ n s } \varphi_n  ( A ) x \right\|^q \right\}^{ 1 / q }  
\end{align*}
(with the usual modification if $q = \infty$) for $s \in \mathbb{R}$ and $1 \leq q \leq \infty$, where $\{ \varphi_n \}$ is an inhomogeneous partition of unity on $\mathbb{R}_+$ and $\dot{X}_{ \pm N }$, with $N > | s |$, are the extropolation spaces given by the completions of $D ( A^{ \pm N } )$ with respect to $\left\| A^{ \pm N } \cdot \right\|$. 
A historical account of the classical theory of Besov spaces on $\mathbb{R}^n$ is beyond the scope of this work, and we refer the reader to, for instance, \cite{TriebelM1983-2010,
Han1994,
YuanM2010,SawanoM2018}. We shall, however, recall briefly the main techniques used to develop the theory of Besov spaces which are relevant to our work. Recall that the classical Besov spaces $B^{s}_{p, q} ( \mathbb{R}^n )$ are related closely to the negative Laplacian
or its square root
on the Euclidean space $\mathbb{R}^n$. Indeed, applying $A = - \Delta_p$, i.e., the negative Laplacian on $L^p ( \mathbb{R}^n )$ with $1 < p < \infty$, to the Komatsu space $D^s_q ( A )$ yields the classical Besov space $B^s_{ p, q } ( \mathbb{R}^n )$ for $s > 0$ and $1 \le q \le \infty$ in the sense of equivalent norms. So far, compared with the classical approaches to Besov spaces (for example, the initial method of moduli of continuity due to O. V. Besov \cite{Besov1961}, the Hardy--Littlewood maximal operator characterizations associated with the Poisson kernel and Gaussian kernel due to H. Taibleson \cite{Taibleson1964}, the Fourier analytic method and the real variable techniques due to J. Peetre \cite{PeetreM1976} and the approach via the Calder\'{o}n reproducing formulas due to H.-Q. Bui, M. Palusz\'{y}nski and M. H. Taibleson \cite{BuiPT1997}),
the dyadic decomposition of resolvents provides an alternative approach to the Besov spaces from the operator-theoretic point of view.

In addition to the real-variable techniques developed in the framework of the classical harmonic analysis, there is some recent work on abstract Besov spaces associated with the negative generators of strongly continuous (or, in particular, bounded analytic) semigroups. More precisely, the work connected with Peetre's approach includes, for instance, \cite{Dancona and Pierfelice2005,
Benedetto and Zheng2010,Hu2014,Mayeli2016,Bui2020}, where Besov spaces associated with self-adjoint positive operators or the Schr\"{o}dinger operators are discussed. The work in \cite{BDY2012} is devoted to a class of
homogeneous Besov spaces $\dot{B}^{s, L}_{p, q}$ with $-1 < s < 1$ and $1 \leq p, q \leq \infty$ 
for the negative generators $L$ of analytic semigroups with heat kernels satisfying an upper bound of Poisson type and H\"{o}lder continuity. And Besov spaces $B^s_{ p, q } (L)$ with a full range of indices $s \in \mathbb{R}$ and $0 < p, q \leq \infty$ are systematically studied 
in \cite{Kerkyacharian and Petrushev2015} for the negative generators $L$ of semigroups with heat kernels satisfying an upper bound of Gauss type, H\"{o}lder continuity and a Markov property. 
Moreover, a quite recent work on Lipschitz spaces $\Lambda^s$ (i.e., $B^s_{ \infty, \infty }$) associated with the Schr\"{o}dinger operators is given in \cite{DeLeon and Torrea2019} by using the language of semigroups.


Our work starts from a unified representation of fractional powers of non-negative operators on Banach spaces. Let $A$ be a non-negative operator on a Banach space $X$ and let $z \in \mathbb{C}_+$. It can be verified that 
\begin{align}\label{e:representation of fractional powers-all}
A^z x = \frac{ \Gamma ( \alpha + \beta ) }{ \Gamma ( \alpha + z ) \Gamma ( \beta - z ) } \int_{0}^{\infty} \lambda^{ z + \alpha } A^\beta ( \lambda + A )^{ - \alpha - \beta } x \, \frac{ d \lambda }{ \lambda }, \quad x \in D ( A^{ z + \epsilon } ), 
\end{align}
where $\epsilon > 0$ and $\alpha, \beta \in \mathbb{C}_+$ satisfying $- \RE \alpha < \RE z < \RE \beta$ (see Proposition \ref{p:representation of fractional powers-all} below). It is the representation (\ref{e:representation of fractional powers-all}) that motivates us to deal with a dyadic decomposition of the integrand and define 
the inhomogeneous Besov space $B^{ s, A }_{ q, X }$ associated with $A$ to be the completion of the subspace 
\begin{align*}
\left\{ x \in \overline{ D ( A ) }: \sum_{ i = k }^{ \infty } \big\| 2^{ i ( s + \alpha ) } A^\beta ( 2^i + A )^{ - \alpha - \beta } x \big\|^q < \infty \right\} \subset X
\end{align*}
with respect to the quasi-norm 
\begin{align*}
\left\| x \right\|_{ B^{ s, A }_{ q, X } } := \left\| ( 2^k + A )^{ - \alpha } x \right\| + \left\{ \sum_{ i = k }^{ \infty } \big\| 2^{ i ( s + \alpha ) } A^\beta ( 2^i + A )^{ - \alpha - \beta } x \big\|^q \right\}^{ 1 / q } 
\end{align*}
(with the usual modification if $q = \infty$) for $s \in \mathbb{R}$ and $0 < q \le \infty$, where $k \in \mathbb{Z}$ and $\alpha, \beta \in \mathbb{C}_+^*$ satisfying $- \RE \alpha < s < \RE \beta$ (see Definition \ref{d:inhomogeneous Besov spaces} below).

Indeed, the dyadic decomposition of resolvents of non-negative operators is an analogue of the standard frequency restriction in the construction of the classical Besov spaces on $\mathbb{R}^n$ and it allows us to construct a variety of Besov spaces 
with a full range of the smoothness index $s \in \mathbb{R}$ and size index $0 < q \leq \infty$ (see Section \ref{S:Besov spaces} below). 
Moreover, the continuous scale of indices $\alpha$ and $\beta$ in the representation (\ref{e:representation of fractional powers-all}) allows us to describe more exactly the lifting property as well as other properties of interest for fractional powers of non-negative operators on Besov spaces $B^{ s, A }_{ q, X }$. Some results are novel even in the case $1 \le q \le \infty$. More precisely, thanks to the representation (\ref{e:representation of fractional powers-all}) with continuous scale of indices $\alpha$ and $\beta$
and reproducing formula (\ref{e:reproducing formula-ab}) of fractional powers of operators, Theorems \ref{t:lifting property-negative} and \ref{p:interpolation}  
provide equivalent quasi-norms for the lifting and interpolation of Besov spaces,
which improves \cite[Theorem 3.1]{Komatsu1967} and \cite[Corollary 7.3 (b)]{Haase2005} in the sense of equivalent (quasi-)norms even in the case $1 \le q \le \infty$ (see Remark \ref{r:interpolations} below for more information).


The paper is organized as follows. In Section \ref{S:Fractional powers of operators} we provide the reader with a concise introduction to the concepts of non-negative operators and their fractional powers and present some estimates and representations of fractional powers of operators used in this paper. 
Section \ref{S:Besov spaces} is devoted to the construction of (inhomogeneous and homogeneous) Besov spaces associated with non-negative operators (i.e., abstract Besov spaces for short). 
Section \ref{S:Basic properties} contains some basic properties of abstract Besov spaces, such as quasi-norm equivalence, continuous embedding and translation invariance. In Section \ref{S:Fractional powers} we discuss the connections between the smoothness of abstract Besov spaces and the fractional powers of the underlying operators, including the lifting property, smoothness reiteration and interpolation spaces. Finally, in Section \ref{S:Comparison} we compare our new Besov spaces with some classical ones, as well as the known Besov spaces developed by the integral transform method, or the functional calculus approach or the Littlewood-Paley decomposition technique, and we show that our new Besov spaces recover many known Besov spaces in the literature.

\bigskip

\noindent \emph{Notation}. Throughout this paper, we shall use the following notation:
\begin{itemize}
\item[] $\mathbb{R}_+ := [ 0, \infty )$, 
\item[] $\mathbb{C}_+ := \{ \alpha \in \mathbb{C}: \RE \alpha > 0 \}$ and $\mathbb{C}_+^* := \mathbb{C}_+ \cup \{ 0 \}$, 
\item[] $\Sigma_\theta := \{ z \in \mathbb{C} \setminus \{ 0 \}: | \arg z | < \theta \} $ for $\theta \in ( 0, \pi )$.
\end{itemize}
We use the notation $ f ( t ) \equiv c$ to mean that $f$ is a constant function taking the value $c$. Also, we use the notation $a \lesssim b$ to mean that $a \leq C b$ for some positive constant $C$ independent of other relevant quantities and the notation $a \simeq b$ to mean that $a \lesssim b \lesssim a$ for the sake of simplicity. Moreover, by $q'$ we denote the conjugate of $q \in ( 1, \infty )$, i.e., $q' \in ( 1, \infty )$ satisfies that
\begin{align*}
\frac{ 1 }{ q } + \frac{ 1 }{ q' } = 1.
\end{align*}
For the consistency of notation, two standard classical inequalities are formulated as follows, which we shall need to get explicit quasi-norm estimates for abstract Besov spaces. One is, for $0 < q < \infty$, 
\begin{align}\label{e: q-inequality}
( a + b )^q \leq C_q ( a^q + b^q ), \quad a, b \geq 0,
\end{align}
with $C_q = \max\{ 1, 2^{ q - 1 } \}$ and the other is, for $0 < q \leq 1$,
\begin{align}\label{e: q-inequality-series<1}
\left( \sum_{ i = 1 }^\infty a_i \right)^q & \leq \sum_{ i = 1 }^\infty 
a_i^q, \quad a_i \geq 0, i \in \mathbb{N}.
\end{align} 
By $\Gamma$ we denote the well-known Gamma function. In this paper we also shall need the following two classical integrals associated with the Gamma function.
One is 
\begin{align}\label{e:Euler integral}
\frac{ \Gamma ( n ) }{ \Gamma ( \alpha ) \Gamma ( n - \alpha ) } \int_0^\infty \lambda^{ \alpha }  ( 1 + \lambda )^{ - n } \, \frac{ d \lambda }{ \lambda } = 
1, \quad 0 < \RE \alpha < n \in \mathbb{N}, 
\end{align}
and the other is 
\begin{align}\label{e:fractional powers-resolvent-scalar}
\frac{ 1 }{ \Gamma ( \alpha ) \Gamma ( 1 - \alpha ) } \int_0^\infty \frac{ \lambda \mu^\alpha }{ \lambda^2 + 2 \lambda \mu^\alpha \cos \pi \alpha + \mu^{ 2 \alpha } } \, \frac{ d \mu }{ \mu } = 1, \quad 0 < \RE \alpha < 1.
\end{align}
For a Banach space $X$, $\mathcal{L} ( X )$ denotes the Banach algebra of all bounded linear operators on
$X$. The domain, range, spectrum and resolvent set of an (unbounded) operator $A$ on $X$ are
denoted by $D(A)$, $R ( A )$, $\sigma (A)$ and $\rho ( A )$, respectively. Moreover, for an operator $A$ on $X$, by $A|_Y$ we denote the part of $A$ in $Y$, i.e., $A|_Y y = A y$ for $y \in D \left( A|_Y \right) := \left\{ y \in D ( A ) \cap Y: A y \in Y \right\}$. 


\smallskip


\section{Fractional powers of operators}\label{S:Fractional powers of operators}

In this section we present a number of estimates and representations for the fractional powers of non-negative operators which we shall need for what follows. Some of this material is quite standard and some is relatively new. 
$X$ is a complex Banach space and $A$ is a closed linear operator on $X$ in this section.

\smallskip

\subsection{Non-negative operators}

The class of non-negative operators in Banach spaces was introduced by A. V. Balakrishnan \cite{Balakrishnan1960} to discuss the fractional powers of operators and subsequently studied by H. Komatsu in a series of papers \cite{Komatsu1966}-\cite{Komatsu1972-73}. 
In particular, in \cite{Komatsu1969} H. Komatsu gave them this nomenclature.

Recall that $A$ is said to be non-negative on $X$ if $(- \infty, 0) \subset \rho (A)$ and 
\begin{align}\label{e:non-negative constant}
M_A := \sup_{ \lambda > 0 } \left\| \lambda (\lambda + A)^{-1} \right\| < \infty.
\end{align}
Thanks to a simple decomposition of the identity operator, i.e., 
\begin{align}\label{e:identity decomposition}
I = \lambda ( \lambda + A )^{-1} + A ( \lambda + A )^{-1}, \quad \lambda \in \rho ( A ), 
\end{align}
it can be seen that $A$ is non-negative on $X$ if and only
if $(- \infty, 0) \subset \rho (A)$ and
\begin{align}\label{e:non-negative constant 2}
L_A := \sup_{ \lambda > 0 } \left\| A (\lambda + A)^{-1} \right\| < \infty.
\end{align}
For convenience, we call $M_A$ and $L_A$ the non-negativity constants of $A$. Moreover, thanks to (\ref{e:non-negative constant 2}), if a non-negative operator $A$ is injective then the inverse $A^{ - 1 }$ of $A$ is also non-negative with the non-negativity constants $M_{ A^{ - 1 } } = L_A$ and $L_{ A^{ - 1 } } = M_A$.
Sometimes a non-negative operator $A$ with $0 \in \rho ( A )$ is also called a positive operator.

 
Also, recall that a family $ \{ A_t \}_{ t \in \Lambda } $ of non-negative operators $A_t$ is said to be uniformly non-negative if
	\begin{align}\label{e:uniformly non-negative constant}
    M = M_{ \{ A_t \}_{ t \in \Lambda } } := \sup_{ t  \in \Lambda } M_{ A_t } < \infty,
	\end{align}
	where $M_{ A_t }$ is the non-negativity constant of $A_t$ given by (\ref{e:non-negative constant}).
Clearly, a family $ \{ A_t \}_{ t \in \Lambda} $ of non-negative operators $A_t$  is uniformly non-negative if and only if 
\begin{align}\label{e:uniformly non-negative constant 2}
L = L_{ \{ A_t \}_{ t \in \Lambda } } := \sup_{ t  \in \Lambda } L_{ A_t } < \infty,
\end{align}
where $L_{ A_t }$ is the non-negativity constant of $A_t$ given by (\ref{e:non-negative constant 2}).
Moreover, we call $M$ and $L$ the uniform non-negativity constants of
$\{ A_t \}_{ t \in \Lambda}$ for convenience.

Some uniformly non-negative families are listed as follows, which are simple but important in the theory of fractional powers of operators and will be used frequently in the sequel.

\begin{example}\label{E:uniform non-negativity and boundedness}
Let $A$ be non-negative on $X$. The following statements hold.
\begin{itemize}
\item[(i)] $ \{ t (t + A )^{-1} \}_{ t > 0 } $ is
uniformly non-negative with the uniform non-negativity constants $M_{ \left\{ t ( t + A )^{-1} \right\}_{ t > 0 } } \leq M_A + L_A$ and $L_{ \left\{ t ( t + A )^{-1} \right\}_{ t > 0 } } \leq M_A$.
\item[(ii)] $ \{ A (t + A )^{-1} \}_{ t > 0 } $ is
uniformly non-negative with the uniform non-negativity constants $M_{ \{ A ( t + A )^{-1} \}_{ t > 0 } } \leq M_A + L_A$ and $L_{ \{ A ( t + A )^{-1} \}_{ t > 0 } } \leq L_A$.
\item[(iii)] $ \{ (s + A) ( t + A )^{-1} \}_{ s, t > 0 } $ is uniformly non-negative with the uniform non-negativity constants $M_{ \{ (s + A) (t + A )^{-1} \}_{ s, t > 0 } }, L_{ \{ (s + A) (t + A )^{-1} \}_{ s, t > 0 } } \leq M_A + L_A$. 
\end{itemize}
\end{example}

Finally, it is necessary to point out that a concept related closely to non-negative operators is the so-called sectorial operators due to T. Kato \cite{Kato1960}. It is known nowadays that a closed linear operator on a Banach space is non-negative if and only if it is sectorial (see \cite[Proposition 1.2.1]{MartinezM2001}). Thanks to sectoriality, it is possible to develop the theory of functional calculi for non-negative operators on Banach spaces. Sectorial operators and functional calculi of them will be discussed in Section \ref{Sub:Functional calculus} 
below. We also refer the reader to, for instance, \cite{
Kunstmann and Weis2004,Vitse2005,AuscherM1998,
Gomilko2015,Batty2017,Kunstmann2017,Batty2019,Ferguson2019}, \cite[Chapter 4]{MartinezM2001} and \cite[Chapter 2]{HaaseM2006} for more information on this topic.

\smallskip


\subsection{Fractional powers of operators}\label{sub:Fractional powers of operators}



Generally speaking, there are three different approaches to fractional powers of non-negative operators on Banach spaces. One is the integral operator approach, including the complex integrals, i.e., the so-called Dunford-Riesz integrals due to sectoriality of non-negative operators (see, for example, \cite[Section 2.6]{Pazy1983M}) and real integrals, i.e., the so-called Balakrishnan-Komatsu operators (see, for example, \cite[Chapter 9]{MuramatuM1985} and \cite[Chapters 3 and 5]{MartinezM2001}). 
Another is a little more abstract approach via the so-called function calculi, such as the Hirsch functional calculus, $H^\infty$-functional calculus, natural functional calculus and so on (see \cite{deLaubenfels1993,MartinezM2001,HaaseM2006,Gomilko2015} and reference therein). 
And the third approach involves some abstract (incomplete) Cauchy problem and the fractional powers of the underlying operator can be given by the (unique) solution of the abstract (incomplete) Cauchy problem in a suitable way (see  \cite{Caffarelli2007,StingaT2010,Stinga2010,Gale2013,Cabre2016,Arendt2018}, especially a quite recent work \cite[Theorem 6.2]{Meichsner2019}). 

Now we recall 
fractional powers of non-negative operators starting from real integrals. Let $A$ be non-negative on $X$ and let $\alpha \in \mathbb{C}_+$. 
Initially, suppose that $A \in \mathcal{L} ( X )$, one can define the fractional power $A^\alpha$ of $A$ by the Balakrishnan-Komatsu integral, i.e., 
\begin{align}\label{e:integral-Balakrishnan}
A^{ \alpha } := \frac{ \Gamma ( n ) }{ \Gamma ( \alpha ) \Gamma ( n - \alpha ) } \int_0^\infty \lambda^{ \alpha } A^n ( \lambda + A )^{ - n } \, \frac{ d \lambda }{ \lambda },
\end{align}
where $\RE \alpha < n \in \mathbb{N}$. 
Thanks to (\ref{e:non-negative constant}) and (\ref{e:non-negative constant 2}), it can be seen that the integral given in the right-hand side of (\ref{e:integral-Balakrishnan}) absolutely converges.
It can also be seen that the part given in the right-hand side of (\ref{e:integral-Balakrishnan}) is independent of integers $n > \RE \alpha$ which 
is a simple consequence of the use of integration by parts. 
This implies that $A^\alpha$ given in (\ref{e:integral-Balakrishnan}) is well-defined as a bounded linear operator on $X$. 
By using the well-known resolvent equation (see \cite[Appendix B, Proposition B.4]{ABHN2001}), 
one can verify the additivity of $A^\alpha$.
Furthermore, by use of additivity of fractional powers one can also verify the injectivity of $A^\alpha$ whenever $A$ is injective. 
 
The fractional powers of unbounded non-negative operators can be defined by using a standard approximation technique. More precisely, 
if $A$ is unbounded with $0 \in \rho ( A )$, it makes sense to consider the fractional power $( A^{ - 1 } )^\alpha$ of $A^{ - 1 }$ and, thanks to the injectivity of $( A^{ - 1 } )^\alpha$ as mentioned above, it is possible to define the fractional power $A^\alpha$ of $A$ as the inverse of $( A^{ - 1 } )^\alpha$, i.e., 
\begin{align*} 
A^\alpha := \left[ ( A^{ - 1 } )^\alpha \right]^{ - 1 }.
\end{align*}
And if $A$ is unbounded while $0 \in \sigma ( A )$, it makes sense to consider the fractional power $( A + \epsilon )^\alpha$ of $A + \epsilon$ with $\epsilon > 0$ due to the fact that $0 \in \rho ( A + \epsilon )$ and define the fractional power $A^\alpha$ of $A$ by the strong limit 
\begin{align*} 
A^\alpha := s-\lim_{ \epsilon \rightarrow 0 } ( A + \epsilon )^\alpha  
\end{align*}
with maximal domain (see \cite[Definitions 5.1.2 and 5.1.3]{MartinezM2001}).

The positive powers of non-negative operators not only look like the classical powers of numbers but also work like the classical powers of numbers. More precisely, they satisfy the following two laws of exponents. One is additivity, i.e., 
\begin{align}\label{e:additivity of fractional powers}
A^\alpha A^\beta = A^{ \alpha + \beta }, \quad \alpha, \beta \in \mathbb{C}_+.
\end{align}
The other is multiplicativity, i.e.,
\begin{align}\label{e:multiplicativity of fractional powers}
( A^\alpha )^\beta = A^{ \alpha \beta }, \quad 0 < \alpha < \pi / \omega, \beta \in \mathbb{C}_+, 
\end{align}
where $\omega \in ( 0, \pi )$ is the sectoriality angle of $A$ (see \cite[Theorems 5.1.11 and 5.4.3]{MartinezM2001}).

The negative or imaginary powers of non-negative operators 
can be defined analogously. If this is the case, injectivity of non-negative operators is needed to ensure the single-valuedness of the desired fractional powers. More precisely, assume that the non-negative operator $A$ is injective. Thanks to (\ref{e:additivity of fractional powers}), it is easy to verify the injectivity of $A^\alpha$ for each $\alpha \in \mathbb{C}_+$ (also see \cite[Corollary 5.2.4 (ii)]{MartinezM2001}), and therefore one can define the negative power $A^{ - \alpha }$ of $A$ as the inverse of $A^\alpha$, i.e., 
\begin{align*}
A^{ - \alpha } := ( A^{ \alpha } )^{ - 1 }, \quad \alpha \in \mathbb{C}_+.
\end{align*}
Compared with positive or negative powers, the imaginary powers of non-negative operators have a slightly more complicated configuration and are the most mysterious objects in the field of fractional powers of operators, even the domains of them have not been fully understood as yet (see Remark \ref{r:mystery of imaginary powers} below).
In view of the importance of the closedness for unbounded operators on abstract spaces, one can define the imaginary powers of non-negative operators by using a closed extension of suitable operators. For example, one can consider the composition $A^{ - 1 } A^{ 1 + i t }$ and define 
the imaginary power $A^{ i t }$ of $A$ by its closed extension in the following way: 
\begin{align}\label{e:imaginary powers}
A^{ i t } := ( 1 + A )^2 A^{ - 1 } A^{ 1 + i t } ( 1 + A )^{ - 2 }, \quad t \in \mathbb{R}.
\end{align}
See \cite[Definition 7.1.2]{MartinezM2001}. 
Moreover, one may define $A^0 := I$, the identity operator, for the sake of convenience. 

Fractional (positive, negative or imaginary) powers of unbounded non-negative operators admit a general version of laws of exponents. On the one hand, in contrast to (\ref{e:additivity of fractional powers}), merely the following inclusion holds:
\begin{align}\label{e:additivity of fractional powers-complex}
	A^\alpha A^\beta \subset A^{ \alpha + \beta }, \quad \alpha, \beta \in \mathbb{C}.
\end{align}
On the other hand, analogous to (\ref{e:multiplicativity of fractional powers}), it holds that 
\begin{align*}
	( A^\alpha )^\beta = A^{ \alpha \beta }, \quad - \pi / \omega < \alpha < \pi / \omega, \beta \in \mathbb{C},
\end{align*}
where $\omega \in ( 0, \pi )$ is the sectoriality angle of $A$.
See \cite[Theorems 7.1.1 and 7.1.3]{MartinezM2001} or \cite[Proposition 3.2.1]{HaaseM2006}). 

In addition to the laws of exponents, the spectral mapping theorem
also plays an important role in the theory of fractional powers of operators. The spectral mapping theorem for fractional powers of non-negative operators reads that 
\begin{align*}
\sigma ( A^\alpha ) = \{ \mu^\alpha: \mu \in \sigma ( A ) \}, \quad \alpha \in \mathbb{C}_+.
\end{align*}
See \cite[Proposition 3.1.1 (j)]{HaaseM2006} or \cite[Theorem 5.3.1]{MartinezM2001}. As for  general spectral mapping theorems for functional calculi, we refer the reader to \cite[Section 4.3]{MartinezM2001} (for the Hirsch functional calculus) and \cite[Section 2.7]{HaaseM2006} (for the natural functional calculus).

Next we turn to representations of fractional powers of non-negative operators.   
Let $A$ be non-negative on $X$ and let $\alpha \in \mathbb{C}_+$ with $\RE \alpha < n \in \mathbb{N}$. 
Thanks to additivity of fractional powers of bounded non-negative operators, by using a standard approximation argument one can conclude from (\ref{e:integral-Balakrishnan}) that 
\begin{align}\label{e:representation of fractional powers-n}
A^{ \alpha } x = \frac{ \Gamma ( n ) }{ \Gamma ( \alpha ) \Gamma ( n - \alpha ) } \int_0^\infty \lambda^{ \alpha } A^n ( \lambda + A )^{ - n } x \, \frac{ d \lambda }{ \lambda }, \quad x \in D ( A^m ),
\end{align}
where $m$ is the smallest integer greater than $\RE \alpha$. Also, see \cite[Page 59, (3.4)]{MartinezM2001}. 
In particular, if $A$ is injective, replacing $A$ by $A^{ - 1 }$ in (\ref{e:representation of fractional powers-n}) yields 
\begin{align}\label{e:representation of fractional powers--n} 
A^{ - \alpha } x = \frac{ \Gamma ( n ) }{ \Gamma ( \alpha ) \Gamma ( n - \alpha ) } \int_0^\infty \lambda^{ - \alpha } \lambda^n ( \lambda + A )^{ - n } x \, \frac{ d \lambda }{ \lambda }, \quad x \in R ( A^m ).
\end{align}

Thanks to (\ref{e:representation of fractional powers-n}), 
we can now give some estimates of fractional powers of non-negative operators.
These estimates, especially
(\ref{e:resolvent estimate-discrete}) and (\ref{e:resolvent estimate-discrete-Charles}) below, will be used frequently to get explicit quasi-norm estimates for abstract Besov spaces. See Sections \ref{S:Besov spaces}, \ref{S:Basic properties} and \ref{S:Fractional powers} below. 

\begin{lem}\label{l:uniform boundedness compositions}
Let $A$ be non-negative and let $ \{ A_t \}_{ t \in \Lambda } $ be a family of non-negative operators $A_t$ on $X$. Fix $\alpha \in \mathbb{C}_+$ with $\RE \alpha < n \in \mathbb{N}$. The following statements hold. 
\begin{itemize}
\item[(i)] If $ \{ A_t \}_{ t \in \Lambda } $ is uniformly non-negative and uniformly bounded, then $ \{ A_t^\alpha \}_{ t \in \Lambda } $ is uniformly bounded and 
\begin{align}\label{e:boundedness of fractional powers-uniformly bounded families} 
\sup_{ t \in \Lambda } \left\| A^{ \alpha }_t \right\| \leq C_{ \alpha, n } ( L + K M )^n,  
\end{align}
where $M$ and $L$ are the uniform non-negativity constants of $\{ A_t \}_{ t \in \Lambda }$ given in (\ref{e:uniformly non-negative constant}) and (\ref{e:uniformly non-negative constant 2}), respectively,  $K = \sup_{ t \in \Lambda } \| A_t \|$ and
\begin{align}\label{e:uniform boundedness compositions}
C_{ \alpha, n } = \frac{ \Gamma ( \RE \alpha ) \Gamma ( n - \RE \alpha ) }{ | \Gamma ( \alpha ) \Gamma ( n - \alpha ) | }.
\end{align}
In particular, applying $A_t \equiv A \in \mathcal{L} ( X )$ to (\ref{e:boundedness of fractional powers-uniformly bounded families}) yields 
\begin{align}\label{e:boundedness of fractional powers-bounded operators} 
\| A^\alpha \| \leq C_{ \alpha, n } \left( L_A + M_A \| A \| \right)^n, 
\end{align}
applying $A_t = t ( t + A )^{ - 1 }$ with $t > 0$ to (\ref{e:boundedness of fractional powers-uniformly bounded families}) yields 
\begin{align}\label{e:uniform boundedness compositions-M}
\sup_{ t > 0 } \big\| t^\alpha ( t + A )^{ - \alpha } \big\| \leq C_{ \alpha, n } M_A^n \left( L_A + M_A + 1 \right)^n,
\end{align}
applying $A_t = A ( t + A )^{ - 1 }$ with $ t > 0 $ to (\ref{e:boundedness of fractional powers-uniformly bounded families}) yields 
\begin{align}\label{e:uniform boundedness compositions-L}
\sup_{ t > 0 } \big\| A^\alpha ( t + A )^{ - \alpha } \big\| \leq C_{ \alpha, n } L_A^n \left( L_A + M_A + 1 \right)^n,
\end{align} 
and applying $A_{ s, t } = ( s + A ) ( t + A )^{ - 1 }$ with $0 \le s / t \le c$ for given $c > 0$ to (\ref{e:boundedness of fractional powers-uniformly bounded families}) yields 
\begin{align}\label{e:uniform boundedness compositions-C}
\sup_{ 0 \leq s / t \leq c } \big\| ( s + A )^\alpha ( t + A )^{ - \alpha } \big\| \leq C_{ \alpha, n } \left( L_A + M_A \right)^n \left( L_A + c M_A + 1 \right)^n, 
\end{align}
where $M_A$ and $L_A$ are the non-negativity constants of $A$ given in (\ref{e:non-negative constant}) and (\ref{e:non-negative constant 2}), respectively.
\item[(ii)] Let $\beta \in \mathbb{C}_+$ with $\beta = \alpha$ or $\RE \alpha < \RE \beta < m \in \mathbb{N}$. For given $c > 0$, we have 
\begin{align}\label{e:resolvent estimate-discrete}
\big\| A^\alpha ( t + A )^{ - \beta } \big\| \leq C \, \big\| A^\alpha ( s + A )^{ - \beta } \big\|, \quad 0 \leq s / t \leq c,
\end{align}
and we also have 
\begin{align}\label{e:resolvent estimate-discrete-Charles}
\big\| ( s + A )^\alpha ( t + A )^{ - \beta } \big\| \leq C \, \big\| ( t + A )^{ - ( \beta - \alpha ) } \big\|, \quad 0 \leq s / t \leq c,
\end{align}
where $C = C_{ \beta, m } ( L_A + M_A )^{ m } ( L_A + c M_A + 1 )^m $ with $C_{ \beta, m }$, $M_A$ and $L_A$ given by (\ref{e:uniform boundedness compositions}), (\ref{e:non-negative constant}) and (\ref{e:non-negative constant 2}), respectively.
\end{itemize}
\end{lem}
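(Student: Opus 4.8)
The plan is to deduce (i) directly from the Balakrishnan--Komatsu integral (\ref{e:integral-Balakrishnan}), and then to obtain (ii) by factoring the operator products and feeding them into (i), in particular into the estimate (\ref{e:uniform boundedness compositions-C}).

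For (i) the key point is a single pointwise-in-$\lambda$ resolvent bound. For fixed $t\in\Lambda$ and $\lambda>0$ I would estimate $\|A_t(\lambda+A_t)^{-1}\|$ in two ways: by $L_{A_t}\le L$, using uniform non-negativity and (\ref{e:non-negative constant 2}); and by $\|A_t\|\,\|(\lambda+A_t)^{-1}\|\le KM_{A_t}/\lambda\le KM/\lambda$, using (\ref{e:non-negative constant}). The elementary inequality $\min\{a,b/\lambda\}\le(a+b)/(1+\lambda)$, valid for all $a,b,\lambda>0$ (if $a\le b/\lambda$ then $a(1+\lambda)\le a+b$, and symmetrically), then gives
\begin{align*}
\big\|A_t(\lambda+A_t)^{-1}\big\|\le\frac{L+KM}{1+\lambda},\qquad t\in\Lambda,\ \lambda>0.
\end{align*}
Since each $A_t$ is bounded, (\ref{e:integral-Balakrishnan}) applies; writing $A_t^n(\lambda+A_t)^{-n}=\big(A_t(\lambda+A_t)^{-1}\big)^n$ and raising this bound to the $n$-th power, I would take norms inside the (absolutely convergent) integral and use the scalar Beta integral $\int_0^\infty\lambda^{\RE\alpha}(1+\lambda)^{-n}\,d\lambda/\lambda=\Gamma(\RE\alpha)\Gamma(n-\RE\alpha)/\Gamma(n)$, i.e.\ the real version of (\ref{e:Euler integral}). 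This yields exactly $\sup_{t}\|A_t^\alpha\|\le C_{\alpha,n}(L+KM)^n$.

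The four specializations are then read off by substituting the uniform non-negativity constants from Example \ref{E:uniform non-negativity and boundedness} and computing the relevant uniform operator norm $K$:
\begin{itemize}
\item $A_t\equiv A\in\mathcal{L}(X)$: take $K=\|A\|$, $M=M_A$, $L=L_A$.
\item $A_t=t(t+A)^{-1}$: by Example \ref{E:uniform non-negativity and boundedness}(i), $M\le M_A+L_A$, $L\le M_A$ and $K=\sup_{t>0}\|t(t+A)^{-1}\|=M_A$, so $L+KM\le M_A(L_A+M_A+1)$.
\item $A_t=A(t+A)^{-1}$: by Example \ref{E:uniform non-negativity and boundedness}(ii), $M\le M_A+L_A$, $L\le L_A$ and $K=\sup_{t>0}\|A(t+A)^{-1}\|=L_A$, so $L+KM\le L_A(L_A+M_A+1)$.
\item $A_{s,t}=(s+A)(t+A)^{-1}$, $0\le s/t\le c$: by Example \ref{E:uniform non-negativity and boundedness}(iii), $M,L\le M_A+L_A$, while the decomposition $(s+A)(t+A)^{-1}=\frac{s}{t}\,t(t+A)^{-1}+A(t+A)^{-1}$ together with (\ref{e:non-negative constant})--(\ref{e:non-negative constant 2}) gives $K\le cM_A+L_A$, so $L+KM\le(L_A+M_A)(L_A+cM_A+1)$.
\end{itemize}
In each case one uses multiplicativity of fractional powers and commutativity of the operators involved to identify $A_t^\alpha$ with the displayed product, e.g.\ $[A(t+A)^{-1}]^\alpha=A^\alpha(t+A)^{-\alpha}$ and $[(s+A)(t+A)^{-1}]^\alpha=(s+A)^\alpha(t+A)^{-\alpha}$.

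For (ii), both inequalities reduce to (\ref{e:uniform boundedness compositions-C}) via the laws of exponents. For (\ref{e:resolvent estimate-discrete}) I would factor
\begin{align*}
A^\alpha(t+A)^{-\beta}=\big[A^\alpha(s+A)^{-\beta}\big]\big[(s+A)^\beta(t+A)^{-\beta}\big],
\end{align*}
the two bounded factors commuting, and bound the second by a constant of the form $C_{\beta,m}(L_A+M_A)^m(L_A+cM_A+1)^m$ using (\ref{e:uniform boundedness compositions-C}) with exponent $\beta$ and integer $m$ (legitimate since $\RE\beta<m$ in both admissible cases, with $m=n$ understood when $\beta=\alpha$). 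For (\ref{e:resolvent estimate-discrete-Charles}) I would factor
\begin{align*}
(s+A)^\alpha(t+A)^{-\beta}=\big[(s+A)^\alpha(t+A)^{-\alpha}\big]\big[(t+A)^{-(\beta-\alpha)}\big],
\end{align*}
using additivity of the negative powers of the positive operator $t+A$, and bound the first factor again by (\ref{e:uniform boundedness compositions-C}). The analytic content is modest; the delicate points are the domain bookkeeping and the justification of these factorizations --- that $(s+A)^\beta(t+A)^{-\beta}$ and $(t+A)^{-(\beta-\alpha)}$ are bounded, commute with the remaining factor, and that the products telescope as written --- which rest on the laws of exponents (\ref{e:additivity of fractional powers})--(\ref{e:multiplicativity of fractional powers}) for the commuting positive operators $A$, $s+A$, $t+A$, the coincidence $D((s+A)^\beta)=D(A^\beta)=D((t+A)^\beta)$, and the boundedness estimates from part (i) (which also shows the right-hand sides of (\ref{e:resolvent estimate-discrete})--(\ref{e:resolvent estimate-discrete-Charles}) are finite, so the estimates are not vacuous). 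I expect this bookkeeping, rather than any genuine inequality, to be the main obstacle.
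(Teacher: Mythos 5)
Your proposal is correct and follows essentially the same route as the paper: you estimate $\|A_t(\lambda+A_t)^{-1}\|$ uniformly to get $(1+\lambda)\|A_t(\lambda+A_t)^{-1}\|\le L+KM$, integrate against the Balakrishnan--Komatsu kernel using the Euler Beta integral, specialize via Example \ref{E:uniform non-negativity and boundedness}, and obtain (ii) by factoring through (\ref{e:uniform boundedness compositions-C}); the paper's proof is a compressed version of exactly this (it writes the first bound as a direct sum $\|\cdot\|+\lambda\|\cdot\|\le L+KM$ rather than via the $\min$ inequality, but that is cosmetic). Your observation that the constant in part (ii) should be read with $m=n$ when $\beta=\alpha$ is a reasonable and correct disambiguation of a small imprecision in the statement.
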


\begin{proof}
The statements can be verified by use of (\ref{e:representation of fractional powers-n}) and 
(\ref{e:Euler integral}).
Indeed, from (\ref{e:representation of fractional powers-n}) and (\ref{e:Euler integral}), it follows that   
	\begin{align*}
		\| A_t^\alpha \| & \leq C_{\alpha, n } \sup_{ \lambda > 0 } \left\| ( 1 + \lambda )^n A_t^n ( \lambda + A_t )^{ - n } \right\| \\
		& \leq C_{\alpha, n } \sup_{ \lambda > 0 } \left\| ( 1 + \lambda ) A_t ( \lambda + A_t )^{ - 1 } \right\|^n \leq C_{ \alpha, n } ( L + K M )^n,
	\end{align*}
from which (\ref{e:boundedness of fractional powers-uniformly bounded families}) follows immediately. 
Note that (\ref{e:boundedness of fractional powers-bounded operators}) is a direct consequence of 
(\ref{e:boundedness of fractional powers-uniformly bounded families}), 
while (\ref{e:uniform boundedness compositions-M}), (\ref{e:uniform boundedness compositions-L}) and (\ref{e:uniform boundedness compositions-C}) are all simple consequences of (\ref{e:boundedness of fractional powers-uniformly bounded families}) and Example \ref{E:uniform non-negativity and boundedness}, where the uniform non-negativity constants of the corresponding families of operators are used. Moreover, (\ref{e:resolvent estimate-discrete}) and (\ref{e:resolvent estimate-discrete-Charles}) are two direct consequences of (\ref{e:uniform boundedness compositions-C}). The proof is complete.
\end{proof}

\begin{rem}
It is necessary to point out that sometimes we may obtain more precise estimates for a variety of  families of operators by a routine calculation starting from (\ref{e:representation of fractional powers-n}) and (\ref{e:Euler integral})
rather than the direct use of (\ref{e:boundedness of fractional powers-uniformly bounded families}). Indeed, the estimates given in (\ref{e:boundedness of fractional powers-bounded operators}), (\ref{e:uniform boundedness compositions-M}), (\ref{e:uniform boundedness compositions-L}) and (\ref{e:uniform boundedness compositions-C}) can be improved in the following way. 
\noindent\begin{itemize}
\item[(i)] Analogous to (\ref{e:boundedness of fractional powers-uniformly bounded families}), by use of (\ref{e:representation of fractional powers-n}) and (\ref{e:Euler integral}) one can verify that  
\begin{itemize}
\item[(\ref{e:uniform boundedness compositions-M})$^* \quad \quad $] $\sup_{ t > 0 } \big\| t^\alpha ( t + A )^{ - \alpha } \big\| \leq C_{ \alpha, n } M_A^n,$
\item[(\ref{e:uniform boundedness compositions-L})$^* \quad \quad $] $\sup_{ t > 0 } \big\| A^\alpha ( t + A )^{ - \alpha } \big\| \leq C_{ \alpha, n } L_A^n,$
\item[(\ref{e:uniform boundedness compositions-C})$^* \quad \quad $] $\sup_{ 0 \leq s / t \leq c } \big\| ( s + A )^\alpha ( t + A )^{ - \alpha } \big\| \leq C_{ \alpha, n } \big( L_A + \max\{ c, 1 \} \cdot M_A \big)^n,$ 
\end{itemize}
where $0 < \RE \alpha < n \in \mathbb{N}$, $C_{ \alpha, n }$ is given by (\ref{e:uniform boundedness compositions}), and $M_A$ and $L_A$ are the non-negativity constants of $ A $ given by (\ref{e:non-negative constant}) and (\ref{e:non-negative constant 2}), respectively. 
Clearly, the estimates (\ref{e:uniform boundedness compositions-M})$^*$, (\ref{e:uniform boundedness compositions-L})$^*$ and (\ref{e:uniform boundedness compositions-C})$^*$ are more precise than (\ref{e:uniform boundedness compositions-M}), (\ref{e:uniform boundedness compositions-L}) and (\ref{e:uniform boundedness compositions-C}), respectively. 
Moreover, (\ref{e:uniform boundedness compositions-M})$^*$ and (\ref{e:uniform boundedness compositions-L})$^*$ are also given in \cite[Corollary 3.1.13]{HaaseM2006} in the case $0 < \alpha < n \in \mathbb{ N }$.
\item[(ii)] 
Compared with (\ref{e:boundedness of fractional powers-bounded operators}), an alternative estimate of $\left\| A^\alpha x \right\|$ can be given by the so-called moment inequality \cite[Lemma 3.1.7]{MartinezM2001}, i.e.,
\begin{align}\label{e:monent inequality-Martinez}
\| A^\alpha x \| \le C ( \alpha, n, M_A ) \left\| A^n x \right\|^{ \RE \alpha / n } \left\| x \right\|^{ ( n - \RE \alpha ) / n }, \quad x \in D ( A^\alpha ), 
\end{align}
where $0 < \RE \alpha < n \in \mathbb{N}$, $M_A$ is the non-negativity constant of $A$ given by (\ref{e:non-negative constant}) and 
\begin{align*}
C ( \alpha, n, M_A ) = \frac{ \Gamma ( n +1 ) } { \left| \Gamma ( \alpha ) \Gamma ( n - \alpha ) \right| } \frac{ M_A^{ \RE \alpha } ( M_A + 1 )^{ n - \RE \alpha } }{ \RE \alpha ( n - \RE \alpha ) }.
\end{align*}
In particular, if $A \in L ( X )$, from (\ref{e:monent inequality-Martinez}) it follows that 
\begin{itemize}
\item[(\ref{e:boundedness of fractional powers-bounded operators})$^* \quad \quad $] $\| A^\alpha \| \leq C ( \alpha, n, M_A ) \| A \|^{ \RE \alpha },$
\end{itemize}
an estimate different from (\ref{e:boundedness of fractional powers-bounded operators}). Also, see \cite[Remark 5.1.2
]{MartinezM2001}.
\end{itemize}
\end{rem}

From this we turn to ergodicity of non-negative operators. In view of (\ref{e:representation of fractional powers-n}), a link between fractional powers and integral powers, 
it is necessary to point out two simple facts for bounded linear operators. More precisely, for $S, T \in \mathcal{L} ( X )$ satisfying commutativity, 
it is routine to verify that    
\begin{align}\label{e:a+b}
( S + T )^n = \sum_{ k = 0 }^{ n } \binom{ n }{ k } S^k T^{ n - k }, \quad n \in \mathbb{N}, 
\end{align}
and that  
\begin{align}\label{e:1-a}
S^n - T^n = ( S - T ) \sum_{ k = 0 }^{ n - 1 } T^k S^{ n - 1 - k }, \quad n \in \mathbb{N}.   
\end{align}

The following lemma improves slightly \cite[Theorem 6.1.1]{MartinezM2001} and \cite[Lemma 2.3]{Matsumoto and Ogawa2010}, which can be verified by using a standard density argument and some operator identities, especially 
(\ref{e:a+b}) and (\ref{e:1-a}) above. 

\begin{lem}\label{l:ergodicity}
	Let $A$ be a non-negative operator on $X$ and let $\alpha \in \mathbb{C}_+$. The following statements hold.
	\begin{itemize}
		\item[(i)] $\overline{D ( A )} = \overline{D ( A^\alpha )}$. Moreover, for $x \in X$,
		\begin{align*}
			x \in \overline{ D ( A^\alpha ) } & \Leftrightarrow \lim\limits_{ t
				\rightarrow \infty }{ t^\alpha ( t + A )^{ - \alpha } x } = x 
			\Leftrightarrow \lim\limits_{ t \rightarrow \infty }{ t^\alpha ( t + A
				)^{ - \alpha } x } \mbox{ exists } \\
			& \Leftrightarrow \lim\limits_{ t \rightarrow \infty }{ A^\alpha ( t +
				A )^{ - \alpha } x } = 0 \Leftrightarrow \lim\limits_{ t \rightarrow
				\infty }{ A^\alpha ( t + A )^{ - \alpha } x } \mbox{ exists} \\
           & \Leftrightarrow A^\alpha ( \lambda + A )^{ - \alpha } x \in \overline{ D ( A^\alpha ) } \mbox{ for all (or, some) } \lambda > 0.
		\end{align*}
		\item[(ii)] $\overline{R ( A )} = \overline{R ( A^\alpha )}$. Moreover, for $x \in X$,
		\begin{align*}
			x \in \overline{R ( A^\alpha )} & \Leftrightarrow \lim\limits_{ t
				\rightarrow 0 }{ A^\alpha ( t + A )^{ - \alpha } x } = x \Leftrightarrow
			\lim\limits_{ t \rightarrow 0 }{ t^\alpha ( t + A )^{ - \alpha } x } = 0 \\
           & \Leftrightarrow \lambda^\alpha ( \lambda + A )^{ - \alpha } x \in \overline{ R ( A^\alpha ) } \mbox{ for all (or, some) } \lambda > 0.
		\end{align*}
		\item[(iii)] $Ker ( A ) = Ker ( A^\alpha ) = Ker ( A^\alpha ( t + A )^{ - \alpha } )$ for $ t > 0 $. Moreover, for $x \in X$,
		\begin{align*}
			x \in Ker ( A^\alpha ) & \Leftrightarrow A^\alpha ( t + A )^{ - \alpha } x \equiv 0 \Leftrightarrow
			\lim\limits_{ t \rightarrow 0 }{ A^\alpha ( t + A )^{ - \alpha } x } = 0 \\
			& \Leftrightarrow t^\alpha ( t + A )^{ - \alpha } x \equiv x \Leftrightarrow
			\lim\limits_{ t \rightarrow 0 }{ t^\alpha ( t + A )^{ - \alpha } x } = x.
		\end{align*}
		\item[(iv)] For $x \in X$, we have 
		\begin{align*}
			x \in \overline{R ( A^\alpha ) } \oplus Ker ( A^\alpha ) & \Leftrightarrow 
			\lim\limits_{ t \rightarrow 0 }{ A^\alpha ( t + A )^{ - \alpha } x } \mbox{ exists } \\
			& \Leftrightarrow \lim\limits_{ t \rightarrow 0 }{ t^\alpha ( t + A )^{ - \alpha } x } \mbox{ exists}.
		\end{align*}
		Moreover, $\lim\limits_{ t \rightarrow 0 }{ t^\alpha ( t + A )^{ - \alpha } x } = x_0$ and $\lim\limits_{ t \rightarrow 0 }{ A^\alpha ( t + A )^{ - \alpha } x } = x_1$ whenever $x = x_0 + x_1$ with $x_0 \in Ker ( A^\alpha )$ and $x_1 \in \overline{R ( A^\alpha ) }$.
	\end{itemize}
\end{lem}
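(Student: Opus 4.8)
The plan is to reduce everything to the case $\alpha = 1$ (ordinary resolvents), for which the statements are the classical Abel-ergodic facts, and then transfer via the elementary estimates of Lemma \ref{l:uniform boundedness compositions} together with the density relations. First I would record the three ingredients that drive the whole argument. (a) For $t>0$, the operators $t^\alpha(t+A)^{-\alpha}$ and $A^\alpha(t+A)^{-\alpha}$ are uniformly bounded in $t$, by (\ref{e:uniform boundedness compositions-M})$^*$ and (\ref{e:uniform boundedness compositions-L})$^*$; moreover they are complementary in the sense that, since $A^\alpha$ and $(t+A)^{-\alpha}$ commute and $(t+A)^{-\alpha}(t+A)^\alpha = I$ on a suitable dense set, one has on $D(A^\alpha)$ (equivalently on $R((t+A)^{-\alpha})$) the identity coming from $t^\alpha + $ "the rest"; the clean statement I will use is $x - t^\alpha(t+A)^{-\alpha}x = A^\alpha(t+A)^{-\alpha}\,[\text{something bounded}]\,x$, obtained by expanding $(t+A)^\alpha - t^\alpha$ via (\ref{e:1-a})-type identities when $\alpha = n$ is an integer and then using the integral representation (\ref{e:representation of fractional powers-n}) to pass to general $\alpha$. (b) On $D(A)$ (hence on $D(A^n)$ for every $n$, which is a core) the limits are easy: if $x\in D(A)$ then $A(t+A)^{-1}x = (t+A)^{-1}Ax \to 0$ as $t\to\infty$ and $\to x$ of the other piece, and iterating/using the integral formula gives $A^\alpha(t+A)^{-\alpha}x\to 0$, $t^\alpha(t+A)^{-\alpha}x \to x$ as $t\to\infty$, and the reversed limits as $t\to 0$. (c) A uniformly bounded family that converges on a dense subspace converges everywhere on the closure of that subspace, and only there (for the "exists $\Rightarrow$ membership" direction one reads off the limit and notes it lies in the relevant closed subspace).

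With these in hand I would prove (i) first. The chain of equivalences is a cycle: $x\in\overline{D(A^\alpha)}$; $t^\alpha(t+A)^{-\alpha}x\to x$; the limit merely exists; $A^\alpha(t+A)^{-\alpha}x\to 0$; that limit merely exists; $A^\alpha(\lambda+A)^{-\alpha}x\in\overline{D(A^\alpha)}$. The implication $\overline{D(A^\alpha)}=\overline{D(A)}$ is itself part of the statement and follows from the fact that $R((1+A)^{-\alpha}) = D(A^\alpha)$ densely embeds relations both ways; I would cite/reuse the moment inequality (\ref{e:monent inequality-Martinez}) and the commutation of $(1+A)^{-1}$ with $(1+A)^{-\alpha}$ to sandwich the two closures. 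From the dense set $D(A)$ plus uniform boundedness (ingredient (c)) I get convergence on $\overline{D(A)}=\overline{D(A^\alpha)}$, which closes "membership $\Rightarrow$ convergence". "Convergence $\Rightarrow$ exists" is trivial. "Exists $\Rightarrow$ membership": if $y:=\lim_{t\to\infty}t^\alpha(t+A)^{-\alpha}x$ exists, then $y\in\overline{R((t+A)^{-\alpha})}\subset\overline{D(A^\alpha)}$ for each $t$, and a resolvent-identity computation shows $t^\alpha(t+A)^{-\alpha}y\to y$ as well while $t^\alpha(t+A)^{-\alpha}(x-y)\to 0$ forces $x-y\in\overline{D(A^\alpha)}$... here I must be slightly careful: cleaner is to observe $t^\alpha(t+A)^{-\alpha}x \in \overline{D(A^\alpha)}$ for every $t$ and $\overline{D(A^\alpha)}$ is closed, so its limit $x$ (if it equals $x$) lies there; to see the limit is actually $x$ and not merely "some point," apply $s^\alpha(s+A)^{-\alpha}$ and let $s\to\infty$ using uniform boundedness and a double-limit interchange. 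The equivalence with $A^\alpha(t+A)^{-\alpha}x\to 0$ is ingredient (a): $x-t^\alpha(t+A)^{-\alpha}x$ and $A^\alpha(t+A)^{-\alpha}x$ differ by a uniformly bounded factor applied to the same vector, so one tends to $0$ iff the other does. The last equivalence (membership of $A^\alpha(\lambda+A)^{-\alpha}x$) follows because $A^\alpha(\lambda+A)^{-\alpha}$ maps $X$ into $D(A^\alpha)\subset\overline{D(A^\alpha)}$ in one direction, and in the other, $x = \lim_{t\to\infty} t^\alpha(t+A)^{-\alpha}x$ can be rewritten using the identity decomposition so that $x\in\overline{D(A^\alpha)}$ once one knows $A^\alpha(\lambda+A)^{-\alpha}x$ is.

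Part (ii) is the mirror image under $t\to 0$, and if $A$ is injective it is literally (i) applied to $A^{-1}$ (using $M_{A^{-1}}=L_A$, $L_{A^{-1}}=M_A$ and $t^\alpha(t+A)^{-\alpha} \leftrightarrow A^{-\alpha}(t^{-1}+A^{-1})^{-\alpha}$ after a change of variable); in the non-injective case I would run the same density-plus-uniform-boundedness scheme directly with $D(A)$ replaced by $R(A)$ (a dense subset of $\overline{R(A)}=\overline{R(A^\alpha)}$, the latter equality proved as in (i)). Part (iii) is purely algebraic: $\mathrm{Ker}(A)=\mathrm{Ker}(A^\alpha)$ is standard (e.g. from $A^\alpha = A\cdot A^{\alpha-1}$-type inclusions, or the spectral mapping theorem), and on $\mathrm{Ker}(A^\alpha)$ one has $A^\alpha(t+A)^{-\alpha}x = (t+A)^{-\alpha}A^\alpha x = 0$ identically, hence $t^\alpha(t+A)^{-\alpha}x = x$ identically by the complementarity identity; the converse implications are immediate by letting $t\to 0$. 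Part (iv) then follows by combining (ii) and (iii): if $x=x_0+x_1$ with $x_0\in\mathrm{Ker}(A^\alpha)$, $x_1\in\overline{R(A^\alpha)}$, apply (iii) to $x_0$ and (ii) to $x_1$ to get the stated limits; conversely if $\lim_{t\to 0}t^\alpha(t+A)^{-\alpha}x=:x_0$ exists, show $x_0\in\mathrm{Ker}(A^\alpha)$ (because $A^\alpha(t+A)^{-\alpha}x_0$ inherits a limit which forces it to $0$) and $x-x_0\in\overline{R(A^\alpha)}$ (its image under $t^\alpha(t+A)^{-\alpha}$ tends to $0$, so by (ii) applied in the complementary form it lies in $\overline{R(A^\alpha)}$), and uniqueness of the decomposition gives $x_1 = x - x_0$ with $\lim A^\alpha(t+A)^{-\alpha}x = x_1$.

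I expect the main obstacle to be the rigorous justification of ingredient (a) — the precise complementarity identity relating $I - t^\alpha(t+A)^{-\alpha}$ to $A^\alpha(t+A)^{-\alpha}$ modulo a uniformly bounded, $t$-dependent operator — since for non-integer $\alpha$ this is not a finite binomial expansion but must be extracted from the Balakrishnan integral (\ref{e:representation of fractional powers-n}) applied to the bounded non-negative operators $t(t+A)^{-1}$, with care that all manipulations take place on the core $D(A^n)$ before passing to closures; the rest is bookkeeping with uniform bounds and double-limit interchanges. A secondary nuisance is keeping the "for all (or, some) $\lambda>0$" clauses honest, which needs the resolvent identity to move between different values of $\lambda$ while staying inside the relevant closed subspace.
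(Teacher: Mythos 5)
Your overall strategy (density on a core, uniform boundedness via Lemma \ref{l:uniform boundedness compositions}, then pass to closures) is in the same spirit as the cited theorem \cite[Theorem~6.1.1]{MartinezM2001}, which the paper in fact invokes for the main equivalence chains in (i) and (ii); the paper's own proof only supplies the extra implications -- the two ``exists $\Rightarrow$ specific value'' statements, the $A^\alpha(\lambda+A)^{-\alpha}$-membership equivalence, and parts (iii)--(iv). But your proposed ingredient (a) has a concrete error that would derail a from-scratch proof.

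\textbf{The complementarity identity fails for general $\alpha$.} You want $x - t^\alpha(t+A)^{-\alpha}x = A^\alpha(t+A)^{-\alpha}B_t x$ with $B_t$ uniformly bounded, and then argue that the two quantities ``differ by a uniformly bounded factor ... so one tends to $0$ iff the other does.'' Writing $u = z/t$, the scalar model is
\begin{align*}
1 - \frac{t^\alpha}{(t+z)^\alpha} = \frac{z^\alpha}{(t+z)^\alpha}\cdot h(u), \qquad h(u) = \frac{(1+u)^\alpha - 1}{u^\alpha},
\end{align*}
and $h(u)\sim \alpha\,u^{1-\alpha}$ as $u\to 0$. For $\RE\alpha > 1$ this is unbounded, so no uniformly bounded $B_t$ exists; for $\RE\alpha < 1$ the reciprocal $1/h$ is unbounded, so the implication in the other direction fails. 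Hence for no $\alpha$ with $\RE\alpha\neq 1$ do you get a two-sided bounded comparison, and the ``iff'' step collapses. The paper sidesteps this entirely: using (\ref{e:representation of fractional powers-n}) together with the finite telescoping identity (\ref{e:1-a}) applied to the bounded operators $t(t+A)^{-1}$ \emph{inside} the Balakrishnan integral, it produces an explicit $x_t$ with $x - t^\alpha(t+A)^{-\alpha}x = A\,x_t$ (note: first power of $A$, not $\alpha$-th), shows $x_t\to 0$ by dominated convergence and the uniform bounds from Example~\ref{E:uniform non-negativity and boundedness}, and then concludes via closedness of $A$. You should replace your ingredient (a) by this integral-plus-closedness mechanism; the bounded-prefactor version cannot be made to work.

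\textbf{Secondary gaps.} In (iii) you write that the converse implications ``are immediate by letting $t\to 0$.'' They are not: to pass from $\lim_{t\to 0}A^\alpha(t+A)^{-\alpha}x = 0$ to $x\in\mathrm{Ker}(A^\alpha)$ you cannot invoke the ergodic decomposition (that is part (iv), which depends on (iii)), and you need the same explicit integral identity plus closedness of $A$ that the paper uses. Similarly, your ``apply $s^\alpha(s+A)^{-\alpha}$ and use a double-limit interchange'' for the ``exists $\Rightarrow$ $=x$'' step is not a proof; the paper's argument (identify the limit of $A x_t$ as $x - y$ with $x_t\to 0$, then apply closedness of $A$ to force $x - y = 0$) is what actually closes it. Finally, your reduction of (ii) to (i) via $A\mapsto A^{-1}$ works only when $A$ is injective, so the direct argument with $R(A)$ in place of $D(A)$ -- which you do mention -- is genuinely needed; just be aware that $R(A)$ need not be dense in $X$, only in $\overline{R(A)}$, and the range-side version of the closedness argument must be run there.
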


\begin{proof}
(i) Thanks to \cite[Theorem 6.1.1 (ii)]{MartinezM2001}, it remains to verify that
\begin{align}\label{e:ergodicity-proof-Ltinfty} 
\lim_{ t \rightarrow \infty } A^\alpha ( t + A )^{ - \alpha } x \mbox{ exists } \Rightarrow \lim_{ t \rightarrow \infty } A^\alpha ( t + A )^{ - \alpha } x = 0,
\end{align}
\begin{align}\label{e:ergodicity-proof-Mtinfty} 
\lim_{ t \rightarrow \infty } t^\alpha ( t + A )^{ - \alpha } x \mbox{ exists } \Rightarrow \lim_{ t \rightarrow \infty } t^\alpha ( t + A )^{ - \alpha } x = x
\end{align}
and 
\begin{align}\label{e:ergodicity-proof-Lbeta} 
\begin{split}
x \in \overline{ D ( A^\alpha ) } \Leftrightarrow {} & A^\alpha ( \lambda + A )^{ - \alpha } x \in \overline{ D ( A^\alpha ) } \mbox{ for all (or, some) } \lambda > 0.
\end{split}
\end{align}

Indeed,
(\ref{e:ergodicity-proof-Ltinfty}) is a simple consequence of the uniform boundedness of $\{ t^\alpha ( t + A )^{ - \alpha } \}_{ t > 0 }$ (see Lemma \ref{l:uniform boundedness compositions} (i) above) and the closedness of $A^\alpha$. 

Now we verify (\ref{e:ergodicity-proof-Mtinfty}).
Let $x \in X$ such that the limit $\lim\limits_{ t \rightarrow \infty } t^\alpha ( t + A )^{ - \alpha } x$ exists. From (\ref{e:representation of fractional powers-n}) and (\ref{e:1-a}) it follows that 
\begin{align*} 
	x - t^\alpha ( t + A )^{ - \alpha } x = A x_t, \quad t > 0,
\end{align*}
where 
\begin{align*} 
x_t = \frac{ \Gamma ( n ) }{ \Gamma ( \alpha ) \Gamma ( n - \alpha ) } \sum_{ k = 0 }^{ n - 1 } \int_{ 0 }^\infty \frac{ \lambda^\alpha }{ ( 1 + \lambda )^n } \bigg[ \frac{ t ( 1 + \lambda ) }{ \lambda } \bigg]^k \bigg[ \frac{ t ( 1 + \lambda ) }{ \lambda } + A \bigg]^{ - ( k + 1 ) } x \, \frac{ d \lambda }{ \lambda }  
\end{align*}
with $\RE \alpha < n \in \mathbb{N}$. Thanks to the dominated convergence theorem, from the uniform boundedness of $\{ t ( t + A )^{ - 1 } \}_{ t > 0 }$ (see, Example \ref{E:uniform non-negativity and boundedness} (b) above) it follows that $x_t \rightarrow 0$ as $t \rightarrow \infty$, so that $A x_t \rightarrow 0$ as $t \rightarrow \infty$ due to the closedness of $A$, i.e., $t^\alpha ( t + A )^{ - \alpha } x \rightarrow x$ as $t \rightarrow \infty$. Thus, we have verified (\ref{e:ergodicity-proof-Mtinfty}).

As for (\ref{e:ergodicity-proof-Lbeta}), it suffices to verify that 
\begin{align}\label{e:ergodicity-proof-Lbeta-1}
x \in \overline{ D ( A ) } \Leftrightarrow {} & A^\alpha ( \lambda + A )^{ - \alpha } x \in \overline{ D ( A ) } \mbox{ for all (or, some) } \lambda > 0 
\end{align}
due to the fact that $\overline{ D ( A^\alpha ) } = \overline{ D ( A ) }$. To this end, let $x \in X$ and fix an integer $n > \RE \alpha$. From (\ref{e:representation of fractional powers-n}) and (\ref{e:a+b}) it follows that 
\begin{align*} 
A^\alpha ( \lambda + A )^{ - \alpha } x - x = \sum_{ k = 1 }^{ n } \int_0^\infty \frac{ \mu^\alpha }{ ( 1 + \mu )^n } ( - 1 )^k \bigg(  \frac{ \lambda \mu }{ 1 + \mu } \bigg)^k \bigg( \frac{ \lambda \mu }{ 1 + \mu } + A \bigg)^{ - k } x \, \frac{ d \mu }{ \mu },
\end{align*}
where $c = \frac{ \Gamma ( n ) }{ \Gamma ( \alpha ) \Gamma ( n - \alpha ) }$. By \cite[Proposition 1.1.7]{ABHN2001} we conclude that the integrals given in the right-hand side of the last equality are all in $\overline{ D ( A ) }$, so that $A^\alpha ( \lambda + A )^{ - \alpha } x - x \in \overline{ D ( A ) }$. Thus, we have verified (\ref{e:ergodicity-proof-Lbeta-1}).

(ii) Analogous to (\ref{e:ergodicity-proof-Lbeta}), we can verify that 
\begin{align*} 
x \in \overline{ R ( A^\alpha ) } \Leftrightarrow \lambda^\alpha ( \lambda + A )^{ - \alpha } x \in \overline{ R ( A^\alpha ) } \mbox{ for all (or, some) } \lambda > 0.
\end{align*}
The other statements have been verified in \cite[Theorem 6.1.1 (iii)]{MartinezM2001}.

(iii) Recall that
\begin{align}\label{e:kernel-n}
Ker ( A ) = Ker ( A^n ), \quad n \in \mathbb{ N }, 
\end{align}
as shown in \cite[Corollary 1.1.6]{MartinezM2001}. A direct proof of (\ref{e:kernel-n}) can be given as follows. Indeed, it is trivial that $Ker ( A ) \subset Ker ( A^n )$ for each $n \in \mathbb{ N }$. Conversely, it suffices to verify that $Ker ( A^2 ) \subset Ker ( A )$. Let $x \in Ker ( A^2 )$. By using $x = ( t + A ) ( t + A )^{ - 1 } x$ with $t > 0$ we have 
\begin{align*}
A x = t A ( t + A )^{ - 1 } x + ( t + A )^{ - 1 } A^2 x = t A ( t + A )^{ - 1 } x.
\end{align*}
Applying $t \rightarrow 0$ in the last equality yields $A x = 0$ since the last term converges to zero as $t \rightarrow 0$ due to the uniform boundedness of $\{ A ( t + A )^{ - 1 } \}_{ t > 0 }$. Thus, we have verified (\ref{e:kernel-n}).

Now we verify the first equality of (iii), i.e.,  
\begin{align}\label{e:kernel-alpha}
Ker ( A ) = Ker ( A^\alpha ).
\end{align}
To this end, fix an integer $n > \RE \alpha$. It is clear that $Ker ( A^\alpha ) \subset Ker ( A^n ) = Ker ( A )$ due to (\ref{e:additivity of fractional powers}) and (\ref{e:kernel-n}). Conversely, let $x \in Ker ( A )$. It is clear that $x \in Ker ( A^n )$ due to (\ref{e:kernel-n}), and hence, $A^\alpha x = 0$ due to the representation (\ref{e:representation of fractional powers-n}). This implies that $x \in Ker ( A^\alpha )$, and hence, $Ker ( A ) \subset Ker ( A^\alpha )$. Thus, we have verified (\ref{e:kernel-alpha}).

Next we verify the second equality of (iii), i.e., 
\begin{align}\label{e:kernel-alpha-compositions}
Ker ( A^\alpha ) = Ker \left( A^\alpha ( t + A )^{ - \alpha } \right), \quad t > 0.
\end{align}
Indeed, by using (\ref{e:identity decomposition}) and the commutativity between $A$ and $( t + A )^{ - 1 }$, we have $Ker ( A ( t + A )^{ - 1 } ) = Ker ( A )$ for each $t > 0$. Applying (\ref{e:kernel-alpha}) to the operator $A ( t + A )^{ - 1 }$ yields (\ref{e:kernel-alpha-compositions}), immediately.


And finally, we verify the desired equivalences of (iii). It is trivial that 
\begin{align*}
	x \in Ker ( A^\alpha ) \Rightarrow A^\alpha ( t + A )^{ - \alpha } x \equiv 0 \Rightarrow \lim\limits_{ t \rightarrow 0 }{ A^\alpha ( t + A )^{ - \alpha } x } = 0.  
\end{align*}
Now fix $x \in X$ such that $\lim\limits_{ t \rightarrow 0 }{ A^\alpha ( t + A )^{ - \alpha } x } = 0$. Note that  
\begin{align*}
	x & \leftarrow x - A^\alpha ( t + A )^{ - \alpha } x \\
	& = \frac{ \Gamma ( n ) }{ \Gamma ( \alpha ) \Gamma ( n - \alpha ) } \int_0^\infty \frac{ \lambda^\alpha }{ ( 1 + \lambda )^n } \frac{ \lambda t }{ 1 + \lambda } \sum_{ k = 0 }^{ n - 1 } A^k \bigg( \frac{ \lambda t }{ 1 + \lambda } + A \bigg)^{ - ( k + 1 ) } x \frac{ d \lambda }{ \lambda } := y_t,
\end{align*}
where the equality follows from (\ref{e:representation of fractional powers-n}) and (\ref{e:1-a}), while $A y_t \rightarrow 0$ as $t \rightarrow 0^+$ by the dominated convergence theorem. By use of the closedness of $A$ we conclude that $x \in D ( A )$ and $A x = 0$, i.e., $x \in Ker ( A )$. Applying (\ref{e:kernel-alpha}) yields $x \in Ker ( A ) = Ker ( A^\alpha )$. Thus, we have verified that 
\begin{align*}
	x \in Ker ( A^\alpha ) \Leftrightarrow A^\alpha ( t + A )^{ - \alpha } x \equiv 0 \Leftrightarrow \lim\limits_{ t \rightarrow 0 }{ A^\alpha ( t + A )^{ - \alpha } x } = 0.  
\end{align*}
By using analogous argunents, we can also verify the equivalence 
\begin{align*}
	x \in Ker ( A^\alpha ) \Leftrightarrow t^\alpha ( t + A )^{ - \alpha } x \equiv x \Leftrightarrow \lim\limits_{ t \rightarrow 0 }{ t^\alpha ( t + A )^{ - \alpha } x } = x.  
\end{align*}
Thus, we have verified the statement (iii).

(iv) We first verify the equivalence 
\begin{align}\label{e:independence of limits-L}
x \in \overline{ R ( A^\alpha ) } \oplus Ker ( A^\alpha ) \Leftrightarrow \lim_{ t \rightarrow 0^+ } A^\alpha ( t + A )^{ - \alpha } x \mbox{ exists}.
\end{align}
Fix $x \in \overline{R ( A^\alpha ) } \oplus Ker ( A^\alpha )$, and write $x = x_1 + x_2$ with $x_1 \in \overline{R ( A^\alpha ) }$ and $x_2 \in Ker ( A^\alpha )$. Thanks to the statement (ii), there exists the limit $\lim\limits_{ t \rightarrow 0 }{ A^\alpha ( t + A )^{ - \alpha } x }$. More precisely, 
\begin{align*}
\lim\limits_{ t \rightarrow 0 }{ A^\alpha ( t + A )^{ - \alpha } x } = \lim\limits_{ t \rightarrow 0 }{ A^\alpha ( t + A )^{ - \alpha } x_1 } = x_1.
\end{align*}
Conversely, fix $x \in X$ such that the limit $\lim\limits_{ t \rightarrow 0 }{ A^\alpha ( t + A )^{ - \alpha } x }$ exists. Write 
\begin{align*}
y := \lim\limits_{ t \rightarrow 0 }{ A^\alpha ( t + A )^{ - \alpha } x }.
\end{align*}
It suffices to verify $x - y \in Ker ( A^\alpha )$ due to the fact that $y \in \overline{R ( A^\alpha ) }$. To this end, write $x_t := A^\alpha ( t + A )^{ - \alpha } x$ with $t > 0$. From (\ref{e:representation of fractional powers-n}) and (\ref{e:a+b}) it follows that 
\begin{align*}
x - x_t 
= C_\alpha \sum_{ k = 1 }^{ n } ( - 1 )^{ k + 1 } \binom{ n }{ k } \int_0^\infty \frac{ \lambda^\alpha }{ ( 1 + \lambda )^n } \bigg( \frac{ \lambda t }{ 1 + \lambda } \bigg)^k \bigg(  \frac{ \lambda t }{ 1 + \lambda } + A \bigg)^{ - k } x \, \frac{ d \lambda }{ \lambda },  
\end{align*}
where $C_\alpha = \frac{ \Gamma ( n ) }{ \Gamma ( \alpha ) \Gamma ( n - \alpha ) }$. By using 
(\ref{e:non-negative constant}) and (\ref{e:non-negative constant 2}) we conclude that $x - x_t \in D ( A )$ and $A ( x -  x_t ) \rightarrow 0$ as $t \rightarrow 0$.
Applying the closedness of $A$ yields $x - y \in Ker ( A )$, and hence, $x - y \in Ker ( A^\alpha )$ due to (\ref{e:kernel-alpha}). Thus, we have verified (\ref{e:independence of limits-L}). 

The equivalence
\begin{align*}
x \in \overline{ R ( A^\alpha ) } \oplus Ker ( A^\alpha ) \Leftrightarrow \lim_{ t \rightarrow 0^+ } t^\alpha ( t + A )^{ - \alpha } x \mbox{ exists}.
\end{align*}
can be verified analogously.
Moreover, thanks to the statements (ii) and (iii), 
it can be seen that $\lim\limits_{ t \rightarrow 0 }{ t^\alpha ( t + A )^{ - \alpha } x } = x_0$ and $\lim\limits_{ t \rightarrow 0 }{ A^\alpha ( t + A )^{ - \alpha } x } = x_1$ whenever $x = x_0 + x_1$ with $x_0 \in Ker ( A^\alpha )$ and $x_1 \in \overline{ R ( A^\alpha ) }$. The proof is complete. 
\end{proof}

In the following lemma, we present some reproducing formulas for the fractional powers of non-negative operators, which will be used to construct Besov spaces associated with operators in Section \ref{S:Besov spaces} below. 

\begin{lem}\label{l:reproducing formulas}
Let $A$ be non-negative on $X$, and let $\alpha, \beta \in \mathbb{C}_+^*$ and $\lambda, \mu > 0$. The following statements hold.
\begin{itemize}
\item[(i)] If $\RE \alpha > 0$ and $\beta = m \in \mathbb{N}$ then, for $x \in \overline{D ( A )}$,
\begin{align}\label{e:inhomogeneous Calderon}
\begin{split}
x = {} & \frac{ \Gamma ( \alpha + m ) }{ \Gamma ( \alpha ) \Gamma ( m ) }
\int_\lambda^\infty t^\alpha A^m ( t + A )^{ - \alpha - m }
x \, \frac{ d t }{ t } \\
+ {} & \sum_{ k = 0 }^{ m - 1 } \frac{ \Gamma ( \alpha + k ) }{ \Gamma ( \alpha ) \Gamma ( k + 1 ) } \big[ A ( \lambda + A )^{ - 1 } \big]^k \lambda^\alpha
( \lambda + A )^{ - \alpha } x. 
\end{split}
\end{align}
\item[(ii)] For $x \in X$,
\begin{equation}\label{e:transform resolvent}
\begin{split}
\lambda^{ \alpha + \beta } A^\beta ( \lambda + A )^{ - \alpha -
\beta } x = {} & ( \alpha + \beta ) \int_\mu^\lambda t^{ \alpha + \beta
} A^{ \beta + 1 } ( t + A )^{ - \alpha - \beta - 1 } x \, \frac{ d t }{ t } \\
+ {} & \mu^{ \alpha + \beta } A^\beta ( \mu + A )^{ - \alpha - \beta }
x.
\end{split}
\end{equation}
\item[(iii)] If $\RE \beta > 0$, then
\begin{equation}\label{e:transform resolvent 2}
\begin{split}
\lambda^{ \alpha + \beta } A^\beta ( \lambda + A )^{ - \alpha -
\beta } x = ( \alpha + \beta ) \int_0^\lambda t^{ \alpha + \beta }
A^{ \beta + 1 } ( t + A )^{ - \alpha - \beta - 1 } x \, \frac{ d t
}{ t }
\end{split}
\end{equation}
for $x \in X$; and if $\beta = 0$, then
(\ref{e:transform resolvent 2}) holds for $x \in \overline{R (
A )}$.
\item[(iv)] For $x \in X$,
\begin{equation}\label{e:transform resolvent 3-0}
\begin{split}
A^\beta ( \lambda + A )^{ - \alpha - \beta } x = ( \alpha + \beta )
\int_\lambda^\mu A^\beta ( t + A )^{ - \alpha - \beta - 1 } x \, d t + A^\beta ( \mu + A )^{ - \alpha - \beta } x.
\end{split}
\end{equation}
\item[(v)] If $\RE \alpha > 0$, then
\begin{align}\label{e:transform resolvent 3}
A^\beta ( \lambda + A )^{ - \alpha - \beta } x = ( \alpha + \beta )
\int_\lambda^\infty A^\beta ( t + A )^{ - \alpha - \beta - 1 } x \, d t
\end{align}
for $x \in X$; and if $\alpha = 0 < \RE \beta$, then
(\ref{e:transform resolvent 3}) holds for $x \in \overline{D (
A )}$.
\end{itemize}
\end{lem}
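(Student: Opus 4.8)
The plan is to read all five identities off the fundamental theorem of calculus applied to suitable $X$-valued (or $\mathcal{L}(X)$-valued) functions of the scale parameter $t$, using the ergodic limits of Lemma~\ref{l:ergodicity} to evaluate the boundary contributions. The standing tools are the standard facts that $t\mapsto (t+A)^{-\gamma}$ is differentiable in $t$ on $(0,\infty)$ with $\frac{d}{dt}(t+A)^{-\gamma}=-\gamma(t+A)^{-\gamma-1}$, that $A^\beta$ commutes with each $(t+A)^{-\gamma}$, and the splitting $(t+A)^{-\gamma}=t(t+A)^{-\gamma-1}+A(t+A)^{-\gamma-1}$ coming from (\ref{e:identity decomposition}). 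For (ii) and (iv), every operator in sight, $A^\beta(t+A)^{-\alpha-\beta}$ and $A^{\beta+1}(t+A)^{-\alpha-\beta-1}$, is bounded (each is a bounded resolvent power times a bounded power of $A(t+A)^{-1}$, cf.\ Lemma~\ref{l:uniform boundedness compositions}), so it suffices to differentiate the $\mathcal{L}(X)$-valued maps $F(t):=t^{\alpha+\beta}A^\beta(t+A)^{-\alpha-\beta}$ and $G(t):=A^\beta(t+A)^{-\alpha-\beta}$. The product rule together with the splitting gives, after the two terms $\pm(\alpha+\beta)t^{\alpha+\beta}A^\beta(t+A)^{-\alpha-\beta-1}$ cancel, $F'(t)=(\alpha+\beta)t^{\alpha+\beta-1}A^{\beta+1}(t+A)^{-\alpha-\beta-1}$ and $G'(t)=-(\alpha+\beta)A^\beta(t+A)^{-\alpha-\beta-1}$; integrating $F'$ over $[\mu,\lambda]$, resp.\ $G'$ over $[\lambda,\mu]$, and applying to $x$ produces (\ref{e:transform resolvent}) and (\ref{e:transform resolvent 3-0}) for every $x\in X$.

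Parts (iii) and (v) then follow by pushing the free endpoint to the boundary and showing the free term drops out. In (\ref{e:transform resolvent}) send $\mu\to 0^+$ and factor $\mu^{\alpha+\beta}A^\beta(\mu+A)^{-\alpha-\beta}=\mu^{\beta}[A(\mu+A)^{-1}]^{\beta}[\mu(\mu+A)^{-1}]^{\alpha}$: the two operator factors are uniformly bounded in $\mu$ by Lemma~\ref{l:uniform boundedness compositions}, so if $\RE\beta>0$ the scalar $\mu^{\beta}\to 0$ kills the free term and (\ref{e:transform resolvent 2}) holds with the integral understood as $\lim_{\mu\to 0^+}\int_\mu^\lambda$, while if $\beta=0$ the free term is $[\mu(\mu+A)^{-1}]^{\alpha}x$, which by Lemma~\ref{l:ergodicity}(ii) tends to $0$ exactly when $x\in\overline{R(A^\alpha)}=\overline{R(A)}$. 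Symmetrically, in (\ref{e:transform resolvent 3-0}) send $\mu\to\infty$ and factor $A^\beta(\mu+A)^{-\alpha-\beta}=\mu^{-\alpha}[A(\mu+A)^{-1}]^{\beta}[\mu(\mu+A)^{-1}]^{\alpha}$; when $\RE\alpha>0$ the scalar $\mu^{-\alpha}\to 0$, and when $\alpha=0<\RE\beta$ the free term $[A(\mu+A)^{-1}]^{\beta}x\to 0$ iff $x\in\overline{D(A^\beta)}=\overline{D(A)}$ by Lemma~\ref{l:ergodicity}(i), which yields (\ref{e:transform resolvent 3}).

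For (i), where $\beta=m\in\mathbb{N}$, the idea is to exhibit an explicit $X$-valued antiderivative on $(\lambda,\infty)$. Put $c_k:=\Gamma(\alpha+k)/(\Gamma(\alpha)\Gamma(k+1))$ for $0\le k\le m-1$ and set $H(t):=\sum_{k=0}^{m-1}c_k\,t^{\alpha}A^k(t+A)^{-\alpha-k}x$. Differentiating each summand as above, and using $(t+A)^{-\alpha-k}=t(t+A)^{-\alpha-k-1}+A(t+A)^{-\alpha-k-1}$, one gets
\[ \frac{d}{dt}\big[t^{\alpha}A^k(t+A)^{-\alpha-k}\big]=-k\,t^{\alpha-1}A^k(t+A)^{-\alpha-k}+(\alpha+k)\,t^{\alpha-1}A^{k+1}(t+A)^{-\alpha-k-1}. \]
Re-indexing the raised-power terms in $H'(t)$ by $k\mapsto k-1$, every intermediate term cancels because the $c_k$ obey $k\,c_k=(\alpha+k-1)c_{k-1}$ (with $c_0=1$), leaving $H'(t)=\big(\Gamma(\alpha+m)/(\Gamma(\alpha)\Gamma(m))\big)t^{\alpha-1}A^m(t+A)^{-\alpha-m}x$. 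At $t=\lambda$, $H(\lambda)=\sum_{k=0}^{m-1}c_k[A(\lambda+A)^{-1}]^k\lambda^{\alpha}(\lambda+A)^{-\alpha}x$ is precisely the finite sum in (\ref{e:inhomogeneous Calderon}); and for $x\in\overline{D(A)}$ one checks $H(t)\to x$ as $t\to\infty$, since the $k=0$ term equals $t^{\alpha}(t+A)^{-\alpha}x\to x$ by Lemma~\ref{l:ergodicity}(i), while for $k\ge 1$ one writes $[A(t+A)^{-1}]^k t^{\alpha}(t+A)^{-\alpha}x=[A(t+A)^{-1}]^k\big(t^{\alpha}(t+A)^{-\alpha}x-x\big)+A^k(t+A)^{-k}x$ and both pieces vanish (uniform boundedness of $[A(t+A)^{-1}]^k$ together with Lemma~\ref{l:ergodicity}(i), using $\overline{D(A^k)}=\overline{D(A)}$). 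Integrating $H'$ over $[\lambda,\infty)$ and rearranging is (\ref{e:inhomogeneous Calderon}).

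The main obstacle is (i): one has to \emph{discover} the antiderivative $H$, i.e.\ that the only coefficients making $H'$ collapse to a single $A^m$-term are the Gamma ratios $c_k$, and then to establish $H(\infty)=x$ on all of $\overline{D(A)}$ (not merely on a dense subspace) via the ergodic theorem. A recurring secondary subtlety in (i), (iii) and (v) is that the displayed integrals over unbounded intervals (or down to $0$) need not converge absolutely, so they must be interpreted as limits of the honest finite-interval integrals supplied by (ii) and (iv); that these limits exist is exactly what the boundary-term analysis above delivers.
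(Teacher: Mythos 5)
Your proof is correct and takes essentially the same route as the paper's: the fundamental theorem of calculus yields (ii) and (iv), endpoint limits via Lemma~\ref{l:ergodicity} yield (iii) and (v), and the telescoping antiderivative $H(t)=\sum_{k=0}^{m-1}\frac{\Gamma(\alpha+k)}{\Gamma(\alpha)\Gamma(k+1)}[A(t+A)^{-1}]^k t^\alpha(t+A)^{-\alpha}x$ with the Gamma-ratio coefficients gives (i). You are somewhat more explicit than the paper in verifying the boundary behavior $H(t)\to x$ as $t\to\infty$ on $\overline{D(A)}$ (the paper simply cites Lemma~\ref{l:ergodicity}(i)), but the underlying ideas coincide.
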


\begin{proof}
By induction, it is routine to verify that  
\begin{align*}
& \frac{ d }{ d t } \Bigg\{ \sum_{ k = 0 }^{ m - 1 } \frac{ \Gamma ( \alpha + k ) }{ \Gamma ( \alpha ) \Gamma ( k + 1 ) } \big[ A ( t + A )^{ - 1 } \big]^k t^\alpha ( t + A )^{ - \alpha } x \Bigg\} \\
= {} & \frac{ \Gamma ( \alpha + m ) }{ \Gamma ( \alpha ) \Gamma ( m ) } t^{ \alpha - 1 } A^m ( t + A )^{ - \alpha - m } x.
\end{align*}
Since $t^\alpha A^m ( t + A )^{ - \alpha - m } x \rightarrow 0$ as $t \rightarrow \infty$ due to Lemma \ref{l:ergodicity} (i), integrating the last equality yields (\ref{e:inhomogeneous Calderon}), immediately.
The equalities (\ref{e:transform resolvent}) and (\ref{e:transform resolvent 3-0}) are direct consequences of the fundamental theorem of calculus. Applying $\mu \rightarrow 0$ to (\ref{e:transform resolvent}) yields (\ref{e:transform resolvent 2}), while applying $\mu \rightarrow \infty$ to (\ref{e:transform resolvent 3-0}) yields (\ref{e:transform resolvent 3}), immediately. The proof is complete.
\end{proof}

\begin{rem}\label{r:reproducing formulas}
The (inhomogeneous) reproducing formula (\ref{e:inhomogeneous Calderon}) above is essentially due to \cite[Lemma 2.1]{Kobayashi and Muramatu1992}, where (\ref{e:inhomogeneous Calderon}) is given for special values $\alpha = k \in \mathbb{N}$. Applying $\lambda \to 0$ to (\ref{e:inhomogeneous Calderon}) yields the following (homogeneous) reproducing formula:
\begin{align*}
x = \frac{ \Gamma ( \alpha + m ) }{ \Gamma ( \alpha ) \Gamma ( m ) } \int_0^\infty t^\alpha A^m ( t + A )^{ - \alpha - m } x \, \frac{ d t }{ t }, \quad x \in \overline{ D ( A ) } \cap \overline{ R ( A ) },
\end{align*}
where $\alpha \in \mathbb{C}_+$ and $m \in \mathbb{N}$. Also see (\ref{e:reproducing formula-ab}) below for an alternative version of the reproducing formula for fractional powers of operators.

\end{rem}


\smallskip


\subsection{Representations of fractional powers}


Given an operator $A$ on a Banach space $X$, a fundamental matter in the theory of fractional powers of operators is to provide explicit representations for the fracitonal power $A^\alpha$ and give exact description for the fractional domain $D ( A^\alpha )$. As mentioned above, the use of 
(\ref{e:representation of fractional powers-n}), i.e., a vector-valued version of the Euler integral (\ref{e:Euler integral}), for the fractional powers of operators goes back to A. V. Balakrishnan \cite{Balakrishnan1960}, while H. Komatsu \cite[Theorem 2.10]{Komatsu1967}  characterized the fractional domain $D ( A^\alpha )$ for the case in which $\overline{ D ( A ) } = X$ and $\alpha \in \mathbb{C}_+$ with $\RE \alpha \not\in \mathbb{N}$. 

Let $A$ be non-negative on $X$ and let $\alpha \in \mathbb{C}_+$ with $\RE \alpha < n \in \mathbb{N}$. Recall that (\ref{e:representation of fractional powers-n}) admits a more general version. More precisely, on the one hand, the regularity of $x$ in (\ref{e:representation of fractional powers-n}) can be weaken from $x \in D ( A^m )$ to $x \in X$ such that there exists the limit $\lim_{ N \rightarrow \infty } \int_0^{ N } \lambda^{ \alpha } A^n ( \lambda +
A )^{ - n } x \, \frac{ d \lambda }{ \lambda }$ (see \cite[Theorem 2.10]{Komatsu1967}).
On the other hand, (\ref{e:representation of fractional powers-n}) can be extended from integers $n > \RE \alpha$ to complex numbers $\beta$ satisfying $\RE \beta > \RE \alpha$, as shown in \cite[Corollary 5.1.13, (5.17)]{MartinezM2001}.
For the sake of the reader's convenience, we reformulate
\cite[Corollary 5.1.13]{MartinezM2001} as follows.


\begin{lem}\label{l:representation of fractional powers-beta-weak convergence}
Let $A$ be non-negative on $X$ and let $\alpha, \beta \in \mathbb{C}_+$ with $\RE \alpha < \RE \beta$. Fix $x \in X$ such that there exists the 
limit
\begin{align}\label{e:characterization of domains-KomatsuChen-weak limit}
\lim_{ N \rightarrow \infty } \frac{ \Gamma ( \beta ) }{ \Gamma ( \alpha )
\Gamma ( \beta - \alpha ) } \int_0^{ N } \lambda^{ \alpha } A^\beta ( \lambda +
A )^{ - \beta } x \, \frac{ d \lambda }{ \lambda } := y.
\end{align}
Then $x \in D ( A^\alpha )$ and $A^\alpha x = y$.
\end{lem}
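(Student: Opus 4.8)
The plan is to bootstrap from the known integer-order representation \eqref{e:representation of fractional powers-n}, which is already available for $x \in D(A^m)$ with $m$ the smallest integer exceeding $\RE\alpha$, together with the resolvent identity \eqref{e:transform resolvent} / the reproducing formula machinery of Lemma \ref{l:reproducing formulas}. First I would fix an integer $n$ with $\RE\beta < n$ and record the elementary scalar identity obtained by iterating the Euler integral \eqref{e:Euler integral}: the two kernels $\lambda^\alpha A^\beta(\lambda+A)^{-\beta}$ and $\lambda^\alpha A^n(\lambda+A)^{-n}$ produce, after a change of variables and Fubini, the same operator up to the Gamma-function normalization, \emph{provided} one already knows the relevant integrals converge. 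So the first substantive step is to relate the truncated integral $\int_0^N \lambda^\alpha A^\beta(\lambda+A)^{-\beta}x\,\frac{d\lambda}{\lambda}$ to $\int_0^N \lambda^\alpha A^n(\lambda+A)^{-n}x\,\frac{d\lambda}{\lambda}$ modulo boundary terms that are controlled by the uniform-boundedness estimates of Lemma \ref{l:uniform boundedness compositions}(i), in particular \eqref{e:uniform boundedness compositions-M} and \eqref{e:uniform boundedness compositions-L}.

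Next I would handle convergence. The hypothesis is only that the $\beta$-integral converges (weakly, in norm — as stated, as a strong limit $y$); I want to transfer this to convergence of the $n$-integral. Here the key trick is to write $A^\beta(\lambda+A)^{-\beta} = A^\beta(\lambda+A)^{-n}\cdot (\lambda+A)^{n-\beta}$ and integrate by parts in $\lambda$, or alternatively to use \eqref{e:transform resolvent} to express $\lambda^{\alpha}A^\beta(\lambda+A)^{-\beta}x$ as $(\text{const})\int_0^\lambda t^\alpha A^{\beta}(t+A)^{-\beta}\cdots$ plus a vanishing term, thereby revealing that partial sums of the two representations differ by quantities tending to a fixed limit. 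Once the $n$-integral is shown to converge, Komatsu's version of \eqref{e:representation of fractional powers-n} with weakened regularity (the $x \in X$ form cited from \cite[Theorem 2.10]{Komatsu1967} in the paragraph before the lemma) immediately gives $x \in D(A^\alpha)$ and identifies $A^\alpha x$ with the limit of the $n$-integral; it then remains only to check the two limits agree, which is the scalar Gamma-function bookkeeping from \eqref{e:Euler integral}.

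An alternative, possibly cleaner route avoids the integer case entirely: approximate $A$ by the bounded non-negative operators $A_\eps := A(1+\eps A)^{-1}$ (or $A(\eps+A)^{-1}\cdot\eps^{-1}$-type regularizations), for which $A_\eps^\alpha$ is given by a genuinely convergent Balakrishnan–Komatsu integral and the identity $A_\eps^\alpha x = \frac{\Gamma(\beta)}{\Gamma(\alpha)\Gamma(\beta-\alpha)}\int_0^\infty \lambda^\alpha A_\eps^\beta(\lambda+A_\eps)^{-\beta}x\,\frac{d\lambda}{\lambda}$ holds by the scalar identity applied inside the bounded functional calculus. Then one passes to the limit $\eps\to 0$, using that $A_\eps^\beta(\lambda+A_\eps)^{-\beta}\to A^\beta(\lambda+A)^{-\beta}$ strongly together with uniform bounds, and invokes closedness of $A^\alpha$. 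The subtlety in this route is justifying the interchange of $\lim_\eps$ with the improper integral near $\lambda=0$ and $\lambda=\infty$ uniformly — which is exactly where the hypothesis that the limit \eqref{e:characterization of domains-KomatsuChen-weak limit} exists is needed to supply a dominating/equicontinuity control on the tails.

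The main obstacle, in either approach, is the convergence transfer at the endpoints: the integrand $\lambda^\alpha A^\beta(\lambda+A)^{-\beta}x$ need not be integrable near $0$ or $\infty$ for general $x\in X$, so all the work is in showing that the \emph{improper} (principal-value-at-infinity) convergence assumed for the $\beta$-kernel forces the analogous convergence for the $n$-kernel, with matching value after the Gamma normalization. I expect this to reduce, after the change of variables $\lambda = st/(1+\cdots)$ suggested by the proof of Lemma \ref{l:reproducing formulas}, to an application of the dominated convergence theorem against the finite measure $\frac{\Gamma(n)}{\Gamma(\beta)\Gamma(n-\beta)}\mu^{\beta}(1+\mu)^{-n}\frac{d\mu}{\mu}$ coming from \eqref{e:Euler integral}, exactly as in the $x_t\to 0$ argument in the proof of Lemma \ref{l:ergodicity}(i).
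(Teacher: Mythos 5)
The paper does not actually supply a proof: Lemma~\ref{l:representation of fractional powers-beta-weak convergence} is stated as a reformulation of \cite[Corollary 5.1.13]{MartinezM2001}, so the comparison has to be against the standard argument behind that citation rather than against text in the paper. With that caveat: your proposal correctly isolates the real difficulty --- the convergence in \eqref{e:characterization of domains-KomatsuChen-weak limit} is a genuine improper-integral hypothesis, since $\lambda^\alpha A^\beta(\lambda+A)^{-\beta}x$ need not be Bochner integrable at $\lambda=\infty$ --- but neither of your two routes resolves it, and both stall at exactly the point you yourself flag.

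Concerning Route~1. Writing $A^\beta(\lambda+A)^{-\beta}=\big[A(\lambda+A)^{-1}\big]^\beta$, applying \eqref{e:integral-Balakrishnan} to the bounded operator $A(\lambda+A)^{-1}$, and then using Fubini (legitimate on a bounded $\lambda$-range) gives, for any integer $n>\RE\beta$,
\begin{align*}
\int_0^{N}\lambda^\alpha A^\beta(\lambda+A)^{-\beta}x\,\frac{d\lambda}{\lambda}
=\frac{\Gamma(n)}{\Gamma(\beta)\Gamma(n-\beta)}
\int_0^\infty \frac{\mu^{\beta-\alpha}}{(1+\mu)^{n-\alpha}}\,
F\!\Big(\tfrac{N\mu}{1+\mu}\Big)\,\frac{d\mu}{\mu},
\qquad
F(M):=\int_0^M \sigma^\alpha A^n(\sigma+A)^{-n}x\,\frac{d\sigma}{\sigma}.
\end{align*}
This is the identity you have in mind, but it runs in the \emph{wrong direction}: it expresses the truncated $\beta$-integral as a weighted average of $F$ over a spread of upper limits, so convergence of the left-hand side is a statement about Ces\`aro/Abel-type means of $F$. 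Deducing that $F(M)$ itself converges as $M\to\infty$ is a Tauberian problem, and none of the usual Tauberian side conditions (a priori boundedness of $F$, slow oscillation) hold for arbitrary $x\in X$. The dominated-convergence step you anticipate would require a majorant for $\|F(M)\|$, which is exactly what is missing; and there is no ``boundary term'' to integrate by parts against --- the mismatch between the two truncations is spread over the full $\mu$-integral.

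Concerning Route~2. For the bounded approximants $A_\epsilon$ you must swap $\lim_{\epsilon\to 0}$ with an improper (not absolutely convergent) integral in $\lambda$. The hypothesis controls the tail for $A$ alone and gives no uniformity in $\epsilon$; ``the hypothesis supplies a dominating/equicontinuity control'' is a hope, not an argument.

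The idea the proposal is missing is to regularize $x$, not $A$: apply a resolvent power $(\mu+A)^{-k}$, $k>\RE\beta$, to the truncated integrals. Since $(\mu+A)^{-k}$ is bounded and commutes with the integrand,
\begin{align*}
(\mu+A)^{-k}\int_0^N\lambda^\alpha A^\beta(\lambda+A)^{-\beta}x\,\frac{d\lambda}{\lambda}
=\int_0^N\lambda^\alpha A^\beta(\lambda+A)^{-\beta}(\mu+A)^{-k}x\,\frac{d\lambda}{\lambda}.
\end{align*}
On the right, splitting $(\mu+A)^{-k}=(\mu+A)^{-\beta}(\mu+A)^{-(k-\beta)}$ and using $\|(\lambda+A)^{-\beta}\|\lesssim\lambda^{-\RE\beta}$ together with \eqref{e:uniform boundedness compositions-L} shows the integrand is $O(\lambda^{\RE\alpha-\RE\beta})$ as $\lambda\to\infty$, hence absolutely integrable against $d\lambda/\lambda$ since $\RE\alpha<\RE\beta$. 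Because $(\mu+A)^{-k}x\in D(A^k)$, the $\beta$-kernel representation for such regular vectors (obtainable from \eqref{e:representation of fractional powers-n} by the very Fubini computation above, now fully justified by absolute convergence) identifies the limit of the right-hand side with $\tfrac{\Gamma(\alpha)\Gamma(\beta-\alpha)}{\Gamma(\beta)}\,A^\alpha(\mu+A)^{-k}x$. Letting $N\to\infty$ on the left using the hypothesis yields $A^\alpha(\mu+A)^{-k}x=(\mu+A)^{-k}y$, and from this $x\in D(A^\alpha)$ with $A^\alpha x=y$ follows by the definition of $A^\alpha$ as a closed extension commuting with resolvents (the same regularization mechanism as in \eqref{e:imaginary powers}; see \cite[Section 5.1]{MartinezM2001}). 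This resolvent-regularization of $x$ is the step that closes the gap; the rest is bookkeeping.
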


In particular, the limit (\ref{e:characterization of domains-KomatsuChen-weak limit}) exists for $x \in X$ satisfying 
\begin{align}\label{e:representation of fractional powers-beta-absolute convergence} 
\int_0^\infty \big\| \lambda^\alpha A^\beta ( \lambda + A )^{ - \beta } x \big\| \frac{ d \lambda }{ \lambda } < \infty.
\end{align}
Thus, we have the following consequence of Lemma \ref{l:representation of fractional powers-beta-weak convergence}.


\begin{cor}\label{c:representation of fractional powers-beta-absolute convergence}
Let $A$ be non-negative on $X$ and let $\alpha, \beta \in \mathbb{C}_+$ with $\RE \alpha < \RE \beta$. Fix $x \in X$ such that (\ref{e:representation of fractional powers-beta-absolute convergence}) holds.
Then $x \in D ( A^\alpha )$ and
\begin{align}\label{e:representation of fractional powers-beta-domain of A-alpha+e} 
A^\alpha x = \frac{ \Gamma ( \beta ) }{ \Gamma ( \alpha ) \Gamma ( \beta - \alpha ) } \int_0^\infty  \lambda^\alpha A^\beta ( \lambda + A )^{ - \beta } x \, \frac{ d \lambda }{ \lambda }.
\end{align}
\end{cor}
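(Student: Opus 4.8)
The plan is to deduce the statement immediately from Lemma~\ref{l:representation of fractional powers-beta-weak convergence}, by checking that the absolute convergence hypothesis~(\ref{e:representation of fractional powers-beta-absolute convergence}) guarantees that the defining limit~(\ref{e:characterization of domains-KomatsuChen-weak limit}) exists and coincides with the Bochner integral over $(0,\infty)$.

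First I would record that for fixed $x \in X$ the $X$-valued map $\lambda \mapsto \lambda^{\alpha} A^{\beta}(\lambda+A)^{-\beta}x$ is continuous on $(0,\infty)$: the operators $A^{\beta}(\lambda+A)^{-\beta}$ are uniformly bounded by Lemma~\ref{l:uniform boundedness compositions} and depend continuously on $\lambda \in (0,\infty)$, so the map is in particular strongly measurable. Together with the hypothesis $\int_{0}^{\infty}\big\|\lambda^{\alpha}A^{\beta}(\lambda+A)^{-\beta}x\big\|\,\frac{d\lambda}{\lambda}<\infty$ this says the integrand is Bochner integrable on $(0,\infty)$ with respect to $d\lambda/\lambda$; consequently the truncated integrals $\int_{0}^{N}\lambda^{\alpha}A^{\beta}(\lambda+A)^{-\beta}x\,\frac{d\lambda}{\lambda}$ converge in $X$, as $N \to \infty$, to $\int_{0}^{\infty}\lambda^{\alpha}A^{\beta}(\lambda+A)^{-\beta}x\,\frac{d\lambda}{\lambda}$. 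Multiplying by the fixed nonzero scalar $\Gamma(\beta)/\big(\Gamma(\alpha)\Gamma(\beta-\alpha)\big)$ --- which is well defined since $\RE\alpha>0$, $\RE\beta>0$ and $\RE(\beta-\alpha)>0$ --- the limit~(\ref{e:characterization of domains-KomatsuChen-weak limit}) exists and equals the right-hand side of~(\ref{e:representation of fractional powers-beta-domain of A-alpha+e}). Lemma~\ref{l:representation of fractional powers-beta-weak convergence} then gives $x \in D(A^{\alpha})$ and $A^{\alpha}x$ equal to that value, which is exactly the assertion.

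There is essentially no obstacle here: the real content lies in Lemma~\ref{l:representation of fractional powers-beta-weak convergence} (i.e.\ \cite[Corollary 5.1.13]{MartinezM2001}), and this corollary merely isolates its most convenient applicable hypothesis. The only thing one genuinely uses is the elementary fact that an absolutely convergent (Bochner) improper integral converges, so that the ``weak'' limit requirement of the lemma is met. One could alternatively give a self-contained proof by splitting $\int_{0}^{\infty}=\int_{0}^{1}+\int_{1}^{\infty}$ and bounding the integrand separately near $0$ and near $\infty$ via the estimates of Lemma~\ref{l:uniform boundedness compositions}, but invoking the lemma is cleaner.
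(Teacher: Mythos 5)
Your proof is correct and takes the same route as the paper: the paper simply observes, immediately before stating this corollary, that the absolute-convergence hypothesis~(\ref{e:representation of fractional powers-beta-absolute convergence}) forces the limit in~(\ref{e:characterization of domains-KomatsuChen-weak limit}) to exist, so that Lemma~\ref{l:representation of fractional powers-beta-weak convergence} applies directly. Your write-up just makes the Bochner-integrability reasoning (strong measurability plus $L^1$-bound, hence convergence of the truncated integrals) explicit, which is a reasonable amount of added detail but no change in substance.
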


By use of (\ref{e:non-negative constant}) and (\ref{e:non-negative constant 2}), it is easy to verify that (\ref{e:representation of fractional powers-beta-absolute convergence}) is satisfied whenever $x \in D ( A^{ \alpha + \epsilon} )$. Thus, we have the following simple consequence of Corollary \ref{c:representation of fractional powers-beta-absolute convergence}. Also see \cite[Lemma 3.1]{ChenLi2016}.

\begin{cor}\label{c:representation of fractional powers-beta-domain of A-alpha+e}
Let $A$ be non-negative on $X$, and let $\alpha, \beta \in \mathbb{C}_+$ with $\RE \alpha < \RE \beta$. Then (\ref{e:representation of fractional powers-beta-domain of A-alpha+e}) holds for $x \in D ( A^{ \alpha + \epsilon} )$ with $\epsilon > 0$. 
\end{cor}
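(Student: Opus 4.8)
The plan is to deduce Corollary \ref{c:representation of fractional powers-beta-domain of A-alpha+e} from Corollary \ref{c:representation of fractional powers-beta-absolute convergence}: it suffices to show that every $x \in D(A^{\alpha+\epsilon})$ satisfies the absolute convergence condition (\ref{e:representation of fractional powers-beta-absolute convergence}), i.e.
\begin{align*}
\int_0^\infty \big\| \lambda^\alpha A^\beta ( \lambda + A )^{ - \beta } x \big\| \, \frac{ d \lambda }{ \lambda } < \infty.
\end{align*}
First I would split the integral at $\lambda = 1$ into $\int_0^1$ and $\int_1^\infty$ and estimate each piece separately, because the behaviour of the integrand as $\lambda \to 0$ and as $\lambda \to \infty$ is governed by different powers. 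The key device is to absorb an extra factor of $A^\epsilon$ (or rather $A^{\alpha+\epsilon}$) onto $x$: write $A^\beta(\lambda+A)^{-\beta}x = \big[A^{\beta-\alpha-\epsilon}(\lambda+A)^{-(\beta-\alpha-\epsilon)}\big]\big[(\lambda+A)^{-(\alpha+\epsilon)}A^{\alpha+\epsilon}x\big]$ when $\RE\beta > \RE\alpha + \epsilon$ (shrinking $\epsilon$ if necessary, which is harmless since $D(A^{\alpha+\epsilon}) \subseteq D(A^{\alpha+\epsilon'})$ for $\epsilon' \le \epsilon$), and similarly one wants to peel off a factor $\lambda^{-\epsilon}A^\epsilon$ at the small-$\lambda$ end.

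For the tail $\int_1^\infty$, the idea is that the large-$\lambda$ decay comes from the factor $\lambda^{\alpha}$ being beaten by the resolvent; more precisely I would factor
\begin{align*}
\lambda^\alpha A^\beta(\lambda+A)^{-\beta}x = \lambda^{-\epsilon}\,\big[\lambda^{\alpha+\epsilon}A^{\alpha+\epsilon}(\lambda+A)^{-(\alpha+\epsilon)}\big]\,\big[A^{\beta-\alpha-\epsilon}(\lambda+A)^{-(\beta-\alpha-\epsilon)}\big]\,A^{-\epsilon}\!\cdots
\end{align*}
— but cleaner is simply to use that $x = A^{-(\alpha+\epsilon)}z$ for some $z$ with $z = A^{\alpha+\epsilon}x$ (valid when $A$ is injective; otherwise decompose $X$ or argue that $A^\beta(\lambda+A)^{-\beta}$ kills $\mathrm{Ker}(A)$ so one may assume injectivity on the relevant part). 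Then $\lambda^\alpha A^\beta(\lambda+A)^{-\beta}x = \lambda^\alpha A^{\beta-\alpha-\epsilon}(\lambda+A)^{-\beta}z$ and the norm is $\lesssim \lambda^{\RE\alpha}\cdot \|A^{\beta-\alpha-\epsilon}(\lambda+A)^{-(\beta-\alpha-\epsilon)}\|\cdot\|(\lambda+A)^{-(\alpha+\epsilon)}\|\cdot\|z\|$. The first operator-norm factor is uniformly bounded by (\ref{e:uniform boundedness compositions-L})$^*$ of Lemma \ref{l:uniform boundedness compositions}, and $\|(\lambda+A)^{-(\alpha+\epsilon)}\| = \lambda^{-\RE(\alpha+\epsilon)}\|\lambda^{\alpha+\epsilon}(\lambda+A)^{-(\alpha+\epsilon)}\| \lesssim \lambda^{-\RE\alpha-\epsilon}$ by (\ref{e:uniform boundedness compositions-M})$^*$, giving an integrand $\lesssim \lambda^{-\epsilon}$ near infinity, which is integrable against $d\lambda/\lambda$. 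For the piece $\int_0^1$, I would instead keep the factor $A^\beta(\lambda+A)^{-\beta}$ on the $x$ side and extract $\lambda^{-\epsilon}$ worth of smoothing the other way: write $\lambda^\alpha A^\beta(\lambda+A)^{-\beta}x = \lambda^{\RE\alpha}\cdot\lambda^{-\RE\alpha}\lambda^\alpha \cdot A^\beta(\lambda+A)^{-\beta}x$; using $\|A^\beta(\lambda+A)^{-\beta}\|$ uniformly bounded (Lemma \ref{l:uniform boundedness compositions} again) already gives integrand $\lesssim \lambda^{\RE\alpha - 1}$, integrable near $0$ since $\RE\alpha > 0$. So actually the small-$\lambda$ end needs no smoothing at all — only $\RE\alpha>0$ — and the extra $D(A^{\alpha+\epsilon})$ hypothesis is exactly what is needed at the large-$\lambda$ end.

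Putting the two estimates together yields (\ref{e:representation of fractional powers-beta-absolute convergence}), and Corollary \ref{c:representation of fractional powers-beta-absolute convergence} then gives both $x \in D(A^\alpha)$ and the identity (\ref{e:representation of fractional powers-beta-domain of A-alpha+e}). The main obstacle I anticipate is the bookkeeping around injectivity: the factorization $x = A^{-(\alpha+\epsilon)}(A^{\alpha+\epsilon}x)$ and the use of laws of exponents like $A^\beta(\lambda+A)^{-\beta} = A^{\beta-\alpha-\epsilon}(\lambda+A)^{-(\beta-\alpha-\epsilon)}\cdot A^{\alpha+\epsilon}(\lambda+A)^{-(\alpha+\epsilon)}$ require care when $A$ is not injective, so one should either invoke the commutation/multiplicativity facts (\ref{e:additivity of fractional powers})–(\ref{e:multiplicativity of fractional powers}) carefully with domains tracked, or avoid negative powers entirely by estimating $\|A^\beta(\lambda+A)^{-\beta}x\|$ directly via $\|A^\beta(\lambda+A)^{-\beta}x\| = \|A^{\beta-\alpha-\epsilon}(\lambda+A)^{-(\beta-\alpha-\epsilon)}\,(\lambda+A)^{-(\alpha+\epsilon)}\,A^{\alpha+\epsilon}x\|$, which is legitimate for $x\in D(A^{\alpha+\epsilon})$ without any injectivity, since $A^{\alpha+\epsilon}x$ is a genuine vector and the remaining operators are bounded. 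I would adopt this last route to keep the argument short and robust; the rest is the two routine one-line integral bounds using the starred estimates of Lemma \ref{l:uniform boundedness compositions}.
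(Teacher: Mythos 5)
Your proposal is correct and matches what the paper has in mind: the paper simply asserts that "by use of (\ref{e:non-negative constant}) and (\ref{e:non-negative constant 2}) it is easy to verify that (\ref{e:representation of fractional powers-beta-absolute convergence}) is satisfied whenever $x \in D(A^{\alpha+\epsilon})$," then invokes Corollary~\ref{c:representation of fractional powers-beta-absolute convergence}, and your split at $\lambda=1$ with the estimate $\|A^\beta(\lambda+A)^{-\beta}x\| = \|A^{\beta-\alpha-\epsilon}(\lambda+A)^{-(\beta-\alpha-\epsilon)}(\lambda+A)^{-(\alpha+\epsilon)}A^{\alpha+\epsilon}x\|$ (after shrinking $\epsilon$ so that $\RE\beta>\RE\alpha+\epsilon$, and commuting the resolvent past $A^{\alpha+\epsilon}$ rather than introducing negative powers) is precisely the routine verification that the paper leaves to the reader. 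Your final formulation sidesteps the injectivity issues you flag correctly and gives the integrand $\lesssim \lambda^{\RE\alpha-1}$ near $0$ and $\lesssim \lambda^{-\epsilon-1}$ near $\infty$, both integrable, so the argument is complete.
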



Compared with Lemma \ref{l:representation of fractional powers-beta-weak convergence} above, the following lemma reads that the limit (\ref{e:characterization of domains-KomatsuChen-weak limit}) exists whenever $x \in D ( A^\alpha )$ with $\alpha \in \mathbb{C}_+$, possibly except for the case in which $\RE \alpha \in \mathbb{N}$ and $\IM \alpha \neq 0$, as shown in
\cite[Theorem 3.5, (i)$\Rightarrow$(iii)]{ChenLi2016}. 

\begin{lem}\label{l:characterization of domains-KomatsuChen}
Let $A$ be non-negative on $X$ and let $\alpha, \beta \in \mathbb{C}_+$ with $\RE \alpha < \RE \beta$. If $x \in D ( A^\alpha )$, then there exists the limit (\ref{e:characterization of domains-KomatsuChen-weak limit})
and $A^\alpha x = y$, possibly except for the case in which $\RE \alpha \in \mathbb{N}$ and $\IM \alpha \neq 0$. 
\end{lem}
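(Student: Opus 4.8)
The plan is to reduce the existence of the limit in (\ref{e:characterization of domains-KomatsuChen-weak limit}) to the case of an \emph{integer} resolvent power, where it is classical, and then to obtain $y = A^\alpha x$ from Lemma~\ref{l:representation of fractional powers-beta-weak convergence}. One cannot simply invoke Corollary~\ref{c:representation of fractional powers-beta-absolute convergence}, because for $x$ merely in $D ( A^\alpha )$ the integrand $\lambda^\alpha A^\beta ( \lambda + A )^{ - \beta } x$ is only $O ( 1 )$ in norm as $\lambda \to \infty$ and need not be absolutely integrable against $d \lambda / \lambda$, so the convergence is genuinely conditional. Since $x \in D ( A^\alpha ) \subseteq \overline{ D ( A^\alpha ) } = \overline{ D ( A ) }$ by Lemma~\ref{l:ergodicity}(i), and the operators $\lambda^\alpha A^\beta ( \lambda + A )^{ - \beta }$ are bounded functions of $A$ leaving $\overline{ D ( A ) }$ invariant, I would first pass to the part of $A$ in $\overline{ D ( A ) }$ and so assume $A$ densely defined (this changes neither the fractional powers of $x$ nor the integral). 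Fix an integer $n > \RE \beta$ (hence $n > \RE \alpha$). The algebraic core is the subordination identity
\begin{align*}
A^\beta ( \lambda + A )^{ - \beta } x = \frac{ \Gamma ( n ) }{ \Gamma ( \beta ) \Gamma ( n - \beta ) } \int_0^\lambda \frac{ u^{ \beta - 1 } ( \lambda - u )^{ n - \beta - 1 } }{ \lambda^{ n - 1 } } \, A^n ( u + A )^{ - n } x \, d u , \qquad \lambda > 0, \ x \in X,
\end{align*}
obtained by applying the Balakrishnan representation (\ref{e:integral-Balakrishnan}) to the bounded non-negative operator $B := A ( \lambda + A )^{ - 1 }$ (so that $B^\beta = A^\beta ( \lambda + A )^{ - \beta }$), using $B^n ( s + B )^{ - n } = ( 1 + s )^{ - n } A^n \big( \tfrac{ s \lambda }{ 1 + s } + A \big)^{ - n }$ and the substitution $u = s \lambda / ( 1 + s )$.

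I would then insert this identity into $\int_0^N \lambda^\alpha A^\beta ( \lambda + A )^{ - \beta } x \, \tfrac{ d \lambda }{ \lambda }$. The resulting double integral over $\{ 0 < u < \lambda < N \}$ has integrand of norm at most $C \| x \| \, \lambda^{ \RE \alpha - n } u^{ \RE \beta - 1 } ( \lambda - u )^{ \RE ( n - \beta ) - 1 }$, which is integrable on that finite region because $\RE \alpha$, $\RE \beta$ and $\RE ( n - \beta )$ are positive; hence Fubini applies. Interchanging the order of integration and setting $v = \lambda / u$ in the inner integral yields
\begin{align*}
\int_0^N \lambda^\alpha A^\beta ( \lambda + A )^{ - \beta } x \, \frac{ d \lambda }{ \lambda } = \frac{ \Gamma ( n ) }{ \Gamma ( \beta ) \Gamma ( n - \beta ) } \int_0^N u^{ \alpha - 1 } A^n ( u + A )^{ - n } x \left( \int_1^{ N / u } v^{ \alpha - n } ( v - 1 )^{ n - \beta - 1 } \, d v \right) d u .
\end{align*}
Writing $F ( t ) := \int_0^t u^{ \alpha - 1 } A^n ( u + A )^{ - n } x \, d u$ — an absolutely convergent Bochner integral with $F ( 0^+ ) = 0$, since $\| u^{ \alpha - 1 } A^n ( u + A )^{ - n } x \| \lesssim u^{ \RE \alpha - 1 } \| x \|$ near $0$ — an integration by parts in $u$ (the boundary terms vanish, as $\int_1^1 ( \cdots ) = 0$ at $u = N$ and $F ( u ) \to 0$ as $u \to 0^+$) followed by the substitution $v = N / u$ turns the right-hand side into $\tfrac{ \Gamma ( n ) }{ \Gamma ( \beta ) \Gamma ( n - \beta ) } \int_1^\infty F ( N / v ) \, v^{ \alpha - n } ( v - 1 )^{ n - \beta - 1 } \, d v$.

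The proof then comes down to the behaviour of $F ( N )$ as $N \to \infty$, which is the classical input: for $x \in D ( A^\alpha )$ the improper Balakrishnan integral with integer resolvent power $n$ converges, $\lim_{ N \to \infty } F ( N )$ existing and equalling $\tfrac{ \Gamma ( \alpha ) \Gamma ( n - \alpha ) }{ \Gamma ( n ) } A^\alpha x$. By (\ref{e:representation of fractional powers-n}) this holds whenever $x \in D ( A^m )$ with $m = \lceil \RE \alpha \rceil$; it extends to every $x \in D ( A^\alpha )$ by \cite[Theorem 2.10]{Komatsu1967} when $\RE \alpha \notin \mathbb{N}$, and it also holds when $\alpha = k \in \mathbb{N}$ for all $x \in D ( A^k )$ (for $n = k + 1$ a telescoping computation using $\tfrac{ d }{ d u } \big[ u^k ( u + A )^{ - k } \big] = k \big[ u^{ k - 1 } ( u + A )^{ - k } - u^k ( u + A )^{ - k - 1 } \big]$ and $\overline{ D ( A ) } = X$ gives $F ( N ) = \tfrac{1}{k} N^k ( N + A )^{ - k } A^k x \to \tfrac{1}{k} A^k x$, and the general case reduces to this); the only case left out is $\RE \alpha \in \mathbb{N}$ with $\IM \alpha \neq 0$, where $D ( A^\alpha )$ need not sit inside $D ( A^{ \RE \alpha + 1 } )$, and this is exactly the source of the proviso in the statement. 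Granting this, $F ( N / v ) \to F_\infty := \lim_N F ( N )$ for each $v > 1$, while $\sup_{ t > 0 } \| F ( t ) \| < \infty$ and $\int_1^\infty v^{ \RE \alpha - n } ( v - 1 )^{ \RE ( n - \beta ) - 1 } \, d v < \infty$ because $\RE \alpha < \RE \beta < n$; dominated convergence then gives that $\lim_{ N \to \infty } \int_0^N \lambda^\alpha A^\beta ( \lambda + A )^{ - \beta } x \, \tfrac{ d \lambda }{ \lambda }$ exists, so the limit (\ref{e:characterization of domains-KomatsuChen-weak limit}) exists, and Lemma~\ref{l:representation of fractional powers-beta-weak convergence} identifies it with $A^\alpha x$. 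The main obstacle is precisely that conditional character of the convergence: it cannot be read off from an absolute bound and must be extracted through the cancellation exposed by the Fubini-then-parts reduction to $F$; a secondary point is keeping careful track of which $\alpha$ the classical fact about $F$ actually covers, which is what dictates the exclusion.
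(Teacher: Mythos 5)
Your argument is correct, and it is a genuinely different route from the paper's: the paper supplies no proof at all, but simply cites \cite[Theorem~3.5, (i)$\Rightarrow$(iii)]{ChenLi2016} (see Remark~\ref{r:mystery of imaginary powers}, where the authors even explain that the cited proof only covers $\RE\alpha\notin\mathbb{N}$ or $\alpha\in\mathbb{N}$, which is exactly the source of the proviso). You instead give a direct argument. Your subordination identity for $A^\beta(\lambda+A)^{-\beta}$ in terms of integer resolvent powers $A^n(u+A)^{-n}$ checks out (apply \eqref{e:integral-Balakrishnan} to the bounded non-negative operator $A(\lambda+A)^{-1}$ and substitute $u=s\lambda/(1+s)$); the absolute bound $\lambda^{\RE\alpha-n}u^{\RE\beta-1}(\lambda-u)^{\RE(n-\beta)-1}$ on $\{0<u<\lambda<N\}$ legitimizes Fubini; the integration by parts is clean because $G(N)=0$ and $F(u)G(u)\to 0$ at $u\to 0^+$; and the dominated-convergence step works since $\int_1^\infty v^{\RE\alpha-n}(v-1)^{n-\RE\beta-1}\,dv<\infty$ and $\sup_t\|F(t)\|<\infty$ once $F$ has a limit. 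The argument cleanly isolates the conditional cancellation in the improper integral and pushes all of the hardness onto the single classical fact $F(N)\to\tfrac{\Gamma(\alpha)\Gamma(n-\alpha)}{\Gamma(n)}A^\alpha x$, which is precisely where the exclusion $\RE\alpha\in\mathbb{N}$, $\IM\alpha\neq 0$ enters — a much more transparent account than the paper offers. One small point you leave compressed in the phrase ``the general case reduces to this'': you fix $n>\RE\beta$ from the outset, but the two sources you invoke for the convergence of $F$ — \cite[Theorem~2.10]{Komatsu1967} for $\RE\alpha\notin\mathbb{N}$, and your telescoping identity $F(N)=\tfrac{1}{k}N^k(N+A)^{-k}A^kx$ which uses $n=k+1$ — need not use that same $n$. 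To close this you want a short independence-of-$n$ step: integration by parts in $\lambda$ with $\frac{d}{d\lambda}(\lambda+A)^{-(n-1)}=-(n-1)(\lambda+A)^{-n}$ relates $F_n$ to $F_{n-1}$ up to the boundary term $\tfrac{1}{n-1}N^\alpha A^{n-1}(N+A)^{-(n-1)}x$, which for $x\in D(A^\alpha)$ factors through $N^\alpha(N+A)^{-\alpha}A^{n-1-\alpha}(N+A)^{-(n-1-\alpha)}A^\alpha x\to 0$ since $A^\alpha x\in\overline{D(A)}$. Also note that in your telescoping identity the lower boundary $g(0^+)=\lim_{\epsilon\to 0}\epsilon^k(\epsilon+A)^{-k}A^kx$ vanishes because $A^kx\in R(A)$ and Lemma~\ref{l:ergodicity}(ii) applies; this is needed, and worth saying explicitly. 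With those two sentences added your proof is complete and, unlike the paper's citation, self-contained modulo \cite[Theorem~2.10]{Komatsu1967}.
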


\begin{rem}\label{r:mystery of imaginary powers}
Here we say a few more words on Lemma \ref{l:characterization of domains-KomatsuChen}. Recall that, as for the three statements (i), (ii) and (iii) given in \cite[Theorem 3.5]{ChenLi2016}, the equivalence (i)$\Leftrightarrow$(ii)
and the implication (iii)$\Rightarrow$(i) both hold,
while the implication (i)$\Rightarrow$(iii) was merely verified for the case in which $\RE \alpha \not\in \mathbb{N}$ or $\alpha = n \in \mathbb{N}$ (since, in Step II of the proof given in \cite[Theorem 3.5]{ChenLi2016}, the use of integration by parts is possible merely in the case $\RE \alpha = 1$ with $\IM \alpha = 0$, i.e., $\alpha = 1$). Thus, whether Lemma \ref{l:characterization of domains-KomatsuChen} above holds in the case $\alpha = n + i t$ with $n \in \mathbb{N}$ and $ 0 \neq t \in \mathbb{R}$ is still an {\bf open problem}. We also refer the reader to \cite[Theorem 2.10]{Komatsu1967} and \cite[Theorem 6.1.3 and Remark 6.1.1]{MartinezM2001} for more information on this topic.
\end{rem}

Analogous to (\ref{e:representation of fractional powers--n}), if $A$ is injective, replacing $A$ by $A^{ - 1 }$ in (\ref{e:representation of fractional powers-beta-domain of A-alpha+e}) yields
\begin{align}\label{e:representation of negative powers-beta}
A^{ - \alpha } x = \frac{ \Gamma ( \beta ) }{ \Gamma ( \alpha )
\Gamma ( \beta - \alpha ) } \int_0^\infty \lambda^{ - \alpha } \lambda^\beta ( \lambda +
A )^{ - \beta } x \, \frac{ d \lambda }{ \lambda }, \quad x \in R ( A^{ \alpha + \epsilon } ).
\end{align}
In particular, (\ref{e:representation of negative powers-beta}) holds for each $x \in X$ if $A$ is positive.

The next result
is new, which states that both (\ref{e:representation of fractional powers-beta-domain of A-alpha+e}) of positive powers $A^\alpha$ and (\ref{e:representation of negative powers-beta}) of negative powers $A^{ - \alpha }$ can be reformulated as a unified one.

\begin{prop}\label{p:representation of fractional powers-all}
Let $A$ be non-negative on $X$ and let $z \in \mathbb{C}$ and $\alpha, \beta \in \mathbb{C}_+^*$ such that $- \RE \alpha < \RE z < \RE \beta$. The following statements hold. 
\begin{itemize}
\item [(i)] If $\RE z > 0$, then (\ref{e:representation of fractional powers-all}) holds for $x \in D ( A^{ z + \epsilon } )$ with $\epsilon > 0$. 
\item [(ii)] If $\RE z < 0$, then (\ref{e:representation of fractional powers-all}) holds for $x \in R ( A^{ - z + \epsilon } )$ with $\epsilon > 0$ whenever $A$ is injective.
\item [(iii)] If $\RE z = 0$ then (\ref{e:representation of fractional powers-all}) holds for $x \in R ( A^{ - z + \epsilon } ) \cap D ( A^{ z + \epsilon } )$ with $\epsilon > 0$ whenever $A$ is injective.
\end{itemize} 
	
\end{prop}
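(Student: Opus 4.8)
The plan is to reduce all three cases to the already-established formulas for positive powers, namely Corollary \ref{c:representation of fractional powers-beta-domain of A-alpha+e} and its inverse-operator counterpart \eqref{e:representation of negative powers-beta}, by a bookkeeping argument on the Gamma-function prefactor. The target identity \eqref{e:representation of fractional powers-all} has the shape
\begin{align*}
A^z x = \frac{ \Gamma ( \alpha + \beta ) }{ \Gamma ( \alpha + z ) \Gamma ( \beta - z ) } \int_0^\infty \lambda^{ z + \alpha } A^\beta ( \lambda + A )^{ - \alpha - \beta } x \, \frac{ d \lambda }{ \lambda },
\end{align*}
and the point is that the substitution $z + \alpha =: \alpha'$, $\alpha + \beta =: \beta'$ turns the integral into $\int_0^\infty \lambda^{\alpha'} A^{\beta'}(\lambda+A)^{-\beta'}x\,\frac{d\lambda}{\lambda}$ with prefactor $\Gamma(\beta')/(\Gamma(\alpha')\Gamma(\beta'-\alpha'))$, because $\beta' - \alpha' = \beta - z$ and the constraint $-\RE\alpha < \RE z < \RE\beta$ becomes exactly $0 < \RE\alpha' < \RE\beta'$.

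\textbf{Case (i), $\RE z > 0$.} Here $\RE\alpha' = \RE z + \RE\alpha > 0$, so $\alpha', \beta' \in \mathbb{C}_+$ with $\RE\alpha' < \RE\beta'$, and Corollary \ref{c:representation of fractional powers-beta-domain of A-alpha+e} gives, for any $w$ with $w \in D(A^{\alpha' + \epsilon})$,
\begin{align*}
A^{\alpha'} w = \frac{\Gamma(\beta')}{\Gamma(\alpha')\Gamma(\beta'-\alpha')} \int_0^\infty \lambda^{\alpha'} A^{\beta'}(\lambda+A)^{-\beta'} w \, \frac{d\lambda}{\lambda}.
\end{align*}
I would set this up so the right-hand side reproduces the claimed integral and then identify $A^{\alpha'} = A^{z+\alpha}$ with $A^z$ after peeling off $A^\alpha$. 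The cleanest route: start from $x \in D(A^{z+\epsilon})$, note $A^\alpha x \in D(A^{z})$ (using \eqref{e:additivity of fractional powers} / \eqref{e:additivity of fractional powers-complex} and the moment-type mapping properties), and apply the corollary with $w = x$ but with the upstairs power being $\alpha' = z + \alpha$: then $A^{z+\alpha}x = A^z A^\alpha x$ by additivity, while the integrand $\lambda^{z+\alpha}A^\beta(\lambda+A)^{-\alpha-\beta} = \lambda^{z+\alpha} A^{\beta'-\alpha}\cdot A^\alpha (\lambda+A)^{-\beta'}$ — here I must be slightly careful, since what I really want is to keep $A^\beta(\lambda+A)^{-\alpha-\beta}$ intact rather than $A^{\beta'}(\lambda+A)^{-\beta'} = A^{\alpha+\beta}(\lambda+A)^{-\alpha-\beta}$. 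The discrepancy is a factor $A^\alpha$, and this is reconciled by applying the corollary to the vector $x$ directly with parameters $(\alpha', \beta') = (z+\alpha, \alpha+\beta)$, reading the conclusion as $A^{z+\alpha}x$ on the left, and then using $A^{z+\alpha}x = A^z(A^\alpha x)$ together with the fact that $A^\alpha$ commutes with and can be pulled through the resolvents inside the (absolutely convergent) integral — so the integral $\int \lambda^{z+\alpha}A^{\alpha+\beta}(\lambda+A)^{-\alpha-\beta}x\,\frac{d\lambda}{\lambda}$ equals $A^\alpha \int \lambda^{z+\alpha}A^\beta(\lambda+A)^{-\alpha-\beta}x\,\frac{d\lambda}{\lambda}$, and closedness of $A^\alpha$ lets me cancel it against the $A^\alpha$ sitting inside $A^{z+\alpha}x = A^z A^\alpha x$. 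One checks $x \in D(A^{z+\epsilon})$ forces absolute convergence of $\int\|\lambda^{z+\alpha}A^{\alpha+\beta}(\lambda+A)^{-\alpha-\beta}x\|\frac{d\lambda}{\lambda}$ via the estimates in Lemma \ref{l:uniform boundedness compositions} (splitting $\lambda \lessgtr 1$ and using $\RE z + \RE \alpha > 0$ at $0$ and $\RE z < \RE\beta$ at $\infty$).

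\textbf{Cases (ii) and (iii).} For $\RE z < 0$, apply case (i) — or rather \eqref{e:representation of negative powers-beta} — to $A^{-1}$ in place of $A$: replacing $A$ by $A^{-1}$ in the desired identity, using $(A^{-1})^{w} = A^{-w}$ and $(\lambda + A^{-1})^{-1} = \lambda^{-1}A(\lambda^{-1}+A)^{-1}$, and the change of variables $\lambda \mapsto 1/\lambda$, transforms the statement for $A^{-1}$ and exponent $-z$ into the statement for $A$ and exponent $z$; the hypothesis $x \in R(A^{-z+\epsilon})$ is exactly $x \in D((A^{-1})^{-z+\epsilon})$, so case (i) (with $z$ replaced by $-z > 0$) applies to $A^{-1}$. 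Case (iii), $\RE z = 0$, is handled by running either argument: $\alpha' = z + \alpha$ still has $\RE\alpha' = \RE\alpha > 0$, so Corollary \ref{c:representation of fractional powers-beta-domain of A-alpha+e} applies verbatim once absolute convergence is in hand, and the hypothesis $x \in R(A^{-z+\epsilon}) \cap D(A^{z+\epsilon})$ (with $A$ injective, so $A^{it}$ is well-defined by \eqref{e:imaginary powers}) secures convergence at both ends — decay like $\lambda^{\RE\alpha}$ near $0$ from $x \in D(A^{z+\epsilon})$ and like $\lambda^{-(\RE\beta)}$ near $\infty$ from $x \in R(A^{-z+\epsilon})$, matching the two-sided summability of $\lambda^{z+\alpha}A^\beta(\lambda+A)^{-\alpha-\beta}$.

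\textbf{Main obstacle.} The genuine friction is in case (iii) (and at the boundary of the parameter ranges generally): when $\RE z = 0$ the fractional power $A^z = A^{it}$ is defined only through the closed-extension recipe \eqref{e:imaginary powers}, so I cannot freely manipulate $A^{z+\alpha}x = A^z A^\alpha x$ as an ordinary product — I expect to need the law-of-exponents inclusion \eqref{e:additivity of fractional powers-complex} plus a density/closedness argument (approximating $x$ by $(n(n+A)^{-1})^{M}x$ or similar, as in the proof of Lemma \ref{l:ergodicity}) to justify pulling $A^\alpha$ through the integral and cancelling it. The bookkeeping of which Gamma factor goes where, and confirming $\Gamma(\beta')/(\Gamma(\alpha')\Gamma(\beta'-\alpha')) = \Gamma(\alpha+\beta)/(\Gamma(\alpha+z)\Gamma(\beta-z))$, is routine; the convergence estimates are routine given Lemma \ref{l:uniform boundedness compositions}; the only subtlety that needs real care is legitimising the cancellation of $A^\alpha$ (equivalently, that the candidate $y$ on the right lies in $D(A^\alpha)$ with $A^\alpha y$ matching), which I would phrase as: the right-hand integral defines a vector $v$; show $v \in D(A^\alpha)$ and $A^\alpha v = A^{z+\alpha}x$; conclude $v = A^z x$ by injectivity of $A^\alpha$ on $\overline{R(A^\alpha)}$ together with $v \in \overline{R(A^\alpha)}$, which holds because each $A^\beta(\lambda+A)^{-\alpha-\beta}x \in \overline{R(A)} = \overline{R(A^\alpha)}$ by Lemma \ref{l:ergodicity}(ii).
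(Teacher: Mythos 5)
Your reduction in case (i) hinges on applying Corollary \ref{c:representation of fractional powers-beta-absolute convergence} with the upshifted parameters $\alpha'=z+\alpha$, $\beta'=\alpha+\beta$, which requires absolute convergence of
\begin{align*}
\int_0^\infty\big\|\lambda^{z+\alpha}A^{\alpha+\beta}(\lambda+A)^{-\alpha-\beta}x\big\|\,\frac{d\lambda}{\lambda},
\end{align*}
and you assert this follows from $x\in D(A^{z+\epsilon})$ via Lemma~\ref{l:uniform boundedness compositions}. It does not. Near $\lambda=\infty$ the sharpest bound that regularity buys is obtained by factoring $A^{\alpha+\beta}=A^{\alpha+\beta-z-\epsilon}A^{z+\epsilon}$ and $(\lambda+A)^{-\alpha-\beta}=(\lambda+A)^{-(\alpha+\beta-z-\epsilon)}(\lambda+A)^{-(z+\epsilon)}$, giving $\|\lambda^{z+\alpha}A^{\alpha+\beta}(\lambda+A)^{-\alpha-\beta}x\|\lesssim\lambda^{\RE\alpha-\epsilon}\|A^{z+\epsilon}x\|$, which is not integrable with respect to $d\lambda/\lambda$ once $\RE\alpha>\epsilon$ — and $\alpha$ may be arbitrarily large in the statement while $\epsilon>0$ is arbitrarily small. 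The original integrand in \eqref{e:representation of fractional powers-all} has only $A^\beta$ upstairs and so enjoys the extra decay $\lambda^{-\RE\alpha}$ from the spare $(\lambda+A)^{-\alpha}$ factor, yielding $\lambda^{-\epsilon}$ and absolute convergence; replacing $A^\beta$ by $A^{\alpha+\beta}$ destroys exactly this decay. Consequently neither Corollary~\ref{c:representation of fractional powers-beta-absolute convergence} nor Lemma~\ref{l:representation of fractional powers-beta-weak convergence} applies, and the ``pull $A^\alpha$ out of the integral and cancel'' manoeuvre cannot even begin, since \cite[Proposition 1.1.7]{ABHN2001} also demands Bochner integrability of the image under $A^\alpha$. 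Your diagnosis that the subtle step is legitimising the cancellation of $A^\alpha$ (using $\ker(A^\alpha)\cap\overline{R(A)}=\{0\}$, which is indeed correct) is downstream of the real obstacle.

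The paper's proof is structurally different precisely to avoid this: rather than upshifting exponents in the corollary, it replaces $A$ by the positive operator $A_t:=A+t$. Positivity makes $(\sigma+A_t)^{-\gamma}$ decay like $\sigma^{-\RE\gamma}$ uniformly in $x$, so one may expand $(\lambda+A_t)^{-\alpha-\beta}$ via \eqref{e:representation of negative powers-beta} with an auxiliary $\gamma$ of large real part, perform Fubini with the substitution $\sigma=\lambda+\mu$ (which produces the factor $\Gamma(z+\alpha)\Gamma(\gamma-\alpha-\beta)/\Gamma(z+\gamma-\beta)$), pull $A_t^\beta$ out by closedness, and collapse the remaining integral back to $A_t^{z-\beta}$ by \eqref{e:representation of negative powers-beta} again. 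This yields the identity for $A_t$ directly — with no extraneous $A^\alpha$ to cancel — and one then lets $t\to 0$ using dominated convergence and $A^z x=\lim_{t\to 0}A_t^z x$ for $x\in D(A^{z+\epsilon})$. Your treatment of cases (ii) and (iii) follows the paper's in outline (passing to $A^{-1}$, and to the formula \eqref{e:imaginary powers} for $A^{it}$, respectively), but both bootstrap on (i) and therefore inherit the gap.
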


\begin{proof}
(i) Let $\RE z > 0$ and fix $x \in D ( A^{ z + \epsilon } )$ with $0 < \epsilon < \RE \beta - \RE z$. 
First, the integral given in the right-hand side of (\ref{e:representation of fractional powers-all}) absolutely converges. This can be verified by using (\ref{e:uniform boundedness compositions-M}), (\ref{e:uniform boundedness compositions-L}) and the fact that $\RE \beta > \RE z + \epsilon$. More precisely,     
\begin{align*} 
\int_0^\infty \big\| \lambda^{ z + \alpha } A^{ \beta } ( \lambda + A )^{ - \alpha - \beta } x \big\| \, \frac{ d \lambda }{ \lambda } \lesssim \| x \| + \| A^{ z + \epsilon } x \|. 
\end{align*}

Next we verify (\ref{e:representation of fractional powers-all}). To this end, write $A_t := A + t$ with $t > 0$, and fix $\gamma \in \mathbb{C}_+$ such that $\RE \gamma > \RE \alpha + \RE \beta$. Applying (\ref{e:representation of negative powers-beta}) to the operator $\lambda + A_t$ yields 
\begin{align*} 
& \int_0^\infty \lambda^{ z + \alpha } A_t^{ \beta } ( \lambda + A_t )^{ - \alpha - \beta } x \, \frac{ d \lambda }{ \lambda } \\
= {} & C \int_0^\infty \lambda^{ z + \alpha } A_t^{ \beta } \int_{ 0 }^\infty \mu^{ \gamma - \alpha - \beta } \big ( \mu + \lambda + A_t \big )^{ - \gamma } x \, \frac{ d \mu }{ \mu } \, \frac{ d \lambda }{ \lambda },  
\end{align*}
where $C = \frac{ \Gamma ( \gamma ) }{ \Gamma ( \alpha + \beta ) \Gamma ( \gamma - \alpha - \beta ) }$. Applying the change of variable $\sigma = \lambda + \mu$ and exchanging the order of integration yields 
\begin{align*} 
\int_0^\infty \lambda^{ z + \alpha } A_t^{ \beta } ( \lambda + A_t )^{ - \alpha - \beta } x \, \frac{ d \lambda }{ \lambda } 
= C \int_0^\infty \sigma^{ z - \beta } \sigma^\gamma A_t^{ \beta } ( \sigma + A_t )^{ - \gamma } x \, \frac{ d \sigma }{ \sigma },
\end{align*}
where $C = \frac{ \Gamma ( \gamma ) \Gamma ( z + \alpha ) }{ \Gamma ( \alpha + \beta ) \Gamma ( z - \beta + \gamma ) }$.
Thanks to the closedness of $A_t^\beta$, 
by \cite[Proposition 1.1.7]{ABHN2001} we conclude that 
\begin{align*} 
\int_0^\infty \lambda^{ z + \alpha } A_t^{ \beta } ( \lambda + A_t )^{ - \alpha - \beta } x \, \frac{ d \lambda }{ \lambda } 
= C A_t^{ \beta } \int_0^\infty \sigma^{ z - \beta } \sigma^\gamma ( \sigma + A_t )^{ - \gamma } x \, \frac{ d \sigma }{ \sigma }, 
\end{align*}
where $C = \frac{ \Gamma ( \gamma ) \Gamma ( z + \alpha ) }{ \Gamma ( \alpha + \beta ) \Gamma ( z - \beta + \gamma ) }$. Since $A_t$ is positive, from (\ref{e:representation of negative powers-beta}) it follows that 
\begin{align*} 
\int_0^\infty \lambda^{ z + \alpha } A_t^{ \beta } ( \lambda + A_t )^{ - \alpha - \beta } x \, \frac{ d \lambda }{ \lambda } 
= C A_t^{ \beta } A_t^{ z - \beta } = C A_t^z,
\end{align*}
where $C = \frac{ \Gamma ( z + \alpha ) \Gamma ( \beta - z ) }{ \Gamma ( \alpha + \beta ) }$. 
Thus, we have verified that 
\begin{align}\label{e:representation of fractional powers-all-proof1} 
A_t^{ z } x = \frac{ \Gamma ( \alpha + \beta ) }{ \Gamma ( \alpha + z ) \Gamma ( \beta - z ) } \int_0^\infty \lambda^{ z + \alpha } A_t^{ \beta } ( \lambda + A_t )^{ - \alpha - \beta } x \, \frac{ d \lambda }{ \lambda }.
\end{align}
Note that $A^z x = \lim_{ t \rightarrow 0 } A_t^z x$ due to the fact that $x \in D ( A^{ z + \epsilon } ) \subset D ( A^z )$. Thanks to the dominated convergence theorem, applying $t \rightarrow 0$ to (\ref{e:representation of fractional powers-all-proof1}) yields (\ref{e:representation of fractional powers-all}), immediately. 

(ii) The statement is a simple consequence of (i). More precisely, let $\RE z < 0$ and fix $x \in R ( A^{ - z + \epsilon } )$ with $0 < \epsilon < \RE z + \RE \alpha$. Note that $- \RE \beta < \RE ( - z ) < \RE \alpha$ and $R ( A^{ - z + \epsilon } ) = D ( ( A^{ - 1 } )^{ - z + \epsilon } )$. 
Applying (i) to the $(-z)$-power of $A^{ - 1 }$ yields  
\begin{align*}  
A^{ z } x & = \frac{ \Gamma ( \alpha + \beta ) }{ \Gamma ( \beta - z ) \Gamma ( \alpha + z ) } \int_0^\infty \mu^{ - z + \beta } ( A^{ - 1 } )^{ \alpha } ( \mu + A^{ - 1 } )^{ - \beta - \alpha } x \, \frac{ d \mu }{ \mu } \\
& = \frac{ \Gamma ( \alpha + \beta ) }{ \Gamma ( \alpha + z ) \Gamma ( \beta - z ) } \int_0^\infty \lambda^{ z + \alpha } A^{ \beta } ( \lambda + A )^{ - \alpha - \beta } x \, \frac{ d \lambda }{ \lambda }.
\end{align*}
Thus, we have verified (\ref{e:representation of fractional powers-all}) for $\RE z < 0$.

(iii) Let $z = it$ with $t \in \mathbb{R}$ and fix $x \in D ( A^{ i t + \epsilon } ) \cap R ( A^{ i t + \epsilon } )$ with $0 < \epsilon < \min\{ 1, \RE \alpha, \RE \beta \}$. Thanks to (\ref{e:non-negative constant}) and (\ref{e:non-negative constant 2}), it is easy to verify the absolute convergence of the integral given in the right-hand side of (\ref{e:representation of fractional powers-all}) due to the fact that $0 < \epsilon < \min\{ \RE \alpha, \RE \beta \}$. By using additivity, we obtain from (\ref{e:imaginary powers}) that 
\begin{align*}  
A^{ i t } x = ( 1 + A )^2 A^{ - \epsilon } A^{ \epsilon + i t } ( 1 + A )^{ - 2 } x.
\end{align*}
Applying (i) to the positive power $A^{ \epsilon + i t }$ of $A$ with $- \RE ( \alpha - \epsilon ) < \RE ( \epsilon + i t ) < \RE ( \epsilon + \beta )$ yields 
\begin{align*}  
A^{ i t } x = C ( 1 + A )^2 A^{ - \epsilon } \int_0^\infty \lambda^{ \epsilon + i t } \lambda^{ \alpha - \epsilon } A^{ \beta + \epsilon } ( \lambda + A )^{ - \alpha - \beta } ( 1 + A )^{ - 2 } x \, \frac{ d \lambda }{ \lambda },
\end{align*}
where $C = \frac{ \Gamma ( \alpha + \beta ) }{ \Gamma ( \alpha + i t ) \Gamma ( \beta - i t ) }$.
Thanks to the fact that $0 < \epsilon < \min\{ 1, \RE \alpha, \RE \beta \}$ and the closedness of $( 1 + A )^2 A^{ - \epsilon }$,
applying \cite[Proposition 1.1.7]{ABHN2001} to the operator $( 1 + A )^2 A^{ - \epsilon }$ yields (\ref{e:representation of fractional powers-all}), immediately.
The proof is complete.
\end{proof}

\begin{rem}\label{r:homogeneous Calderon-regularity}
By Propostion \ref{p:representation of fractional powers-all} (iii), specifying $z = 0$ to (\ref{e:representation of fractional powers-all}) yields 
\begin{align}\label{e:reproducing formula-ab}  
x = \frac{ \Gamma ( \alpha + \beta ) }{ \Gamma ( \alpha ) \Gamma ( \beta ) } \int_0^\infty \lambda^{ \alpha } A^{ \beta } ( \lambda + A )^{ - \alpha - \beta } x \, \frac{ d \lambda }{ \lambda }, \quad x \in D ( A^\epsilon ) \cap R ( A^\epsilon ),
\end{align}
for $\epsilon > 0$. This (homogeneous) reproducing formula will be used to establish interpolations for abstract Besov spaces in Section \ref{S:Fractional powers} below.
\end{rem}


Furthermore, we present some representations for resolvents of fractional powers in the following lemma. In particular, the representation (\ref{e:fractional powers-resolvent-L}) below will be used to establish smoothness reiteration for abstract Besov spaces in Section \ref{S:Fractional powers}. 

\begin{lem}\label{l:representations-powers-semigroups} Let $A$ be non-negative on $X$ and let $0 < \alpha < 1$. 
Then $A^\alpha$ is non-negative with the non-negativity constant $M_{ A^\alpha } \leq M_A$. More precisely, for $\lambda > 0$,
\begin{align}\label{e:fractional powers-resolvent}
( \lambda + A^\alpha )^{ - 1 } = C \int_0^\infty \frac{ \mu^{ \alpha + 1 } }{ \lambda^2 + 2 \lambda \mu^\alpha \cos \pi \alpha + \mu^{ 2 \alpha } } ( \mu + A )^{ - 1 } \, \frac{ d \mu }{ \mu }
\end{align}
and 
\begin{align}\label{e:fractional powers-resolvent-L}
A^\alpha ( \lambda + A^\alpha )^{ - 1 } = C \int_0^\infty \frac{ \lambda \mu^{ \alpha } }{ \lambda^2 + 2 \lambda \mu^\alpha \cos \pi \alpha + \mu^{ 2 \alpha } } A ( \mu + A )^{ - 1 } \, \frac{ d \mu }{ \mu },
\end{align}
where $C = \frac{ 1 }{ \Gamma ( \alpha ) \Gamma ( 1 - \alpha ) }$ and $M_A$ is the non-negativity constant of $A$ given by (\ref{e:non-negative constant}).
\end{lem}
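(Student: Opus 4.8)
The plan is to start from the scalar identity \eqref{e:fractional powers-resolvent-scalar}, promote it to the operator-valued setting by inserting the resolvent $(\mu+A)^{-1}$, and then recognize the resulting bounded operator as $(\lambda+A^\alpha)^{-1}$ via the functional-calculus/subordination formula for $A^\alpha$. First I would record the elementary complex-analytic fact that for $0<\alpha<1$ and $\lambda>0$ the rational function $\zeta\mapsto (\lambda+\zeta^\alpha)^{-1}$, with $\zeta^\alpha$ the principal branch, admits the representation
\begin{align*}
\frac{1}{\lambda+r^\alpha}
= C\int_0^\infty \frac{\mu^{\alpha+1}}{\lambda^2+2\lambda\mu^\alpha\cos\pi\alpha+\mu^{2\alpha}}\,\frac{1}{\mu+r}\,\frac{d\mu}{\mu},
\qquad r>0,
\end{align*}
with $C=\tfrac1{\Gamma(\alpha)\Gamma(1-\alpha)}$; this reduces to \eqref{e:fractional powers-resolvent-scalar} after substituting $\mu\mapsto\mu r$ (or rather extracting the $r$-homogeneity) and using $\int_0^\infty$ of the Poisson-type kernel. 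The denominator $\lambda^2+2\lambda\mu^\alpha\cos\pi\alpha+\mu^{2\alpha}=|\lambda+(\mu e^{i\pi})^\alpha|^2=|\lambda+\mu^\alpha e^{i\pi\alpha}|^2$ is strictly positive since $0<\alpha<1$ forces $\cos\pi\alpha>-1$, so the integrand decays like $\mu^{\alpha-1}$ at $0$ and like $\mu^{-\alpha-1}$ at $\infty$, giving an absolutely convergent Bochner integral of the uniformly bounded family $\{(\mu+A)^{-1}\}_{\mu>0}$ (bounded by $M_A/\mu$).

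Next I would identify the integral on the right of \eqref{e:fractional powers-resolvent} with $(\lambda+A^\alpha)^{-1}$. The cleanest route is the Balakrishnan/subordination representation of $A^\alpha$: writing $A^\alpha$ through its natural functional calculus and using that, for fixed $\mu>0$, $(\mu+A)^{-1}$ commutes with $A$ and hence with $A^\alpha$, one checks by a Fubini argument that $(\lambda+A^\alpha)$ applied to the right-hand side of \eqref{e:fractional powers-resolvent} yields the identity on a suitable dense set, and then extends by boundedness. Concretely, one can argue scalar-wise: both $r\mapsto(\lambda+r^\alpha)^{-1}$ and the kernel integral are bounded holomorphic (or at least bounded Borel) functions on $(0,\infty)$ agreeing pointwise, and the map $A$ has a functional calculus under which this identity transfers; alternatively, approximate $A$ by the bounded sectorial operators $A(I+\epsilon A)^{-1}$, for which everything is classical, and pass to the limit using the uniform bounds. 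From \eqref{e:fractional powers-resolvent} the non-negativity constant estimate is immediate: $\|\lambda(\lambda+A^\alpha)^{-1}\|\le C\int_0^\infty \frac{\lambda\mu^{\alpha}}{\lambda^2+2\lambda\mu^\alpha\cos\pi\alpha+\mu^{2\alpha}}\,M_A\,\frac{d\mu}{\mu}=M_A$ by \eqref{e:fractional powers-resolvent-scalar}, which in particular shows $(-\infty,0)\subset\rho(A^\alpha)$ and $M_{A^\alpha}\le M_A$.

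Finally, \eqref{e:fractional powers-resolvent-L} follows from \eqref{e:fractional powers-resolvent} by the identity-decomposition trick \eqref{e:identity decomposition}: write $A^\alpha(\lambda+A^\alpha)^{-1}=I-\lambda(\lambda+A^\alpha)^{-1}$ and $A(\mu+A)^{-1}=I-\mu(\mu+A)^{-1}$, substitute \eqref{e:fractional powers-resolvent} for $\lambda(\lambda+A^\alpha)^{-1}$, and use \eqref{e:fractional powers-resolvent-scalar} once more to absorb the ``$I$'' terms; after the algebra, the kernel $\frac{\mu^{\alpha+1}}{\cdots}$ gets replaced by $\frac{\lambda\mu^{\alpha}}{\cdots}$, which is exactly the claim. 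The main obstacle I anticipate is the rigorous justification of the operator identity \eqref{e:fractional powers-resolvent} itself --- i.e. showing the kernel integral really inverts $\lambda+A^\alpha$ rather than merely being a plausible candidate --- since $A$ need be neither densely defined nor injective, so one cannot lean on the injective/bounded functional calculus directly and must instead be careful with the approximation $A\rightsquigarrow A(I+\epsilon A)^{-1}$ (or an analogous resolvent regularization) together with closedness of $A^\alpha$ and the dominated-convergence bounds supplied by \eqref{e:fractional powers-resolvent-scalar}. Everything else is the scalar computation \eqref{e:fractional powers-resolvent-scalar} plus Fubini.
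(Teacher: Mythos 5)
Your proof matches the paper's: the bound $M_{A^\alpha}\le M_A$ is obtained by applying $\|\mu(\mu+A)^{-1}\|\le M_A$ and the normalization (\ref{e:fractional powers-resolvent-scalar}) to (\ref{e:fractional powers-resolvent}), and (\ref{e:fractional powers-resolvent-L}) is obtained from (\ref{e:fractional powers-resolvent}) via the identity decomposition (\ref{e:identity decomposition}) together with (\ref{e:fractional powers-resolvent-scalar}) to absorb the identity term --- exactly as you propose. The only difference is that for (\ref{e:fractional powers-resolvent}) itself the paper does not reprove it but cites the vector-valued Cauchy-integral (contour) argument from Komatsu, Mart\'{\i}nez--Sanz and Haase, so the approximation/functional-calculus route you flag as the ``main obstacle'' is precisely the step the paper outsources to those references.
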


\begin{proof}
It is routine to verify (\ref{e:fractional powers-resolvent}) by using the vector-valued version of the Cauchy integral formula (see, for example, \cite[Proposition 10.2]{Komatsu1966}, \cite[Proposition 5.3.2(5.24)]{MartinezM2001} or \cite[Remark 3.1.16]{HaaseM2006}). Applying  (\ref{e:non-negative constant}) and (\ref{e:fractional powers-resolvent-scalar}) to (\ref{e:fractional powers-resolvent}) gives $M_{ A^\alpha } \le M_A$, immediately. Moreover, from (\ref{e:identity decomposition}) and (\ref{e:fractional powers-resolvent}) it follows that 
\begin{align*} 
A^\alpha ( \lambda + A^\alpha )^{ - 1 } = {} & I - C \int_0^\infty \frac{ \lambda \mu^{ \alpha } }{ \lambda^2 + 2 \lambda \mu^\alpha \cos \pi \alpha + \mu^{ 2 \alpha } } \mu ( \mu + A )^{ - 1 } \, \frac{ d \mu }{ \mu } \\
= {} & C \int_0^\infty \frac{ \lambda \mu^{ \alpha } }{ \lambda^2 + 2 \lambda \mu^\alpha \cos \pi \alpha + \mu^{ 2 \alpha } } A ( \mu + A )^{ - 1 } \, \frac{ d \mu }{ \mu }, \quad \lambda > 0,
\end{align*}
the desired (\ref{e:fractional powers-resolvent-L}). The proof is complete.
\end{proof}

Finally, in contrast to the fractional powers of non-negative operators on Banach spaces, a complete account of the theory of fractional powers of operators is beyond of the scope of our work, and we refer the reader to, for instance,  \cite{Fattorini1983,
Tarasov2007,Han2020} (for fractional powers associated with the fractional calculus), \cite{deLaubenfels1993,van Neerven1998,Periago2002,deLaubenfels2008,Kostic2011,Wang2012} (for fractional powers of operators with polynomially bounded resolvent), \cite{Martinez2005} (for fractional powers of almost non-negative operators), 
\cite[Section 5.7]{MartinezM2001} (for fractional powers of non-negative (multivalued) operators in Fr\'{e}chet spaces), and \cite{Auscher2008,Bui2017,Huang2018,Shen2018,Li2019} for some applications associated with the fractional Laplacians, fractional Schr\"{o}dinger operators and some other fractional operators.

\smallskip


\section{Besov spaces associated with operators}\label{S:Besov spaces}

This section is devoted to the construction of Besov spaces associated with non-negative operators in the framework of Banach spaces. In this section, $( X, \left\| \cdot \right\| )$ is a Banach space and $A$ is a non-negative operator (not necessarily defined densely or with dense range) on $X$.

\smallskip


\subsection{Subspaces of Besov type}


One of the advantages of the classical Besov spaces
is that they provide continuous scales for the smoothness of function spaces. From the operator-theoretic point of view, the classical Besov spaces $B^s_{ p, q } ( \mathbb{R}^n )$ admit a close relation with
the well known Laplacian on $L^p ( \mathbb{ R }^n )$ (see Examples \ref{E:Gaussian semigroup} and \ref{E:Poisson semigroup} below or \cite[Section 2.12.2]{TriebelM1983-2010}). Compared with (\ref{e:characterization of domains-KomatsuChen-weak limit}), (\ref{e:representation of fractional powers-beta-absolute convergence}) provides a quantitative estimate, i.e., $\lambda^\alpha A^\beta ( \lambda + A )^{ - \beta } x \in L^1 ( ( 0, \infty ); d \lambda / \lambda )$, for the fractional domain $D ( A^\alpha )$. The quantitative estimate (\ref{e:representation of fractional powers-beta-absolute convergence}) can also be refined via an additional size index $1 \le q \le \infty$, i.e., $\lambda^\alpha A^\beta ( \lambda + A )^{ - \beta } x \in L^q ( ( 0, \infty ); d \lambda / \lambda )$ (see (\ref{d:Komatsu spaces}) for $\beta = n \in \mathbb{N}$).
This is the main idea of H. Komatsu \cite{Komatsu1967} on abstract Besov spaces from the interpolation point of view. 

In order to give unified quantitative estimates of fractional domains for a full range $s \in \mathbb{R}$ and $0 < q \le \infty$, we start from the unified representation (\ref{e:representation of fractional powers-all}) and turn to the following dyadic series instead of the Lebesgue integrals.  

\begin{defn}\label{d:Besov type-inhomogeneous}
Let $0 < q \leq \infty$ and $s \in \mathbb{R}$. Fix $k \in \mathbb{Z}$ and $\alpha, \beta \in \mathbb{C}_+^*$ such that $- \RE \alpha < s < \RE \beta$. The space $R^{ s, A }_{ q, X } ( k, \alpha, \beta )$ is defined by
\begin{align*}
R^{ s, A }_{ q, X } ( k, \alpha, \beta ) := \left\{ x \in \overline{ D ( A ) }: \sum_{ i = k
}^\infty \big\| 2^{ i ( s + \alpha ) } A^{ \beta } ( 2^i + A )^{ - \alpha - \beta } x \big\|^q < \infty \right\},
\end{align*}
endowed with the semi-quasinorm
\begin{align*}
\left| x \right|_{ R^{ s, A }_{ q, X } ( k, \alpha, \beta ) } := \left\{ \sum_{ i = k
}^\infty \big\| 2^{ i ( s + \alpha ) } A^{ \beta } ( 2^i + A )^{ - \alpha
- \beta } x \big\|^q \right\}^{ 1 / q }
\end{align*}
(with the usual modification when $q = \infty$).
\end{defn}

\begin{rem}\label{r:quasi-norm of inhomogeneous Besov space-injective}
It is necessary to point out that the subspace $R^{ s, A }_{ q, X } ( k, \alpha, \beta )$ is not trivial and that the semi-quasinorm $\left| \cdot \right|_{ R^{ s, A }_{ q, X } ( k, \alpha, \beta ) }$ is indeed a quasi-norm in some special cases. More precisely, 
\begin{itemize}
\item[(i)] It is clear that $D ( A^\beta ) \subset R^{ s, A }_{ q, X } ( k, \alpha, \beta )$ if $s \geq 0$ and that $R^{ s, A }_{ q, X } ( k, \alpha, \beta ) = \overline{ D (A) }$ if $s < 0$ due to the uniform boundedness of $\{ 2^{ j \alpha } ( 2^j + A )^{ - \alpha } \}_{ j \geq k }$ and $\{ A^\beta ( 2^j + A )^{ - \beta } \}_{ j \geq k }$ as shown in Lemma \ref{l:uniform boundedness compositions} (ii), so that the space $R^{ s, A }_{ q, X } ( k, \alpha, \beta ) $ is non-zero for a non-trivial operator $A$. 
\item[(ii)] The semi-quasinorm $\left| \cdot \right|_{ R^{ s, A }_{ q, X } ( k, \alpha, \beta ) }$ is indeed a quasi-norm if $\beta = 0$ or $A$ is injective. Indeed, the case when $\beta = 0$ is trivial while the case when $A$ is injective can be seen by observing that 
\begin{align*}
	| x |_{ R^{ s, A }_{ q, X } ( k, \alpha, \beta ) } = 0 \Leftrightarrow A^{ \beta } ( 2^j + A )^{ - \beta } x = 0
\Leftrightarrow x \in Ker ( A ),
\end{align*}
where the last equivalence follows from Lemma \ref{l:ergodicity} (iii) above. 
\end{itemize}
\end{rem}


It is easy to verify that the space $R^{ s, A }_{ q, X } ( k, \alpha, \beta ) $ is independent of the choice of $k$ in the sense of equivalent semi-quasinorms.

\begin{lem}\label{l:independence of R-inhomogeneous-k}
Let $0 < q \leq \infty$ and $s \in \mathbb{R}$. Fix $k \in	\mathbb{Z}$ and $\alpha, \beta \in \mathbb{C}_+^*$ such that $- \RE \alpha < s < \RE \beta$. Then $R^{ s, A }_{ q, X } ( k, \alpha, \beta ) = R^{ s, A }_{ q, X } ( k', \alpha, \beta )$ from the set-theoretic point of view and the equivalence 
\begin{align}\label{e:independence of R-inhomogeneous k}
| x |_{ R^{ s, A }_{ q, X } ( k, \alpha, \beta ) } \simeq | x |_{ R^{ s, A }_{ q, X } ( k', \alpha, \beta ) }
\end{align}
holds for each $k' \in \mathbb{Z}$.
\end{lem}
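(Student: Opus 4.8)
The plan is to compare the two semi-quasinorms term by term, exploiting that the defining series differ by only finitely many terms. We may assume $k<k'$ (if $k=k'$ there is nothing to prove, and the roles of $k,k'$ are interchangeable), and we write $g_i:=A^{\beta}(2^i+A)^{-\alpha-\beta}x$, so the $i$-th summand of $|x|_{R^{s,A}_{q,X}(k,\alpha,\beta)}$ is $2^{i(s+\RE\alpha)}\|g_i\|$. Since $2^i>0$, the operator $A^{\beta}(2^i+A)^{-\beta}$ is bounded by Lemma~\ref{l:uniform boundedness compositions}(i) (trivially so if $\beta=0$), and $(2^i+A)^{-\alpha}=\big((2^i+A)^{-1}\big)^{\alpha}$ is bounded by (\ref{e:boundedness of fractional powers-bounded operators}); hence $A^{\beta}(2^i+A)^{-\alpha-\beta}\in\mathcal{L}(X)$ and $\|g_i\|<\infty$ for all $i$. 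Consequently $\sum_{i\ge k}2^{i(s+\RE\alpha)q}\|g_i\|^q$ and $\sum_{i\ge k'}2^{i(s+\RE\alpha)q}\|g_i\|^q$ differ by the finitely many finite terms $i=k,\dots,k'-1$, so one converges iff the other does; since the ambient condition $x\in\overline{D(A)}$ is common, this already gives the set-theoretic equality $R^{s,A}_{q,X}(k,\alpha,\beta)=R^{s,A}_{q,X}(k',\alpha,\beta)$.

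For (\ref{e:independence of R-inhomogeneous k}) the inequality $|x|_{R^{s,A}_{q,X}(k',\alpha,\beta)}\le|x|_{R^{s,A}_{q,X}(k,\alpha,\beta)}$ is clear, the $k'$-series being obtained from the $k$-series by dropping finitely many nonnegative terms (a supremum over a smaller index set when $q=\infty$). For the reverse inequality, since only the indices $i\in\{k,\dots,k'-1\}$ are in question, it suffices to bound each $2^{i(s+\RE\alpha)}\|g_i\|$ by a fixed multiple of $|x|_{R^{s,A}_{q,X}(k',\alpha,\beta)}$. The key is the operator identity
\begin{align*}
g_i=(2^{k'}+A)^{\alpha+\beta}(2^i+A)^{-\alpha-\beta}\,g_{k'},\qquad k\le i<k',
\end{align*}
which holds because the operators in play are all functions of $A$, hence commute, and $(2^{k'}+A)^{\alpha+\beta}(2^{k'}+A)^{-\alpha-\beta}=I$, the operator $2^{k'}+A$ being positive. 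Now $\RE(\alpha+\beta)=\RE\alpha+\RE\beta>0$ because $-\RE\alpha<s<\RE\beta$; so by Lemma~\ref{l:uniform boundedness compositions} (for instance (\ref{e:uniform boundedness compositions-C}), or (\ref{e:resolvent estimate-discrete-Charles}) with the fractional exponent taken to be $\alpha+\beta$, and with $c=2^{k'-k}$) the operator $(2^{k'}+A)^{\alpha+\beta}(2^i+A)^{-\alpha-\beta}$ is bounded, with norm at most a constant $C=C(\alpha,\beta,A,k'-k)$ independent of $i$.

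It follows, using $i<k'$ and $s+\RE\alpha>0$, that
\begin{align*}
2^{i(s+\RE\alpha)}\|g_i\|\le C\,2^{i(s+\RE\alpha)}\|g_{k'}\|=C\,2^{(i-k')(s+\RE\alpha)}\cdot 2^{k'(s+\RE\alpha)}\|g_{k'}\|\le C\,2^{k'(s+\RE\alpha)}\|g_{k'}\|\le C\,|x|_{R^{s,A}_{q,X}(k',\alpha,\beta)},
\end{align*}
since $2^{k'(s+\RE\alpha)}\|g_{k'}\|$ is one summand of the $k'$-semi-quasinorm. Inserting these finitely many estimates into
\begin{align*}
|x|^q_{R^{s,A}_{q,X}(k,\alpha,\beta)}=\sum_{i=k}^{k'-1}2^{i(s+\RE\alpha)q}\|g_i\|^q+|x|^q_{R^{s,A}_{q,X}(k',\alpha,\beta)}
\end{align*}
(with the obvious modification when $q=\infty$) gives $|x|_{R^{s,A}_{q,X}(k,\alpha,\beta)}\lesssim|x|_{R^{s,A}_{q,X}(k',\alpha,\beta)}$, as required. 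The only genuine step, as opposed to bookkeeping, is this domination of each low-scale term $g_i$ ($k\le i<k'$) by the single higher-scale term $g_{k'}$ through the bounded operator $(2^{k'}+A)^{\alpha+\beta}(2^i+A)^{-\alpha-\beta}$, which is precisely where the quantitative resolvent estimates of Lemma~\ref{l:uniform boundedness compositions} enter; the degenerate values $\alpha=0$ or $\beta=0$ (so that $(2^i+A)^{-\alpha}=I$ or $A^{\beta}=I$) cause no difficulty, since $\RE(\alpha+\beta)>0$ in every admissible configuration.
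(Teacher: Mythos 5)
Your proof is correct and uses essentially the same approach as the paper: the key step, in both cases, is the uniform boundedness of $(2^{k'}+A)^{\alpha+\beta}(2^i+A)^{-\alpha-\beta}$ via Lemma \ref{l:uniform boundedness compositions}, which lets one absorb the finitely many low-scale terms into a high-scale term. The only organizational difference is that the paper reduces to the single step $k\mapsto k+1$ (with (\ref{e:uniform boundedness compositions-C}) applied with $c=2$) and iterates, whereas you compare $k$ and $k'$ directly using $c=2^{k'-k}$; the two are interchangeable, and your version also records the set-theoretic equality explicitly, which the paper leaves implicit.
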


\begin{proof}
It suffices to verify that 	
\begin{align*} 
| x |_{ R^{ s, A }_{ q, X } ( k, \alpha, \beta ) } \simeq | x |_{ R^{ s, A }_{ q, X } ( k + 1, \alpha, \beta ) }.  
\end{align*}
Indeed, it is trivial that   
\begin{align*} 
	| x |_{ R^{ s, A }_{ q, X } ( k + 1, \alpha, \beta ) } \leq | x |_{ R^{ s, A }_{ q, X } ( k, \alpha, \beta ) },  
\end{align*}
and hence, it remains to verify that
\begin{align}\label{e:independence of R-inhomogeneous-k-proof1}
| x |_{ R^{ s, A }_{ q, X } ( k, \alpha, \beta ) } \lesssim | x |_{ R^{ s, A }_{ q, X } ( k + 1, \alpha, \beta ) }.
\end{align}

If $0 < q < \infty$, applying (\ref{e: q-inequality}) yields 
\begin{align*} 
\begin{split}
| x |_{ R^{ s, A }_{ q, X } ( k, \alpha, \beta ) } \leq {} & C_{ 1 / q } \left[ | x |_{ R^{ s, A }_{ q, X } ( k + 1, \alpha, \beta ) } + \big\| 2^{ k ( s + \alpha ) } A^\beta ( 2^k + A )^{ - \alpha - \beta } x \big\| \right],
\end{split}
\end{align*}
while applying (\ref{e:uniform boundedness compositions-C}) with $c = 2$ to $( 2^{ k + 1 } + A )^{ \alpha + \beta } ( 2^k + A )^{ - ( \alpha + \beta ) }$ yields 
\begin{align}\label{e:independence of R-inhomogeneous-k-proof2} 
\begin{split}
\big\| 2^{ k ( s + \alpha ) } A^\beta ( 2^k + A )^{ - \alpha - \beta } x \big\| = {} & 2^{ - ( s + \RE \alpha ) } \big\| ( 2^{ k + 1 } + A )^{ \alpha + \beta } ( 2^k + A )^{ - ( \alpha + \beta ) }  \\
{} & \cdot 2^{ ( k + 1 ) ( s + \alpha ) } A^\beta ( 2^{ k + 1 } + A )^{ - ( \alpha + \beta ) }\big\| \\
\leq {} & C \, \big\| 2^{ ( k + 1 ) ( s + \alpha ) } A^\beta ( 2^{ k + 1 } + A )^{ - ( \alpha + \beta ) } \big\|, 
\end{split}
\end{align}
where $C = 2^{ - ( s + \RE \alpha ) } C_{ \alpha + \beta, N } ( L_A + 2 M_A )^N$ with $\RE ( \alpha + \beta ) < N \in \mathbb{ N }$ and $C_{ \alpha + \beta, N }$ given by (\ref{e:uniform boundedness compositions}).
This implies that 
\begin{align*} 
| x |_{ R^{ s, A }_{ q, X } ( k, \alpha, \beta ) } \leq {} & C_{ 1 / q } \Big[ | x |_{ R^{ s, A }_{ q, X } ( k + 1, \alpha, \beta ) } + C \big\| 2^{ ( k + 1 ) ( s + \alpha ) } A^\beta ( 2^{ k + 1 } + A )^{ - ( \alpha + \beta ) }\big\| \Big] \\
\leq {} & C_{ 1 / q } \max\{ 1, C \} \cdot | x |_{ R^{ s, A }_{ q, X } ( k + 1, \alpha, \beta ) },
\end{align*}
where $C_{ 1 / q }$ and $C$ are given in (\ref{e: q-inequality}) and (\ref{e:independence of R-inhomogeneous-k-proof2}), respectively. Thus, we have verified (\ref{e:independence of R-inhomogeneous-k-proof1}) for $0 < q < \infty$.

The case when $q = \infty$ can be verified analogously. More precisely, by using (\ref{e:independence of R-inhomogeneous-k-proof2}) again we have 
\begin{align*} 
| x |_{ R^{ s, A }_{ q, X } ( k, \alpha, \beta ) } & = \sup_{ j \geq k } \big\| 2^{ j ( s + \alpha ) } A^\beta ( 2^j + A )^{ - ( \alpha + \beta )} \big\| \\
& \leq \max\Big\{ \big\| 2^{ k ( s + \alpha ) } A^\beta ( 2^k + A )^{ - ( \alpha + \beta )} \big\|, | x |_{ R^{ s, A }_{ q, X } ( k + 1 , \alpha, \beta ) } \Big\} \\
& \leq C \, | x |_{ R^{ s, A }_{ q, X } ( k + 1 , \alpha, \beta ) },
\end{align*}
where $C$ is the constant given in (\ref{e:independence of R-inhomogeneous-k-proof2}).
Thus, we have verified (\ref{e:independence of R-inhomogeneous-k-proof1}).
The proof is complete.
\end{proof}

Thanks to Lemmas \ref{l:ergodicity} and \ref{l:reproducing formulas}, we can verify that the space $R^{ s, A }_{ q, X } ( k, \alpha, \beta ) $ is also independent of the choice of $\alpha$ in the sense of equivalent semi-quasinorms.

\begin{lem}\label{l:independence of lR-inhomogeneous}
Let $0 < q \leq \infty$ and $s \in \mathbb{R}$. Fix $k \in \mathbb{Z}$ and $\alpha, \beta \in \mathbb{C}_+^*$ such that $- \RE \alpha < s < \RE \beta$. Then
$R^{ s, A }_{ q, X } ( k, \alpha, \beta ) = R^{ s, A }_{ q, X } ( k, \alpha', \beta )$ from the set-theoretic point of view and 
\begin{align}\label{e:independence of R-inhomogeneous l}
| x |_{ R^{ s, A }_{ q, X } ( k, \alpha, \beta ) } \simeq | x |_{ R^{ s, A
	}_{ q, X } ( k, \alpha', \beta ) } 
\end{align}
for each $\alpha' \in \mathbb{C}_+^*$ satisfying $- \RE \alpha' < s$.
\end{lem}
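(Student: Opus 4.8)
The plan is to establish the two-sided estimate (\ref{e:independence of R-inhomogeneous l}) by reducing to the case $\RE\alpha>\RE\alpha'$ and then proving the two inclusions separately, the nontrivial one by means of a reproducing formula for the resolvent-type family $A^\beta(\,\cdot\,+A)^{-\alpha-\beta}$ together with a discrete convolution estimate. For the reduction, note first that by symmetry we may assume $\RE\alpha\ge\RE\alpha'$; and if $\RE\alpha=\RE\alpha'$ we fix a real number $\alpha''>\max\{\RE\alpha,-s,0\}$, so that $\alpha''\in\mathbb{C}_+^*$, $-\RE\alpha''<s<\RE\beta$ and $\RE\alpha''>\RE\alpha=\RE\alpha'$, and deduce the equal-real-part case from the strict case via $R^{s,A}_{q,X}(k,\alpha,\beta)=R^{s,A}_{q,X}(k,\alpha'',\beta)=R^{s,A}_{q,X}(k,\alpha',\beta)$. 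So I assume henceforth $\RE\alpha>\RE\alpha'$; observe $\RE\alpha>0$, and also $\RE(\alpha'+\beta)>0$ since $\alpha'$ and $\beta$ cannot both vanish (otherwise the constraint $-\RE\alpha'<s<\RE\beta$ would be vacuous).

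The inclusion $R^{s,A}_{q,X}(k,\alpha',\beta)\hookrightarrow R^{s,A}_{q,X}(k,\alpha,\beta)$ is the easy one. Using additivity of fractional powers of the positive operator $2^i+A$, for $i\ge k$ one has
\begin{equation*}
2^{i(s+\alpha)}A^\beta(2^i+A)^{-\alpha-\beta}x=\big[2^{i(\alpha-\alpha')}(2^i+A)^{-(\alpha-\alpha')}\big]\big[2^{i(s+\alpha')}A^\beta(2^i+A)^{-\alpha'-\beta}x\big],
\end{equation*}
and since $\RE(\alpha-\alpha')>0$ the family $\{t^{\alpha-\alpha'}(t+A)^{-(\alpha-\alpha')}\}_{t>0}$ is uniformly bounded by Lemma \ref{l:uniform boundedness compositions}; taking norms and then $\ell^q$-norms in $i$ gives $|x|_{R^{s,A}_{q,X}(k,\alpha,\beta)}\lesssim|x|_{R^{s,A}_{q,X}(k,\alpha',\beta)}$.

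For the reverse inclusion the crucial point is the identity
\begin{equation*}
A^\beta(\lambda+A)^{-\alpha'-\beta}x=\frac{\Gamma(\alpha+\beta)}{\Gamma(\alpha'+\beta)\Gamma(\alpha-\alpha')}\int_0^\infty\mu^{\alpha-\alpha'}A^\beta(\mu+\lambda+A)^{-\alpha-\beta}x\,\frac{d\mu}{\mu}\tag{$\star$}
\end{equation*}
for all $\lambda>0$ and all $x\in R^{s,A}_{q,X}(k,\alpha,\beta)$. To obtain $(\star)$ I would first apply (\ref{e:representation of negative powers-beta}) to the positive operator $\lambda+A$ with the pair of exponents $\alpha'+\beta<\alpha+\beta$ (both of positive real part) to get the scalar-level version without the outer $A^\beta$, valid for every $x\in X$ with absolutely convergent integral, and then move $A^\beta$ inside by closedness and \cite[Proposition 1.1.7]{ABHN2001}. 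The commutation requires $\int_0^\infty\mu^{\RE\alpha-\RE\alpha'-1}\|A^\beta(\mu+\lambda+A)^{-\alpha-\beta}x\|\,d\mu<\infty$: near $\mu=0$ this is clear from $\RE\alpha>\RE\alpha'$, while near $\mu=\infty$ I would use the single-term bound $\|A^\beta(2^j+A)^{-\alpha-\beta}x\|\le|x|_{R^{s,A}_{q,X}(k,\alpha,\beta)}\,2^{-j(s+\RE\alpha)}$ coming from the defining series, together with (\ref{e:resolvent estimate-discrete}) and (\ref{e:uniform boundedness compositions-C}) to pass from dyadic values to all $\mu\ge\lambda$, which bounds the tail integrand by a constant times $\mu^{-(s+\RE\alpha')-1}$; this is integrable exactly because $s+\RE\alpha'>0$, i.e. because of the standing hypothesis $-\RE\alpha'<s$. (If $\RE\alpha'>0$ the tail integrand is already $O(\mu^{-\RE\alpha'-1})$ and $(\star)$ holds for every $x\in X$; membership in $R^{s,A}_{q,X}(k,\alpha,\beta)$ is needed only in the borderline case $\RE\alpha'=0$, which forces $s>0$.)

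Granting $(\star)$, I would set $\lambda=2^i$, take norms, and split $\int_0^\infty=\int_0^{2^i}+\sum_{j\ge i}\int_{2^j}^{2^{j+1}}$. On each piece $\mu+2^i$ is comparable to a single dyadic number ($2^i$ on the first piece, $2^j$ on the $j$-th), so (\ref{e:resolvent estimate-discrete}) and (\ref{e:uniform boundedness compositions-C}) replace $A^\beta(\mu+2^i+A)^{-\alpha-\beta}x$ by $A^\beta(2^i+A)^{-\alpha-\beta}x$, resp.\ $A^\beta(2^j+A)^{-\alpha-\beta}x$, up to constants, while $\int\mu^{\RE\alpha-\RE\alpha'}\,d\mu/\mu$ over the piece is comparable to $2^{i(\RE\alpha-\RE\alpha')}$, resp.\ $2^{j(\RE\alpha-\RE\alpha')}$. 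Writing $g_j:=\|2^{j(s+\alpha)}A^\beta(2^j+A)^{-\alpha-\beta}x\|$, these estimates collapse, after simplifying the powers of $2$, to
\begin{equation*}
\big\|2^{i(s+\alpha')}A^\beta(2^i+A)^{-\alpha'-\beta}x\big\|\lesssim\sum_{j\ge i}2^{-(j-i)(s+\RE\alpha')}\,g_j,
\end{equation*}
that is, the left-hand side is dominated by the discrete convolution of $(g_j)_{j\ge k}$ with the sequence $w(m)=2^{-m(s+\RE\alpha')}$ ($m\ge0$). Since $s+\RE\alpha'>0$, this $w$ lies in $\ell^1$, and in $\ell^q$ for every $q>0$; hence Young's inequality for sequences (when $1\le q\le\infty$) or the $q$-subadditivity (\ref{e: q-inequality-series<1}) (when $0<q\le1$) yields $|x|_{R^{s,A}_{q,X}(k,\alpha',\beta)}\lesssim|x|_{R^{s,A}_{q,X}(k,\alpha,\beta)}$, hence also $R^{s,A}_{q,X}(k,\alpha,\beta)\subseteq R^{s,A}_{q,X}(k,\alpha',\beta)$ (both spaces being subsets of $\overline{D(A)}$). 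Combining the two inclusions and estimates proves (\ref{e:independence of R-inhomogeneous l}). I expect the main obstacle to be the rigorous proof of $(\star)$, and in particular the justification that $A^\beta$ can be pulled through the integral in the borderline regime $\RE\alpha'=0$: there the absolute convergence needed for \cite[Proposition 1.1.7]{ABHN2001} is not available for arbitrary $x\in X$, and one is forced to exploit the a priori $\ell^q$-summability encoded in the hypothesis $x\in R^{s,A}_{q,X}(k,\alpha,\beta)$ — which is precisely the point where the assumption $-\RE\alpha'<s$ is used.
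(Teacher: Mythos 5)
Your proposal is correct, and it takes a genuinely different route from the paper's own proof in the key step. The paper establishes the nontrivial inequality by incrementing $\alpha$ by one, i.e.\ it shows $| x |_{ R^{ s, A }_{ q, X } ( k, \alpha, \beta ) } \lesssim | x |_{ R^{ s, A }_{ q, X } ( k, \alpha + 1, \beta ) }$ using the reproducing formula (\ref{e:transform resolvent 3}),
$A^\beta ( \lambda + A )^{ - \alpha - \beta } x = ( \alpha + \beta )\int_\lambda^\infty A^\beta ( t + A )^{ - \alpha - \beta - 1 } x \, d t$,
which is an elementary consequence of the fundamental theorem of calculus and the ergodicity lemma, and then iterates. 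You instead jump directly from any $\alpha$ to any $\alpha'$ with $\RE\alpha' < \RE\alpha$ via a one-shot Balakrishnan–Komatsu representation: applying (\ref{e:representation of negative powers-beta}) to the positive operator $\lambda + A$ with the exponent pair $\alpha'+\beta < \alpha+\beta$ and then commuting $A^\beta$ through the integral. After the respective integral identities are in hand, the two arguments are structurally parallel: dyadic decomposition, comparison of resolvents at nearby scales via (\ref{e:resolvent estimate-discrete}), and a discrete-convolution estimate with a geometrically decaying kernel handled by $q$-subadditivity for $q\le 1$ and by Young/H\"older for $q\ge 1$. The trade-offs are real: the paper's incremental formula requires no commutation argument with the unbounded operator $A^\beta$, so each step is clean, at the cost of an iteration; your one-shot formula $(\star)$ is more direct and more symmetric, but — as you correctly flag — in the borderline case $\RE\alpha'=0$ the absolute convergence needed to justify pulling $A^\beta$ inside the integral fails for a general $x\in X$, and one must invoke the a priori dyadic decay encoded in $x \in R^{s,A}_{q,X}(k,\alpha,\beta)$, which is exactly where the standing hypothesis $-\RE\alpha'<s$ becomes essential. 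Both routes yield the same quantitative conclusion.
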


\begin{proof}
It suffices to verify (\ref{e:independence of R-inhomogeneous l}) in the case $\RE \alpha \leq \RE \alpha'$, otherwise the conclusion follows from the symmetry of $\alpha$ and $\alpha'$. Indeed, it suffices to verify the following two inequalities. One is 
\begin{align}\label{e:independence of R-inhomogeneous l-proof-trivial} 
	| x |_{ R^{ s, A }_{ q, X } ( k, \alpha'', \beta ) } \lesssim | x |_{ R^{ s, A }_{ q, X } ( k, \alpha, \beta ) } 
\end{align}	
for each $\alpha'' \in \mathbb{C}_+^*$ satisfying $\RE \alpha'' > \RE \alpha$ and the other is 
\begin{align}\label{e:independence of R-inhomogeneous l-proof1}
| x |_{ R^{ s, A }_{ q, X } ( k, \alpha, \beta ) } \lesssim | x |_{ R^{ s, A }_{ q, X } ( k, \alpha + 1, \beta ) }.
\end{align}
If this is the case, from (\ref{e:independence of R-inhomogeneous l-proof1}) and (\ref{e:independence of R-inhomogeneous l-proof-trivial}) it follows that 
\begin{align*}
| x |_{ R^{ s, A }_{ q, X } ( k, \alpha, \beta ) } & \lesssim | x |_{ R^{ s, A }_{ q, X } ( k, \alpha + 1, \beta ) } \\
& \lesssim | x |_{ R^{ s, A }_{ q, X } ( k, \alpha', \beta ) } \\
& \lesssim | x |_{ R^{ s, A }_{ q, X } ( k, \alpha' + 1, \beta ) } \lesssim | x |_{ R^{ s, A }_{ q, X } ( k, \alpha, \beta ) } 
\end{align*}
whenever $\RE \alpha = \RE \alpha'$
and that 
\begin{align*}
| x |_{ R^{ s, A }_{ q, X } ( k, \alpha, \beta ) } \lesssim | x |_{ R^{ s, A }_{ q, X } ( k, \alpha + n, \beta ) } \lesssim | x |_{ R^{ s, A }_{ q, X } ( k, \alpha', \beta ) } 
 \lesssim | x |_{ R^{ s, A }_{ q, X } ( k, \alpha, \beta ) } 
\end{align*}
whenever $\RE \alpha < \RE \alpha'$, where $n \in \mathbb{N}$ with $\RE \alpha' < \RE \alpha + n$. Thus, we have verified (\ref{e:independence of R-inhomogeneous l}) by using (\ref{e:independence of R-inhomogeneous l-proof-trivial}) and (\ref{e:independence of R-inhomogeneous l-proof1}).

It remains to verify (\ref{e:independence of R-inhomogeneous l-proof-trivial}) and (\ref{e:independence of R-inhomogeneous l-proof1}). Indeed, (\ref{e:independence of R-inhomogeneous l-proof-trivial}) is a direct consequence of the uniform boundedness of the family $\{ 2^{ j ( \alpha'' - \alpha ) } ( 2^j + A )^{ - ( \alpha'' - \alpha ) } \}_{ j \geq k }$  due to (\ref{e:uniform boundedness compositions-M}) and it remains to verify (\ref{e:independence of R-inhomogeneous l-proof1}). We merely verify (\ref{e:independence of R-inhomogeneous l-proof1}) in the case $0 < q < \infty$ and the case when $q = \infty$ can be verified analogously.  
To this end, let $0 < q < \infty$. From (\ref{e:transform resolvent 3}) it follows that 
\begin{align*}
\begin{split}	 
	| x |_{ R^{ s, A }_{ q, X } ( k, \alpha, \beta ) } 
	& \leq \left| \alpha + \beta \right| \left\{ \sum_{ i = k }^\infty \left[ \int_{ 2^i }^{ \infty } \big\| 2^{ i ( s + \alpha ) } A^\beta ( t + A )^{ - \alpha - \beta - 1 } x \big\| \, d t \right]^q \right\}^{ 1 / q }.
\end{split}	
\end{align*}
Rewriting $\int_{ 2^i }^\infty = \sum_{ r = i }^\infty \int_{ 2^r }^{ 2^{ r + 1 } }$ and applying the estimate (\ref{e:resolvent estimate-discrete}) with $c = 1$ yields 
\begin{align}\label{e:independence of lR-inhomogeneous-proof-CI} 
| x |_{ R^{ s, A }_{ q, X } ( k, \alpha, \beta ) } 
\leq C J, 
\end{align}
where $C = \left| \alpha + \beta \right| C_{ \alpha + \beta + 1, N } ( M_A + L_A )^{ N }$ with $\RE \alpha + \RE \beta + 1 < N \in \mathbb{ N }$ and $C_{ \alpha + \beta + 1, N }$ given by (\ref{e:uniform boundedness compositions}) and 
\begin{align*} 
J = \left\{ \sum_{ i = k }^\infty \bigg[ \sum_{ r=i }^{ \infty } \big\| 2^{ i ( s + \alpha ) } 2^r A^\beta ( 2^r + A )^{ - \alpha - \beta - 1 } x \big\| \bigg]^q \right\}^{ 1 / q }.
\end{align*}
It can be verified that  
\begin{align}\label{e:independence of lR-inhomogeneous-proof-I<alpha+1}
	J \lesssim | x |_{ R^{ s, A }_{ q, X } ( k, \alpha + 1, \beta ) }.
\end{align}
Indeed, if $0 < q \leq 1$, applying the classical inequality (\ref{e: q-inequality-series<1}) and exchanging the order of summation yields 
\begin{align*} 
	J 
	& \leq \left\{ \sum_{ r = k }^\infty \sum_{ i=k }^{ r } \big\| 2^{ i ( s + \alpha ) }  2^{ r } A^\beta ( 2^r + A )^{ - \alpha - \beta - 1 } x \big\|^q \right\}^{ 1 / q } \\
	& \leq \left[ 1 - 2^{ - ( s + \RE \alpha ) q } \right]^{ - 1 / q } | x |_{ R^{ s, A }_{ q, X } ( k, \alpha + 1, \beta ) }.
\end{align*}
And if $1 < q < \infty$, rewriting $I$ as   
\begin{align*} 
J = \left\{ \sum_{ i = k }^\infty 2^{ r ( s + \RE \alpha ) q } \bigg[ \sum_{ r = i }^{ \infty } 2^{ r ( \delta - s - \RE \alpha ) } \big\| 2^{ r ( s + \alpha - \delta + 1 ) } A^\beta ( 2^r + A )^{ - \alpha - \beta - 1 } x \big\| \bigg]^q \right\}^{ 1 / q }  
\end{align*}
with an auxiliary constant $\delta \in (0, s + \RE \alpha )$ and applying the H\"{o}lder inequality to the inner bracket yields 
\begin{align*}
J \le {} & \frac{ 1 }{ [ 1 - 2^{ ( \delta - s - \RE \alpha ) q' } ]^{ 1 / q' } } \Bigg\{ \sum_{ i = k }^\infty \sum_{ r = i }^{ \infty } \big\| 2^{ i \delta } 2^{ r ( s + \alpha - \delta + 1 ) } A^\beta ( 2^r + A )^{ - \alpha - \beta - 1 } x \big\|^q \Bigg\}^{ 1 / q }.
\end{align*}
Exchanging the order of summation yields 
\begin{align*}
J \leq {} & \frac{ 1 }{ [ 1 - 2^{ ( \delta - s - \RE \alpha ) q' } ]^{ 1 / q' } } \left( \frac{ 2^{ \delta q } }{ 2^{ \delta q } - 1 } \right)^{ 1 / q } | x |_{ R^{ s, A }_{ q, X } ( k, \alpha + 1, \beta ) }.
\end{align*}
Thus, we have verified (\ref{e:independence of lR-inhomogeneous-proof-I<alpha+1}). Finally, from (\ref{e:independence of lR-inhomogeneous-proof-CI}) and (\ref{e:independence of lR-inhomogeneous-proof-I<alpha+1}) the desired inequality (\ref{e:independence of R-inhomogeneous l-proof1}) follows immediately. The proof is complete.
\end{proof}

For a relatively satisfactory independence of the space $R^{ s, A }_{ q, X } ( k, \alpha, \beta )$ with respect to $\beta$, however, an auxiliary term $\| ( 2^k + A )^{ - \alpha } x \|$ seems to be indispensable. The next result is crucial to the theory of inhomogeneous Besov spaces associated with non-negative operators.

\begin{lem}\label{l:independence of klm-inhomogeneous}
Let $0 < q \leq \infty$ and $s \in \mathbb{R}$, and let $k \in \mathbb{Z}$ and $\alpha, \beta \in \mathbb{C}_+^*$ such that $- \RE \alpha < s < \RE \beta$.
Then $R^{ s, A }_{ q, X } ( k, \alpha, \beta ) = R^{ s, A }_{ q, X } ( k', \alpha', \beta' )$ from the set-theoretic point of view and the equivalence
\begin{align}\label{e:independence of klm-inhomogeneous}
\left\| ( 2^k + A )^{ - \alpha } x \right\| + | x |_{ R^{ s, A }_{ q, X } ( k, \alpha, \beta ) } \simeq \big\| ( 2^{ k' } + A )^{ - \alpha' } x \big\| + | x |_{ R^{ s, A }_{ q, X } ( k', \alpha', \beta' ) }
\end{align}
holds for all $\alpha', \beta' \in \mathbb{C}_+^*$ satisfying $- \RE \alpha' < s < \RE \beta'$ and $k' \in \mathbb{Z}$.
\end{lem}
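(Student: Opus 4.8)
The plan is to reduce the full equivalence (\ref{e:independence of klm-inhomogeneous}) to three separate independence claims, two of which are already available, and then to isolate the genuinely new content, which is independence with respect to $\beta$. By Lemma \ref{l:independence of R-inhomogeneous-k} the semi-quasinorm $|x|_{R^{s,A}_{q,X}(k,\alpha,\beta)}$ does not depend on $k$, and by Lemma \ref{l:independence of lR-inhomogeneous} it does not depend on $\alpha$; moreover the auxiliary term $\|(2^k+A)^{-\alpha}x\|$ is easily controlled under changes of $k$ and $\alpha$ using the uniform boundedness estimates (\ref{e:uniform boundedness compositions-M}) and (\ref{e:resolvent estimate-discrete}) (for a change of $\alpha$ one writes $(2^k+A)^{-\alpha'} = (2^k+A)^{-(\alpha'-\alpha)}(2^k+A)^{-\alpha}$ or the reverse, noting $\RE\alpha,\RE\alpha'>-s$ so the extra factor is a bounded power of a resolvent, up to the case $\alpha=0$ where the term is just $\|x\|$ and is dominated by the general estimates of Remark \ref{r:quasi-norm of inhomogeneous Besov space-injective}(i)). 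So after these reductions it suffices to fix $k=k'$ and $\alpha=\alpha'$ and prove, for $\beta,\beta'\in\mathbb{C}_+^*$ with $s<\RE\beta$, $s<\RE\beta'$,
\begin{align*}
\|(2^k+A)^{-\alpha}x\| + |x|_{R^{s,A}_{q,X}(k,\alpha,\beta)} \simeq \|(2^k+A)^{-\alpha}x\| + |x|_{R^{s,A}_{q,X}(k,\alpha,\beta')}.
\end{align*}

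For the $\beta$-independence, by symmetry it is enough to treat $\RE\beta<\RE\beta'$, and in fact it suffices to pass from $\beta$ to $\beta+1$ (then iterate to reach any $\beta'$ with $\RE\beta'>\RE\beta$, and by Lemma \ref{l:independence of lR-inhomogeneous} the non-integer gap is absorbed). One direction, $|x|_{R^{s,A}_{q,X}(k,\alpha,\beta+1)}\lesssim |x|_{R^{s,A}_{q,X}(k,\alpha,\beta)}$ (up to the auxiliary term), should follow from the uniform boundedness of $\{A(2^j+A)^{-1}\}_{j\ge k}$ via Example \ref{E:uniform non-negativity and boundedness}(ii), writing $A^{\beta+1}(2^i+A)^{-\alpha-\beta-1} = A(2^i+A)^{-1}\cdot A^{\beta}(2^i+A)^{-\alpha-\beta}$; no auxiliary term is even needed here. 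The reverse direction is the crux: one must recover $A^{\beta}(2^i+A)^{-\alpha-\beta}x$ from the family $\{A^{\beta+1}(2^j+A)^{-\alpha-\beta-1}x\}_{j}$ together with a single ``base'' term. This is exactly what the reproducing-type identity (\ref{e:inhomogeneous Calderon}) of Lemma \ref{l:reproducing formulas}(i) provides when $\beta$ is a positive integer; for general $\beta\in\mathbb{C}_+^*$ the right tool is (\ref{e:transform resolvent}), which expresses $2^{i(\alpha'+\beta)}A^{\beta}(2^i+A)^{-\alpha'-\beta}x$ for a suitable intermediate exponent as an integral of $t^{\alpha'+\beta}A^{\beta+1}(t+A)^{-\alpha'-\beta-1}x$ over $(\mu,2^i)$ plus a boundary term at $\mu$. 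Taking $\mu=2^k$ produces the boundary term $2^{k(\alpha'+\beta)}A^{\beta}(2^k+A)^{-\alpha'-\beta}x$, which (shifting the exponent $\alpha'$) is dominated by $\|(2^k+A)^{-\alpha}x\|$ after using the uniform boundedness of the relevant resolvent compositions (Lemma \ref{l:uniform boundedness compositions}). Here one has to be a little careful: $\beta$ and $\beta+1$ appear with $\alpha-1$ in place of $\alpha$ so that the ``$s+\alpha$'' weight matches; since $s<\RE\beta$ we may, by the already-proven $\alpha$-independence, first decrease $\alpha$ by $1$, apply (\ref{e:transform resolvent}), and then restore $\alpha$.

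Concretely, I would split $\int_{2^k}^{2^i} = \sum_{r=k}^{i-1}\int_{2^r}^{2^{r+1}}$ in the integral coming from (\ref{e:transform resolvent}), bound each piece using (\ref{e:resolvent estimate-discrete}) with $c=2$ to replace $(t+A)^{-\alpha-\beta-1}$ by $(2^r+A)^{-\alpha-\beta-1}$ up to a constant, and thereby dominate $|x|_{R^{s,A}_{q,X}(k,\alpha,\beta)}$ by the $\ell^q$-norm of a lower-triangular convolution of the sequence $(2^{r(s+\RE\alpha)}\|2^r A^{\beta+1}(2^r+A)^{-\alpha-\beta-1}x\|)_{r\ge k}$ against the geometric weight $2^{(i-r)(s+\RE\alpha)}$ (which is summable because $s+\RE\alpha>0$), plus the boundary contribution. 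The summability of this discrete convolution is handled by (\ref{e: q-inequality-series<1}) when $0<q\le1$, by exchanging the order of summation, and by Hölder's inequality with an auxiliary exponent $\delta\in(0,s+\RE\alpha)$ when $1<q<\infty$ — precisely the two-case argument already used in the proof of Lemma \ref{l:independence of lR-inhomogeneous}; the case $q=\infty$ is a direct supremum estimate. The main obstacle, then, is purely bookkeeping: arranging the exponents so that (\ref{e:transform resolvent}) applies with the correct shifted $\alpha$, and checking that the single boundary term $2^{k(\alpha'+\beta)}A^{\beta}(2^k+A)^{-\alpha'-\beta}x$ is genuinely controlled by $\|(2^k+A)^{-\alpha}x\|$ (this is where the auxiliary term in the statement is indispensable, since without it no bound is possible when $s<0$). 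Once $\beta$-independence is established, combining it with Lemmas \ref{l:independence of R-inhomogeneous-k} and \ref{l:independence of lR-inhomogeneous} and the easy estimates for the auxiliary term yields (\ref{e:independence of klm-inhomogeneous}) in full generality, and in particular shows that the set $R^{s,A}_{q,X}(k,\alpha,\beta)$ is independent of all three parameters.
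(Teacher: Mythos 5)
Your reduction to three separate independence claims ($k$, $\alpha$, $\beta$) is the right skeleton, and your plan for $\beta$-independence — split $\int_{2^k}^{2^i}$ into dyadic pieces via (\ref{e:transform resolvent}), control the triangular convolution by the geometric weight, and bound the boundary term by $\|(2^k+A)^{-\alpha}x\|$ — matches the paper's argument (apart from a confused digression about ``decreasing $\alpha$ by $1$ so the weight matches''; no such shift is needed since $t^{\alpha+\beta}=t^{s+\alpha}\cdot t^{\beta-s}$ makes the weights line up directly).

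However, there is a genuine gap in your dismissal of the $\alpha$-step. You assert that the auxiliary term $\|(2^k+A)^{-\alpha}x\|$ is ``easily controlled under changes of $\alpha$'' because the discrepancy $(2^k+A)^{-(\alpha'-\alpha)}$ is ``a bounded power of a resolvent.'' That is true only in one direction: if $\RE\alpha'>\RE\alpha$ then $\|(2^k+A)^{-\alpha'}x\|\lesssim\|(2^k+A)^{-\alpha}x\|$. For the opposite inequality you would need to apply $(2^k+A)^{\alpha'-\alpha}$ with $\RE(\alpha'-\alpha)>0$, which is unbounded whenever $A$ is unbounded, so the estimate $\|(2^k+A)^{-\alpha}x\|\lesssim\|(2^k+A)^{-\alpha'}x\|$ simply fails. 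Consequently you cannot reduce to ``$k=k'$ and $\alpha=\alpha'$'' as cheaply as you claim. In the paper this is precisely where the inhomogeneous Calder\'on reproducing formula (\ref{e:inhomogeneous Calderon}) enters: it is used, with $\alpha+\beta=n\in\mathbb{N}$, to prove
\begin{align*}
\big\|(2^k+A)^{-\alpha}x\big\|\lesssim\big\|(2^k+A)^{-(\alpha+1)}x\big\|+|x|_{R^{s,A}_{q,X}(k,\alpha+1,\beta)},
\end{align*}
i.e.\ the loss in the auxiliary term when $\RE\alpha$ increases must be compensated by the semi-quasinorm, not absorbed into the resolvent. Thus $\alpha$-independence of the \emph{full} quasi-norm (as opposed to the semi-quasinorm, which is Lemma \ref{l:independence of lR-inhomogeneous}) is a second genuinely new ingredient, on a par with $\beta$-independence, and your plan does not account for it. To repair the proposal you should add a step that proves (\ref{e:independence of klm-inhomogeneous-proof-a<a+1}) along these lines, using (\ref{e:inhomogeneous Calderon}) with $m=\alpha+\beta\in\mathbb{N}$ (which you may arrange by first invoking $\beta$-independence) and then the same dyadic decomposition and (\ref{e:resolvent estimate-discrete}) estimate you use for $\beta$.
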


\begin{proof}
Fix $k' \in \mathbb{Z}$ and let $\alpha', \beta' \in \mathbb{C}_+^*$ such that $- \RE \alpha' < s < \RE \beta'$. It suffices to verify that 
\begin{align}\label{e:independence of klm-inhomogeneous k}
\big\| ( 2^k + A )^{ - \alpha } x \big\| + | x |_{ R^{ s, A }_{ q, X } ( k, \alpha, \beta ) } \simeq \big\| ( 2^{ k' } + A )^{ - \alpha } x \big\| + | x |_{ R^{ s, A }_{ q, X } ( k', \alpha, \beta ) },
\end{align}
\begin{align}\label{e:independence of klm-inhomogeneous m}
\big\| ( 2^k + A )^{ - \alpha } x \big\| + | x |_{ R^{ s, A }_{ q, X } ( k, \alpha, \beta) } \simeq \big\| ( 2^{ k } + A )^{ - \alpha } x \big\| + | x |_{ R^{ s, A }_{ q, X } ( k, \alpha, \beta' ) },
\end{align}
and
\begin{align}\label{e:independence of klm-inhomogeneous l}
\big\| ( 2^k + A )^{ - \alpha } x \big\| + | x |_{ R^{ s, A }_{ q, X } ( k, \alpha, \beta ) } \simeq \big\| ( 2^{ k } + A )^{ - \alpha' } x \big\| + | x |_{ R^{ s, A }_{ q, X } ( k, \alpha', \beta ) }.
\end{align}

First we verify (\ref{e:independence of klm-inhomogeneous k}). Thanks to (\ref{e:independence of R-inhomogeneous k}), it suffices to verify that 
\begin{align*}
\big\| ( 2^k + A )^{ - \alpha } x \big\| \simeq \big\| ( 2^{ k' } + A )^{ - \alpha } x \big\|.
\end{align*}
Indeed, note that $( 2^{ k' } + A ) ( 2^k + A )^{ - 1 }$ is bounded on $X$ due to (\ref{e:non-negative constant}) and (\ref{e:non-negative constant 2}), so is $( 2^{ k' } + A )^\alpha ( 2^k + A )^{ - \alpha }$ as the $\alpha$-power of $( 2^{ k' } + A ) ( 2^k + A )^{ - 1 }$ given by (\ref{e:integral-Balakrishnan}) above (also, see (\ref{e:uniform boundedness compositions-C}) above). This implies that
\begin{align*}
\big\| ( 2^k + A )^{ - \alpha } x \big\| 
\leq \big\| ( 2^{ k' } + A )^\alpha ( 2^k + A )^{ - \alpha } \big\| \cdot \big\| ( 2^{ k' } + A )^{ - \alpha } \big\| 
\lesssim \big\| ( 2^{ k' } + A )^{ - \alpha } x \big\|.  
\end{align*}
The inverse inequality can be verified analogously. 
Thus, we have verified (\ref{e:independence of klm-inhomogeneous k}). 

Next we verify (\ref{e:independence of klm-inhomogeneous m}). Analogous to (\ref{e:independence of R-inhomogeneous l-proof1}), it suffices to verify that
\begin{align*}
| x |_{ R^{ s, A }_{ q, X } ( k, \alpha, \beta' ) } \lesssim | x |_{ R^{ s, A }_{ q, X } ( k, \alpha, \beta ) } \lesssim \left\| ( 2^k + A )^{ - \alpha } x \right\| + | x |_{ R^{ s, A }_{ q, X } ( k, \alpha, \beta + 1 ) } 
\end{align*}
for $\RE \beta' > \RE \beta$. Indeed, the former is a direct consequence of (\ref{e:uniform boundedness compositions-L}). The latter can be verified by using Lemmas \ref{l:uniform boundedness compositions} and \ref{l:reproducing formulas}. We merely give a proof in the case $0 < q < \infty$ and the case when $q = \infty$ can be proved analogously. 
To this end, fix $x \in \overline{ D ( A ) }$ such that 
$| x |_{ R^{ s, A }_{ q, X, k, \alpha, \beta + 1 } } < \infty$. 
Applying (\ref{e:transform resolvent}) with $\lambda = 2^i$ and $\mu = 2^k$ yields
\begin{align*}
| x |_{ R^{ s, A }_{ q, X } ( k, \alpha, \beta ) } 
\leq {} & \left| \alpha + \beta \right| \Bigg\{ \sum_{ i = k }^\infty \bigg[ 2^{ i ( s - \RE \beta ) } \int_{ 2^k }^{ 2^i } \left\| t^{ \alpha + \beta } A^{ \beta + 1 } ( t + A )^{ - \alpha - \beta - 1 } x \right\| \frac{ d t }{ t } \\
+ {} & 2^{ i ( s - \RE \beta ) +  k ( \RE \alpha + \RE \beta ) } \big\| A^\beta ( 2^k + A )^{ - \alpha - \beta } x \big\| \bigg]^q \Bigg\}^{ 1 / q }.
\end{align*}	
Rewriting $\int_{ 2^k }^{ 2^i } = \sum_{ r = k }^{ i - 1 } \int_{ 2^r }^{ 2^{ r + 1 } }$ and applying (\ref{e: q-inequality}) and (\ref{e:resolvent estimate-discrete}) with $c = 1$ yields
\begin{align}\label{e:independence of klm-inhomogeneous-proof-J1+J2}
| x |_{ R^{ s, A }_{ q, X } ( k, \alpha, \beta ) } \lesssim J_1 + J_2,
\end{align}
%
where 
\begin{align*}
	J_1 = \left\{ \sum_{ i = k + 1 }^\infty \left[ \sum_{ r = k
	}^{ i - 1 } \big\| 2^{ i ( s - \beta ) } 2^{ r ( \alpha + \beta ) } A^{ \beta + 1 }
	( 2^r + A )^{ - \alpha - \beta - 1 } x \big\| \right]^q \right\}^{ 1 / q }
\end{align*}
and 
\begin{align*}
	J_2 = \left\{ \sum_{ i = k }^\infty 2^{ [ i ( s - \RE \beta ) + k ( \RE \alpha + \RE \beta ) ] q } \left\| A^\beta ( 2^k + A )^{ - \alpha - \beta } x \right\|^q \right\}^{ 1 / q }. 
\end{align*}
Analogous to (\ref{e:independence of lR-inhomogeneous-proof-I<alpha+1}), by using the classical inequality (\ref{e: q-inequality-series<1}) for $0 < q \leq 1$ and the H\"{o}lder inequality for $1 < q < \infty$ we have   
\begin{align*}
	J_1 \lesssim | x |_{ R^{ s, A }_{ q, X } ( k, \alpha, \beta + 1 ) }.
\end{align*}
By (\ref{e:uniform boundedness compositions-L}) we also have 
\begin{align*}
	J_2 \lesssim \left\| ( 2^k + A )^{ - \alpha } x \right\|  
\end{align*}
due to the fact that $s - \RE \beta < 0$. Then it follows from (\ref{e:independence of klm-inhomogeneous-proof-J1+J2}) that     
\begin{eqnarray*}
	| x |_{ R^{ s, A }_{ q, X } ( k, \alpha, \beta ) } \lesssim \| ( 2^k + A )^{ - \alpha } x \| + | x |_{ R^{ s, A }_{ q, X } ( k, \alpha, \beta + 1 ) }, 
\end{eqnarray*}
which is the desired inequality. Thus, we have verified (\ref{e:independence of klm-inhomogeneous m}).

Finally, we verify (\ref{e:independence of klm-inhomogeneous l}). To this end, we may assume that $\RE \alpha \leq \RE \alpha'$, otherwise the desired conclusion follows from the symmetry of $\alpha$ and $\alpha'$. Moreover, it suffices to verify the following two inequalities for some special indices. One is  
\begin{align}\label{e:independence of klm-inhomogeneous-proof-a1<a2}
\big\| ( 2^k + A )^{ - \alpha_1 } x \big\| + | x |_{ R^{ s, A }_{ q, X } ( k, \alpha_1, \beta ) }  \lesssim \big\| ( 2^k + A )^{ - \alpha_2 } x \big\| + | x |_{ R^{ s, A }_{ q, X } ( k, \alpha_2, \beta ) }
\end{align}
for all $\alpha_1, \alpha_2 \in \mathbb{C}_+^*$ with $\RE \alpha_1 > \RE \alpha_2$, and the other is  
\begin{align}\label{e:independence of klm-inhomogeneous-proof-a<a+1}
\big\| ( 2^k + A )^{ - \alpha } x \big\| + | x |_{ R^{ s, A }_{ q, X } ( k, \alpha, \beta ) }  \lesssim \big\| ( 2^k + A )^{ - ( \alpha +1 ) } x \big\| + | x |_{ R^{ s, A }_{ q, X } ( k, \alpha + 1, \beta ) }.
\end{align} 
If this is the case, from (\ref{e:independence of klm-inhomogeneous-proof-a<a+1}) and (\ref{e:independence of klm-inhomogeneous-proof-a1<a2}) it follows that 
\begin{align*}
\big\| ( 2^k + A )^{ - \alpha } x \big\| + | x |_{ R^{ s, A }_{ q, X } ( k, \alpha, \beta ) } & \lesssim \big\| ( 2^k + A )^{ - ( \alpha + 1 ) } x \big\| + | x |_{ R^{ s, A }_{ q, X } ( k, \alpha + 1, \beta ) } \\
& \lesssim \big\| ( 2^k + A )^{ - \alpha' } x \big\| + | x |_{ R^{ s, A }_{ q, X } ( k, \alpha', \beta ) } \\
& \lesssim \big\| ( 2^k + A )^{ - \alpha' + 1 } x \big\| + | x |_{ R^{ s, A }_{ q, X } ( k, \alpha' + 1, \beta ) } \\
& \lesssim \big\| ( 2^k + A )^{ - \alpha } x \big\| + | x |_{ R^{ s, A }_{ q, X } ( k, \alpha, \beta ) }
\end{align*}
whenever $\RE \alpha = \RE \alpha'$ and that 
\begin{align*}
\big\| ( 2^k + A )^{ - \alpha } x \big\| + | x |_{ R^{ s, A }_{ q, X } ( k, \alpha, \beta ) } & \lesssim \big\| ( 2^k + A )^{ - ( \alpha + n ) } x \big\| + | x |_{ R^{ s, A }_{ q, X } ( k, \alpha + n, \beta ) } \\
& \lesssim \big\| ( 2^k + A )^{ - \alpha' } x \big\| + | x |_{ R^{ s, A }_{ q, X } ( k, \alpha', \beta ) } \\
& \lesssim \big\| ( 2^k + A )^{ - \alpha } x \big\| + | x |_{ R^{ s, A }_{ q, X } ( k, \alpha, \beta ) }
\end{align*}
whenever $\RE \alpha < \RE \alpha'$, where $n \in \mathbb{N}$ satisfying $\RE \alpha + n > \RE \alpha'$. Thus, we have verified (\ref{e:independence of klm-inhomogeneous l}) by using (\ref{e:independence of klm-inhomogeneous-proof-a<a+1}) and (\ref{e:independence of klm-inhomogeneous-proof-a1<a2}).

It remains to verify (\ref{e:independence of klm-inhomogeneous-proof-a1<a2}) and (\ref{e:independence of klm-inhomogeneous-proof-a<a+1}). Indeed, (\ref{e:independence of klm-inhomogeneous-proof-a1<a2}) is a direct consequence of (\ref{e:uniform boundedness compositions-M}) and it is sufficient to verify (\ref{e:independence of klm-inhomogeneous-proof-a<a+1}). Thanks to (\ref{e:independence of klm-inhomogeneous m}), it suffices to verify (\ref{e:independence of klm-inhomogeneous-proof-a<a+1}) for the case in which $\beta$ with $\IM \beta = - \IM \alpha$ and $\RE \beta = n - \RE \alpha$, where $n$ is an integer large enough such that $n > s + \RE \alpha$. To this end, let $x \in \overline{ D ( A ) }$ such that $| x |_{ R^{ s, A }_{ q, X } ( k, \alpha + 1, \beta ) } < \infty$. Note that $\alpha + \beta = n \in \mathbb{N}$ due to the hypothesis, and hence, by using (\ref{e:inhomogeneous Calderon}) with $\alpha$ and $m$ replaced by $\alpha + 1$ and $\alpha + \beta$, respectively, we have 
\begin{align*}
\big\| ( 2^k + A )^{ - \alpha } x \big\| & \lesssim \big\| ( 2^k + A )^{ - ( \alpha + 1 ) } x \big\| + \int_{ 2^k }^\infty \big\| A^{ \beta } ( \mu + A )^{ - \alpha - \beta - 1 } x \big\| \, d \mu. 
\end{align*}
Decomposing the integral $\int_{ 2^k }^\infty = \sum_{ i = k }^{ \infty } \int_{ 2^i }^{ 2^{ i + 1 } }$ and applying (\ref{e:resolvent estimate-discrete}) yields 
\begin{align*}
& \big\| ( 2^k + A )^{ - \alpha } x \big\| \\
\lesssim {} & \big\| ( 2^k + A )^{ - ( \alpha + 1 ) } x \big\| + \sum_{ i = k }^\infty 2^{ - i ( s + \RE \alpha ) } \big\| 2^{ i s } 2^{ i ( \alpha + 1 ) } A^{ \beta } ( 2^i + A )^{ - \alpha - \beta - 1 } x \big\| \\
\lesssim {} & \big\| ( 2^k + A )^{ - ( \alpha + 1 ) } x \big\| + | x |_{ R^{ s, A }_{ q, X } ( k, \alpha +1, \beta ) },
\end{align*}
where the last inequality follows from the monotonicity of spaces $\ell_q$ if $0 < q \leq 1$, from the H\"{o}lder inequality if $1 < q < \infty$ and from the convergence of the series $\sum_{ i = k }^\infty 2^{ - i ( s + \RE \alpha ) }$ if $q = \infty$, respectively. Thus, we have verified that
\begin{align}\label{e:independence of klm-inhomogeneous-proof-final}
\big\| ( 2^k + A )^{ - \alpha } x \big\| \lesssim \big\| ( 2^k + A )^{ - ( \alpha + 1 ) } x \big\| + | x |_{ R^{ s, A }_{ q, X } ( k, \alpha + 1, \beta ) }.
\end{align}
Finally, by using (\ref{e:independence of klm-inhomogeneous-proof-final}) and (\ref{e:independence of R-inhomogeneous l}) (or, more precisely, (\ref{e:independence of R-inhomogeneous l-proof1})) we obtain the desired inequality (\ref{e:independence of klm-inhomogeneous-proof-a<a+1}), immediately. The proof is complete.
\end{proof}

\smallskip


\subsection{Inhomogeneous Besov spaces}


Recall that a vector space $\mathcal{X}$ on $\mathbb{K} = \mathbb{R}$ or $\mathbb{C}$ is said to be a quasi-normed space on $\mathbb{K}$ if there is a functional $\left\|
\cdot \right\|_\mathcal{X}$
on $\mathcal{X}$ which satisfies the following three conditions (a), (b) and (c):
\begin{itemize}
\item[(a)] $\| x \|_\mathcal{X} \geq 0$ for $x \in \mathcal{X}$, and $\| x \|_\mathcal{X} = 0$ if and only if $x = 0$,
\item[(b)] $\| \alpha x \|_\mathcal{X} = | \alpha | \, \| x \|_\mathcal{X}$ for $\alpha \in \mathbb{K}$ and $x \in \mathcal{X}$,
\item[(c)] there is a constant $K \geq 1$ such that
\begin{align}\label{e:quasi-triangle inequality}
\| x + y \|_\mathcal{X} \leq K ( \| x \|_\mathcal{X} + \| y \|_\mathcal{X} ), \quad x, y \in \mathcal{X}.
\end{align}
\end{itemize}
%
By the Aoki-Rolewicz theorem\cite{Aoki1942,Rolewicz1957}, 
for each quasi-normed space $( \mathcal{X}, \left\| \cdot \right\|_{ \mathcal{X} } )$, there is an equivalent quasi-norm $\interleave \cdot \interleave_\mathcal{X}$ on $\mathcal{X}$ which satisfies the $p$-subadditivity, i.e.,
\begin{align*}
\interleave x + y \interleave_\mathcal{X}^p \leq \interleave x \interleave_\mathcal{X}^p
+ \interleave y \interleave_\mathcal{X}^p, \quad x, y \in \mathcal{X},
\end{align*}
for some $0 < p \le 1$. More precisely, one can take $p = \ln 2 / ( \ln K + \ln 2 )$ with $K$, the modulus of concavity given in (\ref{e:quasi-triangle inequality}), and define an equivalent quasi-norm $\interleave \cdot \interleave_\mathcal{X}$ on $\mathcal{X}$ satisfying the $p$-subadditivity in the following way: 
\begin{align}\label{e:quasinorm-p-subadditivity}
\interleave x \interleave_\mathcal{X} := \inf \Bigg\{ \bigg( \sum_{ i = 1 }^n \| x_i \|^p_{ \mathcal{ X } } \bigg)^{ 1 / p }: x = \sum_{ i = 1 }^n x_i, x_i \in \mathcal{X}, i = 1, 2, \cdots, n \in \mathbb{N} \Bigg\} 
\end{align}
for $x \in \mathcal{X}$ (see \cite[Theorems 1.2 and 1.3]{KaltonM1984}).
%
%
The metric topology on $( \mathcal{X}, \left\| \cdot \right\|_{ \mathcal{X} } )$, induced by $\left\| \cdot \right\|_{ \mathcal{X} }$, can be defined by $d ( x, y ) := \interleave x - y \interleave_\mathcal{X}^p$ for $x, y \in \mathcal{X}$,
where $\interleave \cdot \interleave_\mathcal{X}$ is the equivalent quasi-norm on $\mathcal{X}$ given by (\ref{e:quasinorm-p-subadditivity}). Therefore, 
every quasi-normed space admits a completion in the sense of equivalent quasi-norms.
Moreover, a quasi-normed space $\mathcal{X}$ is said to be a quasi-Banach space if it is complete with respect to this metric $d$.  We also refer the reader to \cite{Kalton2003} for a brief review on quasi-Banach spaces.

Let $0 < q \leq \infty$ and $s \in \mathbb{R}$, and fix $k \in \mathbb{Z}$ and $\alpha, \beta \in \mathbb{C}_+^*$ satisfying $- \RE \alpha < s < \RE \beta$. 
We now turn to the vector space $R^{ s, A }_{ q, X } ( k, \alpha, \beta )$ given in Definition \ref{d:Besov type-inhomogeneous}. 
Observe that 
\begin{align}\label{e:quasi-norm of inhomogeneous Besov spaces}
\| x \|_{ B^{ s, A }_{ q, X } ( k, \alpha, \beta ) } := \left\| ( 2^k + A )^{ - \alpha } x \right\| + | x |_{ R^{ s, A }_{ q, X } ( k, \alpha, \beta ) },
\end{align}
is a quasi-norm on $R^{ s, A }_{ q, X } ( k, \alpha, \beta )$ due to the fact that $\left\| ( 2^k + A )^{ - \alpha } \cdot \right\|$ is a norm on the Banach space $X$. Therefore, there is a metric topology on $R^{ s, A }_{ q, X } ( k, \alpha, \beta )$
induced by the quasi-norm $ \left\| \cdot \right\|_{ B^{ s, A }_{ q, X } ( k, \alpha, \beta ) }$ as we discussed in the last paragraph. More precisely, a metric on $R^{ s, A }_{ q, X } ( k, \alpha, \beta )$ can be defined by 
\begin{align}\label{e:inhomogeneous Besov metric}
d ( x, y ) = \interleave x - y \interleave_{ B^{ s, A }_{ q, X } ( k, \alpha, \beta ) }^p, \quad x, y \in R^{ s, A }_{ q, X } ( k, \alpha, \beta ),
\end{align}
where $\interleave \cdot \interleave_{ B^{ s, A }_{ q, X } ( k, \alpha, \beta ) }$ is the equivalent quasi-norm given by (\ref{e:quasinorm-p-subadditivity}) with $( \mathcal{X}, \left\| \cdot \right\|_{ \mathcal{X} } )$
replaced by
$( R^{ s, A }_{ q, X } ( k, \alpha, \beta ), \left\| \cdot \right\|_{ B^{ s, A }_{ q, X } ( k, \alpha, \beta ) } )$.

Unfortunately, the space $R^{ s, A }_{ q, X } ( k, \alpha, \beta )$ is possibly not complete with respect to the metric $d$ given by (\ref{e:inhomogeneous Besov metric}), even in some classical examples (like $A = - \Delta_p$, the negative of the Laplacian on $L^p ( \mathbb{R}^n )$). 
We turn to the completion of $R^{ s, A }_{ q, X } ( k, \alpha, \beta )$ with respect to the quasi-norm $\left\| \cdot \right\|_{ B^{ s, A }_{ q, X } ( k, \alpha, \beta ) }$ 
and define (inhomogeneous) Besov spaces associated with $A$ as follows.


\begin{defn}\label{d:inhomogeneous Besov spaces}
Let $0 < q \leq \infty$ and $s \in \mathbb{R}$, and fix $k \in \mathbb{Z}$ and $\alpha, \beta \in \mathbb{C}_+^*$ satisfying $- \RE \alpha < s < \RE \beta$. The inhomogeneous
Besov space $B^{ s, A }_{ q, X } ( k, \alpha, \beta )$ associated with $A$ is defined as the completion of $ R^{ s, A }_{ q, X } ( k, \alpha, \beta ) $ with respect to the metric $d$ given in (\ref{e:inhomogeneous Besov metric}) (the quasi-norm on $B^{ s, A }_{ q, X } ( k, \alpha, \beta )$ is still denoted by $\left\| \cdot \right\|_{ B^{ s, A }_{ q, X } ( k, \alpha, \beta ) }$ for convenience).
\end{defn}


\begin{rem}\label{r:independence of klm-inhomogeneous}
As shown in Lemma \ref{l:independence of klm-inhomogeneous}, the quasi-norm $\left\| \cdot \right\|_{ B^{ s, A }_{ q, X } ( k, \alpha, \beta ) }$ given by (\ref{e:quasi-norm of inhomogeneous Besov spaces}) is independent of the choice of $k, \alpha$ and $\beta$ 
in the sense of equivalent quasi-norms.
We shall not distinguish between equivalent quasi-norms of a given quasi-Banach space, and therefore we 
write $B^{ s, A }_{ q, X }$ instead of $B^{ s, A }_{ q, X } ( k, \alpha, \beta )$ as well as $\left\| \cdot \right\|_{ B^{ s, A }_{ q, X } }$ instead of $\left\| \cdot \right\|_{ B^{ s, A }_{ q, X } ( k, \alpha, \beta ) }$ unless otherewise the indices $k, \alpha$ and $\beta$ need to be explicitly specified.
\end{rem}


The next result reveals that the inhomogeneous Besov space $B^{ s, A }_{ q, X }$ is indeed a subspace of the underlying space $X$ whenever $s > 0$. 

\begin{prop}\label{p:completeness-inhomogeneous}
Let $s > 0 $ and $0 < q \leq \infty$. Then $B^{ s, A }_{ q, X } (k, \alpha, \beta) = R^{ s, A }_{ q, X } (k, \alpha, \beta)$ for each $k \in \mathbb{Z}$, $\alpha \in \mathbb{C}_+^*$ and $\beta \in \mathbb{C}_+$ satisfying $- \RE \alpha < s < \RE \beta$. 
\end{prop}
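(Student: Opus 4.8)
The plan is to show that the subspace $R^{ s, A }_{ q, X } ( k, \alpha, \beta )$ is already complete for the metric $d$ of (\ref{e:inhomogeneous Besov metric}); since $B^{ s, A }_{ q, X } ( k, \alpha, \beta )$ is by Definition \ref{d:inhomogeneous Besov spaces} the completion of $R^{ s, A }_{ q, X } ( k, \alpha, \beta )$, completeness of the latter yields the asserted identity. By Lemma \ref{l:independence of klm-inhomogeneous} and Remark \ref{r:independence of klm-inhomogeneous} the quasi-norm $\| \cdot \|_{ B^{ s, A }_{ q, X } ( k, \alpha, \beta ) }$ is, up to equivalence, independent of $( k, \alpha, \beta )$, and equivalent quasi-norms have the same Cauchy sequences and the same limits; moreover the same set $R^{ s, A }_{ q, X }$ underlies all admissible choices. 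Hence it suffices to prove completeness for one convenient choice, and I would take $\RE \alpha > 0$ and $\beta = m \in \mathbb{N}$ with $m > s$, which is possible because $s > 0$.

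The heart of the matter is the estimate
\begin{align*}
\| x \|_X \lesssim \| x \|_{ B^{ s, A }_{ q, X } ( k, \alpha, m ) } = \big\| ( 2^k + A )^{ - \alpha } x \big\| + | x |_{ R^{ s, A }_{ q, X } ( k, \alpha, m ) }, \qquad x \in \overline{ D ( A ) }.
\end{align*}
To obtain it I would apply the inhomogeneous reproducing formula (\ref{e:inhomogeneous Calderon}) with $\lambda = 2^k$ and $\beta = m$: the finite sum $\sum_{ j = 0 }^{ m - 1 } [ A ( 2^k + A )^{ - 1 } ]^j 2^{ k \alpha } ( 2^k + A )^{ - \alpha } x$ is controlled by $\| ( 2^k + A )^{ - \alpha } x \|$ via Example \ref{E:uniform non-negativity and boundedness} and (\ref{e:uniform boundedness compositions-M}), while in the integral term I would write $\int_{ 2^k }^\infty = \sum_{ i = k }^\infty \int_{ 2^i }^{ 2^{ i + 1 } }$ and estimate each piece by $\big\| 2^{ i \alpha } A^m ( 2^i + A )^{ - \alpha - m } x \big\|$ using (\ref{e:resolvent estimate-discrete}) with $c = 2$. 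This gives
\begin{align*}
\| x \|_X \lesssim \big\| ( 2^k + A )^{ - \alpha } x \big\| + \sum_{ i = k }^\infty 2^{ - i s } \big\| 2^{ i ( s + \alpha ) } A^m ( 2^i + A )^{ - \alpha - m } x \big\|,
\end{align*}
and here the hypothesis $s > 0$ makes $( 2^{ - i s } )_{ i \geq k }$ lie in $\ell^r$ for every $r \geq 1$, so by Hölder's inequality the series is bounded by $| x |_{ R^{ s, A }_{ q, X } ( k, \alpha, m ) }$ (pairing $\ell^q$ with $\ell^{ q' }$ when $1 < q \leq \infty$, and using $\ell^q \hookrightarrow \ell^1$ together with boundedness of $( 2^{ - i s } )_{ i \geq k }$ when $0 < q \leq 1$).

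With this estimate in hand the completeness argument is routine. Let $( x_n )$ be a sequence in $R^{ s, A }_{ q, X } ( k, \alpha, m )$ which is Cauchy for $\| \cdot \|_{ B^{ s, A }_{ q, X } }$; applying the estimate to $x_n - x_{ n' } \in \overline{ D ( A ) }$ shows that $( x_n )$ is Cauchy in $X$, hence converges to some $x$, and $x \in \overline{ D ( A ) }$ by closedness of $\overline{ D ( A ) }$. For each $i \geq k$ the operator $2^{ i ( s + \alpha ) } A^m ( 2^i + A )^{ - \alpha - m }$ is bounded (Lemma \ref{l:uniform boundedness compositions}), so $2^{ i ( s + \alpha ) } A^m ( 2^i + A )^{ - \alpha - m } x_n \to w_i := 2^{ i ( s + \alpha ) } A^m ( 2^i + A )^{ - \alpha - m } x$ in $X$; a Fatou-type argument — letting $n' \to \infty$ in $\sum_{ i = k }^M \| w_i^{ ( n ) } - w_i^{ ( n' ) } \|^q$ for fixed $M$, then letting $M \to \infty$ — yields $| x - x_n |_{ R^{ s, A }_{ q, X } ( k, \alpha, m ) }^q \leq \liminf_{ n' } | x_{ n' } - x_n |_{ R^{ s, A }_{ q, X } ( k, \alpha, m ) }^q$ (with the usual modification for $q = \infty$), which is finite for $n$ large and tends to $0$ as $n \to \infty$ since $( x_n )$ is Cauchy. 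In particular $| x |_{ R^{ s, A }_{ q, X } ( k, \alpha, m ) } < \infty$, so $x \in R^{ s, A }_{ q, X } ( k, \alpha, m )$, and combined with $\| ( 2^k + A )^{ - \alpha } ( x_n - x ) \| \to 0$ (continuity of the bounded operator $( 2^k + A )^{ - \alpha }$) this gives $x_n \to x$ in $B^{ s, A }_{ q, X } ( k, \alpha, m )$. Thus $R^{ s, A }_{ q, X } ( k, \alpha, m )$ is complete and coincides with its completion $B^{ s, A }_{ q, X } ( k, \alpha, m )$, and transporting back through Lemma \ref{l:independence of klm-inhomogeneous} finishes the proof.

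The step I expect to be the main obstacle is precisely the key estimate $\| x \|_X \lesssim \| x \|_{ B^{ s, A }_{ q, X } }$: this is where $s > 0$ is genuinely used (for $s \leq 0$ the pairing of $( 2^{ - i s } )_i$ against an $\ell^q$ sequence breaks down, and the conclusion itself fails, e.g.\ for $A = - \Delta_p$ as noted in the text), and one must keep careful track of the three regimes $0 < q \leq 1$, $1 < q < \infty$, $q = \infty$ in the summation, as well as carry out the reduction to $\RE \alpha > 0$ and integer $\beta$ cleanly so that the reproducing formula (\ref{e:inhomogeneous Calderon}) applies.
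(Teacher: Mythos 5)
Your proof is correct, but it takes a more laborious route than the paper's at the one step that matters. The paper exploits the freedom in Lemma \ref{l:independence of klm-inhomogeneous} to reduce to the parameter choice $k = 0$, $\alpha = 0$: then $(2^k + A)^{-\alpha} = I$, so the quasi-norm $\| \cdot \|_{B^{s,A}_{q,X}(0,0,\beta)}$ literally contains the summand $\| x \|$, and the estimate $\| x \|_X \lesssim \| x \|_{B^{s,A}_{q,X}}$ — which you correctly identify as the heart of the matter — becomes a tautology. After that it is exactly the Fatou-type limiting argument you describe. You instead insist on $\RE \alpha > 0$ and $\beta = m \in \mathbb{N}$ and prove the key estimate from the reproducing formula (\ref{e:inhomogeneous Calderon}), splitting the integral dyadically, invoking (\ref{e:resolvent estimate-discrete}) on each block, and pairing $(2^{-is})_{i \geq k}$ against the $\ell^q$ sequence of $R$-seminorm terms; each of those steps is sound, and the three regimes of $q$ are handled correctly, so the argument closes. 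The only thing you miss is that $0 \in \mathbb{C}_+^*$ is an admissible value of $\alpha$ here (since $s > 0 = -\RE 0$), which would have let you skip the entire reproducing-formula computation. The payoff of your version is that it exhibits $B^{s,A}_{q,X} \hookrightarrow X$ with explicit control by the $(\alpha,m)$-quasi-norm — a fact the paper gets essentially for free from the parameter change but which is worth seeing proved directly — while the paper's version is shorter and needs less machinery.
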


\begin{proof}
Let $k \in \mathbb{Z}$ and fix $\alpha \in \mathbb{C}_+^*$ and $\beta \in \mathbb{C}_+$ such that 
$- \RE \alpha < s < \RE \beta$. It suffices to verify that $R^{ s, A }_{ q, X } (k, \alpha, \beta)$ is complete with respect to $\left\| \cdot \right\|_{ B^{ s, A }_{ q, X } ( k, \alpha, \beta ) }$. By Lemma \ref{l:independence of klm-inhomogeneous} above, it remains to verify that $R^{ s, A }_{ q, X } (0, 0, \beta)$ is complete with respect to $\left\| \cdot \right\|_{ B^{ s, A }_{ q, X } ( 0, 0, \beta) }$.
To this end, suppose that $\left\{ x_n \right\} \subset \overline{ D ( A ) }$ is a Cauchy sequence with respect to $\left\| \cdot \right\|_{ B^{ s, A }_{ q, X } ( 0, 0, \beta ) }$. Let $\epsilon > 0$ and fix $N \in \mathbb{N}$ such that 
\begin{align}\label{e:completeness-inhomogeneous-proof1}
\left\| x_m - x_n \right\|_{ B^{ s, A }_{ q, X } ( 0, 0, \beta ) } < \epsilon, \quad m, n \geq N.
\end{align}
By (\ref{e:quasi-norm of inhomogeneous Besov spaces}), 
it is clear that $\left\{ x_n \right\}$ is also Cauchy in $X$, and hence, there is an $x \in \overline{ D ( A ) }$ such that $x_n \rightarrow x$ as $n \rightarrow \infty$. Applying $n \rightarrow \infty$ to (\ref{e:completeness-inhomogeneous-proof1}) yields 
\begin{align*}
\| x_m - x \| + \left| x_m - x \right|_{ R^{ s, A }_{ q, X } ( 0, 0, \beta ) } \leq \epsilon, \quad m \geq N,
\end{align*}
and hence, 
\begin{align*}
\left| x \right|_{ R^{ s, A }_{ q, X } ( 0, 0, \beta ) } \lesssim {} & \left| x - x_N \right|_{ R^{ s, A }_{ q, X } ( 0, 0, \beta ) }
+ \left| x_N \right|_{ R^{ s, A }_{ q, X } ( 0, 0, \beta ) } < \infty.
\end{align*}
This implies that $x \in R^{ s, A }_{ q, X } (0, 0, \beta)$ and $x_n \to x$ with respect to the quasi-norm $\left\| \cdot \right\|_{ B^{ s, A }_{ q, X } ( 0, 0, \beta ) }$. The proof is complete.
\end{proof}

Now we give a connection between the fractional domains $D ( A^\alpha )$ and Besov spaces $B^{ s, A }_{ q, X }$, which will be used to establish interpolation spaces for abstract Besov spaces in Section \ref{S:Fractional powers} below. 

\begin{prop}\label{p:inclusions-powers-inhomogeneous}
Let $0 < q \le \infty$, and let $s, s' > 0$ and $\alpha \in \mathbb{C}_+$ with $0 < s < \RE \alpha < s' < \infty$. Then 
\begin{align}\label{e:inclusions-powers-inhomogeneous}
B^{ s', A }_{ q, X } \subset D ( A^\alpha ) \subset B^{ s, A }_{ q, X }.
\end{align}
\end{prop}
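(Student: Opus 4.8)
The plan is to prove the two inclusions separately, in each case using Proposition~\ref{p:completeness-inhomogeneous} (legitimate since $s,s'>0$) to identify each Besov space with the subspace $R^{s,A}_{q,X}$, and Lemma~\ref{l:independence of klm-inhomogeneous} to compute the $R$-semi-quasinorm with whichever representative $(k,\gamma,\beta)$ is convenient; throughout I will take $\gamma=0$, so that the quantity to control is $\big\{\sum_{i\ge k}\big\|2^{is}A^\beta(2^i+A)^{-\beta}x\big\|^q\big\}^{1/q}$ together with the auxiliary term $\|x\|$, with $\beta\in\mathbb{C}_+$ free subject to $\RE\beta>s$ (respectively $>s'$). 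The analytic inputs are the uniform resolvent bounds of Lemma~\ref{l:uniform boundedness compositions}: uniform boundedness of $\{A^\gamma(t+A)^{-\gamma}\}_{t>0}$ from (\ref{e:uniform boundedness compositions-L}), the bound $\|(t+A)^{-\gamma}\|\lesssim t^{-\RE\gamma}$ from (\ref{e:uniform boundedness compositions-M}), and the dyadic comparison (\ref{e:resolvent estimate-discrete}); and, for the first inclusion, the absolutely convergent representation of $A^\alpha$ in Corollary~\ref{c:representation of fractional powers-beta-absolute convergence}.

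For $B^{s',A}_{q,X}\subset D(A^\alpha)$, I would fix $\beta\in\mathbb{C}_+$ with $\RE\beta>s'$; then also $\RE\beta>s'>\RE\alpha$, so Corollary~\ref{c:representation of fractional powers-beta-absolute convergence} applies to the pair $(\alpha,\beta)$ once (\ref{e:representation of fractional powers-beta-absolute convergence}) is verified. Given $x\in B^{s',A}_{q,X}=R^{s',A}_{q,X}(k,0,\beta)$, set $a_i:=2^{is'}\|A^\beta(2^i+A)^{-\beta}x\|$, so that $(a_i)_{i\ge k}\in\ell^q$. Splitting $\int_0^\infty=\int_0^{2^k}+\sum_{i\ge k}\int_{2^i}^{2^{i+1}}$, the piece near $0$ is bounded by $\|x\|\int_0^{2^k}\lambda^{\RE\alpha-1}\,d\lambda<\infty$ via (\ref{e:uniform boundedness compositions-L}) and $\RE\alpha>0$, while the $i$-th dyadic block is bounded by a constant times $2^{i\RE\alpha}\|A^\beta(2^i+A)^{-\beta}x\|=2^{i(\RE\alpha-s')}a_i$, using $\lambda\simeq 2^i$ on $[2^i,2^{i+1}]$ and (\ref{e:resolvent estimate-discrete}) with $c=1$ in its $\beta=\alpha$ case. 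Since $\RE\alpha-s'<0$ the weights decay geometrically, so $\sum_{i\ge k}2^{i(\RE\alpha-s')}a_i<\infty$ --- by (\ref{e: q-inequality-series<1}) when $0<q\le1$, by H\"older's inequality when $1<q<\infty$, and trivially when $q=\infty$. Thus (\ref{e:representation of fractional powers-beta-absolute convergence}) holds and $x\in D(A^\alpha)$.

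For $D(A^\alpha)\subset B^{s,A}_{q,X}$, I first note $D(A^\alpha)\subset\overline{D(A^\alpha)}=\overline{D(A)}$ by Lemma~\ref{l:ergodicity}(i), so it suffices to place a given $x\in D(A^\alpha)$ in $R^{s,A}_{q,X}(k,0,\beta)$ for a convenient $\beta$; take $\beta$ real with $\beta>\RE\alpha$. The crucial identity is the factorization of bounded operators
\[
A^\beta(2^i+A)^{-\beta}=\big[A^{\beta-\alpha}(2^i+A)^{-(\beta-\alpha)}\big]\,\big[A^\alpha(2^i+A)^{-\alpha}\big],
\]
which holds because $\alpha,\beta-\alpha\in\mathbb{C}_+$: it is an instance of additivity (\ref{e:additivity of fractional powers}) together with commutation with the resolvents, which I would justify on the dense subspace $D(A^N)$ of $\overline{D(A)}$ (with $N>\RE\beta$) and then extend by boundedness. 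For $x\in D(A^\alpha)$ one has $A^\alpha(2^i+A)^{-\alpha}x=(2^i+A)^{-\alpha}A^\alpha x$, so (\ref{e:uniform boundedness compositions-L}) applied to $A^{\beta-\alpha}(2^i+A)^{-(\beta-\alpha)}$ and the bound $\|(2^i+A)^{-\alpha}\|\lesssim 2^{-i\RE\alpha}$ from (\ref{e:uniform boundedness compositions-M}) yield
\[
\big\|2^{is}A^\beta(2^i+A)^{-\beta}x\big\|\lesssim 2^{i(s-\RE\alpha)}\|A^\alpha x\|.
\]
Since $s-\RE\alpha<0$, the right-hand side is a geometric $\ell^q$-sequence, hence $|x|_{R^{s,A}_{q,X}(k,0,\beta)}\lesssim\|A^\alpha x\|<\infty$; as the auxiliary term equals $\|x\|<\infty$, we get $x\in R^{s,A}_{q,X}(k,0,\beta)=B^{s,A}_{q,X}$.

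I expect the main obstacle to be purely technical rather than conceptual: keeping the constraints $-\RE\gamma<(\text{smoothness})<\RE\beta$ satisfied at each substitution; treating the three ranges of $q$ uniformly in the ``sum against a geometric weight'' steps; and --- since $A$ need not be densely defined --- justifying the operator factorization and the commutation $A^\alpha(2^i+A)^{-\alpha}x=(2^i+A)^{-\alpha}A^\alpha x$ with care, working inside $\overline{D(A)}$ and exploiting the density of $D(A^N)$ there (Lemma~\ref{l:ergodicity}(i)).
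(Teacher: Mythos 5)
Your proposal is correct and follows essentially the same route as the paper: for $D(A^\alpha)\subset B^{s,A}_{q,X}$ you factor $A^\beta(2^i+A)^{-\beta}$ through $A^\alpha(2^i+A)^{-\alpha}$, commute $A^\alpha$ past the resolvent on $D(A^\alpha)$, and invoke $(\ref{e:uniform boundedness compositions-M})$ and $(\ref{e:uniform boundedness compositions-L})$ to produce a geometric weight $2^{i(s-\RE\alpha)}$, while for $B^{s',A}_{q,X}\subset D(A^\alpha)$ you verify the absolute convergence $(\ref{e:representation of fractional powers-beta-absolute convergence})$ by splitting the integral into a piece near the origin (bounded via $\RE\alpha>0$) and a dyadic far piece controlled by the Besov semi-quasinorm through $(\ref{e:resolvent estimate-discrete})$ and geometric decay of $2^{i(\RE\alpha-s')}$, then apply Corollary~\ref{c:representation of fractional powers-beta-absolute convergence}. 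The only difference is cosmetic — you split at $2^k$ rather than at $1$, and you are slightly more explicit about the density justification of the factorization and about $D(A^\alpha)\subset\overline{D(A)}$ via Lemma~\ref{l:ergodicity}(i) — but the decompositions, the key lemmas, and the three-way treatment of $q$ all coincide with the paper's proof.
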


\begin{proof}
The inclusion 
\begin{align*}
D ( A^\alpha ) \subset B^{ s, A }_{ q, X } 
\end{align*}
follows from the non-negativity of $A$. Indeed, let $x \in D ( A^\alpha )$. Fix $\beta \in \mathbb{C}_+$ with $\RE \alpha < \RE \beta$. From (\ref{e:uniform boundedness compositions-M})$^*$ and (\ref{e:uniform boundedness compositions-L})$^*$ it follows that 
\begin{align*}
\| x \|_{ R^{ s, A }_{ q, X } ( 0, 0, \beta ) } = {} & \sup_{ j \ge 0 } \big\| 2^{ j s } A^\beta ( 2^j + A )^{ - \beta } x \big\| \le C \| A^\alpha x \| 
\end{align*}
for $q = \infty$ and 
\begin{align*}
\| x \|_{ R^{ s, A }_{ q, X } ( 0, 0, \beta ) } = {} & \sum_{ j = 0 }^{ \infty } \big\| 2^{ j s } A^\beta ( 2^j + A )^{ - \beta } x \big\|^q \le 
\frac{ C }{ 1 - 2^{ ( s - \RE \alpha ) q } } \| A^\alpha x \| 
\end{align*}
for $0 < q < \infty$, where $C = C_{ \alpha, n } C_{ \beta - \alpha, m } M_A^n L_A^m$ with $C_{ \alpha, n }, C_{ \beta - \alpha, m }$ given by (\ref{e:uniform boundedness compositions}) and $M_A$ and $L_A$ given by (\ref{e:non-negative constant}) and (\ref{e:non-negative constant 2}), respectively. This implies that $x \in B^{ s, A }_{ q, X }$.

It remains to verify that 
\begin{align}\label{e:inclusions-powers-inhomogeneous-proof-B subset D}
B^{ s', A }_{ q, X } \subset D ( A^\alpha ).  
\end{align}
Let $x \in B^{ s', A }_{ q, X }$. We now verify that $x \in D ( A^\alpha )$.  
Thanks to Corollary \ref{c:representation of fractional powers-beta-absolute convergence}, it suffices to verify that  
\begin{align}\label{e:inclusions-powers-inhomogeneous-proof-absolute convergence}
\int_0^\infty \big\| \lambda^\alpha A^\beta ( \lambda + A )^{ - \beta } x \big\| \, \frac{ d \lambda }{ \lambda } < \infty,
\end{align}
where $\beta \in \mathbb{C}_+$ with $\RE \alpha < \RE \beta$. To this end, fix $\beta \in \mathbb{C}_+$ such that $\RE \alpha < \RE \beta$. For the part $\int_0^1$, from (\ref{e:uniform boundedness compositions-L})$^*$ it follows that 
\begin{align*}
\int_0^1 \big\| \lambda^\alpha A^\beta ( \lambda + A )^{ - \beta } x \big\| \, \frac{ d \lambda }{ \lambda } \le \frac{ C_{ \beta, m } L_A^m }{ \RE \alpha } \| x \|,
\end{align*}
where $C_{ \beta, m }$ is given by (\ref{e:uniform boundedness compositions}) and $L_A$ is the non-negativity constant $A$ given by (\ref{e:non-negative constant 2}). As for the part $\int_1^\infty$, rewriting $\int_1^\infty = \sum_{ j = 0 }^{ \infty } \int_{ 2^j }^{ 2^{ j +1 } }$ and applying (\ref{e:resolvent estimate-discrete}) with $c = 1$ yields 
\begin{align*}
\int_1^\infty \big\| \lambda^\alpha A^\beta ( \lambda + A )^{ - \beta } x \big\| \, \frac{ d \lambda }{ \lambda } \lesssim {} & \sum_{ j = 0 }^{ \infty } \big\| 2^{ j \alpha } A^\beta ( 2^j + A )^{ - \beta } x \big\| \\
= {} & \sum_{ j = 0 }^{ \infty } \big\| 2^{ j ( \alpha - s' ) } 2^{ j s' }A^\beta ( 2^j + A )^{ - \beta } x \big\| \lesssim | x |_{ R^{ s', A }_{ q, X } ( 0, 0, \beta ) },
\end{align*}
where the last ineqaulity follows from (\ref{e: q-inequality}) in the case $0 < q \le 1$ and from the H\"{o}lder inequality in the case $1 < q < \infty$ (the case when $q = \infty$ is a direct consequence of the estimate (\ref{e:resolvent estimate-discrete}) with $c = 1$). Thus, we have verified (\ref{e:inclusions-powers-inhomogeneous-proof-absolute convergence}). 
The proof is complete.
\end{proof}

A density property of the inhomogeneous Besov spaces associated with non-negative operators is given as follows.

\begin{prop}\label{p:denseness-inhomogeneous}
Let $s \in \mathbb{R}$ and $0 < q < \infty$. Then $D ( A^\beta )$ is dense in $B^{ s, A }_{ q, X }$ for each $\beta \in \mathbb{C}_+$ satisfying $\RE \beta > |s|$.
\end{prop}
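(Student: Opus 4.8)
The plan is to exploit that, by Definition \ref{d:inhomogeneous Besov spaces}, $B^{s,A}_{q,X}$ is the completion of the quasi-normed space $\big(R^{s,A}_{q,X}(k,\alpha,\beta),\ \|\cdot\|_{B^{s,A}_{q,X}(k,\alpha,\beta)}\big)$, so $R^{s,A}_{q,X}(k,\alpha,\beta)$ is already dense in $B^{s,A}_{q,X}$; hence it suffices to approximate each $x\in R^{s,A}_{q,X}(k,\alpha,\beta)$ by elements of $D(A^\beta)$ in this quasi-norm. By Lemma \ref{l:independence of klm-inhomogeneous} and Remark \ref{r:independence of klm-inhomogeneous} we may fix $k=0$ and, since $\RE\beta>|s|$, choose $\alpha\in\mathbb{C}_+$ with $\RE\alpha>|s|$, so that $-\RE\alpha<s<\RE\beta$ and $\beta\in\mathbb{C}_+$. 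Two preliminaries would be recorded first: \emph{(a)} $D(A^\beta)\subseteq B^{s,A}_{q,X}$ — indeed $D(A^\beta)\subseteq\overline{D(A^\beta)}=\overline{D(A)}$ by Lemma \ref{l:ergodicity}(i), and for $y\in D(A^\beta)$ one has $\big\|2^{i(s+\alpha)}A^\beta(2^i+A)^{-\alpha-\beta}y\big\|=2^{i(s+\RE\alpha)}\big\|(2^i+A)^{-\alpha-\beta}A^\beta y\big\|\lesssim 2^{i(s-\RE\beta)}\|A^\beta y\|$ (using the resolvent bound $\|(2^i+A)^{-\alpha-\beta}\|\lesssim 2^{-i(\RE\alpha+\RE\beta)}$ from (\ref{e:uniform boundedness compositions-M})), which is $q$-summable over $i\ge0$ because $s<\RE\beta$, so $y\in R^{s,A}_{q,X}(0,\alpha,\beta)$; \emph{(b)} by Lemma \ref{l:ergodicity}(i) the operators $2^{N\beta}(2^N+A)^{-\beta}$ converge strongly to $I$ on $\overline{D(A)}$ as $N\to\infty$, while $\sup_N\big\|2^{N\beta}(2^N+A)^{-\beta}\big\|<\infty$ by (\ref{e:uniform boundedness compositions-M}).

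Given $x\in R^{s,A}_{q,X}(0,\alpha,\beta)$, I would take the approximants $x_N:=2^{N\beta}(2^N+A)^{-\beta}x$. Then $x_N\in D\big((2^N+A)^\beta\big)=D(A^\beta)$ and $x_N\in\overline{D(A)}$, hence $x_N\in R^{s,A}_{q,X}(0,\alpha,\beta)$ by (a). The step that requires care is that a direct triangle-inequality estimate of $|x-x_N|_{R^{s,A}_{q,X}(0,\alpha,\beta)}$ is worthless: it only gives the bound $|x|_{R^{s,A}_{q,X}(0,\alpha,\beta)}+|x_N|_{R^{s,A}_{q,X}(0,\alpha,\beta)}$, with neither summand small. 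Instead one writes $x-x_N=\big(I-2^{N\beta}(2^N+A)^{-\beta}\big)x$ and uses that this operator, being a function of $A$, commutes with $(1+A)^{-\alpha}$ and with every $A^\beta(2^i+A)^{-\alpha-\beta}$; setting $v_i:=2^{i(s+\alpha)}A^\beta(2^i+A)^{-\alpha-\beta}x$, so that $\|v_i\|=:g_i$ with $(g_i)_{i\ge0}\in\ell^q$ since $x\in R^{s,A}_{q,X}(0,\alpha,\beta)$, the $i$-th summand of $|x-x_N|_{R^{s,A}_{q,X}(0,\alpha,\beta)}^q$ is precisely $\big\|\big(I-2^{N\beta}(2^N+A)^{-\beta}\big)v_i\big\|^q$.

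The argument then closes by $\ell^q$-dominated convergence, which is exactly where the hypothesis $q<\infty$ enters. For each fixed $i$ one has $v_i\in\overline{D(A)}$ (from $A^\beta(2^i+A)^{-\alpha-\beta}x=(2^i+A)^{-\alpha}A^\beta(2^i+A)^{-\beta}x\in D(A^\alpha)\subseteq\overline{D(A)}$, using $\RE\alpha>0$), so by (b) $\big\|\big(I-2^{N\beta}(2^N+A)^{-\beta}\big)v_i\big\|\to0$ as $N\to\infty$; and $\big\|\big(I-2^{N\beta}(2^N+A)^{-\beta}\big)v_i\big\|\le C_0\,g_i$ uniformly in $N$ and $i$, with $C_0:=1+\sup_N\big\|2^{N\beta}(2^N+A)^{-\beta}\big\|<\infty$. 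Since $0<q<\infty$ and $(g_i)_{i\ge0}\in\ell^q$, dominated convergence for $\ell^q$-series yields $\sum_{i\ge0}\big\|\big(I-2^{N\beta}(2^N+A)^{-\beta}\big)v_i\big\|^q\to0$, that is, $|x-x_N|_{R^{s,A}_{q,X}(0,\alpha,\beta)}\to0$. The auxiliary term is handled identically: $(1+A)^{-\alpha}(x-x_N)=\big(I-2^{N\beta}(2^N+A)^{-\beta}\big)(1+A)^{-\alpha}x$ with $(1+A)^{-\alpha}x\in D(A^\alpha)\subseteq\overline{D(A)}$, hence tends to $0$ by (b). Therefore $\|x-x_N\|_{B^{s,A}_{q,X}(0,\alpha,\beta)}\to0$, and since $x$ ranges over the dense subspace $R^{s,A}_{q,X}(0,\alpha,\beta)$ of $B^{s,A}_{q,X}$ while every $x_N$ lies in $D(A^\beta)$, this proves that $D(A^\beta)$ is dense in $B^{s,A}_{q,X}$.

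The only genuinely delicate point is the one highlighted in the second paragraph: resisting the naive estimate and instead commuting the ``smooth high-pass'' operator $I-2^{N\beta}(2^N+A)^{-\beta}$ past the operators defining the seminorm, which reduces the whole statement to a single application of $\ell^q$-dominated convergence — this is also precisely what fails for $q=\infty$, consistent with the expected non-density of $D(A^\beta)$ in the Lipschitz-type spaces $B^{s,A}_{\infty,X}$. Everything else (the strong convergence $2^{N\beta}(2^N+A)^{-\beta}\to I$ on $\overline{D(A)}$ from Lemma \ref{l:ergodicity}, the uniform resolvent bounds from Lemma \ref{l:uniform boundedness compositions}, and the embedding $D(A^\beta)\subseteq B^{s,A}_{q,X}$) is routine from the material already developed.
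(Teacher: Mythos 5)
Your proof is correct, and it uses the same approximants as the paper (the paper takes $x_n = n^\beta(n+A)^{-\beta}x$; you take the dyadic subsequence $x_N = 2^{N\beta}(2^N+A)^{-\beta}x$), but you close the argument in a tidier way. The paper's Proposition \ref{p:denseness-inhomogeneous} proceeds by an explicit $\epsilon/4$ truncation: it chooses $K$ to kill the tail $\sum_{i\ge K}$, invokes $\|x_n-x\|\to 0$ in $X$ and an explicit finite-section constant $D_K$ to kill the head $\sum_{i<K}$, and pays the quasi-triangle cost $C_{1/q}^2$ in splitting $x_n - x$ inside the seminorm. You instead commute $I - 2^{N\beta}(2^N+A)^{-\beta}$ past the operators $2^{i(s+\alpha)}A^\beta(2^i+A)^{-\alpha-\beta}$ (all are functions of $A$, so they commute), reducing the $i$-th term exactly to $\|(I-2^{N\beta}(2^N+A)^{-\beta})v_i\|$; per-term convergence then comes from ergodicity (Lemma \ref{l:ergodicity}(i)) applied to $v_i\in\overline{D(A)}$, the domination $\le C_0 g_i$ comes from the uniform bound in Lemma \ref{l:uniform boundedness compositions}, and summation is one application of $\ell^q$-dominated convergence. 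This avoids both the quasi-triangle constant $C_{1/q}$ and the finite-section constant $D_K$ entirely, and your remark that the mechanism is precisely what fails at $q=\infty$ is on point. The two arguments rely on the same two ingredients (strong ergodic convergence on $\overline{D(A)}$ plus uniform boundedness), so this is a cleaner packaging of the same underlying idea rather than a fundamentally different method; the $\ell^q$-dominated-convergence step is, after all, proved by exactly the kind of truncation the paper makes explicit.

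One minor point to keep in mind in your write-up: you should record why $\alpha$ can be taken with $\RE\alpha>0$ even when $s\ge 0$ (you do, by choosing $\RE\alpha>|s|$); the strict positivity $\RE\alpha>0$ is what makes $v_i=(2^i+A)^{-\alpha}A^\beta(2^i+A)^{-\beta}x\in D\big((2^i+A)^\alpha\big)\subset\overline{D(A)}$ and $(1+A)^{-\alpha}x\in\overline{D(A)}$, and without it the per-term convergence step would have no foothold.
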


\begin{proof}
Since the Besov space $B^{ s, A }_{ q, X }$ is the completion of $R^{ s, A }_{ q, X }$ with respect to $\left\| \cdot \right\|_{ B^{ s, A }_{ q, X } }$, it suffices to show that $D ( A^\beta )$ is dense in $R^{ s, A }_{ q, X }$ with respect to $\left\| \cdot \right\|_{ B^{ s, A }_{ q, X } }$. To this end, let $x \in R^{ s, A }_{ q, X }$, fix $\beta \in \mathbb{C}_+$ such that $\RE \beta > |s|$ and write 
\begin{align*}
	x_n := n^\beta ( n + A )^{ - \beta } x \in D ( A^\beta ), \quad n \in \mathbb{N}. 
\end{align*}
It suffices to verify that $\{ x_n \}$ converges to $x$ in $B^{ s, A }_{ q, X }$. Indeed, fix $\epsilon > 0$. Thanks to the fact that $\| x \|_{ B^{ s, A }_{ q, X } } < \infty$, there exists a $K \in \mathbb{N}$ such that 
\begin{align}\label{e:denseness-inhomogeneous-proof1}
\left\{
\sum_{ i = K }^\infty \big\| 2^{ i ( s + \beta ) } A^{ \beta } ( 2^i + A )^{ - 2 \beta } x \big\|^q \right\}^{ 1 / q } < \frac{ \epsilon }{ 4 C_{ 1/q }^2 C_{ \beta, m } M_A^m },
\end{align}
where $\RE \beta < m \in \mathbb{N}$ and $C_{ 1 / q }$, $C_{ \beta, m }$ and $M_A$ are the three constants given in (\ref{e: q-inequality}), (\ref{e:uniform boundedness compositions}) and (\ref{e:non-negative constant}), respectively. Furthermore, since $x_n \rightarrow x$ in $X$ as $n \rightarrow \infty$ due to Lemma \ref{l:ergodicity} (i), there exists an $N \in \mathbb{N}$ such that 
\begin{align}\label{e:denseness-inhomogeneous-proof2}
	\| x_n - x \| < \frac{ \epsilon }{ 4 C_{ 1 / q } D_K }, \quad n \geq N,
\end{align}
where $D_K = \max \big\{ d_K, \| ( 1 + A )^{ - \beta } \| \big\}$ with 
\begin{align*}
	d_K = \left\{ \sum_{ i = 0 }^{ K - 1 } \big\| 2^{ i ( s + \beta ) } A^{ \beta } ( 2^i + A )^{ - 2 \beta } \big\|^q \right\}^{ 1 / q }, 
\end{align*}
and hence, 
\begin{align}\label{e:denseness-inhomogeneous-proof3}
\big\| ( 1 + A )^{ - \beta } ( x_n - x ) \big\| < \frac{ \epsilon }{ 4 }, \quad n \geq N.
\end{align}
From (\ref{e:denseness-inhomogeneous-proof1}), (\ref{e:denseness-inhomogeneous-proof2}) and (\ref{e:denseness-inhomogeneous-proof3}) it follows that 
\begin{align*}
\| x_n - x \|_{ B^{ s, A }_{ q, X } } 
 \leq {} & \big\| ( 1 + A )^{ - \beta } ( x_n - x ) \big\| \\
+ {} & C_{ 1 / q }^2 \left\{ \sum_{ i = K }^\infty \big\| 2^{ i ( s + \beta ) } A^{ \beta } ( 2^i + A )^{ - 2 \beta } x_n \big\|^q \right\}^{ 1 / q } \\
+ {} & C_{ 1 / q }^2 \left\{ \sum_{ i = K }^\infty \big\| 2^{ i ( s + \beta ) } A^{ \beta } ( 2^i + A )^{ - 2 \beta } x \big\|^q \right\}^{ 1 / q } \\
+ {} & C_{ 1 / q } \left\{ \sum_{ i = 0 }^{ K - 1 } \big\| 2^{ i ( s + \beta ) } A^{ \beta } ( 2^i + A )^{ - 2 \beta } ( x_n - x ) \big\|^q \right\}^{ 1 / q } \\
< {} & \frac{ \epsilon }{ 4 } + \frac{ \epsilon }{ 4 } + \frac{ \epsilon }{ 4 } + \frac{ \epsilon }{ 4 } = \epsilon, \quad n \geq N,
\end{align*}
and hence, $\{x_n\} \subset D ( A^\beta )$ converges to $x$ as $n \rightarrow \infty$ in $ B^{ s, A }_{ q, X } $. Thus, we have verified that $D ( A^\beta )$ is dense in $R^{ s, A }_{ q, X }$ with respect to $\left\| \cdot \right\|_{ B^{ s, A }_{ q, X } }$. The proof is complete.
\end{proof}

In addition to the (inhomogeneous) Besov spaces
associated with non-negative operators given in Definition \ref{d:inhomogeneous Besov spaces}, an alternative version of abstract (inhomogeneous) Besov spaces
is given as follows. 

\begin{defn}\label{d:inhomogeneous Besov spaces-breve}
Let $s \in \mathbb{R}$ and $0 < q \leq \infty$, and let $A$ be injective. The inhomogeneous Besov space $\breve{B}^{ s, A }_{ q, X } := \breve{B}^{ s, A }_{ q, X } ( k, \alpha, \beta )$ associated with $A$ is defined as the completion of the subspace 
\begin{align*}
\breve{R}^{ s, A }_{ q, X } ( k, \alpha, \beta ) := \left\{ x \in \overline{ R ( A ) }: \sum_{ i = - \infty }^{ k } \big\| 2^{ i ( s + \alpha ) } A^\beta ( 2^i + A )^{ - \alpha - \beta } x \big\|^q < \infty \right\}
\end{align*}
with respect to the 
quasi-norm
\begin{align*}
\| x \|_{ \breve{B}^{ s, A }_{ q, X } } := \left\| A^\beta ( 2^k + A )^{ - \beta } x \right\| + \left\{ \sum_{ i = - \infty }^{ k } \big\| 2^{ i ( s + \alpha ) } A^\beta ( 2^i + A )^{ - \alpha - \beta } x \big\|^q \right\}^{ 1 / q }
\end{align*}
(with the usual modification if $q = \infty$), where $k \in \mathbb{Z}$ and $\alpha \in \mathbb{C}_+^*$ and $\beta \in \mathbb{C}_+$ satisfying $- \RE \alpha < s < \RE \beta$.
\end{defn}

\begin{rem}
Analogous to $B^{ s, A }_{ q, X }$, it can be verified that $\breve{B}^{ s, A }_{ q, X }$ is also independent of the choice of indices $k, \alpha$ and $\beta$, and hence, $\breve{B}^{ s, A }_{ q, X }$ is well defined as a quasi-Banach space (Banach space) for each $0 < q \leq \infty$ ($1 \leq q \leq \infty$). In particular, the inhomogeneous Besov space $\breve{B}^{ s, A }_{ q, X }$ with  $s < 0$ and $1 \leq q \leq \infty$ coincides with the Komatsu space $R^{ - s } _q ( A )$ \cite[Definitions 2.1 and 2.4]{Komatsu1969}.
\end{rem}


The next result reveals a connection between Besov spaces $\breve{B}^{ s, A }_{ q, X }$ and $B^{ s, A }_{ q, X }$.

\begin{prop}\label{p:inverse-inhomogeneous}
Let $A$ be injective, and let $s \in \mathbb{R}$ and $0 < q \leq \infty$. Then $\breve{B}^{ - s, A }_{ q, X } = B^{ s, A^{ - 1 } }_{ q, X }$ in the sense of equivalent quasi-norms. In particular, $\breve{B}^{ 0, A }_{ q, X } = B^{ 0, A^{ - 1 } }_{ q, X }$ in the sense of equivalent quasi-norms. 
\end{prop}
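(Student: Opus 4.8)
The plan is to identify, for one conveniently chosen admissible choice of parameters on each side, the defining subspaces of $\breve{B}^{-s,A}_{q,X}$ and of $B^{s,A^{-1}}_{q,X}$ together with their quasi-norms, and then pass to completions using the independence of both scales from the choice of $(k,\alpha,\beta)$. Since $A$ is injective and non-negative, $A^{-1}$ is non-negative, so $B^{s,A^{-1}}_{q,X}$ is well defined; also $D(A^{-1})=R(A)$, hence $\overline{D(A^{-1})}=\overline{R(A)}$, which is the ambient space for both $R^{s,A^{-1}}_{q,X}$ and $\breve{R}^{-s,A}_{q,X}$. I would fix an integer $n>|s|$ and work with $k=0$, $\alpha=\beta=n$ in both definitions, which is admissible because $-n<s<n$ and $-n<-s<n$.

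The key step is the resolvent identity, valid for each $i\in\mathbb{Z}$,
\[
(2^i+A^{-1})^{-1}=2^{-i}A(2^{-i}+A)^{-1},
\]
which I would verify directly (both sides are bounded, and composing with $2^i+A^{-1}$ on either side returns the identity, using $A^{-1}A\subset I$ and $AA^{-1}\subset I$ on the relevant ranges). Raising this to a power $\gamma\in\mathbb{C}_+$, and invoking $(\lambda T)^\gamma=\lambda^\gamma T^\gamma$ for $\lambda>0$ together with the multiplicativity law $(A(t+A)^{-1})^\gamma=A^\gamma(t+A)^{-\gamma}$ (a consequence of (\ref{e:additivity of fractional powers})--(\ref{e:multiplicativity of fractional powers}), since $A(t+A)^{-1}=(t^{-1}A)(1+t^{-1}A)^{-1}$), one gets
\[
(2^i+A^{-1})^{-\gamma}=2^{-i\gamma}A^\gamma(2^{-i}+A)^{-\gamma},\qquad \gamma\in\mathbb{C}_+.
\]
Taking $\gamma=2n$ in the generic summand and $\gamma=n$ in the auxiliary term, and using additivity $(A^{-1})^nA^{2n}\subset A^n$ to simplify the resulting bounded operators, I expect to obtain, for $x\in\overline{R(A)}$,
\[
2^{i(s+n)}(A^{-1})^n(2^i+A^{-1})^{-2n}x=2^{-i(-s+n)}A^n(2^{-i}+A)^{-2n}x,\qquad (1+A^{-1})^{-n}x=A^n(1+A)^{-n}x.
\]

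With these identities in hand, the substitution $j=-i$ turns the summation range $i\ge 0$ into $j\le 0$ and sends the summand above to $2^{j(-s+n)}A^n(2^j+A)^{-2n}x$, which is precisely the generic summand of $\breve{R}^{-s,A}_{q,X}(0,n,n)$, while the two auxiliary terms already coincide. Therefore $R^{s,A^{-1}}_{q,X}(0,n,n)=\breve{R}^{-s,A}_{q,X}(0,n,n)$ as subsets of $\overline{R(A)}$, with $\|x\|_{B^{s,A^{-1}}_{q,X}(0,n,n)}=\|x\|_{\breve{B}^{-s,A}_{q,X}(0,n,n)}$ for every such $x$ (sums replaced by suprema when $q=\infty$). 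Passing to completions and invoking Lemma~\ref{l:independence of klm-inhomogeneous} and its $\breve{B}$-analogue (the remark following Definition~\ref{d:inhomogeneous Besov spaces-breve}) then gives $\breve{B}^{-s,A}_{q,X}=B^{s,A^{-1}}_{q,X}$ up to equivalence of quasi-norms, and the case $s=0$ is merely the specialization.

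The main obstacle I anticipate is the multiplicativity identity $(A(t+A)^{-1})^\gamma=A^\gamma(t+A)^{-\gamma}$ for complex $\gamma$ in this generality—$A^{-1}$ need not be bounded—and, relatedly, keeping careful track of domains when applying additivity of fractional powers to possibly unbounded factors; once those points are settled, what remains is bookkeeping and a change of the summation index.
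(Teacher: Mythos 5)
Your proof is correct and follows essentially the same route as the paper: identify the $\breve{B}^{-s,A}_{q,X}$ quasi-norm with the $B^{s,A^{-1}}_{q,X}$ quasi-norm via the resolvent identity $(2^i+A^{-1})^{-1}=2^{-i}A(2^{-i}+A)^{-1}$, change the summation index $i\mapsto -i$, and conclude by independence of the quasi-norm from the choice of $(k,\alpha,\beta)$. The one difference is that you specialize to $k=0$, $\alpha=\beta=n\in\mathbb{N}$, while the paper keeps general complex $\alpha,\beta$ (swapping their roles between the two definitions) and exhibits the equality for any admissible triple. Your specialization buys you two things: the quasi-norms come out exactly equal (not merely equivalent) before invoking parameter-independence, and the obstacle you flagged evaporates — with $\gamma=n$ or $\gamma=2n$ an integer, the identity $(A(t+A)^{-1})^n=A^n(t+A)^{-n}$ and the simplification $(A^{-1})^nA^{2n}(2^{-i}+A)^{-2n}=A^n(2^{-i}+A)^{-2n}$ are elementary operator algebra with no delicate functional-calculus input. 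So the worry about multiplicativity for complex exponents, which would indeed need care if you kept $\gamma$ complex, is moot for the path you actually take.
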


\begin{proof}
Thanks to the density argument, it suffices to verify that $R^{ s, A }_{ q, X } = \breve{R}^{ s, A }_{ q, X }$ from the set-theoretic point of view and that  
\begin{align*}
\| x \|_{ \breve{B}^{ - s, A }_{ q, X } } \simeq \| x \|_{ B^{ s, A^{ - 1 } }_{ q, X } }, \quad x \in R^{ s, A }_{ q, X }. 
\end{align*}
Indeed, 
let $x \in R^{ s, A }_{ q, X }$ and fix $k \in \mathbb{Z}$ and $\alpha, \beta \in \mathbb{C}_+^*$ such that $- \RE \alpha < s < \RE \beta$. By Definitions \ref{d:inhomogeneous Besov spaces} and \ref{d:inhomogeneous Besov spaces-breve}, we have 
\begin{align*}
\begin{split}
 \| x \|_{ \breve{B}^{ - s, A }_{ q, X } } 
 \simeq {} & \big\| A^\alpha ( 2^k + A )^{ - \alpha } x \big\| \\
+ {} & \left\{ \sum_{ i = - \infty }^{ k } \big\| 2^{ i ( - s + \beta ) } A^\alpha ( 2^i + A )^{ - \beta - \alpha } x \big\|^q \right\}^{ 1 / q } \\ 
= {} &2^{ - k \RE \alpha } \big\| ( 2^{ - k } + A^{ - 1 } )^{ - \alpha } x \big\| \\
 + {} & \left\{ \sum_{ i = - k }^{ \infty } \big\| 2^{ i ( s + \alpha ) } A^{ - \beta } ( 2^{ i } + A^{ - 1 } )^{ - \alpha - \beta } x \big\|^q \right\}^{ 1 / q } \simeq \left\| x \right\|_{ B^{ s, A^{ - 1 } }_{ q, X } },
\end{split}	
\end{align*}
the desired conclusion. The proof is complete.
\end{proof}

In the next subsection we will introduce the homogeneous Besov space associated with
non-negative operators. Further properties of inhomogeneous Besov spaces will be given in Sections \ref{S:Basic properties} and \ref{S:Fractional powers} subsequently.

\smallskip


\subsection{Homogeneous Besov spaces}\label{Sub:Homogeneous Besov spaces}

Let $A$ be an injective non-negative operator on $X$. Analogous to the inhomogeneous Besov spaces associated with $A$, in order to define homogeneous Besov spaces associated
with $A$, we start with the following subspace
$\dot{R}^{ s, A }_{ q, X } ( \alpha, \beta )$.

\begin{defn}\label{d:homogeneous Besov spaces-R+Ker}
Let $0 < q \leq \infty$ and $s \in \mathbb{R}$, and let $\alpha \in \mathbb{C}_+^*$ and $\beta \in \mathbb{C}_+$ such that $- \RE \alpha < s < \RE \beta$. The space $\dot{R}^{ s, A }_{ q, X } ( \alpha, \beta )$ is defined by
\begin{align*}
\dot{R}^{ s, A }_{ q, X } ( \alpha, \beta ) := \left\{ x \in \overline{ D (A) } \cap \overline{ R (A) }: | x |_{ \dot{R}^{ s, A }_{ q, X } ( \alpha, \beta ) } < \infty \right\}, 
\end{align*}
endowed with the 
quasi-norm
\begin{align*}
| x |_{ \dot{R}^{ s, A }_{ q, X } ( \alpha, \beta ) } := \left\{ \sum_{ i = -
\infty }^\infty \big\| 2^{ i ( s + \alpha ) } A^{ \beta } ( 2^i + A
)^{ - \alpha - \beta } x \big\|^q \right\}^{ 1 / q }
\end{align*}
(with the usual modification if $q = \infty$).
\end{defn}



Analogous to Lemma \ref{l:independence of klm-inhomogeneous} above, we can verify that the
space $\dot{R}^{ s, A }_{ q, X } ( \alpha, \beta )$ is independent of the choice
of indices $\alpha$ and $\beta$ in the sense of equivalent quasi-norms.

\begin{lem}\label{l:independence of l and m-homogeneous}
Let $s \in \mathbb{R}$ and $0 < q \leq \infty$, and let $\alpha \in \mathbb{C}_+^*$ and $\beta \in \mathbb{C}_+$ such that $- \RE \alpha < s < \RE \beta$. 
Then $\dot{R}^{ s, A }_{ q, X } ( \alpha, \beta ) = \dot{R}^{ s, A }_{ q, X } ( \alpha', \beta' )$
in the sense of equivalent quasi-norms
for all $\alpha' \in \mathbb{C}_+^*$ and $\beta' \in \mathbb{C}_+$ satisfying $- \RE \alpha' < s < \RE \beta'$.
\end{lem}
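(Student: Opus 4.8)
The plan is to follow the proof of Lemma~\ref{l:independence of lR-inhomogeneous} almost verbatim, but adapted to the bilateral dyadic sum; the homogeneous case is in fact lighter, since no auxiliary term is needed. It suffices to prove separately that
\[
|x|_{\dot R^{s,A}_{q,X}(\alpha,\beta)}\simeq|x|_{\dot R^{s,A}_{q,X}(\alpha',\beta)}\quad\text{whenever }-\RE\alpha'<s,\qquad
|x|_{\dot R^{s,A}_{q,X}(\alpha,\beta)}\simeq|x|_{\dot R^{s,A}_{q,X}(\alpha,\beta')}\quad\text{whenever }s<\RE\beta',
\]
and then to chain these two equivalences through the intermediate space $\dot R^{s,A}_{q,X}(\alpha',\beta)$ to reach an arbitrary admissible pair $(\alpha',\beta')$. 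The set-theoretic identity $\dot R^{s,A}_{q,X}(\alpha,\beta)=\dot R^{s,A}_{q,X}(\alpha',\beta')$ is then automatic, because the ambient subspace $\overline{D(A)}\cap\overline{R(A)}$ occurring in Definition~\ref{d:homogeneous Besov spaces-R+Ker} does not depend on $\alpha,\beta$. As in Lemma~\ref{l:independence of lR-inhomogeneous}, each of the two equivalences is obtained from a ``monotonicity'' inequality together with a ``lifting by one'' inequality, which are then iterated and combined exactly as there (treating the cases $\RE\alpha=\RE\alpha'$ and $\RE\alpha<\RE\alpha'$ separately, and likewise for $\beta$).

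For independence in $\alpha$: the monotonicity inequality $|x|_{\dot R^{s,A}_{q,X}(\alpha'',\beta)}\lesssim|x|_{\dot R^{s,A}_{q,X}(\alpha,\beta)}$ for $\RE\alpha''>\RE\alpha$ is immediate once one writes $A^\beta(2^i+A)^{-\alpha''-\beta}=2^{-i(\alpha''-\alpha)}\bigl[2^{i(\alpha''-\alpha)}(2^i+A)^{-(\alpha''-\alpha)}\bigr]A^\beta(2^i+A)^{-\alpha-\beta}$ and uses the uniform boundedness of $\{2^{i(\alpha''-\alpha)}(2^i+A)^{-(\alpha''-\alpha)}\}_{i\in\mathbb Z}$, i.e.\ (\ref{e:uniform boundedness compositions-M})$^*$ with all $t=2^i>0$. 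For the lifting inequality $|x|_{\dot R^{s,A}_{q,X}(\alpha,\beta)}\lesssim|x|_{\dot R^{s,A}_{q,X}(\alpha+1,\beta)}$ I would apply the reproducing formula (\ref{e:transform resolvent 3}) with $\lambda=2^i$ (valid for $x\in X$ when $\RE\alpha>0$, and for $x\in\overline{D(A)}$ when $\alpha=0$, which is within reach since $\dot R^{s,A}_{q,X}(\alpha,\beta)\subset\overline{D(A)}$), split $\int_{2^i}^\infty=\sum_{r\ge i}\int_{2^r}^{2^{r+1}}$, use the estimate (\ref{e:resolvent estimate-discrete}) with $c=1$ and $dt\simeq 2^r\,dt/t$ on $[2^r,2^{r+1}]$, and arrive at
\[
2^{i(s+\RE\alpha)}\bigl\|A^\beta(2^i+A)^{-\alpha-\beta}x\bigr\|\lesssim\sum_{r\ge i}2^{-(r-i)(s+\RE\alpha)}\,2^{r(s+\RE\alpha+1)}\bigl\|A^\beta(2^r+A)^{-(\alpha+1)-\beta}x\bigr\|,
\]
a convolution of the $\dot R^{s,A}_{q,X}(\alpha+1,\beta)$-sequence with the kernel $\{2^{-j(s+\RE\alpha)}\}_{j\ge0}$, which is summable because $s+\RE\alpha>0$. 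Taking $q$-norms then finishes the step: via (\ref{e: q-inequality-series<1}) when $0<q\le1$, via the H\"older inequality with an auxiliary weight $2^{-r\delta}2^{r\delta}$, $0<\delta<s+\RE\alpha$, exactly as in the proof of Lemma~\ref{l:independence of lR-inhomogeneous}, when $1<q<\infty$, and directly when $q=\infty$.

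For independence in $\beta$ the argument is symmetric: the monotonicity inequality $|x|_{\dot R^{s,A}_{q,X}(\alpha,\beta'')}\lesssim|x|_{\dot R^{s,A}_{q,X}(\alpha,\beta)}$ for $\RE\beta''>\RE\beta$ comes from the uniform boundedness of $\{A^{\beta''-\beta}(2^i+A)^{-(\beta''-\beta)}\}_{i\in\mathbb Z}$, i.e.\ (\ref{e:uniform boundedness compositions-L})$^*$; while the lifting inequality $|x|_{\dot R^{s,A}_{q,X}(\alpha,\beta)}\lesssim|x|_{\dot R^{s,A}_{q,X}(\alpha,\beta+1)}$ uses the reproducing formula (\ref{e:transform resolvent 2}) with $\lambda=2^i$ (valid for $x\in X$ when $\RE\beta>0$, and for $x\in\overline{R(A)}$ when $\beta=0$, again within reach since $\dot R^{s,A}_{q,X}(\alpha,\beta)\subset\overline{R(A)}$), the splitting $\int_0^{2^i}=\sum_{r<i}\int_{2^r}^{2^{r+1}}$, the estimate (\ref{e:resolvent estimate-discrete}) with $c=2$, and $|t^{\alpha+\beta}|=t^{\RE\alpha+\RE\beta}\simeq 2^{r(\RE\alpha+\RE\beta)}$ on $[2^r,2^{r+1}]$; this bounds the $i$-th summand by a convolution of the $\dot R^{s,A}_{q,X}(\alpha,\beta+1)$-sequence with the kernel $\{2^{-j(\RE\beta-s)}\}_{j\ge1}$, summable because $\RE\beta-s>0$, and the same $q$-summation as above concludes. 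The one genuinely new feature compared with the inhomogeneous proofs is that the dyadic sums now run over all of $\mathbb Z$, so the geometric kernels have to converge at \emph{both} ends; this is exactly what the standing hypothesis $-\RE\alpha<s<\RE\beta$ provides, and keeping the bookkeeping of the two-sided convolutions correct is the step I expect to require the most care in the write-up, though there is no real conceptual obstacle.
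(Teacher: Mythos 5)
Your proposal is correct and follows essentially the same route as the paper: reduce to a monotonicity inequality (in each of $\alpha$ and $\beta$, via the uniform boundedness estimates (\ref{e:uniform boundedness compositions-M})$^*$ and (\ref{e:uniform boundedness compositions-L})$^*$) together with a lifting-by-one inequality obtained from the reproducing formulas (\ref{e:transform resolvent 3}) and (\ref{e:transform resolvent 2}) with $\lambda = 2^i$, the dyadic splitting, the resolvent estimate (\ref{e:resolvent estimate-discrete}), and the discrete convolution bound treated separately for $0<q\le 1$, $1<q<\infty$ and $q=\infty$, exactly as the paper does. The only superfluous remark in your write-up is the case $\beta=0$ for (\ref{e:transform resolvent 2}), which cannot occur here since the hypothesis requires $\beta\in\mathbb{C}_+$.
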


\begin{proof}
By analogous arguments given in the first paragraph of the proof of Lemma \ref{l:independence of lR-inhomogeneous}, it suffices to verify that
\begin{align}\label{e:proof 1 of l:independence of l and m-homogeneous}
| x |_{ \dot{R}^{ s, A }_{ q, X } ( \alpha, \beta ) } \lesssim | x |_{
\dot{R}^{ s, A }_{ q, X } ( \alpha + 1, \beta ) }
\end{align}
and that 
\begin{align}\label{e:proof 2 of l:independence of l and m-homogeneous}
| x |_{ \dot{R}^{ s, A }_{ q, X } ( \alpha, \beta ) } \lesssim | x |_{
\dot{R}^{ s, A }_{ q, X } ( \alpha, \beta + 1 ) }.
\end{align}
Moreover, we merely verify the conclusion in the case $0 < q < \infty$ and the case when $q = \infty$ can be verified analogously. 

First we verify (\ref{e:proof 1 of l:independence of
l and m-homogeneous}). To this end, 
let $x \in \dot{R}^{ s, A }_{ q, X } ( \alpha + 1, \beta )$. Note that $x \in \overline{D (
A )}$ by Definition \ref{d:homogeneous Besov spaces-R+Ker} if $\alpha = 0
$ and that $( 2^i + A )^{ - \alpha } x \in D ( A^\alpha ) \subset \overline{ D ( A ) }$ if $\RE \alpha > 0$, and hence, by using (\ref{e:transform resolvent 3}) with $\lambda = 2^i$ we have 
\begin{align*}
| x |_{ R^{ s, A }_{ q, X } ( \alpha, \beta ) } 
& \leq \left| \alpha + \beta \right| \left\{ \sum_{ i = - \infty }^\infty \bigg[ 2^{ i ( s + \RE \alpha ) } \int_{ 2^i }^\infty \big\| A^{ \beta } ( t + A )^{ - \alpha - \beta - 1 } x \big\| \, d t \bigg]^q \right\}^{ 1 / q }.
\end{align*}
Rewriting $\int_{ 2^i }^\infty = \sum_{ r = i }^{ \infty } \int_{ 2^r }^{ 2^{ r + 1 } }$ and applying (\ref{e:resolvent estimate-discrete}) yields
\begin{align*}
| x |_{ R^{ s, A }_{ q, X } ( \alpha, \beta ) }
& \leq C \left\{ \sum_{ i = - \infty }^\infty \bigg[ \sum_{ r = i }^\infty 2^{ i ( s + \RE \alpha ) } 2^r \big\| A^{ \beta } ( 2^r + A )^{ - \alpha - \beta - 1 } x \big\| \bigg]^q \right\}^{ 1 / q } \\
& = C K,
\end{align*}
where $C = \left| \alpha + \beta \right| C_{ \alpha + \beta + 1, N } ( L_A + 2 M_A )^{ N } $ with $\RE ( \alpha + \beta + 1 ) < N \in \mathbb{N}$ and $C_{ \alpha + \beta + 1, N }$ given by (\ref{e:uniform boundedness compositions}) and
\begin{align*}
K = \left\{ \sum_{ i = - \infty }^\infty \bigg[ \sum_{ r = i }^\infty 2^{ i ( s + \RE \alpha ) } 2^r \big\| A^{ \beta } ( 2^r + A )^{ - \alpha - \beta - 1 } x \big\| \bigg]^q \right\}^{ 1 / q }.
\end{align*}
It can be verified that 
\begin{align}\label{e:independence of l and m-homogeneous-proof-K}
K \lesssim | x |_{ R^{ s, A }_{ q, X } ( \alpha + 1, \beta ) }.
\end{align}
Indeed, if $0 < q \leq 1$, applying (\ref{e: q-inequality-series<1}) to the inner summation of $K$ and exchanging the order of summation yields
\begin{align*}
K
\leq \frac{ 1 }{ [ 1 - 2^{ - ( s + \RE \alpha ) q } ]^{ 1 / q } } | x |_{
R^{ s, A }_{ q, X } ( \alpha + 1, \beta ) }.
\end{align*}
And if $1 < q < \infty$, rewriting $K$ as
\begin{align*}
K 
& = \left\{ \sum_{ i = - \infty }^\infty 2^{ i ( s + \RE \alpha ) q }
\bigg[ \sum_{ r = i }^\infty 2^{ - r ( s + \alpha - \epsilon ) } \big\|
2^{ r ( s + \alpha - \epsilon ) } 2^r A^{ \beta } ( 2^r + A )^{ - \alpha - \beta - 1 }
x \big\| \bigg]^q \right\}^{ 1 / q }
\end{align*}
with $0 < \epsilon < s + \RE \alpha$ and applying the H\"{o}lder inequality
to the inner summation
yields 
\begin{align*}
K 
\leq \frac{ 1 }{ [ 1 - 2^{ - ( s + \RE \alpha - \epsilon ) q' } ]^{ 1 / q' }
} \frac{ 1 }{ ( 1 - 2^{ - \epsilon q } )^{ 1 / q } } | x |_{ R^{ s,
A }_{ q, X } ( \alpha + 1, \beta ) }.
\end{align*}
Thus, we have verified (\ref{e:independence of l and m-homogeneous-proof-K}).

Next we verify (\ref{e:proof 2 of l:independence of l and m-homogeneous}). To this end, 
let $x \in R^{ s, A }_{ q, X } ( \alpha, \beta + 1 )$. 
By letting $\lambda = 2^i$ in (\ref{e:transform resolvent 2}) we have
\begin{align*}
| x |_{ R^{ s, A }_{ q, X } ( \alpha, \beta ) } 
\leq \left| \alpha + \beta \right| \left\{ \sum_{ i = - \infty }^\infty \bigg[ \int_{ 0 }^{ 2^i } \big\| 2^{ i ( s - \beta ) } t^{ \alpha + \beta } A^{ \beta + 1 }
( t + A )^{ - \alpha - \beta - 1 } x \big\| \, \frac{ d t }{ t } \bigg]^q \right\}^{ 1 / q }. 
\end{align*}
Rewriting $\int_{ 0 }^{ 2^i } = \sum_{r = - \infty }^{ i - 1 } \int_{ 2^r }^{ 2^{ r + 1 } }$ and applying (\ref{e:resolvent estimate-discrete}) to the integrands yields
\begin{align*}
| x |_{ R^{ s, A }_{ q, X } ( \alpha, \beta ) } \lesssim J,
\end{align*}
where 
\begin{align*}
J = \left\{ \sum_{ i = - \infty }^\infty \bigg[ \sum_{ r = - \infty }^{ i - 1 } \big\| 2^{ i ( s - \beta ) } 2^{ r ( \alpha + \beta ) } A^{ \beta + 1 } ( 2^r + A )^{ - \alpha - \beta - 1 } x \big\| \bigg]^q \right\}^{ 1 / q }.
\end{align*}
It remains to verify that
\begin{align}\label{e:independence of l and m-homogeneous-proof-I}
J \lesssim | x |_{ R^{ s, A }_{ q, X } ( \alpha, \beta + 1 ) }.
\end{align}
Indeed, if $0 < q \leq 1$, applying (\ref{e: q-inequality}) to the inner summation of $J$ and exchanging the order of summation yields  
\begin{align*}
J & \leq \left\{ \sum_{ i = - \infty }^\infty \sum_{ r = - \infty
}^{ i - 1 } \big\| 2^{ i ( s - \beta ) } 2^{ r ( \alpha + \beta ) } A^{ \beta + 1 } (
2^r + A )^{ - \alpha - \beta - 1 } x \big\|^q \right\}^{ 1 / q } \\
& = \frac{ 2^{ s - \RE \beta } }{ [ 1 - 2^{ ( s - \RE \beta ) q } ]^{ 1 / q } } |
x |_{ R^{ s, A }_{ q, X } ( \alpha, \beta +1 ) }.
\end{align*}
If $1 < q < \infty$, rewriting the part $J$ as
\begin{align*}
J & = \left\{ \sum_{ i = - \infty }^\infty 2^{ i ( s - \RE \beta ) q }
\bigg[ \sum_{ r = - \infty }^{ i - 1 } \big\| 2^{ r \epsilon } 2^{ r
( \alpha + \beta - \epsilon ) } A^{ \beta + 1 } ( 2^r + A )^{ - \alpha - \beta - 1 } x
\big\| \bigg]^q \right\}^{ 1 / q },
\end{align*}
with $0 < \epsilon < \RE \beta - s$, applying the H\"{o}lder inequality to the inner summation of $J$ and exchanging the order of summation yields
\begin{align*}
J \leq 
\frac{ 1 }{ ( 2^{ \epsilon q' } - 1 )^{ 1 / q' } } \frac{ 2^{
s - \RE \beta + \epsilon } }{ [ 1 - 2^{ ( s - \RE \beta + \epsilon ) q } ]^{ 1 / q }
} | x |_{ R^{ s, A }_{ q, X } ( \alpha, \beta +1 ) }.
\end{align*}
Thus, we have verified (\ref{e:independence of l and m-homogeneous-proof-I}). The proof is complete.
\end{proof}

Thanks to Lemma \ref{l:independence of l and m-homogeneous}, we can now
define the homogeneous Besov spaces associated with non-negative operators as
follows.

\begin{defn}\label{d:Besov spaces-homogeneous}
Let $0 < q \leq \infty$ and $s \in \mathbb{R}$. The homogeneous
Besov space $\dot{B}^{ s, A }_{ q, X } := \dot{B}^{ s, A }_{ q, X } ( \alpha, \beta )$ associated with $A$ is defined as the completion of $\dot{R}^{ s, A }_{ q, X } ( \alpha, \beta )$ with respect to the 
quasi-norm
\begin{align}\label{e:Besov quasi-norms-homogeneous}
\left\| \cdot \right\|_{ \dot{B}^{ s, A }_{ q, X } } := \left\| \cdot \right\|_{ \dot{B}^{ s, A }_{ q, X } ( \alpha, \beta ) } := \left| \cdot \right|_{ \dot{R}^{ s, A }_{ q, X } ( \alpha, \beta ) },
\end{align}
where $\alpha \in \mathbb{C}_+^*$ and $\beta \in \mathbb{C}_+$ satisfying $- \RE \alpha < s < \RE \beta$.
\end{defn}

\begin{rem}
By Lemma \ref{l:independence of l and m-homogeneous}, the Besov space $\dot{B}^{ s, A }_{ q, X }$ is well defined as a quasi-Banach space (Banach space) for each $0 < q \leq \infty$ ($1 \leq q \leq \infty$).
In particular, for $q \geq 1$ and $s \in \mathbb{R}$, the Besov space $\dot{B}^{ s,
A }_{ q, X }$ coincides with the space $\dot{B}^\phi_{ X, q }$, with
$\phi ( \lambda ) = \lambda^s$, defined by T. Matsumoto and T. Ogawa
\cite[Definition 2.8]{Matsumoto and Ogawa2010}. Also, see Remark \ref{r:inhomogeneous Besov spaces-equivalent quasi-norms-continuity} below for more information.
\end{rem}

%

Compared with inhomogeneous 
Besov spaces as in Proposition \ref{p:inverse-inhomogeneous}, the next result for homogeneous 
Besov spaces is of independent interest. 

\begin{prop}\label{p:inverse-homogeneous}
Let $0 < q \leq \infty$ and $s \in \mathbb{R}$. Then $\dot{B}^{ - s, A }_{ q, X } = \dot{B}^{ s, A^{ - 1 } }_{ q, X }$ in the sense of equivalent quasi-norms whenever $A$ is injective.
\end{prop}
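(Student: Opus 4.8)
## Proof Proposal

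The plan is to exploit the substitution $A \mapsto A^{-1}$ together with the definition of the homogeneous quasi-norm, mimicking the argument already used for the inhomogeneous case in Proposition~\ref{p:inverse-inhomogeneous}. Since $A$ is injective and non-negative, $A^{-1}$ is also injective and non-negative (with the roles of $M_A$ and $L_A$ interchanged), so $\dot{B}^{s, A^{-1}}_{q, X}$ is well defined. By the density of $\dot{R}^{s, A}_{q, X}(\alpha, \beta)$ in $\dot{B}^{s, A}_{q, X}$ and of $\dot{R}^{-s, A^{-1}}_{q, X}(\alpha, \beta)$ in $\dot{B}^{-s, A^{-1}}_{q, X}$, it suffices to show that these two dense subspaces coincide set-theoretically and that the quasi-norms $|\cdot|_{\dot{R}^{-s, A}_{q, X}(\alpha, \beta)}$ and $|\cdot|_{\dot{R}^{s, A^{-1}}_{q, X}(\alpha, \beta)}$ are equivalent on their common domain.

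The key computational step is the identity, valid for $\mu > 0$ and $x \in \overline{D(A)} \cap \overline{R(A)}$,
\begin{align*}
A^{\beta} (\mu + A)^{-\alpha - \beta} x = \mu^{-\alpha-\beta} \, A^{-\alpha} (\mu^{-1} + A^{-1})^{-\alpha-\beta} x,
\end{align*}
which follows from writing $\mu + A = A(\mu^{-1} + A^{-1}) \mu^{-1}\cdot\mu = \mu\, A\, \mu^{-1}(\mu^{-1}+A^{-1})$ — more carefully, $(\mu + A)^{-1} = \mu^{-1} A^{-1}(\mu^{-1} + A^{-1})^{-1}$, and then raising to the appropriate fractional powers using multiplicativity and additivity of fractional powers (Proposition-level facts (\ref{e:additivity of fractional powers}), (\ref{e:multiplicativity of fractional powers}) and the fact that $A^{-\alpha} = (A^\alpha)^{-1}$). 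Applying this with $\mu = 2^i$ gives
\begin{align*}
2^{i(-s+\beta)} A^{\beta}(2^i + A)^{-\alpha-\beta} x = 2^{-i(s+\alpha)}\, (A^{-1})^{\alpha} (2^{-i} + A^{-1})^{-\alpha-\beta} x,
\end{align*}
so that the $i$-th term of $|x|_{\dot{R}^{-s, A}_{q, X}(\alpha, \beta)}$ equals the $(-i)$-th term of $|x|_{\dot{R}^{s, A^{-1}}_{q, X}(\beta, \alpha)}$ (note the indices $\alpha, \beta$ get swapped). Summing over $i \in \mathbb{Z}$ and reindexing $i \mapsto -i$ yields
\begin{align*}
|x|_{\dot{R}^{-s, A}_{q, X}(\alpha, \beta)} = |x|_{\dot{R}^{s, A^{-1}}_{q, X}(\beta, \alpha)},
\end{align*}
and then Lemma~\ref{l:independence of l and m-homogeneous} (independence of the homogeneous quasi-norm on the choice of $\alpha, \beta$) converts the right-hand side into an equivalent quasi-norm with the standard index choice. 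One also checks that $\overline{D(A)} \cap \overline{R(A)} = \overline{D(A^{-1})} \cap \overline{R(A^{-1})}$, which is immediate since $D(A^{-1}) = R(A)$ and $R(A^{-1}) = D(A)$.

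I expect the main obstacle to be a careful justification of the fractional-power identity above for the full range $\alpha \in \mathbb{C}_+^*$, $\beta \in \mathbb{C}_+$ — in particular, handling the boundary cases $\alpha = 0$ and verifying that all the operators involved are genuinely defined and that the manipulations with $(A^\alpha)^{-1} = A^{-\alpha}$ respect domains. When $\alpha = 0$ the identity degenerates to $A^\beta(\mu+A)^{-\beta}x = \mu^{-\beta}(\mu^{-1}+A^{-1})^{-\beta}x \cdot (A^{-1})^{0}$ which still makes sense, but one should confirm the domain condition $x \in \overline{D(A)}\cap\overline{R(A)}$ is exactly what is needed for both sides to be well-defined and for the reproducing formula (\ref{e:reproducing formula-ab}) or Lemma~\ref{l:ergodicity} to apply symmetrically. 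Since everything reduces to bounded operators acting on a fixed vector $x$ once $\mu$ is fixed, these checks are routine but must be stated; the "in particular" claim $\dot{B}^{0,A}_{q,X} = \dot{B}^{0,A^{-1}}_{q,X}$ is simply the case $s = 0$.
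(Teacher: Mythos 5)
Your strategy --- reduce to an exact algebraic identity relating the resolvents of $A$ and $A^{-1}$, reindex $i \mapsto -i$, and absorb the resulting swap of $(\alpha,\beta)$ via Lemma~\ref{l:independence of l and m-homogeneous} --- is exactly the paper's, so the approach is sound. However, your key display carries a bookkeeping error: the $i$-th term of $|x|_{\dot{R}^{-s,A}_{q,X}(\alpha,\beta)}$ is $\|2^{i(-s+\alpha)}A^\beta(2^i+A)^{-\alpha-\beta}x\|$ (first index in the power of $2$, second index on the operator), not $\|2^{i(-s+\beta)}A^\beta(2^i+A)^{-\alpha-\beta}x\|$ as you wrote. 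With the correct exponent the identity reads
\begin{align*}
2^{i(-s+\alpha)} A^{\beta}(2^i + A)^{-\alpha-\beta} x = 2^{-i(s+\beta)}\, (A^{-1})^{\alpha} (2^{-i} + A^{-1})^{-\alpha-\beta} x,
\end{align*}
whose right-hand side is indeed the $(-i)$-th term of $|x|_{\dot{R}^{s,A^{-1}}_{q,X}(\beta,\alpha)}$; your stated conclusion $|x|_{\dot{R}^{-s,A}_{q,X}(\alpha,\beta)} = |x|_{\dot{R}^{s,A^{-1}}_{q,X}(\beta,\alpha)}$ is therefore correct, but it does not follow from the display as you wrote it, since the expression $2^{i(-s+\beta)}A^\beta(2^i+A)^{-\alpha-\beta}x$ is not the generic term of $\dot{R}^{-s,A}_{q,X}(\alpha,\beta)$ (nor of $\dot{R}^{-s,A}_{q,X}(\beta,\alpha)$, which would have $A^\alpha$). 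For comparison, the paper starts directly from $|x|_{\dot{R}^{-s,A}_{q,X}(\beta,\alpha)}$ --- legitimate since $-\RE\beta<-s<\RE\alpha$ --- and obtains the identity with $|x|_{\dot{R}^{s,A^{-1}}_{q,X}(\alpha,\beta)}$ in a single substitution, after which Lemma~\ref{l:independence of l and m-homogeneous} closes the argument. The domain concerns you raise about $\alpha = 0$ are not actually needed: once $\mu>0$ is fixed every operator in the identity is bounded, and the qualifying condition $x\in\overline{D(A)}\cap\overline{R(A)}=\overline{D(A^{-1})}\cap\overline{R(A^{-1})}$ is manifestly symmetric under $A\leftrightarrow A^{-1}$.
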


\begin{proof}
Let $A$ be injective. It suffices to show that $\dot{R}^{ - s, A }_{ q, X } ( \alpha, \beta ) =
\dot{R}^{ s, A^{ - 1 } }_{ q, X } ( \alpha, \beta )$ in the sense of equivalent
quasi-norms for fixed $\alpha \in \mathbb{C}_+^*$ and $\beta \in \mathbb{C}_+$ satisfying $- \RE \alpha < s < \RE \beta$. To this end, let $\alpha \in \mathbb{C}_+^*$ and $\beta \in \mathbb{C}_+$ satisfying $- \RE \alpha < s < \RE \beta$. It is clear that $- \RE \beta < - s < \RE \alpha$, and hence,
\begin{align*}
| x |_{ \dot{R}^{ - s, A }_{ q, X } ( \beta, \alpha ) } & = \left\{ \sum_{ i = - \infty }^{ \infty } \big\| 2^{ i ( - s + \beta ) } A^\alpha ( 2^i + A )^{ - \alpha - \beta } x \big\|^q \right\}^{ 1 / q } \\
	& = \left\{ \sum_{ i = - \infty }^{ \infty } \big\| 2^{ i ( s - \beta ) } A^\alpha ( 2^{ - i } + A )^{ - \alpha - \beta } x \big\|^q \right\}^{ 1 / q } 
= | x |_{ \dot{R}^{ s, A^{-1} }_{ q, X } ( \alpha, \beta ) }, 
\end{align*}
the conclusion desired. The proof is complete.
\end{proof}

Finally, we present a relationship between the inhomogeneous Besov spaces and homogeneous Besov spaces to end this section.

\begin{prop}\label{p:inhomogeneous-homogeneous s>0}
Let $0 < q \leq \infty$ and $s > 0$. Then $B^{ s, A }_{ q, X } =
\dot{B}^{ s, A }_{ q, X } \cap X$ whenever $\overline{ D ( A ) } = \overline{ R ( A ) } = X$.
\end{prop}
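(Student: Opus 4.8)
The plan is to reduce the statement to a direct comparison of the dyadic subspaces underlying the two Besov spaces, exploiting the freedom in the parameters $(k,\alpha,\beta)$. Since $s>0$, by Lemmas~\ref{l:independence of klm-inhomogeneous} and~\ref{l:independence of l and m-homogeneous} I may work throughout with $\alpha=0$: fix $k\in\mathbb{Z}$ and $\beta\in\mathbb{C}_+$ with $\RE\beta>s$, so that $\|x\|_{B^{s,A}_{q,X}}\simeq\|x\|+|x|_{R^{s,A}_{q,X}(k,0,\beta)}$ and $\|x\|_{\dot{B}^{s,A}_{q,X}}=|x|_{\dot{R}^{s,A}_{q,X}(0,\beta)}$, where $|x|_{R^{s,A}_{q,X}(k,0,\beta)}=\big(\sum_{i\geq k}\|2^{is}A^\beta(2^i+A)^{-\beta}x\|^q\big)^{1/q}$ and $|x|_{\dot{R}^{s,A}_{q,X}(0,\beta)}=\big(\sum_{i\in\mathbb{Z}}\|2^{is}A^\beta(2^i+A)^{-\beta}x\|^q\big)^{1/q}$ (usual modification for $q=\infty$). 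Under the hypothesis $\overline{D(A)}=\overline{R(A)}=X$, the subspace $\dot{R}^{s,A}_{q,X}(0,\beta)$ is exactly $\{x\in X:|x|_{\dot{R}^{s,A}_{q,X}(0,\beta)}<\infty\}$, and by Proposition~\ref{p:completeness-inhomogeneous} the inhomogeneous space $B^{s,A}_{q,X}=R^{s,A}_{q,X}(k,0,\beta)=\{x\in X:|x|_{R^{s,A}_{q,X}(k,0,\beta)}<\infty\}$ is already complete and sits inside $X$.

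The analytic heart of the argument is that, for $s>0$, the ``low-frequency tail'' of the homogeneous quasi-norm is dominated by $\|x\|_X$ alone. Splitting $|x|_{\dot{R}^{s,A}_{q,X}(0,\beta)}^q=\sum_{i<k}\|2^{is}A^\beta(2^i+A)^{-\beta}x\|^q+|x|_{R^{s,A}_{q,X}(k,0,\beta)}^q$, using the uniform bound $\sup_{t>0}\|A^\beta(t+A)^{-\beta}\|<\infty$ from~\eqref{e:uniform boundedness compositions-L}, and summing the geometric series $\sum_{i<k}2^{isq}<\infty$ (this is precisely where $s>0$ enters; for $q=\infty$ one uses $\sup_{i<k}2^{is}<\infty$), one obtains $\sum_{i<k}\|2^{is}A^\beta(2^i+A)^{-\beta}x\|^q\lesssim\|x\|_X^q$. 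Consequently, for $x\in X$ one has $|x|_{\dot{R}^{s,A}_{q,X}(0,\beta)}<\infty$ if and only if $|x|_{R^{s,A}_{q,X}(k,0,\beta)}<\infty$, so the two dyadic subspaces coincide as subsets of $X$; moreover on this common subspace $|x|_{\dot{R}^{s,A}_{q,X}(0,\beta)}\lesssim\|x\|_X+|x|_{R^{s,A}_{q,X}(k,0,\beta)}$, while trivially $|x|_{R^{s,A}_{q,X}(k,0,\beta)}\leq|x|_{\dot{R}^{s,A}_{q,X}(0,\beta)}$. Adding $\|x\|_X$ to both inequalities gives $\|x\|_X+|x|_{\dot{R}^{s,A}_{q,X}(0,\beta)}\simeq\|x\|_X+|x|_{R^{s,A}_{q,X}(k,0,\beta)}\simeq\|x\|_{B^{s,A}_{q,X}}$.

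It remains to check that $\dot{B}^{s,A}_{q,X}\cap X$ — the part of the homogeneous completion realized inside $X$, equipped with $\|\cdot\|_{\dot{B}^{s,A}_{q,X}}+\|\cdot\|_X$ — coincides with $\dot{R}^{s,A}_{q,X}(0,\beta)$. One inclusion is immediate, since $\dot{R}^{s,A}_{q,X}(0,\beta)\subset X$ embeds canonically in $\dot{B}^{s,A}_{q,X}$. For the other, if $x\in X$ is the $X$-limit of a sequence $(x_n)\subset\dot{R}^{s,A}_{q,X}(0,\beta)$ that is Cauchy for $|\cdot|_{\dot{R}^{s,A}_{q,X}(0,\beta)}$, then $A^\beta(2^i+A)^{-\beta}x_n\to A^\beta(2^i+A)^{-\beta}x$ in $X$ for each $i$, and Fatou's lemma applied to the defining series yields $|x|_{\dot{R}^{s,A}_{q,X}(0,\beta)}\leq\liminf_n|x_n|_{\dot{R}^{s,A}_{q,X}(0,\beta)}<\infty$, i.e. $x\in\dot{R}^{s,A}_{q,X}(0,\beta)$ with $\|x\|_{\dot{B}^{s,A}_{q,X}}=|x|_{\dot{R}^{s,A}_{q,X}(0,\beta)}$. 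Combining with the preceding paragraph, $\dot{B}^{s,A}_{q,X}\cap X=\dot{R}^{s,A}_{q,X}(0,\beta)=R^{s,A}_{q,X}(k,0,\beta)=B^{s,A}_{q,X}$ with equivalent quasi-norms, which is the assertion. The main obstacle is not really analytic — the tail estimate above is soft and mirrors the computations in Lemmas~\ref{l:independence of lR-inhomogeneous} and~\ref{l:independence of l and m-homogeneous} — but rather the bookkeeping in this last step: one must fix the meaning of the intersection $\dot{B}^{s,A}_{q,X}\cap X$ and verify, via the Fatou argument, that no extra limit points of the homogeneous completion are realized in $X$ beyond those already lying in $\dot{R}^{s,A}_{q,X}$.
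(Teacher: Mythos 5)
Your proof is correct and follows essentially the same route as the paper's: fix $\alpha=0$, use Proposition~\ref{p:completeness-inhomogeneous} to realise $B^{s,A}_{q,X}$ inside $X$, and note that the low-frequency tail $\sum_{i<k}\|2^{is}A^\beta(2^i+A)^{-\beta}x\|^q$ is controlled by $\|x\|^q$ via~\eqref{e:uniform boundedness compositions-L} and the convergent geometric series because $s>0$, giving the finiteness equivalence $|x|_{R^{s,A}_{q,X}(0,0,\beta)}<\infty\Leftrightarrow|x|_{\dot R^{s,A}_{q,X}(0,\beta)}<\infty$. Your final Fatou-type paragraph explicitly pinning down the meaning of $\dot B^{s,A}_{q,X}\cap X$ as $\dot R^{s,A}_{q,X}(0,\beta)$ supplies bookkeeping the paper leaves implicit, but the analytic content is the same.
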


\begin{proof}
Let $k = \alpha = 0$ and $\beta \in \mathbb{C}_+$ with $s < \RE \beta$. It is clear that $B^{ s, A }_{ q, X } \subset X$ by Proposition \ref{p:completeness-inhomogeneous}.
Moreover, it is also clear that  
\begin{align*}
| x |_{ R^{ s, A }_{ q, X } ( 0, 0, \beta ) } < \infty \Leftrightarrow | x |_{ \dot{R}^{ s, A }_{ q, X } ( 0, \beta ) } < \infty 
\end{align*}
due to the fact that $s > 0$. Thus, $B^{ s, A }_{ q, X } = \dot{B}^{ s, A }_{ q, X } \cap X$.
\end{proof}

\smallskip


\section{Regular properties of Besov spaces}\label{S:Basic properties}

This section is devoted to basic properties of Besov spaces associated with
non-negative operators, including quasi-norm equivalence, continuous embedding and translation invariance. As in the last section, $(X, \left\| \cdot \right\|)$ is a Banach space and $A$ is a non-negative operator on $X$ in this section.

\smallskip


\subsection{Quasi-norm equivalence}

The semi-quasinorm $\left| \cdot \right|_{ R^{ s, A }_{ q, X } ( k, \alpha, \beta ) }$ given in Definition \ref{d:Besov type-inhomogeneous} can also be characterized by use of the Lebesgue integrals. For convenience, we write
\begin{align}\label{e:Besov type-inhomogeneous-continuity norm}
| x |^c_{ R^{ s, A }_{ q, X } ( k, \alpha, \beta ) } := \bigg\{ \int_{ 2^k
}^\infty \big\| t^{ s + \alpha } A^{ \beta } ( t + A )^{ - \alpha - \beta }
x \big\|^q \, \frac{ d t }{ t } \bigg\}^{ 1 / q },
\end{align}
with the usual modification when $q = \infty$.

\begin{lem}\label{l:Besov type-inhomogeneous-continuity}
Let $0 < q \leq \infty$ and $s \in \mathbb{R}$. Fix $k \in
\mathbb{Z}$ and $\alpha, \beta \in \mathbb{C}_+^*$ such that $- \RE \alpha < s < \RE \beta$, and
let $x \in \overline{ D (A) }$. Then 
$x \in R^{ s, A }_{ q, X } ( k, \alpha, \beta )$ if and only if $| x |^c_{ R^{ s, A }_{ q, X } ( k, \alpha, \beta ) } < \infty$. If this is the case, 
\begin{align*}
| x |_{ R^{ s, A }_{ q, X } ( k, \alpha, \beta ) } \simeq | x |^c_{ R^{ s, A
}_{ q, X } ( k, \alpha, \beta ) }.
\end{align*}
\end{lem}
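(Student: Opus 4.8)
The plan is to prove the equivalence by comparing the dyadic series with the integral one dyadic block at a time, over the intervals $[2^i,2^{i+1}]$ with $i\ge k$. First I would note that for every $t>0$ the operator $A^\beta(t+A)^{-\alpha-\beta}$ is bounded on $X$ — splitting $(t+A)^{-\alpha-\beta}=(t+A)^{-\alpha}(t+A)^{-\beta}$ and using that $A^\beta(t+A)^{-\beta}\in\mathcal{L}(X)$ by Lemma \ref{l:uniform boundedness compositions} together with boundedness of $(t+A)^{-\alpha}$ when $\RE\alpha>0$ (the cases $\alpha=0$ or $\beta=0$ being trivial) — and that $t\mapsto A^\beta(t+A)^{-\alpha-\beta}x$ is continuous; hence $t\mapsto\|t^{s+\alpha}A^\beta(t+A)^{-\alpha-\beta}x\|=t^{s+\RE\alpha}\,\|A^\beta(t+A)^{-\alpha-\beta}x\|$ is measurable and $|x|^c_{R^{s,A}_{q,X}(k,\alpha,\beta)}$ is well defined in $[0,\infty]$. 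This also accounts for the hypothesis $x\in\overline{D(A)}$, under which membership in $R^{s,A}_{q,X}(k,\alpha,\beta)$ is being tested.

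Next, I would decompose $\int_{2^k}^\infty(\cdot)\,\frac{dt}{t}=\sum_{i=k}^\infty\int_{2^i}^{2^{i+1}}(\cdot)\,\frac{dt}{t}$ and estimate a single block. On $[2^i,2^{i+1}]$ the scalar weight obeys $t^{s+\RE\alpha}\simeq 2^{i(s+\RE\alpha)}$ with constants depending only on $s$ and $\RE\alpha$, and two applications of the estimate (\ref{e:resolvent estimate-discrete}) with $c=2$ — with the roles of $\alpha,\beta$ in that estimate played by $\beta$ and $\alpha+\beta$, noting that $\RE\beta\le\RE(\alpha+\beta)$ with equality exactly when $\alpha=0$, so that its hypotheses hold — give
\begin{align*}
\big\|A^\beta(t+A)^{-\alpha-\beta}x\big\|\simeq\big\|A^\beta(2^i+A)^{-\alpha-\beta}x\big\|,\qquad 2^i\le t\le 2^{i+1},
\end{align*}
with constants independent of $i$, $t$ and $x$. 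Multiplying the two comparisons and integrating $\frac{dt}{t}$ over $[2^i,2^{i+1}]$, which only contributes the factor $\ln 2$, yields
\begin{align*}
\int_{2^i}^{2^{i+1}}\big\|t^{s+\alpha}A^\beta(t+A)^{-\alpha-\beta}x\big\|^q\,\frac{dt}{t}\simeq\big\|2^{i(s+\alpha)}A^\beta(2^i+A)^{-\alpha-\beta}x\big\|^q
\end{align*}
for every $i\ge k$, uniformly in $i$ and $x$; for $q=\infty$ the corresponding statement, $\sup_{2^i\le t\le 2^{i+1}}\|t^{s+\alpha}A^\beta(t+A)^{-\alpha-\beta}x\|\simeq\|2^{i(s+\alpha)}A^\beta(2^i+A)^{-\alpha-\beta}x\|$, is proved in exactly the same way.

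Finally, summing the block estimate over $i\ge k$ when $0<q<\infty$, and taking the supremum over $i\ge k$ when $q=\infty$, gives $|x|^c_{R^{s,A}_{q,X}(k,\alpha,\beta)}\simeq|x|_{R^{s,A}_{q,X}(k,\alpha,\beta)}$ as elements of $[0,\infty]$; in particular the left side is finite exactly when the right side is, which is both the membership criterion and the claimed norm equivalence. I do not anticipate a genuine obstacle: the steps are routine, and the only points demanding care are consistently passing to real parts of the complex exponents, checking that the parameters fed into (\ref{e:resolvent estimate-discrete}) meet its hypotheses in the degenerate cases $\alpha=0$ and $\beta=0$, and carrying the $q=\infty$ case along in parallel with the standard supremum modification.
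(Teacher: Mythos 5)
Your proof is correct and takes essentially the same approach as the paper: decompose the integral over $[2^k,\infty)$ into dyadic blocks and use (\ref{e:resolvent estimate-discrete}) (ultimately (\ref{e:uniform boundedness compositions-C})) to compare $\|A^\beta(t+A)^{-\alpha-\beta}x\|$ with its value at the left endpoint $2^i$. The only organizational difference is that you establish a single two-sided block equivalence and then sum (or take a sup), whereas the paper proves the two one-sided inequalities separately; your added remarks on measurability and on the degenerate cases $\alpha=0$, $\beta=0$ are sound and, if anything, slightly more careful than the paper's treatment.
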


\begin{proof}
First we verify the statement in the case $0 < q < \infty$. Let $| x |_{ R^{ s, A }_{ q, X } ( k, \alpha, \beta ) } < \infty$. From (\ref{e:Besov type-inhomogeneous-continuity norm}) it follows that 
\begin{align*}
| x |^c_{ R^{ s, A }_{ q, X } ( k, \alpha, \beta ) } & = \left\{ \sum_{ i = k }^\infty \int_{ 2^i }^{ 2^{ i + 1 } } t^{ ( s + \RE \alpha ) q - 1 } \| A^\beta ( t + A )^{ - \alpha - \beta } x \|^q \, d t \right\}^{ 1 / q } \\
& \lesssim \left\{ \sum_{ i = k }^\infty \int_{ 2^i }^{ 2^{ i + 1 } } t^{ ( s + \RE \alpha ) q - 1 } \| A^\beta ( 2^i + A )^{ - \alpha - \beta } x \|^q \, d t \right\}^{ 1 / q } \\
& = \bigg[ \frac{ 2^{ ( s + \RE \alpha ) q } - 1 }{ ( s + \RE \alpha ) q } \bigg]^{ 1 / q } \cdot | x |_{ R^{ s, A }_{ q, X } ( k, \alpha, \beta ) },
\end{align*}
where the last inequality follows from (\ref{e:resolvent estimate-discrete}).

Conversely, let $| x |^c_{ R^{ s, A }_{ q, X } ( k, \alpha, \beta ) } < \infty$. Applying the equality
\begin{align*}
\frac{ s + \RE \alpha }{ 2^{ s + \RE \alpha } - 1 } \int_{ 2^i }^{ 2^{ i + 1 } } t^{ s + \RE \alpha - 1 } 2^{ - i ( s + \RE \alpha ) } \, d t = 1
\end{align*}
yields 
\begin{align*}
| x |_{ R^{ s, A }_{ q, X } ( k, \alpha, \beta ) } & \simeq \left\{ \sum_{ i = k + 1 }^\infty 2^{ i ( s + \RE \alpha ) q } \| A^\beta ( 2^i + A )^{ - \alpha - \beta } x \|^q \right\}^{ 1 / q } \\
& = \bigg[ \frac{ ( s + \RE \alpha ) q  }{ 2^{ ( s + \RE \alpha ) q } - 1 } \bigg]^{ 1 / q } \left\{ \sum_{ i = k + 1 }^\infty \int_{ 2^i }^{ 2^{ i + 1 } } \| t^{ s + \alpha } A^\beta ( 2^i + A )^{ - \alpha - \beta } x \|^q \, \frac{ d t }{ t } \right\}^{ 1 / q }. 
\end{align*}
Applying (\ref{e:resolvent estimate-discrete}) with $c = 2$ yields 
\begin{align*}
| x |_{ R^{ s, A }_{ q, X } ( k, \alpha, \beta ) } 
& \lesssim \left\{ \sum_{ i = k + 1 }^\infty \int_{ 2^i }^{ 2^{ i + 1 } } \| t^{ s + \alpha } A^\beta ( t + A )^{ - \alpha - \beta } x \|^q \, \frac{ d t }{ t } \right\}^{ 1 / q } \\
& = \left\{ \int_{ 2^{ k + 1 } }^{ \infty } \| t^{ s + \alpha } A^\beta ( t + A )^{ - \alpha - \beta } x \|^q \, \frac{ d t }{ t } \right\}^{ 1 / q } \lesssim | x |^c_{ R^{ s, A }_{ q, X } ( k, \alpha, \beta ) }.
\end{align*}
Thus, we have verified the statement for $0 < q < \infty$.

Next we verify the statement for $q = \infty$. Indeed, it is clear that 
\begin{align*}
\sup_{ i \geq k } 2^{ i s } \cdot \big\| 2^{ i \alpha } A^\beta ( 2^i + A )^{ - \alpha - \beta } x \big\| \leq \sup_{ \lambda \geq 2^k } \lambda^s \cdot \big\| \lambda^{ \alpha } A^\beta ( \lambda + A )^{ - \alpha - \beta } x \big\|
\end{align*}
and that 
\begin{align*}
& \sup_{ \lambda \geq 2^k } \lambda^s \cdot \big\| \lambda^{ \alpha } A^\beta ( \lambda + A )^{ - \alpha - \beta } x \big\| \\
= {} & \sup_{ i \geq k } \sup_{ \lambda \in [ 2^i, 2^{ i + 1 } ] } \big\| \lambda^{ s + \alpha } A^\beta ( \lambda + A )^{ - \alpha - \beta } x \big\| \\
\lesssim {} & \sup_{ i \geq k } \big\| 2^{ i ( s + \alpha ) } A^\beta ( 2^i + A )^{ - \alpha - \beta } x \big\|,
\end{align*}
from which we obtain
\begin{align*}
\| x \|^c_{ R^{ s, A }_{ \infty, X } ( k, \alpha, \beta ) } \simeq \| x \|_{ R^{ s, A }_{ \infty, X } ( k, \alpha, \beta ) },
\end{align*}
immediately. The proof is complete.
\end{proof}

Thanks to Lemma \ref{l:Besov type-inhomogeneous-continuity}, we can now give an equivalent characterization of the inhomogeneous Besov spaces as follows. 

\begin{thm}\label{t:inhomogeneous Besov spaces-equivalent quasi-norms-continuity}
Let $s \in \mathbb{R}$ and $0 < q \leq \infty$. Fix $k \in \mathbb{Z}$ and fix $\alpha, \beta \in \mathbb{C}_+^*$ such that $- \RE \alpha < s < \RE \beta$.
Then $B^{ s, A }_{ q, X }$ is the completion of $\overline{ D ( A ) }$ with respect to the quasi-norm $\left\| \cdot \right\|^c_{ B^{ s, A }_{ q, X } }$ 
given by 
\begin{align*}
\| x \|^c_{ B^{ s, A }_{ q, X } } := \| ( 2^k + A )^{ - \alpha } x \| + | x |^c_{ R^{ s, A }_{ q, X } ( k, \alpha, \beta ) },
\end{align*}
where $\left| \cdot \right|^c_{ R^{ s, A }_{ q, X } ( k, \alpha, \beta ) }$ is given in (\ref{e:Besov type-inhomogeneous-continuity norm}) above.
\end{thm}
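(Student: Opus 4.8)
The plan is to reduce the statement about the completion $B^{s,A}_{q,X}$ to the norm equivalence on the dense subspace $R^{s,A}_{q,X}(k,\alpha,\beta)$ that was already established in Lemma \ref{l:Besov type-inhomogeneous-continuity}. Recall that by definition $B^{s,A}_{q,X}(k,\alpha,\beta)$ is the completion of $R^{s,A}_{q,X}(k,\alpha,\beta)$ with respect to the quasi-norm $\|\cdot\|_{B^{s,A}_{q,X}(k,\alpha,\beta)}$, so it suffices to show two things: first, that $R^{s,A}_{q,X}(k,\alpha,\beta)$ coincides, as a subset of $\overline{D(A)}$, with the space $\{x\in\overline{D(A)}:\|x\|^c_{B^{s,A}_{q,X}}<\infty\}$; and second, that on this common subspace the two quasi-norms $\|\cdot\|_{B^{s,A}_{q,X}}$ and $\|\cdot\|^c_{B^{s,A}_{q,X}}$ are equivalent. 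Once these are in place, the abstract fact that completions of a fixed vector space with respect to two equivalent quasi-norms coincide (in the sense of equivalent quasi-norms, as discussed after the Aoki--Rolewicz theorem in the excerpt) finishes the proof.

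First I would observe that both quasi-norms share the same first term $\|(2^k+A)^{-\alpha}x\|$, so the whole matter rests on comparing the semi-quasinorm $|x|_{R^{s,A}_{q,X}(k,\alpha,\beta)}$ with its continuous analogue $|x|^c_{R^{s,A}_{q,X}(k,\alpha,\beta)}$. But this is exactly the content of Lemma \ref{l:Besov type-inhomogeneous-continuity}: for $x\in\overline{D(A)}$ one has $|x|_{R^{s,A}_{q,X}(k,\alpha,\beta)}<\infty$ if and only if $|x|^c_{R^{s,A}_{q,X}(k,\alpha,\beta)}<\infty$, and when finite the two are comparable with constants depending only on $s,q,\alpha,\beta$ and the non-negativity constants of $A$. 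This immediately gives the set-theoretic identity $R^{s,A}_{q,X}(k,\alpha,\beta)=\{x\in\overline{D(A)}:\|x\|^c_{B^{s,A}_{q,X}}<\infty\}$, and adding back the common term $\|(2^k+A)^{-\alpha}x\|$ yields
\begin{align*}
\|x\|_{B^{s,A}_{q,X}(k,\alpha,\beta)}\simeq\|x\|^c_{B^{s,A}_{q,X}},\qquad x\in R^{s,A}_{q,X}(k,\alpha,\beta).
\end{align*}

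Finally I would invoke the general principle on completions: if $(\mathcal{X},\|\cdot\|_1)$ and $(\mathcal{X},\|\cdot\|_2)$ are quasi-normed spaces on the same vector space with $\|\cdot\|_1\simeq\|\cdot\|_2$, then the identity map extends to an isomorphism of their completions, and in particular a Cauchy sequence for one quasi-norm is Cauchy for the other with the same limit. Applying this with $\mathcal{X}=R^{s,A}_{q,X}(k,\alpha,\beta)$, $\|\cdot\|_1=\|\cdot\|_{B^{s,A}_{q,X}(k,\alpha,\beta)}$ and $\|\cdot\|_2=\|\cdot\|^c_{B^{s,A}_{q,X}}$ shows that $B^{s,A}_{q,X}$, already defined as the completion of $R^{s,A}_{q,X}(k,\alpha,\beta)$ under $\|\cdot\|_1$, is equally the completion of $R^{s,A}_{q,X}(k,\alpha,\beta)$, and hence of $\overline{D(A)}$, under $\|\cdot\|^c_{B^{s,A}_{q,X}}$; the quasi-norms agree up to equivalence on the dense subspace and therefore on the whole completion. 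The only mild subtlety — and the point I would state carefully rather than the routine estimates — is that $R^{s,A}_{q,X}(k,\alpha,\beta)$ need not be dense in $X$ or in $\overline{D(A)}$ for the $B$-quasi-norm, so "completion of $\overline{D(A)}$" must be read as: the closure, inside the abstract completion, of the image of $\overline{D(A)}$, which is the same as the completion of $R^{s,A}_{q,X}(k,\alpha,\beta)$ precisely because the latter is the set on which $\|\cdot\|^c_{B^{s,A}_{q,X}}$ is finite. No genuinely new estimate is needed beyond Lemma \ref{l:Besov type-inhomogeneous-continuity} and Lemma \ref{l:independence of klm-inhomogeneous} for the independence of $k,\alpha,\beta$.
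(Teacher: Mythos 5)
Your proposal is correct and follows essentially the same route as the paper, which offers no explicit proof body but states the theorem as an immediate consequence of Lemma \ref{l:Besov type-inhomogeneous-continuity} together with the definition of $B^{s,A}_{q,X}$ as a completion. Your extra remark about how to read ``completion of $\overline{D(A)}$'' (namely as the completion of the subset where $\|\cdot\|^c_{B^{s,A}_{q,X}}$ is finite, which coincides with $R^{s,A}_{q,X}(k,\alpha,\beta)$) is a genuine subtlety the paper leaves implicit, and you handle it correctly.
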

%
%
%

\begin{rem}\label{r:inhomogeneous Besov spaces-equivalent quasi-norms-continuity}
Let $0 < q \le \infty$ and $s \in \mathbb{R}$. We have the following explicit characterizations of the Besov space $B^{ s, A }_{ q, X }$.
\begin{itemize}
\item[(i)] Let $s > 0$. Fix $k = \alpha = 0 < s
< \RE \beta$. It is clear that $B^{ s, A }_{ q, X } \subset
X$ and that 
\begin{align*}
\| x \|_{ B^{ s, A }_{ q, X } } & \simeq \| x \| + \left\{
\sum_{ i = 0 }^\infty \big\| 2^{ i s } A^{ \beta } ( 2^i + A )^{ - \beta } x
\big\|^q \right\}^{ 1 / q } \\
& \simeq \| x \| + \left\{ \sum_{ i \in \mathbb{Z} } \big\| 2^{
i s } A^{ \beta } ( 2^i + A )^{ - \beta } x \big\|^q \right\}^{ 1 / q },
\quad x \in B^{ s, A }_{ q, X },
\end{align*}
where the last equivalence follows from the estimate (\ref{e:uniform boundedness compositions-L}) and the fact that $s > 0$. Moreover, by Lemma \ref{l:Besov type-inhomogeneous-continuity}, we have
\begin{align*}
\| x \|_{ B^{ s, A }_{ q, X } } & \simeq \| x \| + \left\{
\int_{ 1 }^\infty \big\| t^s A^{ \beta } ( t + A )^{ - \beta } x
\big\|^q \, \frac{ d t }{ t } \right\}^{ 1 / q } \\
& \simeq \| x \| + \left\{ \int_{ 0 }^\infty 
\big\| t^{ s } A^{ \beta } ( t + A )^{ - \beta } x \big\|^q \, \frac{ d t }{
t } \right\}^{ 1 / q }, \quad x \in B^{ s, A }_{ q, X }.
\end{align*}
In particular, for $1 \leq q \leq \infty$, the last equivalence
reveals that our Besov space $B^{ s, A }_{ q, X }$ 
coincides with the space $D^s_q ( A )$ due to H. Komatsu
\cite{Komatsu1967}. 
\item[(ii)] Let $s > 0$. Fix $k \in \mathbb{Z}$ and $\alpha, \beta \in \mathbb{C}_+$ such that $s < \RE \beta$. By Lemma \ref{l:Besov type-inhomogeneous-continuity}, 
\begin{align*}
x \in B_{ q, X }^{ s, A } & \Leftrightarrow \left\{ \int_{ 2^k }^\infty \big\| t^{ s + \alpha } A^{ \beta } ( t + A )^{ - \alpha - \beta } x \big\|^q \, \frac{ d t }{ t } \right\}^{ 1 / q } < \infty \\
& \Leftrightarrow \left\{ \int_{ 0 }^\infty \big\| t^{ s + \alpha } A^{ \beta } ( t + A
)^{ - \alpha - \beta } x \big\|^q \, \frac{ d t }{ t } \right\}^{ 1 / q } < \infty.
\end{align*}
The last equivalence gives an alternative characterization of abstract Besov spaces. 
In particular, for $1 \le q \le \infty$, it can be seen that $x \in ( X, D ( A^\alpha ) )_{ \theta, q }$ if and only if  
\begin{align*}
\int_0^\infty \big\| t^{ \RE \beta - ( 1 - \theta ) \RE \alpha } A^\alpha ( t + A )^{ - \beta } x \big\|^q \, \frac{ d t }{ t } < \infty 
\end{align*}
by observing the fact that $D^{ \theta \RE \alpha }_q ( A ) = ( X, D ( A^\alpha ) )_{ \theta, q }$. Also, see \cite[Section 7.3]{Haase2005}.
\item[(iii)] Fix $k = 0$, $\alpha = l $ and $\beta = m $, where $l$ and $m$ are the least non-negative integers such that $- l < s < m$. By Lemmas \ref{l:independence of klm-inhomogeneous} and
\ref{l:Besov type-inhomogeneous-continuity}, the Besov space $B^{ s,
A }_{ q, X }$ coincides with the completion of $X$ with respect 
to the quasi-norm
\begin{align*}
\| x \|^c_{ B^{ s, A }_{ q, X } } = \| ( 1 + A )^{ - l } x \| 
+ \left\{ \int_{ 1 }^\infty \big\| t^{ s + l } A^{ m } ( t + A
)^{ - m - l } x \big\|^q \, \frac{ d t }{ t } \right\}^{ 1 / q }.
\end{align*}
In particular, for $1 \leq q \leq \infty$, the Besov space $B^{ s,
A }_{ q, X }$ coincides with the space $D^s_q ( A )$
introduced by T. Kobayashi and T. Muramatu \cite{Kobayashi and
Muramatu1992}.
\end{itemize}
\end{rem}

\smallskip


\subsection{Continuous embedding}

By $X_1 \hookrightarrow X_2$ we denote that the quasi-normed space $X_1$
is continuously embedded into the quasi-normed space $X_2$, i.e.,
$X_1 \subset X_2$ and $\| x \|_{ X_2 } \lesssim \| x \|_{ X_1 }$ for
each $x \in X_1$.

Motivated by the continuous embedding of the classical Besov spaces on $\mathbb{R}^n$ (see \cite[P.47, (5) and (7) and P.244, Section 5.2.5 Miscellaneous Properties]{TriebelM1983-2010}), we have the following continuous embedding for abstrac Besov spaces.

\begin{prop}\label{p:embedding theorem}
Let $0 < q \leq \infty$ and $s \in \mathbb{R}$. The following 
statements hold.
\begin{itemize}
\item[(i)] $B^{ s, A }_{ q, X } \hookrightarrow B^{ s, A }_{ q_1, X }$ for
all $0 < q \leq q_1 \leq \infty$.
\item[(ii)] $B^{ s_1, A }_{ p, X } \hookrightarrow B^{ s, A }_{ q, X }$
for all $- \infty < s < s_1 < \infty$ and $0 < p \leq \infty$.
\item[(iii)] $\dot{B}^{ s, A }_{ q, X } \hookrightarrow \dot{B}^{ s, A }_{
q_1, X }$ for all $0 < q \leq q_1 \leq \infty$.
\end{itemize}
\end{prop}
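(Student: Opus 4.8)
The plan is to prove the three embeddings by purely "sequence-space" arguments, since each Besov quasi-norm is, up to the auxiliary term, an $\ell^q$-norm of the sequence $\big(2^{i(s+\alpha)}A^\beta(2^i+A)^{-\alpha-\beta}x\big)_{i\geq k}$ (resp. $i\in\mathbb{Z}$ for the homogeneous space). By the independence results (Lemmas \ref{l:independence of klm-inhomogeneous} and \ref{l:independence of l and m-homogeneous}) I may fix any convenient choice of $k,\alpha,\beta$ throughout; for (i) and (iii) I will fix one triple and compare the two $q$-norms of the \emph{same} sequence, while for (ii) I will exploit the geometric decay coming from the gap $s_1-s>0$. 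In all three cases it suffices to establish the quasi-norm inequality on the dense subspace $R^{s,A}_{q,X}(k,\alpha,\beta)$ (resp. $\dot R^{s,A}_{q,X}(\alpha,\beta)$), and then pass to completions: a quasi-norm inequality $\|x\|_{Y}\lesssim\|x\|_{Z}$ on a common dense subspace, with $Z$-convergence implying $Y$-convergence, extends to the completions, so the set-theoretic inclusion and continuity follow.

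For (i) and (iii): fix $k,\alpha,\beta$ with $-\RE\alpha<s<\RE\beta$ and write $a_i:=\big\|2^{i(s+\alpha)}A^\beta(2^i+A)^{-\alpha-\beta}x\big\|\geq 0$. The content is the elementary monotonicity of discrete Lebesgue spaces, $\|(a_i)\|_{\ell^{q_1}}\leq\|(a_i)\|_{\ell^q}$ for $0<q\leq q_1\leq\infty$ (which itself follows from $\sum a_i^{q_1}\leq(\sup_i a_i^{q_1-q})\sum a_i^q$ after normalizing, or directly from $t\mapsto t^{q/q_1}$ being subadditive on $[0,\infty)$ when $q\leq q_1$). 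Hence $|x|_{R^{s,A}_{q_1,X}(k,\alpha,\beta)}\leq|x|_{R^{s,A}_{q,X}(k,\alpha,\beta)}$; adding the common term $\|(2^k+A)^{-\alpha}x\|$ (which is the same for both, and absent in the homogeneous case) gives $\|x\|_{B^{s,A}_{q_1,X}}\lesssim\|x\|_{B^{s,A}_{q,X}}$ on the dense subspace, and similarly for the homogeneous spaces. This also shows $R^{s,A}_{q,X}\subset R^{s,A}_{q_1,X}$ (resp. $\dot R^{s,A}_{q,X}\subset\dot R^{s,A}_{q_1,X}$), and since $\|\cdot\|_{B^{s,A}_{q_1,X}}$-convergence is implied by $\|\cdot\|_{B^{s,A}_{q,X}}$-convergence, the embedding of completions follows.

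For (ii): fix a single triple $k,\alpha,\beta$ with $-\RE\alpha<s<s_1<\RE\beta$ (possible since $s<s_1$, enlarging $\RE\beta$ and $\RE\alpha$ if necessary), and set $a_i:=\big\|2^{i(s_1+\alpha)}A^\beta(2^i+A)^{-\alpha-\beta}x\big\|$, so that $2^{i(s+\alpha)}A^\beta(2^i+A)^{-\alpha-\beta}x$ has norm $2^{i(s-s_1)}a_i$ with $s-s_1<0$. Then I estimate $|x|_{R^{s,A}_{q,X}(k,\alpha,\beta)}$, which is (the $\ell^q$ or $\ell^\infty$ norm of) $\big(2^{i(s-s_1)}a_i\big)_{i\geq k}$, against $\sup_{i\geq k}a_i=|x|_{R^{s_1,A}_{\infty,X}(k,\alpha,\beta)}$: namely $\{\sum_{i\geq k}2^{i(s-s_1)q}a_i^q\}^{1/q}\leq(\sup_i a_i)\{\sum_{i\geq k}2^{i(s-s_1)q}\}^{1/q}=C_{s,s_1,q,k}\,|x|_{R^{s_1,A}_{\infty,X}(k,\alpha,\beta)}$ using convergence of the geometric series, with the obvious modification for $q=\infty$. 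Combining with (i) (which gives $B^{s_1,A}_{p,X}\hookrightarrow B^{s_1,A}_{\infty,X}$ for every $0<p\leq\infty$) and adding the auxiliary term $\|(2^k+A)^{-\alpha}x\|$ on both sides yields $\|x\|_{B^{s,A}_{q,X}}\lesssim\|x\|_{B^{s_1,A}_{\infty,X}}\lesssim\|x\|_{B^{s_1,A}_{p,X}}$ on the dense subspace $R^{s_1,A}_{p,X}(k,\alpha,\beta)$, and passing to completions finishes the proof.

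The only point requiring care — and the step I expect to be the main (minor) obstacle — is the bookkeeping around completions: $B^{s,A}_{q,X}$ is defined as an abstract completion, not a concrete function space, so "the inclusion $R^{s_1,A}_{p,X}\subset R^{s,A}_{q,X}$ together with the quasi-norm inequality extends to $B^{s_1,A}_{p,X}\hookrightarrow B^{s,A}_{q,X}$" should be justified by the standard universal property of completions, noting that the identity map on the dense subspace is uniformly continuous for the relevant quasi-metrics. Everything else is routine $\ell^q$-space inequalities and an appeal to the already-established independence of the quasi-norms from $k,\alpha,\beta$.
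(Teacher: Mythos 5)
Your argument is correct, and for parts (i) and (iii) it is essentially the same as the paper's: monotonicity of the discrete $\ell^q$ norms on the same fixed sequence, transported to the completions via the equivalent Aoki--Rolewicz quasi-norm $\interleave \cdot \interleave$. For part (ii), however, you take a genuinely cleaner route: after reducing the source space to $B^{s_1,A}_{\infty,X}$ by (i), a single sup-times-geometric-series estimate $\big\{\sum_{i\ge k} 2^{i(s-s_1)q}\big\}^{1/q} < \infty$ (using $s<s_1$) gives $B^{s_1,A}_{\infty,X}\hookrightarrow B^{s,A}_{q,X}$ for every $0<q\le\infty$ at once, so the chain $B^{s_1,A}_{p,X}\hookrightarrow B^{s_1,A}_{\infty,X}\hookrightarrow B^{s,A}_{q,X}$ covers all $p$ uniformly. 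The paper instead splits into three cases: $p=q$ directly from $2^{i(s-s_1)}\le 1$, the case where the source index is $\le$ the target index by stacking the $p=q$ case with (i), and the remaining case by a H\"older estimate with exponent $q/p>1$. Both proofs are correct; yours is shorter, avoids H\"older altogether, and makes the dependence on the gap $s_1-s$ transparent. The one place the paper is more explicit than you is the completion bookkeeping --- the paper writes out the induced map $\tilde{x}\mapsto\tilde{x}^*$ on completions and estimates $\interleave\tilde{x}^*\interleave \le \interleave\tilde{x}\interleave$ in the equivalent quasi-norm --- but you flag precisely this step as the one needing care and the argument you sketch (uniform continuity of the identity on a common dense subspace plus the universal property of completions) is the standard one, so there is no gap.
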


\begin{proof}
(i) Let $q \leq q_1 \leq \infty$. Fix $\tilde{x} \in B^{ s, A }_{ q, X }$ and let $\alpha, \beta \in \mathbb{C}_+^*$ such that $- \RE \alpha < s < \RE \beta$. Since $R^{ s, A }_{ q, X } ( 0, \alpha, \beta )$ is dense in $B^{ s, A }_{ q, X } = B^{ s, A }_{ q, X } ( 0, \alpha, \beta )$, there is a sequence $\{ x_n \} \subset R^{ s, A }_{ q, X } ( 0, \alpha, \beta )$ such that $x_n \to \tilde{x}$ with respect to $\interleave \cdot \interleave_{ B^{ s, A }_{ q, X } ( 0, \alpha, \beta ) }$, the equivalent quasi-norm on $B^{ s, A }_{ q, X } ( 0, \alpha, \beta )$ given by (\ref{e:quasinorm-p-subadditivity}). 
From (\ref{e:quasinorm-p-subadditivity}), it can be seen that 
\begin{align*}
\interleave x \interleave_{ B^{ s, A }_{ q_1, X } ( 0, \alpha, \beta ) } \le \interleave x  \interleave_{ B^{ s, A }_{ q, X } ( 0, \alpha, \beta ) }, \quad x \in R^{ s, A }_{ q, X } ( 0, \alpha, \beta ),
\end{align*}
by observing that $| x |_{ R^{ s, A }_{ q_1, X } ( 0, \alpha, \beta ) } \le | x |_{ R^{ s, A }_{ q, X } ( 0, \alpha, \beta ) }$
due to the monotonicity of the $\ell_q$ spaces. This implies that the sequence $\{
x_n \}$ is also Cauchy with respect to $\interleave \cdot \interleave_{ B^{ s, A }_{ q_1, X } ( 0, \alpha, \beta ) }$, and therefore $x_n \to \tilde{x}^*$ in $B^{ s, A }_{ q_1, X } ( 0, \alpha, \beta )$ for some (unique) $\tilde{x}^* \in B^{ s, A }_{ q_1, X } ( 0, \alpha, \beta )$.
Furthermore, it can be seen that the mapping $\tau: \tilde{x} \mapsto \tilde{x}^*$ is injective and bounded from $B^{ s, A }_{ q, X } ( 0, \alpha, \beta )$ to $B^{ s, A }_{ q_1, X } ( 0, \alpha, \beta )$ by observing that
\begin{align*}
\interleave \tilde{x}^* \interleave_{ B^{ s, A }_{ q_1, X } ( 0, \alpha, \beta ) } = {} & \lim_{ n \to \infty } \interleave x_n \interleave_{ B^{ s, A }_{ q_1, X } ( 0, \alpha, \beta ) } \\
\le {} & \lim_{ n \to \infty } \interleave x_n \interleave_{ B^{ s, A }_{ q, X } ( 0, \alpha, \beta ) } = \interleave \tilde{x} \interleave_{ B^{ s, A }_{ q, X } ( 0, \alpha, \beta ) }.
\end{align*}
Therefore, $B^{ s, A }_{ q, X } ( 0, \alpha, \beta )$ is continuously embedded into $B^{ s, A }_{ q_1, X } ( 0, \alpha, \beta )$.

(ii) Let $s < s_1 < \infty$. First we verify the conclusion in the case
$p = q$. To this end, fix $\alpha, \beta \in \mathbb{C}_+^*$ such
that $- \RE \alpha < s < s_1 < \RE \beta$ and let $\tilde{x} \in B^{ s_1, A }_{ q, X } ( 0, \alpha, \beta )$. Then there is a sequence $\{ x_n \} \subset R^{ s_1, A }_{ q, X } ( 0, \alpha, \beta )$ such that $x_n \to \tilde{x}$ in $B^{ s_1, A }_{ q, X } ( 0, \alpha, \beta )$. 
For each $x \in R^{ s_1, A }_{ q, X } ( 0, \alpha, \beta )$, it can be seen that
\begin{align*}
| x |_{ R^{ s, A }_{ q, X } ( 0, \alpha, \beta ) } 
& = \left\{ \sum_{ i = 0 }^\infty \big\| 2^{ i ( s - s_1 ) } 2^{ i ( s_1 + \alpha ) } A^\beta ( 2^{ i } + A )^{ - \alpha - \beta } x \big\|^q \right\}^{ 1 / q } \\
& \leq \left\{ \sum_{ i = 0 }^\infty \big\| 2^{ i ( s_1 + \alpha ) } A^\beta ( 2^{ i } + A )^{ - \alpha - \beta } x \big\|^q \right\}^{ 1 / q } = | x |_{ R^{ s_1, A }_{ q, X } ( 0, \alpha, \beta ) } 
\end{align*}
by observing that $s - s_1 < 0$.
Thus, by using (\ref{e:quasinorm-p-subadditivity}) we have  
\begin{align*}
\interleave x \interleave_{ B^{ s, A }_{ q, X } ( 0, \alpha, \beta ) } \le \interleave x  \interleave_{ B^{ s, A }_{ q, X } ( 0, \alpha, \beta ) }, \quad x \in R^{ s_1, A }_{ q, X } ( 0, \alpha, \beta ),
\end{align*}
This implies that $\{ x_n \}$ is also a Cauchy sequence with respect
to $\interleave \cdot \interleave_{ B^{ s, A }_{ q, X } ( 0, \alpha, \beta ) }$, and hence, there
is a unique $\tilde{x}^* \in B^{ s, A }_{ q, X } ( 0, \alpha, \beta )$ such that $x_n \to \tilde{x}^*$ in $B^{ s, A }_{ q, X } ( 0, \alpha, \beta )$.
It can be seen that the mapping $\tilde{x} \mapsto \tilde{x}^*$ is injective and bounded from $B^{ s_1, A }_{ q, X } ( 0, \alpha, \beta )$ to $B^{ s, A }_{ q, X } ( 0, \alpha, \beta )$ by observing that 
\begin{align*}
\interleave x \interleave_{ B^{ s, A }_{ q, X } ( 0, \alpha, \beta ) } = {} & \lim_{ n \rightarrow \infty } \interleave x_n \interleave_{ B^{ s, A }_{ q, X } ( 0, \alpha, \beta ) } \\
\leq {} & \lim_{ n \rightarrow \infty } \interleave x \interleave_{ B^{ s_1, A }_{ q, X } ( 0, \alpha, \beta ) } = \interleave x \interleave_{ B^{ s_1, A }_{ q, X } ( 0, \alpha, \beta ) }.
\end{align*}
Therefore, $B^{ s_1, A }_{ q, X } ( 0, \alpha, \beta )$ is continuously embedded into $B^{ s, A }_{ q, X } ( 0, \alpha, \beta )$.

Next we verify that $B^{ s_1, A }_{ q, X } \hookrightarrow B^{ s, A
}_{ p, X }$ for $q \leq p \leq \infty$. Indeed, by the discussion above and (i), it is clear that 
\begin{align*}
	B^{ s_1, A }_{ q, X } \hookrightarrow B^{ s, A }_{ q, X }
	\hookrightarrow B^{ s, A }_{ p, X }.
\end{align*}
It remains to verify the desired embedding for $0 < p < q$. Fix $\alpha, \beta \in \mathbb{C}_+^*$ such that $- \RE \alpha < s < s_1 < \RE \beta$. Analogous to the discussion above, it suffices to verify that 
\begin{align}\label{e:embedding theorem-proof}
| x |_{ R^{ s, A }_{ p, X } ( 0, \alpha, \beta ) } \lesssim | x |_{ R^{ s_1, A }_{ q, X } ( 0, \alpha, \beta ) }, \quad x \in R^{ s_1, A }_{ q, X } ( 0, \alpha, \beta ).
\end{align}
Indeed, it is clear that
\begin{align*}
| x |_{ R^{ s, A }_{ p, X } ( 0, \alpha, \beta ) } 
& = \left\{ \sum_{ i = 0 }^\infty 2^{ i ( s - s_1 ) p } \big\| 2^{ i s_1 } 2^{ i \alpha } A^\beta ( 2^{ i } + A )^{ - \alpha - \beta } x \big\|^p \right\}^{ 1 / p }.
\end{align*}
Applying the H\"{o}lder inequality with respect to the index $q / p > 1$ yields
\begin{align*}
| x |_{ R^{ s, A }_{ p, X } ( 0, \alpha, \beta ) } \leq {} & \Bigg\{ \bigg[ \sum_{ i = 0 }^\infty 2^{ i ( s - s_1 ) p ( q / p )' } \bigg]^{ 1 / ( q / p )' } \Bigg\}^{ 1 / p } \\
\cdot {} & \Bigg\{ \bigg[ \sum_{ i = 0 }^\infty \big\| 2^{ i s_1 } 2^{ i \alpha } A^\beta ( 2^{ i } + A )^{ - \alpha - \beta } x \big\|^{ p ( q / p ) } \bigg]^{ 1 / ( q / p ) } \Bigg\}^{ 1 / p } \\
= {} & C \, | x |_{ R^{ s_1, A }_{ q, X } ( 0, \alpha, \beta ) },
\end{align*}
where $C = [ \frac{ 1 }{ 1 - 2^{ ( s - s_1 ) q p / ( q - p ) } } ]^{ 1 - p / q }$.
Thus, we have verified (\ref{e:embedding theorem-proof}), and therefore $B^{ s_1, A }_{ q, X } ( 0, \alpha, \beta )$ is continuously embedded into $B^{ s, A }_{ q, X } ( 0, \alpha, \beta )$.


(iii) The statement is a direct consequence of the monotonicity of the $\ell_q$ spaces. Indeed, let $q < q_1 \leq \infty$, and fix $x \in \dot{B}^{ s, A }_{ q, X }$. Then there is a sequence $\{ x_n \} \subset R^{ s, A }_{ q, X }$ such that $\{ x_n \}$ is Cauchy with respect to $\interleave \cdot \interleave_{ \dot{B}^{ s, A }_{ q, X } }$, an equivalent quasi-norm on $\dot{B}^{ s, A }_{ q, X }$ given by (\ref{e:quasinorm-p-subadditivity}).
Note that
\begin{align*}
\interleave x \interleave_{ \dot{B}^{ s, A }_{ q_1, X } } = \lim_{ n \rightarrow \infty } \interleave x_n \interleave_{ \dot{R}^{ s, A }_{ q_1, X } } \leq \lim_{ n \rightarrow \infty } \interleave x_n \interleave_{ \dot{R}^{ s, A }_{ q, X } } = \interleave x \interleave_{ \dot{B}^{ s, A }_{ q, X } }
\end{align*}
due to the monotonicity of the sequence spaces $\ell_q$, from which the desired
conclusion follows immediately. The proof is complete.
\end{proof}

\smallskip


\subsection{Translation invariance}

Another property of interest for inhomogeneous Besov spaces
associated with non-negative operators is that they are translation invariant
with respect to the underlying operators. More precisely, we have the following proposition. 

\begin{prop}\label{p:translation invariance-inhomogeneous}
Let $0 < q \leq \infty$ and $s \in \mathbb{R}$. Then $B^{ s, A +
\epsilon }_{ q, X } = B^{ s, A }_{ q, X }$ in the sense of equivalent quasi-norms for each $\epsilon > 0$.
\end{prop}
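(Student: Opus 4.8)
The plan is to fix the indices once and for all by choosing a convenient representative of the equivalence class of quasi-norms, then compare the $(A+\epsilon)$-quasi-norm with the $A$-quasi-norm term by term. Concretely, I would pick $\alpha, \beta \in \mathbb{C}_+$ with $-\RE\alpha < s < \RE\beta$ (both with strictly positive real part, which is harmless by Lemma \ref{l:independence of klm-inhomogeneous}) and compute $\|x\|_{B^{s,A+\epsilon}_{q,X}(k,\alpha,\beta)}$ for a suitable $k$. The first observation is that $A+\epsilon$ is again non-negative (indeed $(-\infty,0)\subset\rho(A+\epsilon)$ only after a shift, but for $\lambda>0$ the resolvent $(\lambda + A + \epsilon)^{-1} = (\lambda+\epsilon + A)^{-1}$ is controlled by $M_A$, so $M_{A+\epsilon} \le M_A$, $L_{A+\epsilon}\le M_A+L_A$ or similar), and that $\overline{D(A+\epsilon)} = \overline{D(A)}$, so the underlying subspaces $R^{s,A+\epsilon}_{q,X}$ and $R^{s,A}_{q,X}$ live over the same closed subspace of $X$; hence it suffices to prove the quasi-norm equivalence on this common dense subspace and then pass to completions.

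The heart of the matter is the pointwise comparison of the dyadic summands
\[
\big\| 2^{i(s+\alpha)} (A+\epsilon)^\beta (2^i + A + \epsilon)^{-\alpha-\beta} x \big\|
\quad\text{versus}\quad
\big\| 2^{i(s+\alpha)} A^\beta (2^i + A)^{-\alpha-\beta} x \big\|.
\]
Here I would exploit the fact that $(2^i + A + \epsilon)^{-\alpha-\beta} = (2^i + \epsilon + A)^{-\alpha-\beta}$, so the shift merely moves the dyadic parameter from $2^i$ to $2^i+\epsilon$. For $i$ large (say $2^i \ge \epsilon$) one has $2^i \le 2^i+\epsilon \le 2^{i+1}$, so $(2^i+\epsilon)$ is comparable to $2^i$ with absolute constants; the estimates \eqref{e:resolvent estimate-discrete} and \eqref{e:uniform boundedness compositions-C} (applied to ratios $s/t$ bounded by $2$) then let me replace $(2^i+\epsilon + A)^{-\alpha-\beta}$ by $(2^i+A)^{-\alpha-\beta}$ and $(A+\epsilon)^\beta$ by $A^\beta$ — the latter via $\|(A+\epsilon)^\beta (2^i + \epsilon + A)^{-\beta}\| \lesssim \|A^\beta(2^i + A)^{-\beta}\|\cdot\|(\cdot)\| $, using Lemma \ref{l:uniform boundedness compositions} with the identity $(A+\epsilon)^\beta (2^i+\epsilon+A)^{-\beta} = [(A+\epsilon)(2^i+\epsilon+A)^{-1}]^{\ldots}$ rewritten as a bounded function of $A$. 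Since there are only finitely many "bad" indices $i$ with $2^i < \epsilon$, and on those the summands are controlled by $\|(2^k+A)^{-\alpha}x\|$ (the auxiliary term) together with the uniform boundedness estimates of Lemma \ref{l:uniform boundedness compositions}(ii), these contribute only a bounded perturbation absorbed into the full quasi-norm by \eqref{e: q-inequality}. Finally the auxiliary terms $\|(2^k + A + \epsilon)^{-\alpha}x\|$ and $\|(2^k+A)^{-\alpha}x\|$ are equivalent because $(2^k+\epsilon+A)^{\alpha}(2^k+A)^{-\alpha}$ and its inverse are bounded by \eqref{e:uniform boundedness compositions-C} (ratios $(2^k+\epsilon)/2^k$ and $2^k/(2^k+\epsilon)$ both bounded by $1+\epsilon 2^{-k}$).

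The main obstacle I anticipate is the technical bookkeeping when $\epsilon$ is large relative to $2^k$ — i.e., the finitely many indices $i$ with $2^i$ not comparable to $2^i+\epsilon$ — where one cannot use the clean "ratio bounded by a constant" arguments and must instead invoke the unconditional uniform boundedness of $\{(A+\epsilon)^\beta(2^i+\epsilon+A)^{-\beta}\}$ and $\{2^{i\alpha}(2^i+\epsilon+A)^{-\alpha}\}$ (Lemma \ref{l:uniform boundedness compositions}(i)) to dump these terms into the auxiliary part of the quasi-norm. A clean way to sidestep most of this is to choose $k$ depending on $\epsilon$ with $2^k \ge \epsilon$ (legitimate since $B^{s,A}_{q,X}(k,\alpha,\beta)$ is independent of $k$ by Lemma \ref{l:independence of R-inhomogeneous-k}): then $2^i + \epsilon \simeq 2^i$ for all $i\ge k$ with uniform constants depending only on the structural data, and the term-by-term comparison goes through with no exceptional indices at all, after which I recover the claim for arbitrary $k$ by the $k$-independence already established. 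With that choice the proof reduces to routine applications of \eqref{e:resolvent estimate-discrete}, \eqref{e:uniform boundedness compositions-C} and \eqref{e: q-inequality}, plus the density/completion argument.
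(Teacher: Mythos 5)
Your outline is fine for the direction $\|x\|_{B^{s,A}_{q,X}}\lesssim\|x\|_{B^{s,A+\epsilon}_{q,X}}$: factoring $A^\beta(2^j+A)^{-\alpha-\beta}$ through the uniformly bounded operators $A^\beta(\epsilon+A)^{-\beta}$ and $(2^j+\epsilon+A)^{\alpha+\beta}(2^j+A)^{-\alpha-\beta}$ does work once $2^k>\epsilon$, the auxiliary terms $\|(2^k+A)^{-\alpha}x\|$ and $\|(2^k+\epsilon+A)^{-\alpha}x\|$ are equivalent by \eqref{e:uniform boundedness compositions-C}, and this is exactly the paper's argument for that half.

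The converse direction is where your plan has a real gap. You propose replacing $(A+\epsilon)^\beta$ by $A^\beta$ via an inequality of the shape $\|(A+\epsilon)^\beta(2^i+\epsilon+A)^{-\beta}\|\lesssim C\,\|A^\beta(2^i+A)^{-\beta}\|$, or its pointwise analogue $\|(A+\epsilon)^\beta(2^i+\epsilon+A)^{-\beta}x\|\lesssim C\,\|A^\beta(2^i+A)^{-\beta}x\|$, treating this as just another bounded-factor commutation on the model of the resolvent shift. No such bound can hold: $(A+\epsilon)^\beta$ is injective because $0\in\rho(A+\epsilon)$, whereas $A^\beta$ may have a nontrivial kernel (and more generally need not be bounded below). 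Take $A=0$ on $X=\mathbb{C}$: then $A^\beta(2^i+A)^{-\beta}x\equiv 0$ for every $i$, while $(A+\epsilon)^\beta(2^i+\epsilon+A)^{-\beta}x=\epsilon^\beta(2^i+\epsilon)^{-\beta}x\neq0$; the defect occurs at every $i$, not only the finitely many with $2^i<\epsilon$, and choosing $2^k\geq\epsilon$ does nothing to fix it. The proposition nevertheless holds, but only because the auxiliary term $\|(2^k+A)^{-\alpha}x\|$ absorbs this discrepancy, which a summand-by-summand comparison cannot see. The paper's proof instead reduces to $\beta=m\in\mathbb{N}$ by the $\beta$-independence of the quasi-norm (Lemma~\ref{l:independence of klm-inhomogeneous}), expands $(A+\epsilon)^m=\sum_{i=0}^m\binom{m}{i}\epsilon^{m-i}A^i$ using \eqref{e:a+b}, factors out the uniformly bounded pieces $2^{j(m-i)}(2^j+\epsilon+A)^{-(m-i)}$ and $(2^j+A)^{i+\alpha}(2^j+\epsilon+A)^{-i-\alpha}$, and is left for each $i<m$ with a quasi-norm $|x|_{R^{s+i-m,A}_{q,X}(k,\alpha,i)}$ of strictly lower smoothness; these are then absorbed by the continuous embedding $B^{s,A}_{q,X}\hookrightarrow B^{s+i-m,A}_{q,X}$ of Proposition~\ref{p:embedding theorem}~(ii) together with another application of Lemma~\ref{l:independence of klm-inhomogeneous}. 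The binomial expansion and the embedding step are precisely the ideas your proposal is missing, and without them the estimate you aim for is simply false.
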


\begin{proof}
Let $\epsilon > 0$, take $\alpha, \beta \in \mathbb{C}_+^*$ such that $- \RE \alpha < s < \RE \beta$, and fix $k \in \mathbb{Z}$ such that $\epsilon < 2^k$. It suffices to verify that 
\begin{align}\label{p:translation invariance-inhomogeneous-proof2}
	\| x \|_{ B^{ s, A }_{ q, X } ( k, \alpha, \beta ) } \lesssim \| x \|_{ B^{ s, A + \epsilon }_{ q, X } ( k, \alpha, \beta ) }
\end{align}
and that 
\begin{align}\label{p:translation invariance-inhomogeneous-proof4}
\| x \|_{ B^{ s, A + \epsilon }_{ q, X } ( k, \alpha, \beta ) } \lesssim \| x \|_{ B^{ s, A }_{ q, X } ( k, \alpha, \beta ) }.	
\end{align}

First we verify the equivalence  
\begin{align}\label{p:translation invariance-inhomogeneous-proof1}
\| ( 2^k + A )^{ - \alpha } x \| \simeq \| ( 2^k + A + \epsilon )^{ - \alpha } x \|.
\end{align}
Indeed, applying (\ref{e:uniform boundedness compositions-C})$^*$ with $c = 1$ yields   
\begin{align*}
\| ( 2^k + A + \epsilon )^{ - \alpha } x \| & \le \big\| ( 2^k + A )^\alpha ( 2^k + \epsilon + A )^{ - \alpha } \big\| \cdot \big\| ( 2^k + A )^{ - \alpha } x \big\| \\
& \le C_{ \alpha, n } ( L_A + M_A )^n \big\| ( 2^k + A )^{ - \alpha } x \big\|.
\end{align*}
Thanks to the fact that $\epsilon < 2^k$, applying (\ref{e:uniform boundedness compositions-C})$^*$ with $c = 2$ yields  
\begin{align*}
\| ( 2^k + A )^{ - \alpha } x \| & \le \big\| ( 2^k + \epsilon + A )^\alpha ( 2^k + A )^{ - \alpha } \big\| \cdot \big\| ( 2^k + A + \epsilon )^{ - \alpha } x \big\| \\
& \le C_{ \alpha, n } ( L_A + 2 M_A )^n \big\| ( 2^k + A + \epsilon )^{ - \alpha } x \big\|,
\end{align*}
where $C_{ \alpha, n }$ is given by (\ref{e:uniform boundedness compositions}) with $\RE \alpha < n \in \mathbb{N}$, and $M_A$ and $L_A$ are the constants given by (\ref{e:non-negative constant}) and (\ref{e:non-negative constant 2}), respectively. Thus, we have verified (\ref{p:translation invariance-inhomogeneous-proof1}).

Next we verify (\ref{p:translation invariance-inhomogeneous-proof2}).
Thanks to (\ref{p:translation invariance-inhomogeneous-proof1}), it suffices to verify that
\begin{eqnarray}\label{p:translation invariance-inhomogeneous-proof3}
| x |_{ R^{ s, A }_{ q, X } ( k, \alpha, \beta ) } \lesssim | x |_{ R^{ s, A + \epsilon }_{ q, X } ( k, \alpha, \beta ) }.
\end{eqnarray}
We merely verify (\ref{p:translation invariance-inhomogeneous-proof3}) in the case $0 < q < \infty$ while the case when $q = \infty$ can be verified analogously. To this end, applying (\ref{e:uniform boundedness compositions-L})$^*$ to $ A^\beta ( \epsilon + A )^{ - \beta }$ and applying (\ref{e:uniform boundedness compositions-C})$^*$ with $c = 2$ to $ ( 2^j + \epsilon + A )^{ \alpha + \beta } ( 2^j + A )^{ - ( \alpha + \beta ) }$ yields 
\begin{align*} 
| x |_{ R^{ s, A }_{ q, X } ( k, \alpha, \beta ) } = {} & \Bigg\{ \sum_{ j = k }^\infty
\big\| 2^{ j s } 2^{ j \alpha } A^\beta ( 2^j + A )^{ - \alpha - \beta } x \big\|^q \Bigg\}^{ 1 / q } \\
\le {} & \Bigg\{ \sum_{ j = k }^\infty \big\| A^\beta ( \epsilon + A )^{ - \beta } \big\| \cdot \big\| ( 2^j + \epsilon + A )^{ \alpha + \beta } ( 2^j + A )^{ - \alpha - \beta } \big\| \\
& \cdot \big\| 2^{ j ( s + \alpha ) } ( A + \epsilon )^\beta ( 2^j + A + \epsilon )^{ - \alpha - \beta } x \big\|^q \Bigg\}^{ 1 / q }  \\
 \le {} & C_{ \beta, m } C_{ \alpha + \beta, N } L_A^m ( L_A + 2 M_A )^{ N } | x |_{ R^{ s, A + \epsilon }_{ q, X } ( k, \alpha, \beta ) },	
\end{align*}
where $C_{ \beta, m }$ and $C_{ \alpha + \beta, N }$ are given by (\ref{e:uniform boundedness compositions}) with $\RE \alpha < m \in \mathbb{N}$ and $\RE ( \alpha + \beta ) < N \in \mathbb{N}$, respectively, and $M_A$ and $L_A$ are the non-negativity constants of $A$ given by (\ref{e:non-negative constant}) and (\ref{e:non-negative constant 2}), respectively. Thus, we have verified (\ref{p:translation invariance-inhomogeneous-proof3}).

It remains to verify (\ref{p:translation invariance-inhomogeneous-proof4}).
By Lemma \ref{l:independence of klm-inhomogeneous} (the independence of Besov quasi-norms with respect to the index $\beta$), it suffices to verify (\ref{p:translation invariance-inhomogeneous-proof4}) for the case in which $\beta = m \in \mathbb{Z}$. Moreover, we may assume that $0 < q < \infty$ and the case when $q = \infty$ can be verified analogously. Note that
\begin{align*}
	& | x |_{ R^{ s, A + \epsilon }_{ q, X } ( k, \alpha, m ) } \\
 = {} & \Bigg\{ \sum_{ j = k }^\infty
	\big\| 2^{ j s } 2^{ j \alpha } ( A + \epsilon )^m ( 2^j + A + \epsilon )^{ - \alpha - m } x \big\|^q \Bigg\}^{ 1 / q } \\
 \leq {} & \Bigg\{ \sum_{ j = k }^\infty \bigg[ \sum_{ i = 0 }^m { m
		\choose i } \epsilon^{ m - i } \big\| 2^{ j s } 2^{ j \alpha } A^i ( 2^j + A + \epsilon )^{ - \alpha - m } x \big\| \bigg]^q \Bigg\}^{ 1 / q } \\
\le {} & \Bigg\{ \sum_{ j = k }^\infty \bigg[ \sum_{ i = 0 }^m { m
		\choose i } \epsilon^{ m - i } \big\| 2^{ j ( m - i ) } (
	2^j + A )^{ i + \alpha } ( 2^j + \epsilon + A )^{ - \alpha - m } \big\| \\
{} & \cdot \big\| 2^{ j ( s + i - m ) } 2^{ j \alpha } A^i ( 2^j + A )^{ - i -
		\alpha } x \big\| \bigg]^q \Bigg\}^{ 1 / q }.
\end{align*}
Decomposing $\alpha + m = ( m - i ) + ( i + \alpha )$ yields 
\begin{align}\label{p:translation invariance-inhomogeneous-proof5}
\begin{split}
| x |_{ R^{ s, A + \epsilon }_{ q, X } ( k, \alpha, m ) } \le {} & \Bigg\{ \sum_{ j = k }^\infty \bigg[ \sum_{ i = 0 }^m { m \choose i } \epsilon^{ m - i } \| 2^{ j ( m - i ) } ( 2^j + \epsilon + A )^{ - ( m - i ) } \| \\
{} & \cdot \big\| ( 2^j + A )^{ i + \alpha } ( 2^j + \epsilon + A )^{ - i - \alpha } \big\| \\
{} & \cdot \big\| 2^{ j ( s + i - m ) } 2^{ j \alpha } A^i ( 2^j + A )^{ - i -
		\alpha } x \big\| \bigg]^q \Bigg\}^{ 1 / q }.
\end{split}
\end{align}
Applying of (\ref{e:non-negative constant}) yields 
\begin{align}\label{p:translation invariance-inhomogeneous-proof6}
\| 2^{ j ( m - i ) } ( 2^j + \epsilon + A )^{ - ( m - i ) } \| \le \| 2^{ j } ( 2^j + \epsilon + A )^{ - 1 } \|^{ m - i } \le M_A^{ m - i }.
\end{align}
Applying (\ref{e:uniform boundedness compositions-C})$^*$ with $c = 1$ yields
\begin{align}\label{p:translation invariance-inhomogeneous-proof7}
\begin{split}
\| ( 2^j + A )^{ i + \alpha } ( 2^j + \epsilon + A )^{ - i - \alpha } \| \le C_{ i + \alpha, N } ( L_A + M_A )^N,
\end{split}
\end{align}
where $C_{ i + \alpha, N }$ is given by (\ref{e:uniform boundedness compositions}) with $\RE ( i + \alpha ) < N \in \mathbb{N}$.
Applying (\ref{p:translation invariance-inhomogeneous-proof5}), (\ref{p:translation invariance-inhomogeneous-proof6}) and (\ref{p:translation invariance-inhomogeneous-proof7}) yields 
\begin{align}\label{p:translation invariance-inhomogeneous-proof8}
\begin{split}
 | x |_{ R^{ s, A + \epsilon }_{ q, X } ( k, \alpha, m ) } \le {} & \Bigg\{ \sum_{ j = k }^\infty \bigg[ \sum_{ i = 0 }^m { m \choose i } ( \epsilon M_A )^{ m - i } C_{ i + \alpha, N } ( L_A + M_A )^N \\ 
{} & \cdot \big\| 2^{ j ( s + i - m ) } 2^{ j \alpha } A^i ( 2^j + A )^{ - i - \alpha } x \big\| \bigg]^q \Bigg\}^{ 1 / q } \\
 \le {} & C \sum_{ i = 0 }^m { m \choose i } ( \epsilon M_A )^{ m - i } C_{ i + \alpha, N } ( L_A + M_A )^N | x |_{ R^{ s + i - m, A }_{ q, X, k, \alpha, i } },
\end{split}
\end{align}
where $C = 1$ due to the Minkowski inequality
for $1 < q < \infty$ and $C = 2^{ m ( 1 / q - 1 ) }$ due to (\ref{e: q-inequality}) for $0 < q \le 1$. Applying (\ref{p:translation invariance-inhomogeneous-proof1}) and (\ref{p:translation invariance-inhomogeneous-proof8}) yields 
\begin{align*}
\| x \|_{ B^{ s, A + \epsilon }_{ q, X } ( k, \alpha, m ) } 
\lesssim {} & \| ( 2^k + A )^{ - \alpha } x \| \\
+ {} & \sum_{ i = 0 }^m { m \choose i } ( \epsilon M )^{ m - i } C_{ i + \alpha, N } ( L_A + M_A )^N | x |_{ R^{ s + i - m, A }_{ q, X } ( k, \alpha, i ) } \\
\lesssim {} & \sum_{ i = 0 }^m { m \choose i } ( \epsilon M )^{ m - i } C_{ i + \alpha, N } ( L_A + M_A )^N \|  x \|_{ B^{ s + i - m, A }_{ q, X } ( k, \alpha, i ) }.
\end{align*}
Applying Lemma \ref{l:independence of klm-inhomogeneous} (the independence of Besov quasi-norms with respect to the index $\beta$) yields
\begin{align*}
\| x \|_{ B^{ s, A + \epsilon }_{ q, X } ( k, \alpha, m ) } & \lesssim \sum_{ i = 0 }^m { m \choose i } ( \epsilon M )^{ m - i } C_{ i + \alpha, N } ( L_A + M_A )^N \|  x \|_{ B^{ s + i - m, A }_{ q, X } ( k, \alpha, m ) }.
\end{align*}
Since $s + i - m \leq s$, by Proposition \ref{p:embedding theorem} (ii) we obtain that
\begin{align*}
\| x \|_{ B^{ s, A + \epsilon }_{ q, X } ( k, \alpha, m ) } & \lesssim \sum_{ i = 0 }^m { m \choose i } ( \epsilon M )^{ m - i } C_{ i + \alpha, N } ( L_A + M_A )^N \cdot \|  x \|_{ B^{ s, A }_{ q, X } ( k, \alpha, m ) }.
\end{align*}
Thus, we have verified (\ref{p:translation invariance-inhomogeneous-proof4}). The proof is complete.
\end{proof}

%
%
%

\smallskip


\section{Fractional powers on Besov spaces}\label{S:Fractional powers}

This section is devoted to the connections between Besov spaces associated with non-negative operators and fractional powers of the underlying operators, including the lifting property, smoothness reiteration and interpolation. As in the last two sections, $( X, \left\| \cdot \right\| )$ is a Banach space and $A$ is a non-negative operator on $X$.

\smallskip


\subsection{Lifting property}

Recall that $x \in B^{ s, A }_{ q, X }$ if and only if $x \in D ( A^\gamma )$ and $A^\gamma x \in B^{ s - \RE \gamma , A }_{ q, X }$ whenever $0 < \RE \gamma < s$ and $1 \leq q \leq \infty$ (see \cite[Theorem 2.6]{Komatsu1967}). 
We can show further that the fractional power $A^\gamma$ is continuous from $B^{ s, A }_{ q, X }$ to $B^{ s - \RE \gamma, A }_{ q, X }$, even if $0 < q < 1$. Furthermore, it can be verified that $\left\|A^\gamma \cdot \right\|_{ B^{ s - \RE \gamma, A }_{ q, X } }$ is indeed an equivalent quasi-norm on $B^{ s, A }_{ q, X }$ whenever $A$ is positive. 

\begin{lem}\label{l:lifting property-positive}
Let $0 < \RE \gamma < s$ and $0 < q \leq \infty$. The following statements hold.
\begin{itemize}
\item[(i)] $x \in B^{ s, A }_{ q, X }$ if and only if $x \in D ( A^\gamma )$ and $A^\gamma x \in B^{ s - \RE \gamma , A }_{ q, X }$.
\item[(ii)] $A^\gamma$ is continuous from $B^{ s, A }_{ q, X }$ to $B^{ s - \RE \gamma, A }_{ q, X }$.
\end{itemize}
\end{lem}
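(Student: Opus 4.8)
The plan is to reduce the whole statement to a single exact identity between the defining semi-quasinorms of the two Besov spaces. Since $0<\RE\gamma<s$ forces both $s>0$ and $s-\RE\gamma>0$, Proposition~\ref{p:completeness-inhomogeneous} identifies $B^{s,A}_{q,X}$ and $B^{s-\RE\gamma,A}_{q,X}$ (as sets, up to equivalent quasi-norms) with the subspaces $R^{s,A}_{q,X}(k,\alpha,\beta)$ and $R^{s-\RE\gamma,A}_{q,X}(k',\alpha',\beta')$, so no completion is involved and membership simply means finiteness of a dyadic sum. I would first record that $B^{s,A}_{q,X}\subset D(A^\gamma)$: choosing $s_1\in(0,\RE\gamma)$ and applying Proposition~\ref{p:inclusions-powers-inhomogeneous} with $(s_1,\gamma,s)$ in the roles of $(s,\alpha,s')$ gives this. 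Moreover, picking $\gamma'$ with $\RE\gamma<\RE\gamma'<s$ yields $x\in D(A^{\gamma'})$, whence $A^\gamma x\in D(A^{\gamma'-\gamma})\subset\overline{D(A^{\gamma'-\gamma})}=\overline{D(A)}$ by additivity (\ref{e:additivity of fractional powers}) and Lemma~\ref{l:ergodicity}(i).

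The core step is the following computation. Fix $\delta\in\mathbb{C}_+$ with $\RE\delta>s-\RE\gamma$, so that $\RE(\gamma+\delta)>s$, and use the index triples $(0,0,\gamma+\delta)$ for $B^{s,A}_{q,X}$ and $(0,\gamma,\delta)$ for $B^{s-\RE\gamma,A}_{q,X}$ (both admissible by the inequalities just noted). For $x\in D(A^\gamma)$, the law of exponents (\ref{e:additivity of fractional powers}) applied to the bounded non-negative operator $A(2^i+A)^{-1}$, together with the elementary identity $\bigl(A(2^i+A)^{-1}\bigr)^\sigma=A^\sigma(2^i+A)^{-\sigma}$ and the commutation $A^\gamma(2^i+A)^{-\gamma}x=(2^i+A)^{-\gamma}A^\gamma x$, gives
\begin{align*}
A^{\gamma+\delta}(2^i+A)^{-(\gamma+\delta)}x
&=A^\delta(2^i+A)^{-\delta}\bigl[A^\gamma(2^i+A)^{-\gamma}x\bigr]\\
&=A^\delta(2^i+A)^{-\gamma-\delta}\,A^\gamma x .
\end{align*}
Since the dyadic weight $2^{i s}$ attached to the left-hand term in $R^{s,A}_{q,X}(0,0,\gamma+\delta)$ has the same modulus as the weight $2^{i((s-\RE\gamma)+\gamma)}$ attached to the right-hand term in $R^{s-\RE\gamma,A}_{q,X}(0,\gamma,\delta)$, the two semi-quasinorms agree term by term:
\[
| x |_{R^{s,A}_{q,X}(0,0,\gamma+\delta)}
=\bigl| A^\gamma x \bigr|_{R^{s-\RE\gamma,A}_{q,X}(0,\gamma,\delta)}
\]
(with the usual modification when $q=\infty$).

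Part~(i) then follows at once. If $x\in B^{s,A}_{q,X}$ then $x\in D(A^\gamma)$, $A^\gamma x\in\overline{D(A)}$, and the displayed identity makes the right-hand semi-quasinorm finite, so $A^\gamma x\in B^{s-\RE\gamma,A}_{q,X}$; conversely, if $x\in D(A^\gamma)$ and $A^\gamma x\in B^{s-\RE\gamma,A}_{q,X}$, then $x\in D(A^\gamma)\subset\overline{D(A)}$ and the identity shows $|x|_{R^{s,A}_{q,X}}<\infty$, i.e. $x\in B^{s,A}_{q,X}$. For part~(ii), I would add the auxiliary terms: by Definition~\ref{d:inhomogeneous Besov spaces},
\[
\| A^\gamma x \|_{B^{s-\RE\gamma,A}_{q,X}(0,\gamma,\delta)}
=\bigl\|(1+A)^{-\gamma}A^\gamma x\bigr\|
+\bigl| A^\gamma x \bigr|_{R^{s-\RE\gamma,A}_{q,X}(0,\gamma,\delta)},
\]
and $\|(1+A)^{-\gamma}A^\gamma x\|=\|A^\gamma(1+A)^{-\gamma}x\|\le\|A^\gamma(1+A)^{-\gamma}\|\,\|x\|\lesssim\|x\|$ by Lemma~\ref{l:uniform boundedness compositions}. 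Combining this with the semi-quasinorm identity and Lemma~\ref{l:independence of klm-inhomogeneous} (independence of the indices) yields $\|A^\gamma x\|_{B^{s-\RE\gamma,A}_{q,X}}\lesssim\|x\|_{B^{s,A}_{q,X}}$; since $A^\gamma$ is linear, this is precisely its continuity from $B^{s,A}_{q,X}$ to $B^{s-\RE\gamma,A}_{q,X}$.

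The step I expect to require the most care is the rigorous justification of the operator identity above: one must check that $(2^i+A)^{-(\gamma+\delta)}x$ indeed lies in $D(A^{\gamma+\delta})$ and that $A^\gamma$, $A^\delta$ and $(2^i+A)^{-1}$ may be rearranged freely, which relies on the laws of exponents (\ref{e:additivity of fractional powers}) and (\ref{e:multiplicativity of fractional powers}), the commutation of fractional powers with resolvents, and the uniform boundedness statements of Lemma~\ref{l:uniform boundedness compositions}. Once this is in place, the $q=\infty$ case and the bookkeeping of the complex exponents are routine.
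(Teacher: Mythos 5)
Your proof is correct and follows the same underlying mechanism as the paper's: the semi-quasinorm identity obtained by sliding $A^\gamma$ from the operator $A^\beta(2^i+A)^{-\alpha-\beta}$ onto the vector, with a compensating shift $\alpha\mapsto\alpha+\gamma$, $\beta\mapsto\beta-\gamma$ and the exact cancellation of $2^{i\RE\gamma}$ in the dyadic weight. The one genuine streamlining is that you invoke Proposition~\ref{p:inclusions-powers-inhomogeneous} for $B^{s,A}_{q,X}\subset D(A^\gamma)$ rather than re-deriving it: the paper, in its Necessity step, re-does the integral bound $\int_0^\infty\|t^{\gamma'}t^\alpha A^\beta(t+A)^{-\alpha-\beta}x\|\,dt/t<\infty$ and appeals to Corollary~\ref{c:representation of fractional powers-beta-absolute convergence}, which is exactly the content of Proposition~\ref{p:inclusions-powers-inhomogeneous}'s proof; since that proposition appears in Section~\ref{S:Besov spaces} and uses nothing from Section~\ref{S:Fractional powers}, there is no circularity. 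Your explicit step deducing $A^\gamma x\in\overline{D(A)}$ from $x\in D(A^{\gamma'})$ with $\RE\gamma<\RE\gamma'<s$ is also a point the paper leaves tacit but which is needed for $A^\gamma x$ to qualify for membership in $R^{s-\RE\gamma,A}_{q,X}(\cdot)$. Part~(ii) is handled identically to the paper, via the bound $\|A^\gamma(1+A)^{-\gamma}\|<\infty$ from Lemma~\ref{l:uniform boundedness compositions} together with the semi-quasinorm identity and Lemma~\ref{l:independence of klm-inhomogeneous}.
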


\begin{proof}
(i) Sufficiency. Suppose that $x \in D ( A^\gamma )$ and $A^\gamma x \in B^{ s - \RE \gamma, A }_{ q, X }$. Fix $\alpha, \beta \geq 0$ such that $- \alpha < s < \beta$. It is clear that $x \in B^{ s, A }_{ q, X }$ due to the fact that 
\begin{align*}
\left\{ \sum_{ i = 0 }^\infty \big\| 2^{ i ( s + \alpha ) } A^{ \beta } ( 2^i + A )^{ - ( \alpha + \beta ) } x \big\|^q \right\}^{ 1 / q } 
\leq | A^\gamma x |_{ R^{ s, A }_{ q, X } ( 0, \alpha + \gamma, \beta - \gamma ) } < \infty.
\end{align*}

Necessity. Let $x \in B^{ s, A }_{ q, X }$. First we verify that $x \in D ( A^\gamma )$. To this end, fix $\alpha, \beta, \gamma \geq 0$ such that 
$- \alpha < \RE \gamma < \gamma' < s < \beta$. In order to verify that $x \in D ( A^\gamma )$, it suffices to show that
\begin{align}\label{e:lifting property-positive-proof-absolute convergence}
\int_0^\infty \big\| t^{ \gamma' } t^\alpha A^\beta ( t + A )^{ - \alpha - \beta } x \big\| \, \frac{ d t }{ t } < \infty. 
\end{align}
More precisely, if this is the case, then $x \in D ( A^{ \gamma' } )$ by Corollary \ref{c:representation of fractional powers-beta-absolute convergence}, while $D ( A^{ \gamma' } ) \subset D ( A^\gamma )$ due to the fact that $\gamma' > \RE \gamma$, and hence, $x \in D ( A^\gamma )$. 
We now verify (\ref{e:lifting property-positive-proof-absolute convergence}). On the one hand, from (\ref{e:uniform boundedness compositions-M})$^*$ and (\ref{e:uniform boundedness compositions-L})$^*$ it follows that  
\begin{align}\label{e:lifting property-positive-proof-convergence-0-1}
\int_0^1 \big\| t^{ \gamma' } t^\alpha A^\beta ( t + A )^{ - \alpha - \beta } x \big\| \, \frac{ d t }{ t } \leq C_{ \alpha, n } C_{ \beta, m } M_A^n L_A^m \left\| x \right\| / \gamma' < \infty
\end{align}
where $\alpha < n \in \mathbb{N}$, $\beta < m \in \mathbb{N}$ and $C_{ \alpha, m }$ and $C_{ \beta, n }$ are given by (\ref{e:uniform boundedness compositions}). On the other hand, it can be verified that 
\begin{align}\label{e:lifting property-positive-proof-convergence-1-infty}
\int_1^\infty \big\| t^{ \gamma' } t^\alpha A^\beta ( t + A )^{ - \alpha - \beta } x \big\| \, \frac{ d t }{ t } \lesssim \left| x \right|_{ R^{ s, A }_{ q, X } ( 0, \alpha, \beta ) } < \infty,
\end{align}
and hence, the desired convergence (\ref{e:lifting property-positive-proof-absolute convergence}) follows from (\ref{e:lifting property-positive-proof-convergence-0-1}) and (\ref{e:lifting property-positive-proof-convergence-1-infty}), immediately. It remains to verify (\ref{e:lifting property-positive-proof-convergence-1-infty}). Indeed, rewriting $\int_1^\infty = \sum_{ i = 0 }^{ \infty } \int_{ 2^i }^{ 2^{ i +1 } }$ and applying (\ref{e:resolvent estimate-discrete}) yields  
\begin{align}\label{e:lifting property-positive-proof-convergence-1-infty1}
\int_1^\infty \big\| t^{ \gamma' } t^\alpha A^\beta ( t + A )^{ - \alpha - \beta } x \big\| \, \frac{ d t }{ t } \leq C \sum_{ i = 0 }^{ \infty } \big\| 2^{ i ( \alpha + \gamma' ) } A^\beta ( 2^i + A )^{ - \alpha - \beta } x \big\|,
\end{align}
where $C = C_{ \alpha + \beta, N } ( L_A + 2 M_A )^N ( 2^{ \gamma' + \alpha } - 1 ) / ( \gamma' +  \alpha )$ with $ \alpha + \beta \leq N \in \mathbb{N}$ and $C_{ \alpha + \beta, N }$ given by (\ref{e:uniform boundedness compositions}). Furthermore, we can show that 
\begin{align}\label{e:lifting property-positive-proof-convergence-1-infty2}
J := \sum_{ i = 0 }^{ \infty } \big\| 2^{ i ( \alpha + \gamma' ) } A^\beta ( 2^i + A )^{ - \alpha - \beta } x \big\| \lesssim \left| x \right|_{ R^{ s, A }_{ q, X } ( 0, \alpha, \beta ) }, 
\end{align}
and hence, the desired inequality (\ref{e:lifting property-positive-proof-convergence-1-infty}) follows from (\ref{e:lifting property-positive-proof-convergence-1-infty1}) and (\ref{e:lifting property-positive-proof-convergence-1-infty2}), immediately. 
Thus, it is sufficient to verify (\ref{e:lifting property-positive-proof-convergence-1-infty2}).
Indeed, if $0 < q \leq 1$, applying the fact that $ \gamma' < s $ yields  
\begin{align}\label{e:lifting property-positive-proof-convergence-1-infty2-1}
\begin{split}
J
& \leq \sum_{ i = 0 }^{ \infty } \big\| 2^{ i ( s + \alpha ) } A^\beta ( 2^i + A )^{ - \alpha - \beta } x \big\| \\
& \leq \left\{ \sum_{ i = 0 }^{ \infty } \big\| 2^{ i ( s + \alpha ) } A^\beta ( 2^i + A )^{ - \alpha - \beta } x \big\|^q \right\}^{ 1 / q } = | x |_{ R^{ s, A }_{ q, X } ( 0, \alpha, \beta ) },
\end{split}
\end{align}
where the last inequality follows from (\ref{e: q-inequality-series<1}).
If $1 < q < \infty$, applying the H\"{o}lder inequality yields 
\begin{align}\label{e:lifting property-positive-proof-convergence-1-infty2-2}
\begin{split}
J 
& \leq \left\{ \sum_{ i = 0 }^{ \infty } 2^{ i ( \gamma' - s ) q' } \right\}^{ 1 / q'} \left\{ \sum_{ i = 0 }^{ \infty } \big\| 2^{ i ( s + \alpha ) } A^\beta ( 2^i + A )^{ - \alpha - \beta } x \big\|^q \right\}^{ 1 / q } \\
& = \bigg[ \frac{ 1 }{ 1 - 2^{ ( \gamma' - s ) q' } } \bigg]^{ 1 / q' } | x |_{ R^{ s, A }_{ q, X } ( 0, \alpha, \beta ) }.
\end{split}
\end{align}
And if $q = \infty$, it is clear that 
\begin{align}\label{e:lifting property-positive-proof-convergence-1-infty2-3}
J 
& \leq \left[ \sup_{ j \geq 0 } 2^{ j s } \big\| 2^{ j \alpha } A^\beta ( 2^j + A )^{ - \alpha - \beta } x \big\| \right] \cdot \left[ \sum_{ i = 0 }^{ \infty } 2^{ i ( \gamma' - s ) } \right] = \frac{ | x |_{ R^{ s, A }_{ q, X } ( 0, \alpha, \beta ) } }{ 1 - 2^{ \gamma' - s } }.
\end{align}
By (\ref{e:lifting property-positive-proof-convergence-1-infty2-1}), (\ref{e:lifting property-positive-proof-convergence-1-infty2-2}) and (\ref{e:lifting property-positive-proof-convergence-1-infty2-3}), we obtain (\ref{e:lifting property-positive-proof-convergence-1-infty2}), immediately.
Thus, we have verified that $x \in D ( A^\gamma)$. 

Next we verify that $A^\gamma x \in B^{ s - \gamma, A }_{ q, X }$. To this end, fix $\alpha, \beta \geq 0$ such that $ \alpha > 2 \RE \gamma$ and $- ( \alpha - 2 \RE \gamma ) < s < \beta$. On the one hand, from (\ref{e:uniform boundedness compositions-L})$^*$ it follows that  
\begin{align}\label{e:lifting property-positive-proof-inhomogeneous term boundedness}
\big\| ( 2^k + A )^{ - ( \alpha - \gamma ) } A^\gamma x \big\| 
\leq C_{ \gamma, n } L_A^n \big\| ( 2^k + A )^{ - ( \alpha - 2 \gamma ) } x \big\|
\end{align}
where $0 < \RE \gamma < n \in \mathbb{N}$ and $C_{ \gamma, n }$ and $L_A$ are given by (\ref{e:uniform boundedness compositions}) and (\ref{e:non-negative constant 2}), respectively. On the other hand, it is clear that 
\begin{align*}
& \left\{ \sum_{ i = 0 }^\infty \big\| 2^{ i ( s - \gamma ) } \cdot 2^{ i ( \alpha - \gamma ) } A^{ \beta - \gamma } ( 2^i + A )^{ - ( \alpha + \beta - 2 \gamma ) } A^\gamma x \big\|^q \right\}^{ 1 / q } 
= | x |_{ R^{ s, A }_{ q, X } ( 0, \alpha - 2 \gamma, \beta ) }.
\end{align*}
Thus, we have $A^\gamma x \in B^{ s - \gamma, A }_{ q, X }$ by the definition of inhomogeneous Besov spaces (see Definition \ref{d:inhomogeneous Besov spaces} and Remark \ref{r:independence of klm-inhomogeneous} above).

(ii) Fix $\alpha, \beta \geq 0$ such that $ \alpha > 2 \RE \gamma$ and $- ( \alpha - 2 \RE \gamma ) < s < \beta$. From (\ref{e:lifting property-positive-proof-inhomogeneous term boundedness}) it follows that 
\begin{align}\label{e:lefting property-positive-inequality}
\begin{split}
\| A^\gamma x \|_{ B^{ s - \gamma, A }_{ q, X } } & = \big\| ( 2^k + A )^{ - ( \alpha - \gamma ) } A^\gamma x \big\| + | A^\gamma x |_{ R^{ s, A }_{ q, X } ( 0, \alpha - \gamma, \beta - \gamma ) } \\
& \leq C_{ \gamma, n } L_A^n \big\| ( 2^k + A )^{ - ( \alpha - 2 \gamma ) } x \big\| + | x |_{ R^{ s, A }_{ q, X } ( 0, \alpha - 2 \gamma, \beta ) } \lesssim \| x \|_{ B^{ s, A }_{ q, X } }.
\end{split}
\end{align}
Thus, $A^\gamma$ is continuous from $B^{ s, A }_{ q, X }$ to $B^{ s - \RE \gamma, A }_{ q, X }$. The proof is complete.
\end{proof}

Thanks to Lemma \ref{l:lifting property-positive} above, we can now give an alternative equivalent quasi-norm on $B^{ s, A }_{ q, X }$ by the use of $\left\| A^\gamma \cdot \right\|_{ B^{ s - \RE \gamma, A }_{ q, X } }$ whenever $A$ is positive on $X$.

\begin{thm}\label{t:lifting property-negative}
Let $A$ be positive on $X$, and let $s > 0$ and $0 < q \leq \infty$. The following statements hold. 
\begin{itemize}
\item[(i)] 
$A^{ - \gamma }$ is continuous from $B^{ s, A }_{ q, X }$ to $B^{ s + \RE \gamma, A }_{ q, X }$ for each $\gamma \in \mathbb{C}_+$. 
\item[(ii)] $\| A^{ - \gamma} \cdot \|_{ B^{ s + \RE \gamma, A }_{ q, X } }$ is an equivalent quasi-norm on $B^{ s, A }_{ q, X }$ for each $\gamma \in \mathbb{C}_+$.
\item[(iii)] $\| A^{ \gamma} \cdot \|_{ B^{ s - \RE \gamma, A }_{ q, X } }$ is an equivalent quasi-norm on $B^{ s, A }_{ q, X }$ for each $\gamma \in \mathbb{C}_+$ satisfying $0 < \RE \gamma < s$.
\end{itemize}
\end{thm}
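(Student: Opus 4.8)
The plan is to derive all three statements from the positive-power lifting already in hand (Lemma~\ref{l:lifting property-positive}) together with the fact that, since $A$ is positive, $A^{-\gamma}=(A^\gamma)^{-1}\in\mathcal{L}(X)$ is a bounded bijection of $X$ satisfying $A^\gamma A^{-\gamma}=I$ on $X$ and $A^{-\gamma}A^\gamma=I$ on $D(A^\gamma)$, and that $R(A^{-\gamma})=D(A^\gamma)$. Throughout I would work first on the dense subspace $R^{s,A}_{q,X}$ of $B^{s,A}_{q,X}$ and then pass to the completions by uniqueness of continuous extensions.

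First I would prove (i). Fix $x\in R^{s,A}_{q,X}(k,\alpha,\beta)$, choosing (using Lemma~\ref{l:independence of klm-inhomogeneous}) $\RE\beta>s$ so that the indices $(k,\alpha,\beta+\gamma)$ are admissible for smoothness $s+\RE\gamma$. Since $A^{-\gamma}$ commutes with the resolvents and $A^{\beta+\gamma}A^{-\gamma}=A^\beta$, one has
\[
2^{i(s+\RE\gamma+\RE\alpha)}A^{\beta+\gamma}(2^i+A)^{-(\alpha+\beta+\gamma)}A^{-\gamma}x
=\big[2^{i\RE\gamma}(2^i+A)^{-\gamma}\big]\cdot\big[2^{i(s+\RE\alpha)}A^{\beta}(2^i+A)^{-(\alpha+\beta)}x\big],
\]
and $\big\|2^{i\RE\gamma}(2^i+A)^{-\gamma}\big\|=\big\|(2^i)^\gamma(2^i+A)^{-\gamma}\big\|$ is bounded uniformly in $i$ by \eqref{e:uniform boundedness compositions-M}. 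Summing (or taking the supremum) over $i\ge k$ gives $|A^{-\gamma}x|_{R^{s+\RE\gamma,A}_{q,X}(k,\alpha,\beta+\gamma)}\lesssim|x|_{R^{s,A}_{q,X}(k,\alpha,\beta)}$, while the anchoring term obeys $\|(2^k+A)^{-\alpha}A^{-\gamma}x\|\le\|A^{-\gamma}\|\,\|(2^k+A)^{-\alpha}x\|$; also $A^{-\gamma}x\in\overline{D(A)}$ since $t^\gamma(t+A)^{-\gamma}A^{-\gamma}x=A^{-\gamma}t^\gamma(t+A)^{-\gamma}x\to A^{-\gamma}x$ and Lemma~\ref{l:ergodicity}(i) applies. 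Hence $A^{-\gamma}$ is bounded from $R^{s,A}_{q,X}$, with the $B^{s,A}_{q,X}$ quasi-norm, into $B^{s+\RE\gamma,A}_{q,X}$, and extends uniquely to a bounded operator $B^{s,A}_{q,X}\to B^{s+\RE\gamma,A}_{q,X}$; this is (i).

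For (ii), combine (i) with Lemma~\ref{l:lifting property-positive}(ii), which (applied with $0<\RE\gamma<s+\RE\gamma$) gives continuity of $A^{\gamma}\colon B^{s+\RE\gamma,A}_{q,X}\to B^{s,A}_{q,X}$. For $x\in R^{s,A}_{q,X}$ we have $A^{-\gamma}x\in R(A^{-\gamma})=D(A^\gamma)$, so $A^{\gamma}A^{-\gamma}x=x$; since $R^{s,A}_{q,X}$ is dense in $B^{s,A}_{q,X}$, the two bounded operators compose to the identity on all of $B^{s,A}_{q,X}$, whence $\|x\|_{B^{s,A}_{q,X}}=\|A^{\gamma}A^{-\gamma}x\|_{B^{s,A}_{q,X}}\lesssim\|A^{-\gamma}x\|_{B^{s+\RE\gamma,A}_{q,X}}\lesssim\|x\|_{B^{s,A}_{q,X}}$; injectivity of $A^{-\gamma}$ makes $\|A^{-\gamma}\cdot\|_{B^{s+\RE\gamma,A}_{q,X}}$ a genuine quasi-norm, equivalent to $\|\cdot\|_{B^{s,A}_{q,X}}$. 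For (iii), since $0<\RE\gamma<s$ the space $B^{s-\RE\gamma,A}_{q,X}$ has positive smoothness, so (i) applies with $s$ replaced by $s-\RE\gamma$ to give continuity of $A^{-\gamma}\colon B^{s-\RE\gamma,A}_{q,X}\to B^{s,A}_{q,X}$, while Lemma~\ref{l:lifting property-positive}(ii) gives continuity of $A^{\gamma}\colon B^{s,A}_{q,X}\to B^{s-\RE\gamma,A}_{q,X}$. Every $x\in B^{s,A}_{q,X}$ lies in $D(A^\gamma)$ by Lemma~\ref{l:lifting property-positive}(i), so $A^{-\gamma}A^{\gamma}x=x$, and the same density/composition argument yields $\|x\|_{B^{s,A}_{q,X}}\lesssim\|A^{\gamma}x\|_{B^{s-\RE\gamma,A}_{q,X}}\lesssim\|x\|_{B^{s,A}_{q,X}}$; injectivity of $A^{\gamma}$ finishes (iii).

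The computational bookkeeping here is light; the main care goes into the density and completion arguments — establishing each operator's boundedness first on the concrete subspace $R^{s,A}_{q,X}$ with respect to the correct quasi-norms, invoking the index-independence of Lemma~\ref{l:independence of klm-inhomogeneous} before estimating, and verifying that the algebraic identities $A^{\gamma}A^{-\gamma}=I$ and $A^{-\gamma}A^{\gamma}=I$ transfer faithfully to the abstract completions via uniqueness of continuous extensions.
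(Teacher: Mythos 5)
Your proof is correct and follows the paper's approach: part (i) by a direct index-shift computation reducing $\|A^{-\gamma}x\|_{B^{s+\RE\gamma,A}_{q,X}}$ to $\|x\|_{B^{s,A}_{q,X}}$, and parts (ii), (iii) by composing with the positive lifting of Lemma~\ref{l:lifting property-positive}(ii) via the identities $A^{\gamma}A^{-\gamma}=I$ on $X$ and $A^{-\gamma}A^{\gamma}=I$ on $D(A^\gamma)$. The only cosmetic difference is in (i): the paper shifts both indices to $(\alpha-\gamma,\beta+\gamma)$ so that the semi-quasinorm of $A^{-\gamma}x$ at smoothness $s+\RE\gamma$ collapses exactly to that of $x$ at smoothness $s$, whereas you keep $\alpha$ and shift only $\beta$, absorbing the extra uniformly bounded factor $(2^i)^\gamma(2^i+A)^{-\gamma}$ via \eqref{e:uniform boundedness compositions-M} --- both work, and since $s>0$ and $s\pm\RE\gamma>0$, Proposition~\ref{p:completeness-inhomogeneous} identifies each $B$-space with the concrete subspace $R^{s,A}_{q,X}$, so the density and continuous-extension scaffolding you emphasize is in fact not needed.
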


\begin{proof}
(i) Let $x \in B^{ s, A }_{ q, X }$ and $\gamma \in \mathbb{C}_+$. Fix $k \in \mathbb{Z}$, $\alpha, \beta \geq 0$ such that $\beta > s$ and $ \alpha > \RE \gamma$ (so that $- \alpha < s < \beta$ and $- ( \alpha - \RE \gamma ) < s + \RE \gamma < \beta + \RE \gamma$). By the definition of the quasi-norm on Besov space $B^{ s, A }_{ q, X }$, we have 
\begin{eqnarray}\label{e:lefting property-negative-inequality}
\begin{split}
\| A^{ - \gamma } x \|_{ B^{ s + \RE \gamma, A }_{ q, X } ( k, \alpha - \gamma, \beta + \gamma ) } 
= {} & \big\| ( 1 + 2^k A^{ - \gamma } ) ( 2^k + A )^{ - \alpha } x \big\| \\
 + {} & \left\{ \sum_{ i = k }^\infty \big\| 2^{ i ( s + \alpha ) } A^{ \beta } ( 2^i + A )^{ - \alpha - \beta } x \big\| \Big]^q \right\}^{ 1 / q } \\
\leq {}& C \| x \|_{ B^{ s, A }_{ q, X } ( k, \alpha, \beta ) },
\end{split}
\end{eqnarray}
where $C = \big( 1 + 2^k \| A^{ - \gamma } \| \big)$. Thus, $A^{ - \gamma }$ is continuous from $B^{ s, A }_{ q, X }$ to $B^{ s + \RE \gamma, A }_{ q, X }$.

(ii) The statement is a simple consequence of (\ref{e:lefting property-positive-inequality}) and (\ref{e:lefting property-negative-inequality}). Indeed, 
\begin{align*}
\| A^{ - \gamma } x \|_{ B^{ s + \RE \gamma, A }_{ q, X } } \lesssim \| x \|_{ B^{ s, A }_{ q, X } } = \| A^\gamma A^{ - \gamma } x \|_{ B^{ s + \RE \gamma - \RE \gamma, A }_{ q, X } } \lesssim  \| A^{ - \gamma } x \|_{ B^{ s + \RE \gamma, A }_{ q, X } }.
\end{align*}

(iii) The statement is a simple consequence of (\ref{e:lefting property-negative-inequality}) and (\ref{e:lefting property-positive-inequality}). Indeed, 
\begin{align*}
\| A^{ \gamma } x \|_{ B^{ s - \RE \gamma, A }_{ q, X } } \lesssim \| x \|_{ B^{ s, A }_{ q, X } } = \| A^{ - \gamma } A^{ \gamma } x \|_{ B^{ s - \RE \gamma + \RE \gamma, A }_{ q, X } } \lesssim  \| A^{ \gamma } x \|_{ B^{ s - \RE \gamma, A }_{ q, X } }.
\end{align*}
The proof is complete.
\end{proof}


%

\smallskip


\subsection{Smoothness reiteration}

The smoothness of abstract Besov spaces has a closely relation to the fractional powers of the underlying non-negative operators. By using the so-called smoothness reiteration, we can give more explicit estimates of the quasi-norms on abstract Besov spaces.

We will give the main result of this subsection in Theorem \ref{t:smoothness reiteration} below based on the following two lemmas.  

\begin{lem}\label{l:estimate of cos}
Let $0 < \alpha < 1$ and let 
\begin{align*}
f ( t ) := 1 + 2 t \cos \pi \alpha + t^2, \quad t > 0.
\end{align*}
Then, for given $t > 0$, we have
\begin{align}\label{e:f(t) less f(u)}
f ( u ) \leq K_\alpha f ( t ), \quad t / 2 \le u \le t,
\end{align}
where 
\begin{align*}
K_\alpha = \begin{cases}
            1, \quad & 0 < \alpha \le 1 / 2, \\
            \frac{ 1 }{ 4 } ( 1 + \frac{ 3 }{ \sin^2 \pi \alpha } ), \quad & 1/ 2 < \alpha < 1.
           \end{cases}
\end{align*}
\end{lem}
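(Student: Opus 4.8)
The plan is to split the argument according to the sign of $\cos\pi\alpha$, after two elementary observations about $f$. First, $f$ is a quadratic polynomial in $t$ with positive leading coefficient, hence \emph{convex} on $(0,\infty)$; and completing the square gives $f(t) = (t+\cos\pi\alpha)^2 + \sin^2\pi\alpha$, so $f(t)\ge \sin^2\pi\alpha > 0$ for all $t$. Convexity immediately simplifies the task: on the closed interval $[t/2,t]$ a convex function attains its maximum at an endpoint, so $\sup_{t/2\le u\le t} f(u) = \max\{f(t/2),f(t)\}$; and since $K_\alpha = \frac14(1 + 3/\sin^2\pi\alpha)\ge 1$ (because $\sin^2\pi\alpha\le 1$), it suffices to prove the single inequality $f(t/2)\le K_\alpha f(t)$ for every $t>0$.

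If $0<\alpha\le 1/2$ then $\cos\pi\alpha\ge 0$, so $f'(t) = 2(\cos\pi\alpha + t) > 0$ for $t>0$; hence $f$ is increasing, $f(u)\le f(t)$ whenever $u\le t$, and the claim holds with $K_\alpha = 1$ (consistent with the limiting value $\frac14(1+3)=1$ at $\alpha = 1/2$).

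The substantive case is $1/2 < \alpha < 1$. Here I would put $c := -\cos\pi\alpha\in(0,1)$ and $s := \sin^2\pi\alpha = 1 - c^2\in(0,1)$, so that $f(t) = (t-c)^2 + s$ and $f(t/2) = \frac14(t-2c)^2 + s$. Writing $u := t-c$ and expanding, the inequality $K_\alpha f(t) - f(t/2)\ge 0$ becomes $Q(u) := (K_\alpha - \frac14)u^2 + \frac{c}{2}u + (K_\alpha - 1)s - \frac{c^2}{4}\ge 0$. Since the leading coefficient $K_\alpha - \frac14 > 0$, this holds for all real $u$ exactly when the discriminant is nonpositive, i.e.\ when $c^2 K_\alpha \le 4(K_\alpha - \frac14)(K_\alpha - 1)s$. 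Using $K_\alpha = \frac{s+3}{4s}$, hence $K_\alpha - \frac14 = \frac{3}{4s}$ and $K_\alpha - 1 = \frac{3(1-s)}{4s} = \frac{3c^2}{4s}$, the right-hand side equals $\frac{9c^2}{4s}$ and the left-hand side equals $\frac{c^2(s+3)}{4s}$, so the condition reduces to $s+3\le 9$, i.e.\ $\sin^2\pi\alpha\le 6$, which is true since $\sin^2\pi\alpha\le 1$. This closes the case, and with it the lemma.

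I do not anticipate a genuine obstacle; the only care required is the bookkeeping in the discriminant computation so that the constant comes out \emph{exactly} in the stated form rather than as an unspecified $C$. The value $K_\alpha = \frac14(1+3/\sin^2\pi\alpha)$ is precisely what makes the discriminant inequality collapse to the trivially true estimate $\sin^2\pi\alpha\le 6$; a somewhat smaller admissible constant could be extracted, but the stated one is convenient for the use of (\ref{e:fractional powers-resolvent-L}) in the smoothness-reiteration estimates that follow.
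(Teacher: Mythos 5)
Your proof is correct, and it streamlines the argument in two places compared to the paper's own proof. The paper normalizes to $g_t(u) := f(u)/f(t)$, computes $g_t'$, and then splits the case $1/2<\alpha<1$ into three subcases depending on whether the critical point $u=-\cos\pi\alpha$ lies to the left of $t/2$, inside $[t/2,t]$, or to the right of $t$; in the last two subcases it verifies the bound $g_t(t/2)\le K_\alpha$ by rewriting $g_t(t/2)=\frac14\bigl(1+\frac{3+2t\cos\pi\alpha}{f(t)}\bigr)$ and then dropping the term $2t\cos\pi\alpha\le 0$ and using $f(t)\ge\sin^2\pi\alpha$. Your observation that $f$ is a convex quadratic, so that its maximum over $[t/2,t]$ is automatically $\max\{f(t/2),f(t)\}$, eliminates all three subcases in one stroke; and your discriminant argument for the quadratic $Q(u)$ in $u=t-c$, which collapses the required inequality to $\sin^2\pi\alpha\le 6$, is a clean algebraic alternative to the paper's direct two-step estimate of $g_t(t/2)$. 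Both routes deliver exactly the stated constant; yours is somewhat shorter and makes it visible that the constant is not tight, which the paper only remarks implicitly.
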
 

\begin{proof}
The statement is obvious when $0 < \alpha \le 1 / 2$ since $f$ is increasing on $( 0, \infty )$. Thus, it remains to verify (\ref{e:f(t) less f(u)}) in the case $1 / 2 < \alpha < 1$. Fix $t > 0$ and write 
\begin{align*}
g_t ( u ) := \frac{ f ( u ) }{ f ( t ) }, \quad u > 0.
\end{align*}
It suffices to verify that 
\begin{align}\label{e: estimate of g}
\sup_{ t / 2 \le u \le t } g_t ( u ) \le \frac{ 1 }{ 4 } \Big( 1 + \frac{ 3 }{ \sin^2 \pi \alpha } \Big).
\end{align}
To this end, applying $g_t' ( u ) = 0$ yields $u = - \cos \pi \alpha$, so that $g_t$ is decreasing on $( 0, - \cos \pi \alpha )$ while increasing on $( - \cos \pi \alpha, \infty )$. If $t / 2 \geq - \cos \pi \alpha$, we have
\begin{align*}
\sup_{ t / 2 \le u \le t } g_t ( u ) \le g_t ( t ) = 1 < \frac{ 1 }{ 4 } \Big( 1 + \frac{ 3 }{ \sin^2 \pi \alpha } \Big) 
\end{align*}
since $g_t$ is increasing on $( t / 2, t )$.
If $t / 2 < - \cos \pi \alpha \le t$, we have 
\begin{align}\label{l:estimate of cos-proof-1}
\sup_{ t / 2 \le u \le t } g_t ( u ) \le \max \{ g_t ( t ), g_t ( t / 2 ) \} = \max \{ 1, g_t ( t / 2 ) \} 
\end{align}
since $g_t$ is decreasing on $( t / 2, - \cos \pi \alpha )$ and increasing on $( - \cos \pi \alpha, t )$. Observe that 
\begin{align}\label{l:estimate of cos-proof-2}
\begin{split}
g_t ( t / 2 ) & 
= \frac{ 1 }{ 4 } \bigg( 1 + \frac{ 3 + 2 t \cos \pi \alpha }{ 1 + 2 t \cos \pi \alpha + t^2 } \bigg) \\
& \le \frac{ 1 }{ 4 } \bigg( 1 + \frac{ 3 }{ 1 + 2 t \cos \pi \alpha + t^2 } \bigg)
 \le \frac{ 1 }{ 4 } \bigg( 1 + \frac{ 3 }{ \sin^2 \pi \alpha } \bigg). 
\end{split}
\end{align}
due to the fact that $\cos \pi \alpha < 0$. By using (\ref{l:estimate of cos-proof-1}) and (\ref{l:estimate of cos-proof-2}) we obtain (\ref{e: estimate of g}), immediately. And if $t < - \cos \pi \alpha$, it can be seen from (\ref{l:estimate of cos-proof-2}) that
\begin{align*}
\sup_{ t / 2 \le u \le t } g_t ( u ) \le g_t ( t / 2 ) \leq \frac{ 1 }{ 4 } \Big( 1 + \frac{ 3 }{ \sin^2 \pi \alpha } \Big)
\end{align*}
since $g_t$ is decreasing on $( t / 2, t )$. Thus, we have verified (\ref{e: estimate of g}). The proof is complete.
\end{proof}

\begin{lem}\label{l:lp-boundedness-T}
Let $0 < q \le \infty$ and let $0 < s, \alpha < 1$. Define $T : \ell_q \mapsto \ell_q$ by 
\begin{align*}
T { \bf a } := { \bf b }, \quad { \bf a } = \{ a_j \} \in \ell_q,
\end{align*}
where ${ \bf b } = \{ b_j \}$ with 
\begin{align*}
b_j = \sum_{ i = - \infty }^{ \infty } \frac{ ( 2^{ - j } 2^{ i \alpha } )^{ 1 - s } }{ 1 + 2 ( 2^{ - j } 2^{ i \alpha } ) \cos \pi \alpha + ( 2^{ - j } 2^{ i \alpha } )^{ 2 } } \cdot a_i, \quad j \in \mathbb{Z}.
\end{align*}
Then $T$ is bounded on $\ell_q$.
\end{lem}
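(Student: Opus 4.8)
The plan is to regard $T$ as a discrete integral operator with kernel $K_{ji}:=K(2^{i\alpha-j})$, $i,j\in\mathbb{Z}$, where $K(u):=u^{1-s}\big(1+2u\cos\pi\alpha+u^{2}\big)^{-1}$ for $u>0$, and to control it by a Schur‐type argument once suitable off‐diagonal decay of the $K_{ji}$ is in hand.

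First I would record a pointwise bound on $K$. Writing $1+2u\cos\pi\alpha+u^{2}=(u+\cos\pi\alpha)^{2}+\sin^{2}\pi\alpha$ shows the denominator is positive (here $0<\alpha<1$ forces $\sin\pi\alpha\neq0$), and a short computation shows that the quadratic $\tfrac12(1+\cos^{2}\pi\alpha)u^{2}+2u\cos\pi\alpha+\tfrac12(1+\cos^{2}\pi\alpha)$ in $u$ has discriminant $-(\cos^{2}\pi\alpha-1)^{2}\le0$ and positive leading coefficient, hence is nonnegative; that is, $1+2u\cos\pi\alpha+u^{2}\ge\tfrac12\sin^{2}(\pi\alpha)(1+u^{2})$ for every $u\in\mathbb{R}$. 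Therefore $K(u)\le 2(\sin\pi\alpha)^{-2}\,u^{1-s}(1+u^{2})^{-1}\le 2(\sin\pi\alpha)^{-2}\min\{u^{1-s},u^{-1-s}\}$. Since $0<s<1$, both exponents $1-s$ and $1+s$ are strictly positive, so in the logarithmic variable $v=\log_{2}u$ the bound $K(2^{v})\le 2(\sin\pi\alpha)^{-2}\min\{2^{(1-s)v},2^{-(1+s)v}\}$ decays geometrically both as $v\to+\infty$ and as $v\to-\infty$.

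Next I would bound the row and column sums of $(K_{ji})$. For fixed $j$ the exponents $i\alpha-j$ ($i\in\mathbb{Z}$) form an arithmetic progression of step $\alpha>0$, so $\sum_{i\in\mathbb{Z}}K_{ji}$ is dominated by two geometric series, of ratios $2^{-(1-s)\alpha}$ and $2^{-(1+s)\alpha}$, with a bound independent of $j$; thus $M_{1}:=\sup_{j}\sum_{i}K_{ji}<\infty$. Symmetrically, for fixed $i$ the exponents $i\alpha-j$ ($j\in\mathbb{Z}$) have step $1$, giving $M_{2}:=\sup_{i}\sum_{j}K_{ji}<\infty$. For $1\le q\le\infty$ the Schur test — interpolating between the obvious bounds $\|T\|_{\ell_{\infty}\to\ell_{\infty}}\le M_{1}$ and $\|T\|_{\ell_{1}\to\ell_{1}}\le M_{2}$ — then yields $\|T\mathbf{a}\|_{\ell_{q}}\le M_{1}^{1/q'}M_{2}^{1/q}\,\|\mathbf{a}\|_{\ell_{q}}$ (with the usual conventions at $q=1,\infty$). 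For $0<q<1$ I would argue directly: by the elementary inequality (\ref{e: q-inequality-series<1}), $|b_{j}|^{q}\le\sum_{i}K_{ji}^{q}|a_{i}|^{q}$, so summing over $j$ and exchanging the order of summation gives $\|T\mathbf{a}\|_{\ell_{q}}^{q}\le\big(\sup_{i}\sum_{j}K_{ji}^{q}\big)\|\mathbf{a}\|_{\ell_{q}}^{q}$, and $\sup_{i}\sum_{j}K_{ji}^{q}<\infty$ by the same geometric‐series estimate applied to $K^{q}$, since the exponents $q(1-s)$ and $q(1+s)$ remain strictly positive. In all cases $T$ is bounded on $\ell_{q}$.

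I do not anticipate a genuine obstacle; the only delicate point is the uniformity in $j$ (respectively $i$) of the geometric series, which is precisely where the two‐sided decay $\min\{u^{1-s},u^{-1-s}\}$ — decay both as $u\to0$ and as $u\to\infty$ — is essential, a one‐sided bound being useless. The resulting constant blows up like $(\sin\pi\alpha)^{-2}$ and like $(1-2^{-(1-s)\alpha})^{-1}$ as $\alpha\downarrow0$ or $s\uparrow1$, which is immaterial since $\alpha$ and $s$ are fixed.
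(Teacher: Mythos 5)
Your proof is correct, and it takes a genuinely different and more elementary route than the paper. The paper controls the kernel by comparing the dyadic sum with an integral: it invokes Lemma~\ref{l:estimate of cos} to replace the discrete ratios $2^{-j}2^{i\alpha}$ by a continuous variable inside the rational function $f(t)=1+2t\cos\pi\alpha+t^2$, converts $\sum_j$ into $\int_0^\infty$, and then exploits the dilation invariance of $\int_0^\infty(\mu^{1-s}/f(\mu))^q\,d\mu/\mu$ to obtain a bound independent of $i$; for $1<q<\infty$ it closes the argument with Marcinkiewicz interpolation. You instead bypass the integral comparison altogether by proving the elementary pointwise inequality
\begin{align*}
1+2u\cos\pi\alpha+u^{2}\;\ge\;\tfrac12\sin^{2}(\pi\alpha)\,(1+u^{2}),\qquad u>0,
\end{align*}
which immediately gives the two-sided decay $K(u)\lesssim_{\alpha}\min\{u^{1-s},u^{-1-s}\}$; this yields the uniform row and column sums as explicit geometric series, after which the Schur test (Riesz--Thorin between the strong $\ell_1$ and $\ell_\infty$ endpoints) handles $1\le q\le\infty$, and your $q(1\mp s)>0$ observation handles $0<q<1$ exactly as the paper's use of (\ref{e: q-inequality-series<1}). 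Your argument is shorter, does not depend on Lemma~\ref{l:estimate of cos}, and produces explicit constants; the paper's version is slightly heavier but is structured so that the same Lemma~\ref{l:estimate of cos} can be reused verbatim in the proof of Theorem~\ref{t:smoothness reiteration}, where it is genuinely needed for the sum-to-integral passage. Both are valid; the substance is the same (two-sided decay of the kernel on a logarithmic scale), but the route is different.
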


\begin{proof}
First we verify the statement in the case $0 < q \le 1$. Fix ${\bf a } = \{ a_j \} \in \ell_q$. It follows from (\ref{e: q-inequality-series<1}) that  
\begin{align*}
\| T { \bf a } \|_{ l_q } & \leq \Bigg\{ \sum_{ j = - \infty }^\infty \sum_{ i = - \infty }^{ \infty } \bigg| \frac{ ( 2^{ - j } 2^{ i \alpha } )^{ 1 - s } a_i }{ 1 + 2 ( 2^{ - j } 2^{ i \alpha } ) \cos \pi \alpha + ( 2^{ - j } 2^{ i \alpha } )^{ 2 } } \bigg|^q \Bigg\}^{ 1 / q } \\
& = \Bigg\{ \sum_{ i = - \infty }^{ \infty } | a_i |^q \sum_{ j = - \infty }^\infty \bigg[ \frac{ ( 2^{ - j } 2^{ i \alpha } )^{ 1 - s } }{ 1 + 2 ( 2^{ - j } 2^{ i \alpha } ) \cos \pi \alpha + ( 2^{ - j } 2^{ i \alpha } )^{ 2 } } \bigg]^q \Bigg\}^{ \frac{ 1 }{ q } }.
\end{align*}
In order to verify the $\ell_q$-boundedness of $T$, it is sufficient to verify that    
\begin{align}\label{e:smoothness reiteration-proof-CT-q<1}
\begin{split}
& \sup_{ i \in \mathbb{ Z } } \sum_{ j = - \infty }^\infty \Big[ \frac{ ( 2^{ - j } 2^{ i \alpha } )^{ 1 - s } }{ 1 + 2 ( 2^{ - j } 2^{ i \alpha } ) \cos \pi \alpha + ( 2^{ - j } 2^{ i \alpha } )^2 } \Big]^q < \infty.
\end{split}
\end{align}
To this end, by using the identity $\int_{ 2^j }^{ 2^{ j + 1 } } 2^{ - j } \, d \lambda = 1$ for $j \in \mathbb{Z}$
we have  
\begin{align*} 
& \sup_{ i \in \mathbb{ Z } } \sum_{ j = - \infty }^\infty \Big[ \frac{ ( 2^{ - j } 2^{ i \alpha } )^{ 1 - s } }{ 1 + 2 ( 2^{ - j } 2^{ i \alpha } ) \cos \pi \alpha + ( 2^{ - j } 2^{ i \alpha } )^2 } \Big]^q \\
= {} & \sup_{ i \in \mathbb{ Z } } \sum_{ j = - \infty }^\infty \int_{ 2^j }^{ 2^{ j + 1 } } \Big[ \frac{ ( 2^{ - j } 2^{ i \alpha } )^{ 1 - s } }{ 1 + 2 ( 2^{ - j } 2^{ i \alpha } ) \cos \pi \alpha + ( 2^{ - j } 2^{ i \alpha } )^2 } \Big]^q \cdot 2^{ - j } \, d \lambda \\
\leq {} & 
2^{ ( 1 - s ) q + 1 } \sup_{ i \in \mathbb{ Z } } \sum_{ j = - \infty }^\infty \int_{ 2^j }^{ 2^{ j + 1 } } \Big[ \frac{ ( \lambda^{ - 1 } 2^{ i \alpha } )^{ 1 - s } }{ 1 + 2 ( 2^{ - j } 2^{ i \alpha } ) \cos \pi \alpha + ( 2^{ - j } 2^{ i \alpha } )^2 } \Big]^q \, \frac{ d \lambda }{ \lambda }, 
\end{align*}
where the last inequality follows from the fact that $\lambda \leq 2^{ j + 1 }$.
Since $2^{ - j } 2^{ i \alpha } / 2 \le \lambda^{ - 1 } 2^{ i \alpha } < 2^{ - j } 2^{ i \alpha }$, applying (\ref{e:f(t) less f(u)}) with $t = 2^{ - j } 2^{ i \alpha }$ to the integrand of the last integrel yields 
\begin{align*} 
\begin{split}
& \sup_{ i \in \mathbb{ Z } } \sum_{ j = - \infty }^\infty \Big[ \frac{ ( 2^{ - j } 2^{ i \alpha } )^{ 1 - s } }{ 1 + 2 ( 2^{ - j } 2^{ i \alpha } ) \cos \pi \alpha + ( 2^{ - j } 2^{ i \alpha } )^2 } \Big]^q \\
\le {} & K_\alpha 2^{ ( 1 - s ) q + 1 } \sup_{ i \in \mathbb{ Z } } \int_{ 0 }^{ \infty } \Big[ \frac{ ( \lambda^{ - 1 } 2^{ i \alpha } )^{ 1 - s } }{ 1 + 2 ( \lambda^{ - 1 } 2^{ i \alpha } ) \cos \pi \alpha + ( \lambda^{ - 1 } 2^{ i \alpha } )^2 } \Big]^q \, \frac{ d \lambda }{ \lambda } \\
= {} & K_\alpha 2^{ ( 1 - s ) q + 1 } \int_0^\infty \bigg( \frac{ \mu^{ 1 - s } }{ 1 + 2 \mu \cos \pi \alpha + \mu^2 } \bigg)^q \, \frac{ d \mu }{ \mu },
\end{split}
\end{align*}
from which the desired (\ref{e:smoothness reiteration-proof-CT-q<1}) follows immediately since the last integral converges due to the fact that $0 < s < 1$. More precisely, 
\begin{align*} 
\begin{split}
\| T { \bf a } \|_{ \ell_q } \le C \| { \bf a } \|_{ \ell_q },
\end{split}
\end{align*}
where $C = ( J K_\alpha )^{ 1 / q } 2^{ 1 - s + 1 / q }$ with
\begin{align*} 
\begin{split}
J = \int_0^\infty \bigg( \frac{ \mu^{ 1 - s } }{ 1 + 2 \mu \cos \pi \alpha + \mu^2 } \bigg)^q \, \frac{ d \mu }{ \mu }.
\end{split}
\end{align*}
Thus, we have verified the $\ell_q$-boundedness of $T$ for $0 < q \le 1$.

Next we verify the $\ell_q$-boundedness of $T$ for $q = \infty$. Indeed, 
analogous to (\ref{e:smoothness reiteration-proof-CT-q<1}), by using $\int_{ 2^{ i - 1 } }^{ 2^{ i } } 2^{ 1 - i } \, d \lambda = 1$
and the estimate (\ref{e:f(t) less f(u)}) we can conclude that  
\begin{align}\label{e:smoothness reiteration-proof-CT-q=infty}
\begin{split}
& \sup_{ j \in \mathbb{ Z } } \sum_{ i = - \infty }^{ \infty } \frac{ ( 2^{ - j } 2^{ i \alpha } )^{ 1 - s } }{ 1 + 2 ( 2^{ - j } 2^{ i \alpha } ) \cos \pi \alpha + ( 2^{ - j } 2^{ i \alpha } )^2 } \\
\le {} & \alpha^{ - 1 } K_\alpha 2^{ \alpha ( 1 - s ) + 1 } 
\int_0^\infty \frac{ \mu^{ 1 - s } }{ 1 + 2 \mu \cos \pi \alpha + \mu^2 } \, \frac{ d \mu }{ \mu }  := D_T < \infty,
\end{split}
\end{align}
where $K_\alpha$ is the constant given in (\ref{e:f(t) less f(u)}). This implies that 
\begin{align*}
\| T { \bf a } \|_{ l_\infty } = \sup_{ j \in \mathbb{ Z } } | b_j |
\leq D_T \cdot \sup_{ i \in \mathbb{ Z } } | a_i | = D_T \| { \bf a } \|_{ l_\infty },
\end{align*}
the desired $\ell_\infty$-boundedness of $T$.

Finally, the $\ell_q$-boundedness of $T$ for $1 < q < \infty$ is a direct consequence of the well-known Marcinkiewicz interpolation theorem due to the $\ell_1$-boundedness and $\ell_\infty$-boundedness of the (sub)linear operator $T$. 
The proof is complete.
\end{proof}

We can now establish the smoothness reiteration for Besov spaces $B^{ s, A }_{ q, X }$, which was first discussed by H. Komatsu \cite[Theorem 3.2]{Komatsu1967} via integral transforms in the case $1 \le q \le \infty$ and subsequently described by M. Haase \cite[Corollary 7.3]{Haase2005} by using functional calculi in the case $1 \le q \le \infty$ as well. Here we give a unified approach to the smoothness reiteration for Besov spaces with a full range of $0 < q \le \infty$. Moreover, recall that a closed linear operator on a Banach spaces is non-negative if and only if it is sectorial. 

\begin{thm}\label{t:smoothness reiteration}
Let $s > 0$, $0 < q \leq \infty$ and $0 \le \omega < \pi$, and let $A$ be sectorial of angle $\omega$. Then 
\begin{align*}
B^{ s, A^\alpha }_{ q, X } = B^{ s \alpha, A }_{ q, X }, \quad 0 < \alpha < \pi / \omega,
\end{align*}
in the sense of equivalent quasi-norms.
\end{thm}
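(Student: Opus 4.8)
The plan is to reduce the claimed identity $B^{s,A^\alpha}_{q,X}=B^{s\alpha,A}_{q,X}$ to a statement about equivalence of two sequence-type quasi-norms, and then deduce that equivalence from the resolvent representation \eqref{e:fractional powers-resolvent-L} of $A^\alpha(\lambda+A^\alpha)^{-1}$ together with the boundedness of the operator $T$ on $\ell_q$ established in Lemma \ref{l:lp-boundedness-T}. By multiplicativity \eqref{e:multiplicativity of fractional powers} the operator $A^\alpha$ is again non-negative (indeed sectorial, of angle $\alpha\omega<\pi$), and $M_{A^\alpha}\le M_A$ by Lemma \ref{l:representations-powers-semigroups}; so $B^{s,A^\alpha}_{q,X}$ is well-defined. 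Since $\overline{D(A^\alpha)}=\overline{D(A)}$ and $\overline{R(A^\alpha)}=\overline{R(A)}$ by Lemma \ref{l:ergodicity}, the two Besov spaces are completions of the same ambient subspace, and by the density results (Propositions \ref{p:denseness-inhomogeneous}, and the analogue at the level of $R$-spaces) it suffices to prove the quasi-norm equivalence on a common dense subspace, e.g.\ on $x\in D(A^N)\cap R(A^N)$ for $N$ large; there all the integrals below converge absolutely and manipulations are justified.

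First I would fix convenient parameters. Using Lemmas \ref{l:independence of klm-inhomogeneous} and \ref{l:independence of lR-inhomogeneous} (independence of the inhomogeneous Besov quasi-norm of the indices $k,\alpha,\beta$), and Lemma \ref{l:Besov type-inhomogeneous-continuity} (the continuous, integral form of the semi-quasinorm), I would represent $\|x\|_{B^{s,A^\alpha}_{q,X}}$ in the form
\begin{align*}
\|x\|_{B^{s,A^\alpha}_{q,X}}\simeq \big\|(1+A^\alpha)^{-1}x\big\|+\Big\{\int_1^\infty\big\|\lambda^{s}\,A^\alpha(\lambda+A^\alpha)^{-1}x\big\|^q\,\tfrac{d\lambda}{\lambda}\Big\}^{1/q},
\end{align*}
taking there the resolvent exponent equal to $1$ and $k=0$; this is legitimate because $0<s<\RE\beta$ can be arranged with $\beta=1$ only when $s<1$, but by the smoothness-reiteration bootstrap (applying the lifting property Theorem \ref{t:lifting property-negative} to reduce a general $s>0$ to $s\in(0,1)$ via multiplication by suitable integer powers, which intertwine the two Besov scales) it is enough to treat $0<s<1$. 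A symmetric formula holds for $\|x\|_{B^{s\alpha,A}_{q,X}}$ with $A$ in place of $A^\alpha$ and exponent $s\alpha\in(0,1)$. Plugging \eqref{e:fractional powers-resolvent-L} into the first expression gives
\begin{align*}
\lambda^{s}A^\alpha(\lambda+A^\alpha)^{-1}x = C\int_0^\infty \frac{\lambda^{\,s+1}\mu^{\alpha}}{\lambda^2+2\lambda\mu^\alpha\cos\pi\alpha+\mu^{2\alpha}}\;A(\mu+A)^{-1}x\,\frac{d\mu}{\mu},
\end{align*}
and after discretizing both the outer $\lambda$-integral and the inner $\mu$-integral dyadically — replacing $A(\mu+A)^{-1}x$ on each $[2^i,2^{i+1}]$ by its value at $2^i$, which is licit up to constants by the estimate \eqref{e:resolvent estimate-discrete} — the right-hand side becomes, up to constants, the action of the kernel operator $T$ of Lemma \ref{l:lp-boundedness-T} on the sequence $\{\,\|2^{\,i s\alpha}A(2^i+A)^{-1}x\|\,\}_i$ (after relabelling, the homogeneity in $\mu^\alpha$ is exactly what produces the $2^{i\alpha}$ inside the kernel). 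Taking $\ell_q$ norms and invoking boundedness of $T$ on $\ell_q$ yields $\|x\|_{B^{s,A^\alpha}_{q,X}}\lesssim \|x\|_{B^{s\alpha,A}_{q,X}}$, and the auxiliary term is controlled by $\|(1+A^\alpha)^{-1}x\|\lesssim\|x\|$ directly from non-negativity.

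For the reverse inequality I would proceed the same way but starting from a reproducing/inversion formula expressing $A(\mu+A)^{-1}x$ in terms of the family $\{A^\alpha(\lambda+A^\alpha)^{-1}x\}_\lambda$: the scalar identity \eqref{e:fractional powers-resolvent-scalar} is precisely of this type (it exhibits the kernel $\frac{\lambda\mu^\alpha}{\lambda^2+2\lambda\mu^\alpha\cos\pi\alpha+\mu^{2\alpha}}$ as a reproducing kernel in $d\mu/\mu$), and combined with \eqref{e:fractional powers-resolvent-L} it lets one recover the ``$A$-decomposition'' of $x$ from its ``$A^\alpha$-decomposition''; again discretization plus Lemma \ref{l:lp-boundedness-T} (with the roles of $s$ and $s\alpha$, i.e.\ of the two exponents inside the kernel, interchanged — note the kernel in Lemma \ref{l:lp-boundedness-T} is symmetric enough that boundedness holds for the transposed operator too, which one checks by the same $\ell_1$--$\ell_\infty$ interpolation argument) gives $\|x\|_{B^{s\alpha,A}_{q,X}}\lesssim\|x\|_{B^{s,A^\alpha}_{q,X}}$. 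Finally I would transfer the equivalence from the dense subspace to the completions, noting that a bounded bijective map between dense subspaces with bounded inverse extends to an isomorphism of the completions, which identifies the two Besov spaces.

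The main obstacle I anticipate is the careful bookkeeping in the discretization step: one must check that the error incurred by freezing $A(\mu+A)^{-1}x$ (resp.\ $A^\alpha(\lambda+A^\alpha)^{-1}x$) on dyadic blocks is genuinely controlled by the kernel estimate \eqref{e:f(t) less f(u)} / Lemma \ref{l:estimate of cos} rather than by cruder bounds, since the kernel $1/f(t)$ decays only quadratically and the exponent shift by $\alpha$ in the variable makes the two dyadic scales incommensurate; getting the interchange-of-summation and Hölder/$q$-inequality estimates to land exactly on the hypotheses of Lemma \ref{l:lp-boundedness-T} (namely $0<s<1$ and $0<\alpha<1$, with $\alpha<\pi/\omega$ used only to guarantee $A^\alpha$ is well-behaved) is the technical heart. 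The bootstrap from $s\in(0,1)$ to general $s>0$ via the lifting property is routine but must be stated, since \eqref{e:fractional powers-resolvent-L} is available only for $0<\alpha<1$ and the sequence operator $T$ in Lemma \ref{l:lp-boundedness-T} is only analyzed for $0<s<1$.
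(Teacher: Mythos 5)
Your forward estimate $\|x\|_{B^{s,A^\alpha}_{q,X}}\lesssim\|x\|_{B^{s\alpha,A}_{q,X}}$ is essentially the paper's argument: insert the subordination formula (\ref{e:fractional powers-resolvent-L}) into the Besov quasi-norm, discretize the two integrals dyadically using (\ref{e:resolvent estimate-discrete}) and Lemma \ref{l:estimate of cos}, and invoke the $\ell_q$-boundedness of $T$ from Lemma \ref{l:lp-boundedness-T}. The reductions to $0<\alpha<1$ (via $A=(A^\alpha)^{1/\alpha}$) and to $0<s<1$ (via lifting) also match the paper.

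The reverse inequality, however, contains a genuine gap. You assert that the scalar identity (\ref{e:fractional powers-resolvent-scalar}), ``combined with (\ref{e:fractional powers-resolvent-L}),'' lets one recover $A(\mu+A)^{-1}x$ from the family $\{A^\alpha(\lambda+A^\alpha)^{-1}x\}_\lambda$. But (\ref{e:fractional powers-resolvent-scalar}) only says $\int_0^\infty K(\lambda,\mu)\,d\mu/\mu=1$ for each fixed $\lambda$; it is a normalization, not an inversion formula. If you try to use the transposed kernel $K^T(\mu,\lambda)=K(\lambda,\mu)$ and define a candidate $\int_0^\infty K(\lambda,\mu')\,A^\alpha(\lambda+A^\alpha)^{-1}x\,d\lambda/\lambda$, then substituting (\ref{e:fractional powers-resolvent-L}) into the integrand produces $\int\!\int K(\lambda,\mu')K(\lambda,\mu)\,\tfrac{d\lambda}{\lambda}\,A(\mu+A)^{-1}x\,\tfrac{d\mu}{\mu}$, and there is no reason the inner $\lambda$-integral should concentrate at $\mu=\mu'$. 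The boundedness of the transpose of $T$ on $\ell_q$, which you correctly note, is beside the point: there is no operator identity for it to act on.

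The paper closes the reverse direction by an entirely different algebraic device. It reduces to the special case $\alpha=1/m$ with $m$ an odd integer and proves
$\|x\|_{B^{s/m,A}_{q,X}}\lesssim\|x\|_{B^{s,A^{1/m}}_{q,X}}$
by the factorization $A(t+A)^{-1}=\prod_{i=1}^m A^{1/m}(z_i t^{1/m}+A^{1/m})^{-1}$, where the $z_i$ are $m$-th roots of unity with $z_1=1$. Because $m$ is odd, only $z_1$ is real, and each non-real factor $\{A^{1/m}(z_i t^{1/m}+A^{1/m})^{-1}\}_{t>0}$ is a uniformly bounded family (this is exactly where the sectoriality condition $\alpha<\pi/\omega$ enters). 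Hence the product is controlled by the single $A^{1/m}$-term. The case of general $0<\alpha<1$ then follows by chaining: choose an odd $m$ with $1/m<\alpha$, write $A^{1/m}=(A^\alpha)^{1/(\alpha m)}$, and apply the already-proved forward inequality to the operator $A^\alpha$ with exponent $1/(\alpha m)\in(0,1)$. This factorization step is the heart of the reverse implication and does not appear to be derivable from the subordination formula (\ref{e:fractional powers-resolvent-L}) in the way your proposal suggests.
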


\begin{proof}
It suffices to verify the statement in the case $0 < \alpha < 1$, because otherwise we have $A = ( A^\alpha )^{ 1 / \alpha }$ with $0 < 1 / \alpha < 1$. Moreover, thanks to Lemma \ref{l:lifting property-positive} (i) we may assume that $s$ is sufficiently small, say $0 < s < 1$.
Clearly, it needs merely to verify that 
\begin{align}\label{e:smoothness reiteration-proof-Apower<A}
\| x \|_{ B^{ s, A^\alpha }_{ q, X } } \lesssim \| x \|_{ B^{ s \alpha, A }_{ q, X } }  
\end{align}
and that  
  \begin{align}\label{e:smoothness reiteration-proof-A<Apower}
\| x \|_{ B^{ s \alpha, A }_{ q, X } } \lesssim \| x \|_{ B^{ s, A^\alpha }_{ q, X } }. 
\end{align}

First we verify (\ref{e:smoothness reiteration-proof-Apower<A}). To this end, let $x \in B^{ s \alpha, A }_{ q, X }$. 
From (\ref{e:fractional powers-resolvent-L}) it follows that
\begin{align*}
| x |_{ R^{ s, A^\alpha }_{ q, X } ( 0, 0, \alpha ) } = {} & \Bigg\{ \sum_{ j = 0 }^\infty \big\| 2^{ j s } A^\alpha ( 2^j + A^\alpha )^{ - 1 } x \big\|^q \Bigg\}^{ 1 / q } \\
 \leq {} & \frac{ 1 }{ \Gamma ( \alpha ) \Gamma ( 1 - \alpha ) } \Bigg\{ \sum_{ j = 0 }^\infty \bigg[ \int_0^\infty \frac{ 2^{ j ( s + 1 ) } \tau^\alpha \big\| A ( \tau + A )^{ - 1 } x \big\| }{ 2^{ 2 j } + 2 \cdot 2^j \tau^\alpha \cos \pi \alpha + \tau^{ 2 \alpha } } \, \frac{ d \tau }{ \tau } \bigg]^q \Bigg\}^{ 1 / q }. 
\end{align*}
Rewriting $\int_0^\infty = \sum_{ i = - \infty }^\infty \int^{ 2^{ i + 1 } }_{ 2^i }$
and applying the estimate (\ref{e:resolvent estimate-discrete}) to $\left\| A ( \tau + A )^{ - 1 } x \right\|$
yields 
\begin{align*}
| x |_{ R^{ s, A^\alpha }_{ q, X } ( 0, 0, \alpha ) } \leq {} & C \Bigg\{ \sum_{ j = 0 }^\infty \bigg[ \sum_{ i = - \infty }^\infty \int^{ 2^{ i + 1 } }_{ 2^i } \frac{ 2^{ j ( s + 1 ) } \big\| 2^{ i \alpha } A ( 2^i + A )^{ - 1 } x \big\| }{ 2^{ 2 j } + 2 \cdot 2^j \tau^\alpha \cos \pi \alpha + \tau^{ 2 \alpha } } \, \frac{ d \tau }{ \tau } \bigg]^q \Bigg\}^{ 1 / q } \\
= {} & C
\Bigg\{ \sum_{ j = 0 }^\infty \bigg[ \sum_{ i = - \infty }^\infty \int^{ 2^{ i + 1 } }_{ 2^i } \frac{ ( 2^{ - j } 2^{ i \alpha } )^{ 1 - s } \big\| 2^{ i s \alpha } A ( 2^i + A )^{ - 1 } x \big\| }{ 1 + 2 ( 2^{ - j } \tau^\alpha ) \cos \pi \alpha + ( 2^{ - j } \tau^{ \alpha } )^2 } \, \frac{ d \tau }{ \tau } \bigg]^q \Bigg\}^{ 1 / q }, 
\end{align*}
where $C = \frac{ 2^\alpha (L_A + M_A )^2 }{ \Gamma ( \alpha ) \Gamma ( 1 - \alpha ) }$ with $M_A$ and $L_A$ given by (\ref{e:non-negative constant}) and (\ref{e:non-negative constant 2}), respectively. 
Since $2^{ - j } \tau^\alpha / 2 < 2^{ - j } 2^{ i \alpha } \le 2^{ - j } \tau^\alpha$, applying (\ref{e:f(t) less f(u)}) with $t = 2^{ - j } \tau^{ \alpha }$ yields 
\begin{align}\label{e:smoothness reiteration-proof1}
| x |_{ R^{ s, A^\alpha }_{ q, X } ( 0, 0, \alpha ) }
\le \frac{ 2^\alpha K_\alpha (L_A + M_A )^2 }{ \Gamma ( \alpha ) \Gamma ( 1 - \alpha ) } \cdot J, 
\end{align}
where $K_\alpha$ is the constant given in (\ref{e:f(t) less f(u)}) and 
\begin{align*}
J = \Bigg\{ \sum_{ j = 0 }^\infty \bigg[ \sum_{ i = - \infty }^{ \infty } \frac{ ( 2^{ - j } 2^{ i \alpha } )^{ 1 - s } \big\| 2^{ i \alpha s } A ( 2^i + A )^{ - 1 } x \big\| }{ 1 + 2 ( 2^{ - j } 2^{ i \alpha } ) \cos \pi \alpha + ( 2^{ - j } 2^{ i \alpha } )^{ 2 } } \bigg]^q \Bigg\}^{ 1 / q }.  
\end{align*}
It can be verified that  
\begin{align}\label{e:smoothness reiteration-proof2}
J \lesssim \| x \|_{ B^{ s \alpha, A }_{ q, X } }.
\end{align}
Indeed, the statement is a simple consequence of the $\ell_q$-boundedness of $T$ as shown in Lemma \ref{l:lp-boundedness-T} above. More precisely, applying ${\bf a} = \{ a_i \}$ with  
\begin{align*}
a_i = \big\| 2^{ i s \alpha } A ( 2^i + A )^{ - 1 } x \big\|, \quad i \in \mathbb{Z},
\end{align*}
in Lemma \ref{l:lp-boundedness-T} yields 
\begin{align*}
J = \| { \bf b } \|_{ \ell_q } \lesssim \| { \bf a } \|_{ \ell_q } \lesssim \| x \|_{ B^{ s \alpha, A }_{ q, X } },
\end{align*}
which is the desired inequality (\ref{e:smoothness reiteration-proof2}). Moreover, from (\ref{e:smoothness reiteration-proof1}) and (\ref{e:smoothness reiteration-proof2}) it follows that 
\begin{align*}
\| x \|_{ B^{ s, A^\alpha }_{ q, X } ( 0, 0, \alpha ) } = \| x \| + | x |_{ R^{ s, A^\alpha }_{ q, X } ( 0, 0, \alpha ) } \lesssim \| x \|_{ B^{ s \alpha, A }_{ q, X } }.
\end{align*}
Thus, we have verified (\ref{e:smoothness reiteration-proof-Apower<A}).

Next we verify (\ref{e:smoothness reiteration-proof-A<Apower}).    
It suffices to verify (\ref{e:smoothness reiteration-proof-A<Apower}) in the case $\alpha = 1 / m$ with an odd integer $m$, i.e.,
\begin{align}\label{e:smoothness reiteration-proof5}
\| x \|_{ B^{ s / m, A }_{ q, X } } \lesssim \| x \|_{ B^{ s, A^{ 1 / m } }_{ q, X } },  
\end{align}
since in a general case $0 < \alpha < 1$ we can fix an odd integer $m$ large enough such that $1 / m < \alpha$ and, by using (\ref{e:smoothness reiteration-proof5}) and (\ref{e:smoothness reiteration-proof-Apower<A}), obtain that 
\begin{align*}
\| x \|_{ B^{ s \alpha, A }_{ q, X } } & = \| x \|_{ B^{ s \alpha m / m, A }_{ q, X } } \lesssim 
\| x \|_{ B^{ s \alpha m, A^{ 1 / m } }_{ q, X } } \\
& = \| x \|_{ B^{ s \alpha m, ( A^\alpha )^{ 1 / ( \alpha m ) } }_{ q, X } } 
\lesssim \| x \|_{ B^{ s, A^\alpha }_{ q, X } }
\lesssim \| x \|_{ B^{ s \alpha, A }_{ q, X } },
\end{align*}
which is the desired inequality (\ref{e:smoothness reiteration-proof-A<Apower}). 

It remains to verify (\ref{e:smoothness reiteration-proof5}). 
Let $m$ be an odd integer and let $x \in B^{ s, A^{ 1 / m } }_{ q, X }$. If $0 < q < \infty$, by Lemma \ref{l:Besov type-inhomogeneous-continuity} we have 
 \begin{align*}
| x |_{ R^{ s / m, A }_{ q, X } ( k, 0, 1 ) } & \simeq | x |^c_{ R^{ s / m, A }_{ q, X } ( k, 0, 1 ) } = \left\{ \int_{ 2^k }^\infty \big\| t^{ s / m } A ( t + A )^{ - 1 } x \big\|^q \, \frac{ d t }{ t } \right\}^{ 1 / q } \\
& \leq \left\{ \int_{ 0 }^\infty \big\| t^{ s / m } A ( t + A )^{ - 1 } x \big\|^q \, \frac{ d t }{ t } \right\}^{ 1 / q } \\
& = \bigg\{ \int_{ 0 }^\infty \bigg\| t^{ s / m } \prod_{ i = 1 }^{ m } A^{ 1 / m } ( z_i t^{ 1 / m } + A^{ 1 / m } )^{ - 1 } x \bigg\|^q \, \frac{ d t }{ t } \bigg\}^{ 1 / q },
\end{align*}
where $z_i's$ are all roots of $( - z )^m = - 1$ with $z_1 = 1$. Note that the family $\{ A^{ 1 / m } ( z_i t + A^{ 1 / m } )^{ - 1 } \}_{ t > 0 } $ is uniformly bounded for each $i = 2, 3, \cdots, m$. By Lemma \ref{l:Besov type-inhomogeneous-continuity} again we have 
\begin{align*}
| x |_{ R^{ s / m, A }_{ q, X } } & \lesssim \bigg\{ \int_{ 0 }^\infty \big\| t^{ s / m } A^{ 1 / m } ( t^{ 1 / m } + A^{ 1 / m } )^{ - 1 } x \big\|^q \, \frac{ d t }{ t } \bigg\}^{ 1 / q } \\
& = m^{ 1 / q } | x |^c_{ R^{ s, A^{ 1 / m } }_{ q, X } } \simeq | x |_{ R^{ s, A^{ 1 / m } }_{ q, X } },
\end{align*}
from which the desired inequality (\ref{e:smoothness reiteration-proof5}) follows immediately. 
And if $q = \infty$, by Lemma \ref{l:Besov type-inhomogeneous-continuity} we also have
\begin{align*}
\| x \|_{ B^{ s / m, A }_{ \infty, X } } & = \| x \| + \sup_{ j \geq 0 } \big\| 2^{ j s / m } A ( 2^j + A )^{ - 1 } x \big\| \\
& \lesssim \| x \| + \sup_{ \lambda > 0 } \big\| \lambda^{ s / m } A ( \lambda + A )^{ - 1 } x \big\| \\
& \lesssim \| x \| + \sup_{ \lambda > 0 } \big\| \lambda^{ s / m } A^{ 1 / m } ( \lambda^{ 1 / m } + A^{ 1 / m } )^{ - 1 } x \big\| \\
& = \| x \| + \sup_{ \mu > 0 } \big\| \mu^{ s } A^{ 1 / m } ( \mu + A^{ 1 / m } )^{ - 1 } x \big\| = \| x \|_{ B^{ s, A^{ 1 / m } }_{ \infty, X } },
\end{align*}
which is the desired inequality (\ref{e:smoothness reiteration-proof5}). 
The proof is complete. 
\end{proof}

The following continuous embedding of Besov spaces associated with fractional powers operators is a direct consequence of Theorem \ref{t:smoothness reiteration}.

\begin{cor}
Let $A$ be non-negative on $X$ with sectorial angle $\theta$, and let $s > 0$ and $0 < q \leq \infty$. Then, for $0 < \alpha \leq \beta < \pi / \theta$,
\begin{align*}
B^{ s, A^\beta }_{ q, X } \hookrightarrow B^{ s, A^\alpha }_{ q, X }.
\end{align*}
\end{cor}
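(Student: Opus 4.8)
The plan is to deduce this corollary directly from the smoothness reiteration identity of Theorem \ref{t:smoothness reiteration} together with the multiplicativity of fractional powers (\ref{e:multiplicativity of fractional powers}) and the smoothness embedding of Proposition \ref{p:embedding theorem}~(ii). First I would fix $0 < \alpha \leq \beta < \pi/\theta$ and introduce the operator $A^\alpha$, which is non-negative (indeed sectorial of angle $\alpha\theta < \pi$) by the remarks on fractional powers in Section \ref{S:Fractional powers of operators}. Writing $\gamma := \beta/\alpha \geq 1$, the law of exponents (\ref{e:multiplicativity of fractional powers}) gives $(A^\alpha)^\gamma = A^{\alpha\gamma} = A^\beta$, and since $\gamma < (\pi/\theta)/\alpha = \pi/(\alpha\theta)$ lies below the reciprocal of the sectoriality angle of $A^\alpha$, Theorem \ref{t:smoothness reiteration} applies to the pair $(A^\alpha, \gamma)$: it yields
\begin{align*}
B^{ s, A^\beta }_{ q, X } = B^{ s, (A^\alpha)^\gamma }_{ q, X } = B^{ s\gamma, A^\alpha }_{ q, X }
\end{align*}
in the sense of equivalent quasi-norms.

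Next I would invoke the continuous embedding $B^{ s\gamma, A^\alpha }_{ q, X } \hookrightarrow B^{ s, A^\alpha }_{ q, X }$, which is exactly Proposition \ref{p:embedding theorem}~(ii) applied with the operator $A^\alpha$ in place of $A$, the two smoothness indices $s\gamma$ and $s$, and $p = q$; here $s\gamma \geq s > 0$ because $\gamma \geq 1$ and $s > 0$, so the hypothesis $s \leq s\gamma$ of that proposition is met (when $\gamma = 1$ the two spaces simply coincide). Chaining the equality from Theorem \ref{t:smoothness reiteration} with this embedding gives
\begin{align*}
B^{ s, A^\beta }_{ q, X } = B^{ s\gamma, A^\alpha }_{ q, X } \hookrightarrow B^{ s, A^\alpha }_{ q, X },
\end{align*}
which is the assertion of the corollary.

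The only genuinely delicate point is bookkeeping on the sectoriality angles: one must check that $A^\alpha$ is non-negative (equivalently sectorial) with angle at most $\alpha\theta$, so that both the hypothesis ``$0 < \gamma < \pi/(\text{angle of }A^\alpha)$'' in Theorem \ref{t:smoothness reiteration} and the underlying assumption of Proposition \ref{p:embedding theorem} (that the ambient operator be non-negative) are legitimately in force. This follows from the spectral mapping theorem for fractional powers and the standard angle estimate $\omega(A^\alpha) = \alpha\,\omega(A)$ for $0 < \alpha < \pi/\omega(A)$ recalled in Section \ref{S:Fractional powers of operators}; once $\beta < \pi/\theta$ is assumed, $\gamma\alpha = \beta < \pi/\theta$ forces $\gamma < \pi/(\alpha\theta)$, so no circularity arises. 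Everything else is a one-line substitution, and I do not anticipate any further obstacle.
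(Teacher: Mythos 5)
Your proof is correct and takes exactly the route the paper indicates: combine Theorem \ref{t:smoothness reiteration} (applied to $A^\alpha$ with exponent $\gamma = \beta/\alpha$, using the multiplicativity $(A^\alpha)^\gamma = A^\beta$ and the angle identity $\omega(A^\alpha)=\alpha\theta$) with the smoothness embedding of Proposition \ref{p:embedding theorem}~(ii), handling the case $\gamma=1$ as equality. The paper simply asserts the corollary as "a direct consequence of Theorem \ref{t:smoothness reiteration}," and your writeup supplies precisely the bookkeeping that assertion suppresses.
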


\smallskip


\subsection{Interpolation spaces}

Let $( X_0, \left\| \cdot \right\|_{ X_0 } )$ and $( X_1, \left\| \cdot \right\|_{ X_1 } )$ be a couple of quasi-normed spaces continuously embedded into a topological vector space. Recall that, for each $x \in X_0 + X_1$, the $K$-functional $K ( t, x )$ is defined by 
\begin{align*}
K ( t, x ) := \inf_{ x_0 + x_1 = x } \big( \| x_0 \|_{ X_0 } + t \| x_1 \|_{ X_1 } \big), \quad x_i \in X_i, i = 0, 1,
\end{align*} 
with $0 < t < \infty$. It is easy to see that $K ( t, \cdot )$ is a quasi-norm on $X_0 + X_1$ for given $t > 0$ and that $K ( \cdot, x )$ is a non-negative, increasing and concave function for each $x$ fixed.  

Let $0 < \theta < 1$ and $0 < q \leq \infty$. Recall that the interpolation space $( X_0, X_1 )_{ \theta, q }$ between $X_0$ and $X_1$ is given by 
\begin{align*}
( X_0, X_1 )_{ \theta, q } := \left\{ x \in X_0 + X_1: \int_0^\infty \Big( t^{ - \theta } K ( t, x ) \Big)^q \frac{ d t }{ t } < \infty \right\}
\end{align*} 
and that $( X_0, X_1 )_{ \theta, q }$ is a quasi-Banach space endowed with the quasi-norm  
\begin{align}\label{e:quasi-norms-interpolation spaces}
\| x \|_{ ( X_0, X_1 )_{ \theta, q } } := \left\{ \int_0^\infty \Big( t^{ - \theta } K ( t, x ) \Big)^q \frac{ d t }{ t } \right\}^{ 1 / q }. 
\end{align} 
Also, recall the following equivalent quasi-norm on interpolation spaces (see \cite[Theorem 5.1]{Holmstedt1970}): 
\begin{align}\label{e:equivalent quasi-norm of interpolation spaces}
\| x \|_{ ( X_0, X_1 )_{ \theta, q } } \simeq \| x \|^*_{ ( X_0, X_1 )_{ \theta, q } } := \inf_{ x_0 ( t ) + x_1 ( t ) \equiv x } ( B_0 + B_1 ),
\end{align} 
where 
\begin{align*}
B_0 & = \left\{ \int_0^\infty \Big( t^{ - \theta } \| x_0 ( t ) \|_{ X_0 } \Big)^q \frac{ d t }{ t } \right\}^{ 1 / q }, \\
B_1 & = \left\{ \int_0^\infty \Big( t^{ 1 - \theta } \| x_1 ( t ) \|_{ X_1 } \Big)^q \frac{ d t }{ t } \right\}^{ 1 / q }.
\end{align*} 

Now we give the main result of this subsection, which states that the abstract Besov spaces can be characterized by the interpolation spaces even in the case $0 < q < 1$.

\begin{thm}\label{p:interpolation}
Let $0 < q \leq \infty$, $0 < \theta < 1$ and $\alpha > 0$. Then $( X, D ( A^\alpha ) )_{ \theta, q } = B^{ \theta \alpha, A }_{ q, X }$ 
in the sense of equivalent quasi-norms.
\end{thm}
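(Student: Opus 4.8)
The plan is to establish the two continuous embeddings $( X, D ( A^\alpha ) )_{ \theta, q } \hookrightarrow B^{ \theta \alpha, A }_{ q, X }$ and $B^{ \theta \alpha, A }_{ q, X } \hookrightarrow ( X, D ( A^\alpha ) )_{ \theta, q }$ separately, working with the concrete representatives $R^{ \theta \alpha, A }_{ q, X }$ rather than the abstract completions (a density argument, using Proposition \ref{p:denseness-inhomogeneous} and the fact that $D ( A^\alpha )$ is dense in both spaces, then upgrades the quasi-norm equivalence on a common dense subspace to the identification of the completions). Throughout I would fix auxiliary parameters $k = 0$ and $\beta \in \mathbb{C}_+$ with $\RE \beta > \theta\alpha$ (say $\beta$ an integer $> \alpha$), so that $B^{ \theta\alpha, A }_{ q, X } = R^{ \theta\alpha, A }_{ q, X }( 0, 0, \beta )$ by Proposition \ref{p:completeness-inhomogeneous}, with quasi-norm $\| x \| + \big\{ \int_0^\infty \| t^{ \theta\alpha } A^\beta ( t + A )^{ - \beta } x \|^q \, d t / t \big\}^{ 1 / q }$ by Remark \ref{r:inhomogeneous Besov spaces-equivalent quasi-norms-continuity}(ii) and Lemma \ref{l:Besov type-inhomogeneous-continuity}.

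For the embedding $B^{ \theta\alpha, A }_{ q, X } \hookrightarrow ( X, D ( A^\alpha ) )_{ \theta, q }$, I would use the equivalent quasi-norm $\| \cdot \|^*$ from \eqref{e:equivalent quasi-norm of interpolation spaces} with $\theta$ replaced by $\theta$ and the couple $( X, D(A^\alpha) )$, and produce an explicit decomposition $x = x_0( \tau ) + x_1( \tau )$ adapted to the scale $\lambda = \tau^{ -1/\alpha }$. The natural choice comes from the inhomogeneous reproducing formula \eqref{e:inhomogeneous Calderon}: writing $x$ as the sum of a ``high'' part $\frac{ \Gamma ( \gamma + m ) }{ \Gamma ( \gamma ) \Gamma ( m ) } \int_\lambda^\infty t^\gamma A^m ( t + A )^{ - \gamma - m } x \, \frac{ d t }{ t }$ (which will lie in $D( A^\alpha )$, with $A^\alpha$-norm controlled via Corollary \ref{c:representation of fractional powers-beta-absolute convergence} applied to the truncated integral) plus the finite-sum ``low'' part (which stays in $X$), one gets $x_0( \lambda ), x_1( \lambda )$ whose norms are exactly of the form appearing in the integrand of $R^{ \theta\alpha, A }_{ q, X }$. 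Then $B_0$ and $B_1$ are estimated by Hardy-type inequalities (the discretized versions via \eqref{e: q-inequality}, \eqref{e: q-inequality-series<1}, Hölder, exactly as in the proofs of Lemmas \ref{l:independence of lR-inhomogeneous} and \ref{l:independence of klm-inhomogeneous}) against $| x |_{ R^{ \theta\alpha, A }_{ q, X } }$, the resolvent comparison estimate \eqref{e:resolvent estimate-discrete} handling the passage between the continuous scale $t$ and the dyadic one.

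For the reverse embedding $( X, D ( A^\alpha ) )_{ \theta, q } \hookrightarrow B^{ \theta\alpha, A }_{ q, X }$, I would take an arbitrary decomposition $x = y + z$ with $y \in X$, $z \in D( A^\alpha )$, and estimate $\| t^{ \theta\alpha } A^\beta ( t + A )^{ - \beta } x \|$ using the quasi-triangle inequality: the $y$-piece is bounded by $t^{ \theta\alpha } \| A^\beta ( t + A )^{ - \beta } \| \, \| y \| \lesssim t^{ \theta\alpha } L_A^{ |\beta| } \| y \|$ (from \eqref{e:uniform boundedness compositions-L}$^*$), and the $z$-piece by inserting $A^\alpha$: $t^{ \theta\alpha } \| A^{ \beta - \alpha } ( t + A )^{ - \beta } \| \, \| A^\alpha z \| \lesssim t^{ \theta\alpha } t^{ -\RE\alpha } \| A^\alpha z \|$ (again \eqref{e:uniform boundedness compositions-M}$^*$, \eqref{e:uniform boundedness compositions-L}$^*$). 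Thus $\| t^{ \theta\alpha } A^\beta ( t + A )^{ - \beta } x \| \lesssim \min\{ t^{ \theta\alpha }, t^{ \theta\alpha - \RE\alpha } \} \cdot$(decomposition norms) $\lesssim t^{ -\theta }_{\text{(in rescaled variable)}} K( t^\alpha, x )$, after the substitution $\tau = t^{\alpha}$ or $\tau = t^{-\alpha}$; integrating against $d t / t$ and using that $K( \cdot, x )$ is increasing and concave gives $| x |_{ R^{ \theta\alpha, A }_{ q, X }( 0,0,\beta ) } \lesssim \| x \|_{ ( X, D ( A^\alpha ) )_{ \theta, q } }$, while $\| x \| \lesssim K( 1, x ) \lesssim \| x \|_{ ( X, D ( A^\alpha ) )_{ \theta, q } }$ handles the remaining term (for $q < \infty$ one uses that the $K$-functional at a fixed point is dominated by the interpolation norm up to a constant).

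The main obstacle I anticipate is the first embedding: constructing the decomposition $x = x_0(\lambda) + x_1(\lambda)$ so that $x_0(\lambda)$ genuinely lands in $D( A^\alpha )$ with a quantitatively good bound on $\| A^\alpha x_0(\lambda) \|$ — this is where the truncated reproducing formula \eqref{e:inhomogeneous Calderon} and Corollary \ref{c:representation of fractional powers-beta-absolute convergence} must be combined carefully, choosing the integer $m$ (and possibly an auxiliary $\gamma$) large enough that all the relevant integrals converge, and then matching the exponent bookkeeping so that the resulting double-integral/double-sum is exactly the kind that Hardy's inequality controls by $| x |_{ R^{\theta\alpha, A}_{q,X}}$. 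The $0 < q < 1$ case requires the summation form \eqref{e: q-inequality-series<1} in place of Minkowski/Hölder, but the structure is identical to the arguments already carried out in Lemmas \ref{l:independence of lR-inhomogeneous}–\ref{l:independence of klm-inhomogeneous}, so no genuinely new difficulty arises there beyond care with constants.
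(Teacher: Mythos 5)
There is a genuine gap in the proposed proof of the embedding $B^{\theta\alpha,A}_{q,X}\hookrightarrow (X, D(A^\alpha))_{\theta,q}$, which you yourself flag as the main obstacle, and your suggested fix does not work. You split $x$ via the \emph{inhomogeneous} reproducing formula (\ref{e:inhomogeneous Calderon}) and assign the tail integral $y(\lambda) := c\int_\lambda^\infty t^\gamma A^m(t+A)^{-\gamma-m}x\,\frac{dt}{t}$ to $D(A^\alpha)$ and the finite sum to $X$. This is backwards. The tail integral carries the high-frequency content of $x$: formally $A^\alpha y(\lambda) = c\int_\lambda^\infty t^\gamma A^{m+\alpha}(t+A)^{-\gamma-m}x\,\frac{dt}{t}$, and by (\ref{e:uniform boundedness compositions-M})$^*$, (\ref{e:uniform boundedness compositions-L})$^*$ the integrand has operator norm $\lesssim t^{\RE\alpha}$, so the integral diverges at $t=\infty$; even with the Besov decay one only improves this to $\lesssim t^{(1-\theta)\RE\alpha}$, still divergent. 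So $y(\lambda)\notin D(A^\alpha)$ in general, and Corollary~\ref{c:representation of fractional powers-beta-absolute convergence} cannot be invoked. Moreover, with $\lambda = \tau^{-1/\alpha}$ the finite-sum piece (your $x_0(\tau)$) tends to $x$ as $\tau\to 0^+$, so $\int_0^1 \big(\tau^{-\theta}\|x_0(\tau)\|\big)^q\,\frac{d\tau}{\tau}$ diverges and the $B_0$-term of (\ref{e:equivalent quasi-norm of interpolation spaces}) is infinite.

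The paper's construction avoids this by using the \emph{homogeneous} formula (\ref{e:reproducing formula-ab}) with $\beta=\alpha$, $x = c\int_0^\infty\lambda^\alpha A^\alpha(\lambda+A)^{-2\alpha}x\,\frac{d\lambda}{\lambda}$ (justified after reducing to $0\in\rho(A)$ via Proposition~\ref{p:translation invariance-inhomogeneous}, a reduction you should also state explicitly), splitting at $\lambda\approx\tau^{-1/\alpha}$, and assigning the \emph{small}-$\lambda$ (low-frequency) part $\int_0^{\tau^{-1/\alpha}}$ to $D(A^\alpha)$ — here $A^\alpha$ under the integral gives $\lambda^\alpha A^{2\alpha}(\lambda+A)^{-2\alpha}$ with norm $\lesssim \lambda^{\RE\alpha}$, which is integrable at $\lambda=0$, so the truncated integral genuinely lands in $D(A^\alpha)$ with an effective bound — while the large-$\lambda$ tail is the $X$-part, decaying like $\tau^\theta$ precisely because of the Besov data. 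Your sketch of the converse embedding (decompose $x=y+z$ arbitrarily, estimate $\|t^{\theta\alpha}A^\beta(t+A)^{-\beta}x\|$ termwise, substitute and integrate) is essentially the paper's Step I, and your one-line argument $\|x\|\lesssim K(1,x)\lesssim\|x\|_{(X,D(A^\alpha))_{\theta,q}}$ is a legitimate (indeed shorter) alternative to the paper's Step III, but it likewise requires $0\in\rho(A)$ to give $\|x_1\|\lesssim\|A^\alpha x_1\|$.
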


\begin{proof}
Thanks to Proposition \ref{p:translation invariance-inhomogeneous}, we may suppose that $0 \in \rho ( A )$ without loss of generality. It suffices to verify that 
\begin{align}\label{e:interpolation-proof-interpolation<Besov}
\| x \|_{ ( X, D ( A^\alpha ) )_{ \theta, q } } \lesssim \| x \|_{ B^{ \theta \alpha, A }_{ q, X } } 
\end{align} 
and that 
\begin{align}\label{e:interpolation-proof-Besov<interpolation}
\| x \|_{ B^{ \theta \alpha, A }_{ q, X } } \lesssim \| x \|_{ ( X, D ( A^\alpha ) )_{ \theta, q } }. 
\end{align}

First we verify (\ref{e:interpolation-proof-interpolation<Besov}). We merely give a proof of (\ref{e:interpolation-proof-interpolation<Besov}) in the case $0 < q < \infty$ and the case when $q = \infty$ can be verified analogously. To this end, write $X_0 := X$ and $X_1 = D ( A^\alpha )$ endowed with the graph norm, i.e., $\left\| x \right\|_{ X_1 } = \left\| A^\alpha x \right\|$ for $x \in D ( A^\alpha )$. Let $x \in B^{ \theta \alpha, A }_{ q, X }$ and write  
\begin{align*}
u ( \lambda ) := \frac{ \Gamma ( 2 \alpha ) }{ \Gamma ( \alpha ) \Gamma ( \alpha ) } \lambda^\alpha A^\alpha ( \lambda + A )^{ - 2 \alpha } x.
\end{align*}
Moreover, let $x_0$ and $x_1$ be two simple functions defined by 
\begin{align*}
x_0 ( t ) := \int_{ 2^j }^\infty u ( \lambda ) \frac{ d \lambda }{ \lambda }, \quad x_1 ( t ) := \int_0^{ 2^j } u ( \lambda ) \frac{ d \lambda }{ \lambda }, \quad 2^{ - ( j + 1 ) \alpha } < t \le 2^{ - j \alpha }, j \in \mathbb{Z}.
\end{align*} 
On the one hand, by Proposition \ref{p:inclusions-powers-inhomogeneous} it can be seen that $x \in D ( A^\epsilon )$ for $0 < \epsilon < \theta \alpha$. Since $0 \in \rho ( A )$ due to the hypothesis, from (\ref{r:homogeneous Calderon-regularity}) it follows that 
\begin{align*}
x = \frac{ \Gamma ( 2 \alpha ) }{ \Gamma ( \alpha ) \Gamma ( \alpha ) } \int_0^\infty \lambda^\alpha A^\alpha ( \lambda + A )^{ - 2 \alpha } x \, \frac{ d \lambda }{ \lambda } = x_0 ( t ) + x_1 ( t ), \quad t > 0. 
\end{align*}
On the other hand, by using the change of variable $t = s^{ - \alpha }$ and decomposition $\int_0^\infty = \sum_{ j \in \mathbb{Z} } \int_{ 2^j }^{ 2^{ j + 1 } }$ in (\ref{e:quasi-norms-interpolation spaces}), we can rewrite $\| x \|_{ ( X_0, X_1 )_{ \theta, q } }$ as 
\begin{align*}
\| x \|_{ ( X_0, X_1 )_{ \theta, q } } 
= \alpha^{ 1 / q } \Bigg\{ \sum_{ j = - \infty }^\infty \int_{ 2^j }^{ 2^{ j + 1 } } \Big( s^{ \theta \alpha } K ( s^{ - \alpha }, x ) \Big)^q \frac{ d s }{ s } \Bigg\}^{ 1 / q }. 
\end{align*}
Since $K ( \cdot, x )$ is increasing, we have 
\begin{align*}
\| x \|_{ ( X_0, X_1 )_{ \theta, q } } 
\leq \frac{ \alpha^{ 1 / q } ( 2^{ q \alpha } - 1 ) }{ q \theta \alpha } \Bigg\{ \sum_{ j = - \infty }^\infty \Big( 2^{ j \theta \alpha } K ( 2^{ - j \alpha }, x ) \Big)^q \Bigg\}^{ 1 / q }.
\end{align*} 
By the definition of $K ( t, x )$ we have 
\begin{align*}
\| x \|_{ ( X_0, X_1 )_{ \theta, q } }  
\leq C \Bigg\{ \sum_{ j = - \infty }^\infty \Big[ 2^{ j \theta \alpha } \Big( \big\| x_0 ( 2^{ - j \alpha } ) \big\| + 2^{ - j \alpha } \big\| A^\alpha x_1 ( 2^{ - j \alpha } ) \big\| \Big) \Big]^q \Bigg\}^{ 1 / q }, 
\end{align*} 
where $C = \frac{ \alpha^{ 1 / q } ( 2^{ q \alpha } - 1 ) }{ q \theta \alpha }$. By the classical inequality (\ref{e: q-inequality}) and the definitions of $x_0 ( t )$ and $x_1 ( t )$ we have  
\begin{align*}
\| x \|_{ ( X_0, X_1 )_{ \theta, q } } 
 \lesssim {} & \Bigg\{ \sum_{ j = - \infty }^\infty \bigg[ 2^{ j \theta \alpha } \sum_{ i = j }^\infty \int_{ 2^i }^{ 2^{ i + 1 } } \big\| \lambda^\alpha A^\alpha ( \lambda + A )^{ - 2 \alpha } x \big\| \frac{ d \lambda }{ \lambda } \bigg]^q \Bigg\}^{ 1 / q } \\
 + {} & \Bigg\{ \sum_{ j = - \infty }^\infty \bigg[ 2^{ j ( \theta - 1 ) \alpha } \sum_{ i = - \infty }^{ j - 1 } \int_{ 2^i }^{ 2^{ i + 1 } } \big\| \lambda^\alpha A^{ 2 \alpha } ( \lambda + A )^{ - 2 \alpha } x \big\| \frac{ d \lambda }{ \lambda } \bigg]^q \Bigg\}^{ 1 / q }.
\end{align*}
And by using the estimate (\ref{e:resolvent estimate-discrete}) with $c = 1$ we have further that 
\begin{align*}
\| x \|_{ ( X_0, X_1 )_{ \theta, q } } \lesssim J_1 + J_2, 
\end{align*} 
where 
\begin{align*}
J_1 & = \Bigg\{ \sum_{ j = - \infty }^\infty \bigg[ 2^{ j \theta \alpha } \sum_{ i = j }^\infty \big\| 2^{ i \alpha } A^\alpha ( 2^i + A )^{ - 2 \alpha } x \big\| \bigg]^q \Bigg\}^{ 1 / q }, \\
J_2 & = \Bigg\{ \sum_{ j = - \infty }^\infty \bigg[ 2^{ j ( \theta - 1 ) \alpha } \sum_{ i = - \infty }^{ j - 1 } \big\| 2^{ i \alpha } A^{ 2 \alpha } ( 2^i + A )^{ - 2 \alpha } x \big\| \bigg]^q \Bigg\}^{ 1 / q }. 
\end{align*} 
It can be verified that both $J_1$ and $J_2$ are dominated by $\| x \|_{ B^{ \theta \alpha, A }_{ q, X } }$.
Indeed, if $0 < q \leq 1$, by using (\ref{e: q-inequality}) we have 
\begin{align*}
J_1 & \leq \Bigg\{ \sum_{ j = - \infty }^\infty \sum_{ i = j }^\infty \big\| 2^{ j \theta \alpha } 2^{ i \alpha } A^\alpha ( 2^i + A )^{ - 2 \alpha } x \big\|^q \Bigg\}^{ 1 / q } \\
& = \Bigg\{ \sum_{ i = - \infty }^\infty \sum_{ j = - \infty }^i \big\| 2^{ j \theta \alpha } 2^{ i \alpha } A^\alpha ( 2^i + A )^{ - 2 \alpha } x \big\|^q \Bigg\}^{ 1 / q } \\
& = \bigg( \frac{ 1 }{ 1 - 2^{ - q \theta \alpha } } \bigg)^{ 1 / q } \Bigg\{ \sum_{ i = - \infty }^\infty \big\| 2^{ i \theta \alpha } 2^{ i \alpha } A^\alpha ( 2^i + A )^{ - 2 \alpha } x \big\|^q \Bigg\}^{ 1 / q } \\
& \simeq | x |_{ \dot{R}^{ \theta \alpha, A }_{ q, X, k, \alpha, \alpha } } \leq \| x \|_{ B^{ \theta \alpha, A }_{ q, X } },
\end{align*} 
where the last inequality follows from Remark \ref{r:inhomogeneous Besov spaces-equivalent quasi-norms-continuity} (i) above. Analogously, by using (\ref{e: q-inequality}) and Remark \ref{r:inhomogeneous Besov spaces-equivalent quasi-norms-continuity} (i) we also have 

\begin{align*}
J_2 & \leq \Bigg\{ \sum_{ j = - \infty }^\infty \sum_{ i = - \infty }^{ j - 1 } \big\| 2^{ j ( \theta - 1 ) \alpha } 2^{ i \alpha } A^{ 2 \alpha } ( 2^i + A )^{ - 2 \alpha } x \big\|^q \Bigg\}^{ 1 / q } \\
& = \Bigg\{ \sum_{ i = - \infty }^\infty \sum_{ j = i + 1 }^\infty \big\| 2^{ j ( \theta - 1 ) \alpha } 2^{ i \alpha } A^{ 2 \alpha } ( 2^i + A )^{ - 2 \alpha } x \big\|^q \Bigg\}^{ 1 / q } \\
& = \bigg[ \frac{ 1 }{ 2^{ ( 1 - \theta ) \alpha q } - 1 } \bigg]^{ 1 / q } \Bigg\{ \sum_{ i = - \infty }^\infty 2^{ i ( \theta - 1 ) \alpha q } \big\| 2^{ i \alpha } A^{ 2 \alpha } ( 2^i + A )^{ - 2 \alpha } x \big\|^q \Bigg\}^{ 1 / q } \\
& \simeq | x |_{ \dot{R}^{ \theta \alpha, A }_{ q, X, k, 0, 2 \alpha } } \leq \| x \|_{ B^{ \theta \alpha, A }_{ q, X } }.
\end{align*} 
If $1 < q < \infty$, choosing $\epsilon \in ( 0, \theta \alpha )$ and applying the H\"{o}lder inequality yields 
\begin{align*}
J_1 & = \Bigg\{ \sum_{ j = - \infty }^\infty 2^{ j \theta \alpha q } \bigg[ \sum_{ i = j }^\infty 2^{ - i \epsilon } \big\| 2^{ i \epsilon } 2^{ i \alpha } A^\alpha ( 2^i + A )^{ - 2 \alpha } x \big\| \bigg]^q \Bigg\}^{ 1 / q } \\
& \leq \Bigg\{ \sum_{ j = - \infty }^\infty 2^{ j \theta \alpha q } \bigg( \sum_{ i = j }^\infty 2^{ - i \epsilon q' } \bigg)^{ q / q' } \sum_{ i = j }^\infty \big\| 2^{ i \epsilon } 2^{ i \alpha } A^\alpha ( 2^i + A )^{ - 2 \alpha } x \big\|^q \Bigg\}^{ 1 / q } \\
& = \bigg( \frac{ 1 }{ 1 - 2^{ - \epsilon q' } } \bigg)^{ 1 / q' } \bigg[ \frac{ 1 }{ 1 - 2^{ ( \epsilon - \theta \alpha ) q } } \bigg]^{ 1 / q } \Bigg\{ \sum_{ i = - \infty }^\infty \big\| 2^{ i \theta \alpha } 2^{ i \alpha } A^\alpha ( 2^i + A )^{ - 2 \alpha } x \big\|^q \Bigg\}^{ 1 / q } \\
& \simeq | x |_{ \dot{R}^{ \theta \alpha, A }_{ q, X, k, \alpha, \alpha } } \leq \| x \|_{ B^{ \theta \alpha, A }_{ q, X } },
\end{align*} 
while choosing $0 < c < ( 1 - \theta ) \alpha$ and applying the H\"{o}lder inequality yields
\begin{align*}
J_2 = \Bigg\{ \sum_{ j = - \infty }^\infty 2^{ j ( \theta - 1 ) \alpha q } \sum_{ i = - \infty }^{ j - 1 } 2^{ i \epsilon } \big\| 2^{ - i \epsilon } 2^{ i \alpha } A^{ 2 \alpha } ( 2^i + A )^{ - 2 \alpha } x \big\| \bigg)^q \Bigg\}^{ 1 / q } \lesssim \| x \|_{ B^{ \theta \alpha, A }_{ q, X } }.
\end{align*} 
Thus, we have verified (\ref{e:interpolation-proof-interpolation<Besov}). 

Next we verify (\ref{e:interpolation-proof-Besov<interpolation}). We merely give a proof of (\ref{e:interpolation-proof-Besov<interpolation}) in the case $0 < q < \infty$ and the case when $q = \infty$ can be verified analogously. To this end, let $x \in ( X, D ( A^\alpha ) )_{ \theta, q }$. The proof of (\ref{e:interpolation-proof-Besov<interpolation}) is divided into the following four steps. 

Step I. It can be verified that 
\begin{align}\label{e:interpolation-proof-Besov<interpolation-R-start-2}
\bigg\{ \int_0^\infty \big\| s^{ \theta \alpha } A^\alpha ( s + A )^{ - \alpha } x \big\|^q \frac{ d s }{ s } \bigg\}^{ 1 / q } \lesssim \| x \|_{ ( X, D ( A^\alpha ) )_{ \theta, q } }.
\end{align} 
Indeed, applying the change of variable $s = t^{ - 1 / \alpha }$ yields 
\begin{align*} 
\bigg\{ \int_0^\infty \big\| s^{ \theta \alpha } A^\alpha ( s + A )^{ - \alpha } x \big\|^q \frac{ d s }{ s } \bigg\}^{ 1 / q } \simeq
\bigg\{ \int_0^\infty \big\| t^{ - \theta } A^\alpha ( t^{ - 1 / \alpha } + A )^{ - \alpha } x \big\|^q \frac{ d t }{ t } \bigg\}^{ 1 / q }.
\end{align*} 
Let $x_0: ( 0, \infty ) \mapsto X$ and $x_1: ( 0, \infty ) \mapsto D ( A^\alpha )$ such that $x_0 ( t ) + x_1 ( t ) \equiv x$. Applying the classical inequality (\ref{e: q-inequality}) yields 
\begin{align*}
& \bigg\{ \int_0^\infty \big\| s^{ \theta \alpha } A^\alpha ( s + A )^{ - \alpha } x \big\|^q \frac{ d s }{ s } \bigg\}^{ 1 / q } \\
\lesssim {} & \bigg\{ \int_0^\infty \big\| t^{ - \theta } A^\alpha ( t^{ - 1 / \alpha } + A )^{ - \alpha } x_0 ( t ) \big\|^q \frac{ d t }{ t } \bigg\}^{ 1 / q } \\
+ {} & \bigg\{ \int_0^\infty \big\| t^{ - \theta } A^\alpha ( t^{ - 1 / \alpha } + A )^{ - \alpha } x_1 ( t ) \big\|^q \frac{ d t }{ t } \bigg\}^{ 1 / q }.
\end{align*} 
Applying the uniform boundedness of $\{ A^\alpha ( t + A )^{ - \alpha } \}_{ t > 0 }$ and $\{ t^{ \alpha } ( t + A )^{ - \alpha } \}_{ t > 0}$ (see Lemma \ref{l:uniform boundedness compositions} (i) above)
\begin{align*} 
\begin{split}
& \bigg\{ \int_0^\infty \big\| s^{ \theta \alpha } A^\alpha ( s + A )^{ - \alpha } x \big\|^q \frac{ d s }{ s } \bigg\}^{ 1 / q } \\
\lesssim {} & \bigg\{ \int_0^\infty \big( t^{ - \theta } \| x_0 ( t ) \| \big)^q \frac{ d t }{ t } \bigg\}^{ 1 / q } + \bigg\{ \int_0^\infty \big( t^{ 1 - \theta } \| A^\alpha x_1 ( t ) \| \big)^q \frac{ d t }{ t } \bigg\}^{ 1 / q }.
\end{split}
\end{align*} 
And applying (\ref{e:equivalent quasi-norm of interpolation spaces}) yields (\ref{e:interpolation-proof-Besov<interpolation-R-start-2}), immediately. More precisely,  
\begin{align*} 
\bigg\{ \int_0^\infty \big\| s^{ \theta \alpha } A^\alpha ( s + A )^{ - \alpha } x \big\|^q \frac{ d s }{ s } \bigg\}^{ 1 / q } \lesssim  
\| x \|^*_{ ( X, D ( A^\alpha ) )_{ \theta, q } } \lesssim \| x \|_{ ( X, D ( A^\alpha ) )_{ \theta, q } }.
\end{align*} 
Thus, we have verified (\ref{e:interpolation-proof-Besov<interpolation-R-start-2}).

Step II. It can be verified that 
\begin{align}\label{e:interpolation-proof-Besov<interpolation-R}
\left| x \right|_{ R^{ \theta \alpha, A }_{ q, X } ( 0, 0, \alpha ) } \lesssim \| x \|_{ ( X, D ( A^\alpha ) )_{ \theta, q } }.
\end{align} 
Indeed, by Lemma \ref{l:Besov type-inhomogeneous-continuity} (also, see Remark \ref{r:inhomogeneous Besov spaces-equivalent quasi-norms-continuity} (i)), we have 
\begin{align*} 
\left| x \right|_{ R^{ \theta \alpha, A }_{ q, X } ( 0, 0, \alpha ) } \lesssim {} 
& \bigg\{ \int_0^\infty \big\| t^{ \theta \alpha } A^\alpha ( t + A )^{ - \alpha } x \big\|^q \frac{ d t }{ t } \bigg\}^{ 1 / q }.
\end{align*} 
Applying (\ref{e:interpolation-proof-Besov<interpolation-R-start-2}) yields (\ref{e:interpolation-proof-Besov<interpolation-R}), immediately.

Step III. 
It can also be verified that 
\begin{align}\label{e:interpolation-proof-Besov<interpolation-x}
\| x \| \lesssim \| x \|_{ ( X, D ( A^\alpha ) )_{ \theta, q } }.
\end{align} 
To this end, observe that $x \in B^{ s, A }_{ q, X }$ due to (\ref{e:interpolation-proof-Besov<interpolation-R}), and therefore $x \in D ( A^\epsilon )$ for $0 < \epsilon < \theta \alpha$
by Proposition \ref{p:inclusions-powers-inhomogeneous} above. Thanks to $0 \in \rho ( A )$, by using (\ref{r:homogeneous Calderon-regularity}) we have 
\begin{align}\label{e:interpolation-proof-Besov<interpolation-x-integral}
\| x \| \le \frac{ \Gamma ( 2 \alpha ) }{ \Gamma ( \alpha ) \Gamma (\alpha ) } \int_0^\infty \big\| \lambda^\alpha A^\alpha ( \lambda + A )^{ - 2 \alpha } x \big\| \, \frac{ d \lambda }{ \lambda }.
\end{align} 

If $0 < q \le 1$, by using the decomposition $\int_0^\infty = \sum_{ j \in \mathbb{Z} } \int_{ 2^j }^{ 2^{ j + 1 } }$ and dyadic estimate (\ref{e:resolvent estimate-discrete}) we obtain from (\ref{e:interpolation-proof-Besov<interpolation-x-integral}) that 
\begin{align*} 
\begin{split}
\| x \| \lesssim \sum_{ j \in \mathbb{Z} } \big\| 2^{ j \alpha } A^\alpha ( 2^j + A )^{ - 2 \alpha } \big\| \leq \Bigg\{ \sum_{ j \in \mathbb{Z} } \big\| 2^{ j \alpha } A^\alpha ( 2^j + A )^{ - 2 \alpha } \big\|^q \Bigg\}^{ 1 / q },
\end{split}
\end{align*} 
where the last inequality follows from (\ref{e: q-inequality-series<1}). 
From (\ref{e: q-inequality}), it follows that  
\begin{align*} 
\begin{split}
\| x \| \lesssim {} & \Bigg\{ \sum_{ j = 0 }^{ \infty } \big\| 2^{ j \alpha } A^\alpha ( 2^j + A )^{ - 2 \alpha } \big\|^q \Bigg\}^{ 1 / q } + \Bigg\{ \sum_{ j = - \infty }^{ - 1 } \big\| 2^{ j \alpha } A^\alpha ( 2^j + A )^{ - 2 \alpha } \big\|^q \Bigg\}^{ 1 / q }.
\end{split}
\end{align*} 
Applying (\ref{e:uniform boundedness compositions-M}) to $\| 2^{ j \alpha } ( 2^j + A )^{ - \alpha } \|$ yields  
\begin{align*} 
\begin{split}
\Bigg\{ \sum_{ j = 0 }^{ \infty } \big\| 2^{ j \alpha } A^\alpha ( 2^j + A )^{ - 2 \alpha } \big\|^q \Bigg\}^{ 1 / q } \lesssim {} & \Bigg\{ \sum_{ j = 0 }^{ \infty } \big\| A^\alpha ( 2^j + A )^{ - \alpha } \big\|^q \Bigg\}^{ 1 / q } \\
\le {} & \Bigg\{ \sum_{ j = 0 }^{ \infty } \big\| 2^{ j \theta \alpha } A^\alpha ( 2^j + A )^{ - \alpha } \big\|^q \Bigg\}^{ 1 / q },
\end{split}
\end{align*} 
while applying (\ref{e:uniform boundedness compositions-M}) and (\ref{e:uniform boundedness compositions-L}) to $\| 2^{ j \alpha ( 1 - \theta ) } A^{ \theta \alpha } ( 2^j + A )^{ - \alpha } \|$ yields 
\begin{align*} 
\begin{split}
\Bigg\{ \sum_{ j = - \infty }^{ - 1 } \big\| 2^{ j \alpha } A^\alpha ( 2^j + A )^{ - 2 \alpha } \big\|^q \Bigg\}^{ 1 / q } \lesssim \| A^{ - \theta \alpha } \| \Bigg\{ \sum_{ j = 0 }^{ \infty } \big\| 2^{ j \theta \alpha } A^\alpha ( 2^j + A )^{ - \alpha } \big\|^q \Bigg\}^{ 1 / q }
\end{split}
\end{align*} 
due to the fact that $0 \in \rho ( A )$. This implies that  
\begin{align*} 
\begin{split}
\| x \| \lesssim {} & \Bigg\{ \sum_{ j = 0 }^{ \infty } \big\| 2^{ j \theta \alpha } A^\alpha ( 2^j + A )^{ - \alpha } \big\|^q \Bigg\}^{ 1 / q }.
\end{split}
\end{align*} 
Applying the identity $\int_{ 2^{ j } }^{ 2^{ j + 1 } } 2^{ - j } \, d \lambda = 1$ and the dyadic  estimate (\ref{e:resolvent estimate-discrete}) yields 
\begin{align*} 
\begin{split}
\| x \| \lesssim {} & \Bigg\{ \int_1^\infty \big\| \lambda^{ \theta \alpha } A^\alpha ( \lambda + A )^{ - \alpha } \big\|^q \, \frac{ d \lambda }{ \lambda } \Bigg\}^{ 1 / q } \le \Bigg\{ \int_0^\infty \big\| \lambda^{ \theta \alpha } A^\alpha ( \lambda + A )^{ - \alpha } \big\|^q \, \frac{ d \lambda }{ \lambda } \Bigg\}^{ 1 / q }.
\end{split}
\end{align*} 
Finally, applying (\ref{e:interpolation-proof-Besov<interpolation-R-start-2}) yields the desired (\ref{e:interpolation-proof-Besov<interpolation-x}), immediately. 

And if $1 < q < \infty$, from (\ref{e:interpolation-proof-Besov<interpolation-x-integral}) it follows that 
\begin{align*} 
\begin{split}
\| x \| \lesssim \int_0^\infty \big\| \lambda^{ \alpha ( 1 - \theta ) } ( \lambda + A )^{ - \alpha } \big\| \cdot \big\| \lambda^{ \theta \alpha } A^\alpha ( \lambda + A )^{ - \alpha } x \big\| \, \frac{ d \lambda }{ \lambda }.
\end{split}
\end{align*} 
Thanks to the H\"{o}lder inequality, it can be seen that 
\begin{align*} 
\begin{split}
\| x \| \lesssim \Bigg\{ \int_0^\infty \big\| \lambda^{ \theta \alpha } A^\alpha ( \lambda + A )^{ - \alpha } x \big\|^q \, \frac{ d \lambda }{ \lambda } \Bigg\}^{ 1 / q } 
\end{split}
\end{align*} 
by observing that 
\begin{align*} 
\begin{split}
\int_0^\infty \big\| \lambda^{ \alpha ( 1 - \theta ) } ( \lambda + A )^{ - \alpha } \big\|^{ q' } \, \frac{ d \lambda }{ \lambda } < \infty 
\end{split}
\end{align*} 
due to the estimates (\ref{e:uniform boundedness compositions-M}) and (\ref{e:uniform boundedness compositions-L}) and the fact that $0 \in \rho ( A )$. Again, applying (\ref{e:interpolation-proof-Besov<interpolation-R-start-2}) yields the desired (\ref{e:interpolation-proof-Besov<interpolation-x}), immediately.

Step IV. By using (\ref{e:interpolation-proof-Besov<interpolation-R}) and (\ref{e:interpolation-proof-Besov<interpolation-x}), we obtain the desired (\ref{e:interpolation-proof-Besov<interpolation}), immediately. The proof is complete. 
\end{proof}

\begin{rem}\label{r:interpolations}
Theorems \ref{t:lifting property-negative}, \ref{t:smoothness reiteration} and \ref{p:interpolation} are the main results of this paper, which are novel in the sense equivalent (quasi-)norms even in the case $1 \le q \le \infty$. More precisely, let $s > 0$ and $1 \le q \le \infty$. 
\begin{itemize}
\item[(i)] By Remark \ref{r:inhomogeneous Besov spaces-equivalent quasi-norms-continuity} (i) and Theorem \ref{t:lifting property-negative}, we obtain \cite[Theorem 2.6]{Komatsu1967}, immediatley. Moreover, Theorem \ref{t:lifting property-negative} also improves \cite[Theorem 2.6]{Komatsu1967} in the sense of equivalent norms for $1 \le q \le \infty$.
\item[(ii)] By Remark \ref{r:inhomogeneous Besov spaces-equivalent quasi-norms-continuity} (i) and Theorem \ref{p:interpolation} we obtain \cite[Theorem 3.1]{Komatsu1967}, immediately. Moreover, Theorem \ref{p:interpolation} also improves \cite[Theorem 3.1]{Komatsu1967} in the sense of equivalent norms for $1 \le q \le \infty$.
\item[(iii)] By Remark \ref{r:inhomogeneous Besov spaces-equivalent quasi-norms-continuity} (i) and Theorems \ref{p:interpolation} and \ref{t:smoothness reiteration} we obtain \cite[Corollary 7.3 (a)]{Haase2005}, immediately. Moreover, Theorems \ref{p:interpolation} and \ref{t:lifting property-negative} also improve \cite[Corollary 7.3 (b)]{Haase2005} in the sense of equivalent norms for $1 \le q \le \infty$.
\end{itemize}
\end{rem}

\smallskip

%
%
%
%
%
%
%
%
%
%
%
%
%
%
%
%
%
%
%


\section{Comparison of classical and new Besov spaces}\label{S:Comparison}

It is natural to compare our new Besov spaces with the classical Besov spaces and other known Besov spaces associated with concrete operators, for instance, Besov spaces constructed in the frame of extrapolation spaces \cite{Matsumoto and Ogawa2010} and Besov spaces associated with $0$-sectorial operators \cite{Kriegler and Weis2016} or heat kernels \cite{BDY2012}.

\smallskip


\subsection{Extrapolation method}

Let $A$ be a non-negative operator on a Banach space $( X, \left\| \cdot \right\|_X )$. 
Write $X_0 := \overline{ D ( A ) }$ endowed with the norm $\left\| \cdot \right\|_{ X_0 } := \left\| \cdot \right\|_X$, and let $A_0 := A |_{ X_0 }$, the part of $A$ in $X_0$.
It can be verified that $A_0$ is non-negative on $X_0$ with dense domain. Indeed, for each $x \in X_0$ fixed, write $x_n := n ( n + A )^{ - 1 } x$ with $n \in \mathbb{N}$. It is clear that $x_n \in D ( A )$ and 
\begin{align*}
A x_n = n A ( n + A )^{ - 1 } x = n ( A + n - n ) ( n + A )^{ - 1 } x = n x - n^2 ( n + A )^{ - 1 } x \in X_0 
\end{align*}
due to the fact that $n x \in \overline{ D ( A ) } = X_0$ and $n^2 ( n + A )^{ - 1 } x \in D ( A )\subset X_0$,  and hence, $x_n \in D ( A_0 )$ for each $n \in \mathbb{N}$. Since $x_n \rightarrow x$ in $X$ as $n \rightarrow \infty$ due to the fact that $x \in X_0 = \overline{ D ( A ) }$, it follows that $\overline{ D ( A_0 ) } = X_0$. Moreover, the non-negativity of $A_0$ follows from that of $A$, immediately. More precisely, 
\begin{align*}
\sup_{ \lambda > 0 } \| \lambda ( \lambda + A_0 )^{ - 1 } \| \leq M_A, 
\end{align*}
where $M_A$ is the non-negativity constant of $A$ given in (\ref{e:non-negative constant}).

Now let $X_{ - 1 }$ be the completion of $X_0$ with respect to the norm $\left\| \cdot \right\|_{ - 1 } := \left\| ( 1 + A_0 )^{ - 1 } \cdot \right\|_{ X_0 }$. It is clear that, for each $\lambda > 0$, $( \lambda + A_0 )^{ - 1 }$ admits a bounded extension $J_{ \lambda, - 1 }$ on $X_{ - 1 }$ and 
\begin{align*}
\sup_{ \lambda > 0 } \left\| \lambda J_{ \lambda, - 1 } \right\| \leq M_A.
\end{align*}
Furthermore, by the definition of the bounded extension, it is easy to verify that the operator $J_{ \lambda, - 1 }$ is injective for each $\lambda > 0$, and hence, $A_{ - 1 } := J_{ \lambda, - 1 }^{ - 1 } - \lambda$ is non-negative $X_{ - 1 }$ with dense domain. By recursion, one can define an extrapolation space $X_{ - l }$ and a non-negative operator $A_{ - l }$ with dense domain in $X_{ - l }$ for each $l \geq 2$. Thus, we have obtained a series of Banach spaces $\{  X_{ - l } \}_{ l \in \mathbb{N}_0 }$, where $X_{ - l }$ is densely embedded in $X_{ - l - 1 }$ for each $l \in \mathbb{N}_0$. 

Note that $( 1 + A_0 )^{ - 1 }$ is bounded from $X_0$ to $R ( ( 1 + A_0 )^{ - 1 } ) = ( 1 + A_0 )^{ - 1 } X_0 \subset X_0$ and that $X_0$ is the completion of $R ( ( 1 + A_0 )^{ - 1 } )$ with respect to $\left\| \cdot \right\|_{ - 1 }$. Thus, the extension $J_{ - 1, - 1 }$ of $( 1 + A_0 )^{ - 1 }$ is bounded from $X_{ - 1 }$ to $X_{ 0 }$, and hence, 
\begin{align*}
D ( A_{ - 1 } ) = D ( 1 + A_{ - 1 } ) = D ( J_{ - 1, - 1 }^{ - 1 } ) = R ( J_{ - 1, - 1 } ) = X_0.
\end{align*}
In general, by the recursive construction, it can be seen that $D ( A_{ - l } ) = X_{ - l + 1 }$ for each $ l \in \mathbb{ N }$ and that $A_{ - l } = A_{ - l - 1 } |_{ D ( A_{ - l } ) }$ for each $l \in \mathbb{ N }_0$. 

Observe that the operators $A_{ - n }$ on $X_{ - n}$ are consistent with each other, and therefore there is a well defined operator $\mathcal{A}$ on 
\begin{align*}
X_{ - \infty } := \bigcup_{ n \in \mathbb{N}_0 } X_{ - n }.
\end{align*}
Clearly, $\mathcal{A} |_{ X_{ - n } } = A_{ - n }$ for each $n \in \mathbb{N}_0$.
Moreover, for given $t > 0$, it can be seen that $\mathcal{A}^m ( t + \mathcal{ A } )^{ - m - l } \tilde{x} \in X_0$ for $\tilde{x} \in X_{ - l }$ by observing that $( t + \mathcal{ A } )^{ - l }$ maps $X_{ - l }$ into $X_0$. More precisely,   
\begin{align}\label{e:Besov spaces-inhomogeneous-extrapolaiton-Rquasinorms}
\mathcal{A}^m ( t + \mathcal{ A } )^{ - m - l } \tilde{x} = A^m ( t + A )^{ - m } ( t + \mathcal{A} )^{ - l } \tilde{x}, \quad \tilde{x} \in X_{ - l }.
\end{align}

We can now give an extrapolation version of abstract Besov spaces in the following way. 

\begin{defn}\label{d:Besov spaces-inhomogeneous-extrapolaiton}
Let $0 < q \leq \infty$ and $s \in \mathbb{R}$. Let $k \in \mathbb{Z}$ and let $l$ and $m$ be
two non-negative integers such that $- l < s < m$. The inhomogeneous Besov space $\mathcal{B}^{ s, A }_{ q, X }$ is
defined by
\begin{align*}
\mathcal{B}^{ s, A }_{ q, X } := \big\{ \tilde{x} \in X_{ - l }: |
\tilde{x} |_{ \mathcal{R}^{ s, A }_{ q, X } ( k, l, m ) } < \infty \big\} 
\end{align*}
endowed with the quasi-norm 
\begin{align*}
\| \tilde{x} \|_{ \mathcal{B}^{ s, A }_{ q, X } } := \big\| (
2^k + \mathcal{A} )^{ - l } \tilde{x} \big\|_X + | \tilde{x}
|_{ \mathcal{R}^{ s, A }_{ q, X } ( k, l, m ) },
\end{align*}
where 
\begin{align*}
| \tilde{x} |_{ \mathcal{R}^{ s, A }_{ q, X } ( k, l, m ) } :=
\Bigg\{ \sum_{ j = k }^\infty \big\| 2^{ j s } 2^{ j l }
\mathcal{A}^m ( 2^j + \mathcal{A} )^{ - m - l } \tilde{x}
\big\|_X^q \Bigg\}^{ 1 / q }
\end{align*}
(with the usual modification if $q = \infty$). 
\end{defn}

A homogeneous version of abstract Besov spaces via the extrapolation is given as follows. 

\begin{defn}\label{d:Besov spaces-homogeneous-extrapolaiton}
Let $0 < q \leq \infty$ and $s \in \mathbb{R}$, and let $l$ and $m$ be
two non-negative integers such that $- l < s < m$. The homogeneous Besov space $\mathcal{\dot{B}}^{ s, A }_{ q, X }$ is
defined by
\begin{align*}
\mathcal{\dot{B}}^{ s, A }_{ q, X } := \Bigg\{ \tilde{x} \in X_{ - l }: \sum_{ j = - \infty }^\infty \big\| 2^{ j s } 2^{ j l } \mathcal{A}^m ( 2^j + \mathcal{A} )^{ - m - l } \tilde{x}
\big\|_X^q < \infty \Bigg\},
\end{align*}
endowed with the quasi-norm 
\begin{align*}
\| \tilde{x} \|_{ \mathcal{\dot{B}}^{ s, A }_{ q, X } } := 
\Bigg\{ \sum_{ j = - \infty }^\infty \big\| 2^{ j s } 2^{ j l }
\mathcal{A}^m ( 2^j + \mathcal{A} )^{ - m - l } \tilde{x}
\big\|_X^q \Bigg\}^{ 1 / q }
\end{align*}
(with the usual modification if $q = \infty$). 
\end{defn}

\begin{rem}
Analogous to Theorem \ref{t:inhomogeneous Besov spaces-equivalent quasi-norms-continuity} (more precisely, Remark \ref{r:inhomogeneous Besov spaces-equivalent quasi-norms-continuity} above), it can be verified that $\mathcal{\dot{B}}^{ s, A }_{ q, X }$ admits an equivalent quasi-norm of continuous type, and therefore $\mathcal{\dot{B}}^{ s, A }_{ q, X }$ coincides with $\dot{B}^\phi_{ X, s }$ with $\phi ( \lambda ) = \lambda^s$ ($\lambda > 0$) for $1 \le q \le \infty$ and $s \in \mathbb{R}$, where $\dot{B}^\phi_{ X, s }$ is the homogeneous Besov space due to T. Matsumoto and T. Ogawa \cite[Definitions 2.8 and 2.10]{Matsumoto and Ogawa2010}.
\end{rem}

\begin{prop}\label{p:Besov=extrapolation-inhomogeneous}
Let $0 < q \leq \infty$ and $s \in \mathbb{R}$. Then $\mathcal{B}^{
s, A }_{ q, X } = B^{ s, A }_{ q, X }$ in the sense of equivalent
quasi-norms.
\end{prop}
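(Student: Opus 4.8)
The plan is to show that the extrapolation Besov space $\mathcal{B}^{s,A}_{q,X}$ of Definition \ref{d:Besov spaces-inhomogeneous-extrapolaiton} and the completion-based Besov space $B^{s,A}_{q,X}$ of Definition \ref{d:inhomogeneous Besov spaces} coincide with equivalent quasi-norms. Fix $k\in\mathbb{Z}$ and non-negative integers $l,m$ with $-l<s<m$; by Lemma \ref{l:independence of klm-inhomogeneous} and Remark \ref{r:independence of klm-inhomogeneous} we are free to compute the $B^{s,A}_{q,X}$-quasi-norm using the indices $(k,\alpha,\beta)=(k,l,m)$, so that $\|x\|_{B^{s,A}_{q,X}}\simeq\|(2^k+A)^{-l}x\|+|x|_{R^{s,A}_{q,X}(k,l,m)}$ for $x\in\overline{D(A)}$. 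The first step is to observe, via the identity \eqref{e:Besov spaces-inhomogeneous-extrapolaiton-Rquasinorms}, that for $\tilde x\in X_0=\overline{D(A)}$ one has $\mathcal{A}^m(2^j+\mathcal{A})^{-m-l}\tilde x = A^m(2^j+A)^{-m-l}\tilde x$ (since on $X_0$ the operator $\mathcal{A}$ restricts to $A_0$, which is just the part of $A$), and likewise $(2^k+\mathcal{A})^{-l}\tilde x=(2^k+A)^{-l}\tilde x$; hence $\|\tilde x\|_{\mathcal{B}^{s,A}_{q,X}}=\|\tilde x\|_{B^{s,A}_{q,X}(k,l,m)}$ for every $\tilde x\in X_0$. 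Thus $R^{s,A}_{q,X}(k,l,m)$ sits isometrically inside $\mathcal{B}^{s,A}_{q,X}$, and since $B^{s,A}_{q,X}$ is by definition the completion of $R^{s,A}_{q,X}(k,l,m)$, it embeds continuously into $\mathcal{B}^{s,A}_{q,X}$ provided $\mathcal{B}^{s,A}_{q,X}$ is complete (which follows from the completeness of $X_0$ and $X_{-l}$ by the same Cauchy-sequence argument as in Proposition \ref{p:completeness-inhomogeneous}).

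The substantive direction is the reverse inclusion $\mathcal{B}^{s,A}_{q,X}\subset B^{s,A}_{q,X}$, i.e.\ that every $\tilde x\in X_{-l}$ with $|\tilde x|_{\mathcal{R}^{s,A}_{q,X}(k,l,m)}<\infty$ is the limit, in the $\mathcal{B}$-quasi-norm, of a sequence in $X_0$ with uniformly controlled $B^{s,A}_{q,X}$-quasi-norms. The natural device is the mollifier $\tilde x_n:=n^{m+l}\mathcal{A}^m\,? $ — more precisely I would use $\tilde x_n:= n^l(n+\mathcal{A})^{-l}\tilde x$ or, better, a double-resolvent regularization $\tilde x_n := (n(n+\mathcal{A})^{-1})^N\tilde x$ with $N$ large: each such $\tilde x_n$ lies in $X_0$ (indeed in $D(\mathcal{A}^{N-l})\subset X_0$ once $N>l$, because $(n+\mathcal{A})^{-l}$ maps $X_{-l}$ into $X_0$), it converges to $\tilde x$ in $X_{-l}$ by an ergodic-type argument on the extrapolation scale analogous to Lemma \ref{l:ergodicity}(i), and — this is the key estimate — $\mathcal{A}^m(2^j+\mathcal{A})^{-m-l}\tilde x_n = (n(n+\mathcal{A})^{-1})^N\,\mathcal{A}^m(2^j+\mathcal{A})^{-m-l}\tilde x$ by commutativity, whence the uniform non-negativity bounds of Example \ref{E:uniform non-negativity and boundedness} and Lemma \ref{l:uniform boundedness compositions} give $|\tilde x_n|_{\mathcal{R}^{s,A}_{q,X}(k,l,m)}\le C\,|\tilde x|_{\mathcal{R}^{s,A}_{q,X}(k,l,m)}$ with $C$ independent of $n$. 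Running the same $\epsilon/4$-splitting argument as in the proof of Proposition \ref{p:denseness-inhomogeneous} — splitting the dyadic sum at a large index $K$, using the tail smallness of $|\tilde x|_{\mathcal{R}}$ on $[K,\infty)$, the resolvent bounds on $[k,K)$, and the $X_{-l}$-convergence $\tilde x_n\to\tilde x$ on the finitely many remaining terms plus the auxiliary term $\|(2^k+\mathcal{A})^{-l}(\tilde x_n-\tilde x)\|_X$ — shows $\tilde x_n\to\tilde x$ in the $\mathcal{B}^{s,A}_{q,X}$-quasi-norm. Since each $\tilde x_n\in R^{s,A}_{q,X}(k,l,m)$ with uniformly bounded $B^{s,A}_{q,X}$-quasi-norm and $\{\tilde x_n\}$ is $\mathcal{B}$-Cauchy, it is also $B^{s,A}_{q,X}$-Cauchy (the two quasi-norms agree on $X_0$), so it converges in $B^{s,A}_{q,X}$; its limit there maps to $\tilde x$ under the canonical embedding $B^{s,A}_{q,X}\hookrightarrow\mathcal{B}^{s,A}_{q,X}$, and that embedding being injective we conclude $\tilde x\in B^{s,A}_{q,X}$ with matching quasi-norm.

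I expect the main obstacle to be the verification that $X_0=\overline{D(A)}$ is dense in $\mathcal{B}^{s,A}_{q,X}$ in the appropriate strong sense — that is, the uniform-bound-plus-convergence package for the mollified sequence $\tilde x_n$ on the extrapolation scale, including the fact that $\tilde x_n\to\tilde x$ in $X_{-l}$ (an ergodicity statement one must transport from $X_0$ to $X_{-l}$ through the recursive construction of the $X_{-j}$'s) and the uniformity of the resolvent estimates in $n$. The case $q=\infty$ will, as throughout the paper, require the usual modifications (replacing $\ell_q$-sums by suprema and the Hölder/monotonicity steps accordingly) but is routine once the $0<q<\infty$ case is in place; likewise the quasi-norm equivalence constants are tracked exactly as in Lemmas \ref{l:uniform boundedness compositions} and \ref{l:independence of klm-inhomogeneous}. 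No new analytic input is needed beyond what is already assembled in Sections \ref{S:Fractional powers of operators} and \ref{S:Besov spaces}.
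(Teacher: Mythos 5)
Your proposal follows essentially the same route as the paper's proof: both directions hinge on the identity \eqref{e:Besov spaces-inhomogeneous-extrapolaiton-Rquasinorms}, which identifies the $\mathcal{R}$- and $R$-semi-quasinorms on $X_0$; the forward inclusion is handled by tracking a Cauchy sequence from $R^{s,A}_{q,X}(k,l,m)$ through the $X_{-l}$-norm (the paper does this directly rather than first proving completeness of $\mathcal{B}^{s,A}_{q,X}$, but the content is the same); and the reverse inclusion is a density argument with the mollifier $P_n^l = n^l(n+\mathcal{A})^{-l}$, ergodicity from Lemma \ref{l:ergodicity}(i), uniform resolvent bounds, and the same tail-splitting at a large dyadic index $K$. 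Your alternate suggestion $(n(n+\mathcal{A})^{-1})^N$ with $N>l$ is harmless but unnecessary — $N=l$ is exactly enough since $(n+\mathcal{A})^{-l}$ already maps $X_{-l}$ into $X_0$, which is what the paper uses.
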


\begin{proof}
%

We first verify that 
\begin{align}\label{e:Besov=extrapolation-inhomogeneous-proof-Besov-in-extrapolation}
B^{ s, A }_{ q, X } 
\subset \mathcal{B}^{ s, A }_{ q, X }, 
\end{align}
and
\begin{align}\label{e:Besov=extrapolation-inhomogeneous-proof-Besov-in-extrapolation-quasinorm}
\left\| \tilde{x} \right\|_{ \mathcal{B}^{ s, A }_{ q, X } } \simeq \left\| \tilde{x} \right\|_{ B^{ s, A }_{ q, X } }, \quad x \in B^{ s, A }_{ q, X }.
\end{align}
To this end, let $\tilde{x} \in B^{ s, A }_{ q, X }$. Fix $k \in \mathbb{Z}$ and let $m$ and $l$ be non-negative integers such that $- l < s < m$.  By Definitions \ref{d:inhomogeneous Besov spaces} and \ref{d:Besov type-inhomogeneous}, $\tilde{x} = \{ x_n \}$ for some $\{ x_n \} \subset R^{ s, A }_{ q, X } ( k, l, m )$ which is Cauchy with respect to the Besov quasi-norm 
$\left\| \cdot \right\|_{ B^{ s, A }_{ q, X } ( k, l, m ) }$ given by (\ref{e:quasi-norm of inhomogeneous Besov spaces}). 
This implies that, for each $\epsilon > 0$ fixed, there is an $N \in \mathbb{N}$ such that 
\begin{align}\label{e:Besov=extrapolation-inhomogeneous-proof-Besov-in-extrapolation-<e}
\begin{split}
\left\| x_n - x_{ n' } \right\|_{ B^{ s, A }_{ q, X } ( k, l, m ) } = {} & \| ( 2^k + A )^{ - l } ( x_n - x_{ n' } ) \|_{ X } \\
+ {} & \Bigg\{ \sum_{ j = k }^{ \infty } \big\| 2^{ j ( s + l ) } A^m ( 2^j + A )^{ - m - l } ( x_n - x_{ n' } ) \big\|_X^q \Bigg\}^{ 1 / q } \\
< {} & \epsilon, \quad n, n' \ge N.
\end{split}
\end{align}
From (\ref{e:Besov=extrapolation-inhomogeneous-proof-Besov-in-extrapolation-<e}) it is clear that $\{ x_n \}$ is Cauchy with respect to the norm $\left\| ( 2^k + A )^{ - l } \cdot \right\|_X$, and hence, $x \in X_{ - l }$ by observing that $X_{ - l }$ coincides with the completion of $X_0 = \overline{ D ( A ) }$ with respect to the norm $\| ( 2^k + A )^{ - l } \cdot \|_{ X_0 } = \| ( 2^k + A )^{ - l } \cdot \|_{ X }$. 

We now verify that 
\begin{align}\label{e:Besov=extrapolation-inhomogeneous-proof-Besov-in-extrapolation-R<infty}
| \tilde{x} |_{ \mathcal{R}^{ s, A }_{ q, X } ( k, l, m ) } < \infty.
\end{align}
Fix $\epsilon > 0$. From (\ref{e:Besov spaces-inhomogeneous-extrapolaiton-Rquasinorms}) and (\ref{e:Besov=extrapolation-inhomogeneous-proof-Besov-in-extrapolation-<e}) it follows that 
\begin{align*}
& \Bigg\{ \sum_{ j = k }^{ K } \big\| 2^{ j ( s + l ) } \mathcal{A}^m ( 2^j + \mathcal{A} )^{ - m - l } ( x_n - x_{ n' } ) \big\|_X^q \Bigg\}^{ 1 / q } \\
= {} & \Bigg\{ \sum_{ j = k }^{ K } \big\| 2^{ j ( s + l ) } A^m ( 2^j + A )^{ - m } ( 2^j + \mathcal{A} )^{ - l } ( x_n - x_{ n' } ) \big\|_X^q \bigg\}^{ 1 / q } \\
= {} & \Bigg\{ \sum_{ j = k }^{ K } \big\| 2^{ j ( s + l ) } A^m ( 2^j + A )^{ - m - l } ( x_n - x_{ n' } ) \big\|_X^q \Bigg\}^{ 1 / q } < \epsilon, \quad n, n' \ge N,
\end{align*}
for each integer $K \ge k$. Applying $n' \to \infty$ yields 
\begin{align*}
& \Bigg\{ \sum_{ j = k }^{ K } \big\| 2^{ j ( s + l ) } \mathcal{A}^m ( 2^j + \mathcal{A} )^{ - m - l } ( x_n - \tilde{x} ) \big\|_X^q \Bigg\}^{ 1 / q } \le \epsilon, \quad n \ge N,
\end{align*}
for each integer $K \ge k$ since $( 2^j + \mathcal{A} )^{ - l } ( x_n - x_{ n' } )$ converges to $( 2^j + \mathcal{A} )^{ - l } ( x_n - \tilde{x} )$ as $n' \to \infty$ due to (\ref{e:Besov=extrapolation-inhomogeneous-proof-Besov-in-extrapolation-<e}). Applying $K \to \infty$ yields 
\begin{align}\label{e:Besov=extrapolation-inhomogeneous-proof-Besov-in-extrapolation-R-xn-x}
| x_n - \tilde{x} |_{ \mathcal{R}^{ s, A }_{ q, X } ( k, l, m ) } =
 \Bigg\{ \sum_{ j = k }^{ \infty } \big\| 2^{ j ( s + l ) } \mathcal{A}^m ( 2^j + \mathcal{A} )^{ - m - l } ( x_n - \tilde{x} ) \big\|_X^q \Bigg\}^{ 1 / q } \le \epsilon 
\end{align}
for $n \ge N$. From (\ref{e: q-inequality}) and (\ref{e:Besov=extrapolation-inhomogeneous-proof-Besov-in-extrapolation-R-xn-x}) it follows that 
\begin{align*}
| \tilde{x} |_{ \mathcal{R}^{ s, A }_{ q, X } ( k, l, m ) } \lesssim | \tilde{x} - x_N |_{ \mathcal{R}^{ s, A }_{ q, X } ( k, l, m ) } + | x_N |_{ \mathcal{R}^{ s, A }_{ q, X } ( k, l, m ) } < \infty.
\end{align*}
which is the desired (\ref{e:Besov=extrapolation-inhomogeneous-proof-Besov-in-extrapolation-R<infty}). Thus, we have verified (\ref{e:Besov=extrapolation-inhomogeneous-proof-Besov-in-extrapolation}).

Moreover, observing that $x_n \to \tilde{x}$ as $n \to \infty$ in $X_{ - l }$ as mentioned above and that $x_n \to \tilde{x}$ as $n \to \infty$ in $\mathcal{R}^{ s, A }_{ q, X } ( k, l, m )$ due to (\ref{e:Besov=extrapolation-inhomogeneous-proof-Besov-in-extrapolation-R-xn-x}), we conclude that
\begin{align*} 
\left\| \tilde{x} \right\|_{ \mathcal{B}^{ s, A }_{ q, X } } = \lim_{ n \to \infty } \left\| x_n \right\|_{ \mathcal{B}^{ s, A }_{ q, X } } = \lim_{ n \to \infty } \left\| x_n \right\|_{ B^{ s, A }_{ q, X } } \simeq \left\| \tilde{x} \right\|_{ B^{ s, A }_{ q, X } },
\end{align*}
which is the desired (\ref{e:Besov=extrapolation-inhomogeneous-proof-Besov-in-extrapolation-quasinorm}).

In order to complete the proof, it remains to verify that
\begin{align}\label{e:Besov=extrapolation-inhomogeneous-proof-Besov-in-extrapolation-inverse}
\mathcal{B}^{ s, A }_{ q, X } 
\subset B^{ s, A }_{ q, X }.
\end{align}
Cleaarly, it suffices to verify that $R^{ s, A }_{ q, X } ( k, l, m )$ is dense in $\mathcal{B}^{ s, A }_{ q, X }$. Indeed, let $\tilde{x} \in \mathcal{B}^{ s, A }_{ q, X }$, i.e., $\tilde{x} \in X_{ - l }$ and 
\begin{align}\label{e:Besov=extrapolation-inhomogeneous-proof-Besov-in-extrapolation-inverse<infty}
| \tilde{x} |_{ \mathcal{R}^{ s, A }_{ q, X } ( k, l, m ) } = \Bigg\{ \sum_{ j = k }^{ \infty } \big\| 2^{ j ( s + l ) } \mathcal{A}^m ( 2^j + \mathcal{A} )^{ - m - l } \tilde{x} \big\|_X^q \Bigg\}^{ 1 / q } < \infty. 
\end{align}
Write $P_n := n ( n + \mathcal{A} )^{ - 1 }$ with $n \in \mathbb{N}$. It can be seen that $P_n^l \tilde{x} \in \overline{ D ( A ) }$ for each $n \in \mathbb{N}$ by observing that $( t + A )^{ - l }$ maps $X_{ - l }$ into $X_0 = \overline{ D ( A ) }$ for each $t > 0$, and therefore $P_n^l \tilde{x} \in R^{ s, A }_{ q, X } ( k, l, m )$ for each $n \in \mathbb{N}$
due to (\ref{e:Besov=extrapolation-inhomogeneous-proof-Besov-in-extrapolation-inverse<infty}) and the uniform boundedness of $\{ n^l ( n + A )^{ - l }\}_{ n \in \mathbb{N} }$. Thanks to Lemma \ref{l:ergodicity} (i), it can also be seen that 
\begin{align}\label{e:Besov=extrapolation-inhomogeneous-proof-Besov-in-extrapolation-inverse-k}
\begin{split}
\| ( 2^k + \mathcal{A} )^{ - l } ( P_n^l \tilde{x} - \tilde{x} ) \|_X = \| ( P_n^l - I ) ( 2^k + \mathcal{A} )^{ - l } \tilde{x} \|_X \to 0 
\end{split}
\end{align}
as $n \to \infty$ by observing the fact that $( 2^k + \mathcal{A} )^{ - l } \tilde{x} \in \overline { D ( A ) }$. Also, by Lemma \ref{l:ergodicity} (i) we conclude that 
\begin{align}\label{e:Besov=extrapolation-inhomogeneous-proof-Besov-in-extrapolation-inverse-q}
\begin{split}
& \Bigg\{ \sum_{ j = k }^{ \infty } \big\| 2^{ j ( s + l ) } \mathcal{A}^m ( 2^j + \mathcal{A} )^{ - m - l } ( P_n \tilde{x} - \tilde{x} ) \big\|_X^q \Bigg\}^{ 1 / q } \to 0, \quad n \to \infty.  
\end{split}
\end{align}
More precisely, fix $\epsilon > 0$. On the one hand, thanks to  (\ref{e:Besov=extrapolation-inhomogeneous-proof-Besov-in-extrapolation-inverse<infty}), there is an integer $K$ large enough such that 
\begin{align*}
\Bigg\{ \sum_{ j = K }^{ \infty } \big\| 2^{ j ( s + l ) } \mathcal{A}^m ( 2^j + \mathcal{A} )^{ - m - l } \tilde{x} \big\|_X^q \Bigg\}^{ 1 / q } < \frac{ \epsilon }{ 2 C C_{ 1 / q } },
\end{align*}
where $C = \sup_{ n \in \mathbb{N} } \| P_n^l - I \|$ and $C_{ 1 / q }$ is the constant given in (\ref{e: q-inequality}). On the other hand, thanks to Lemma \ref{l:ergodicity} (i), there is an integer $N$ such that  
\begin{align*}
\Bigg\{ \sum_{ j = 0 }^{ K - 1 } \big\| ( P_n^l - I ) 2^{ j ( s + l ) } \mathcal{A}^m ( 2^j + \mathcal{A} )^{ - m - l } \tilde{x} \big\|_X^q \Bigg\}^{ 1 / q } < \frac{ \epsilon }{ 2 C_{ 1 / q } }, \quad n \ge N,
\end{align*}
due to the fact that $2^{ j ( s + l ) } \mathcal{A}^m ( 2^j + \mathcal{A} )^{ - m - l } \tilde{x} \in \overline{D ( A ) }$ for each $j = 0, 1, \cdots, K-1$. Therefore, from (\ref{e: q-inequality}) it follows that 
\begin{align*}
& \Bigg\{ \sum_{ j = k }^{ \infty } \big\| 2^{ j ( s + l ) } \mathcal{A}^m ( 2^j + \mathcal{A} )^{ - m - l } ( P_n \tilde{x} - \tilde{x} ) \big\|_X^q \Bigg\}^{ 1 / q } \\
= {} & \Bigg\{ \sum_{ j = 0 }^{ \infty } \big\| ( P_n^l - I ) 2^{ j ( s + l ) } \mathcal{A}^m ( 2^j + \mathcal{A} )^{ - m - l } \tilde{x} \big\|_X^q \Bigg\}^{ 1 / q } \\
\le {} & C_{ 1 / q } \Bigg\{ \sum_{ j = 0 }^{ K - 1 } \big\| ( P_n^l - I ) 2^{ j ( s + l ) } \mathcal{A}^m ( 2^j + \mathcal{A} )^{ - m - l } \tilde{x} \big\|_X^q \Bigg\}^{ 1 / q } \\
+ {} & C_{ 1 / q } \Bigg\{ \sum_{ j = K }^{ \infty } \big\| ( P_n^l - I ) 2^{ j ( s + l ) } \mathcal{A}^m ( 2^j + \mathcal{A} )^{ - m - l } \tilde{x} \big\|_X^q \Bigg\}^{ 1 / q }.
\end{align*}
Applying the uniform boundedness of $\{ P_n^l - I \}_{ n \in \mathbb{N} }$ yields 
\begin{align*}
& \Bigg\{ \sum_{ j = k }^{ \infty } \big\| 2^{ j ( s + l ) } \mathcal{A}^m ( 2^j + \mathcal{A} )^{ - m - l } ( P_n \tilde{x} - \tilde{x} ) \big\|_X^q \Bigg\}^{ 1 / q } \\
\le {} & C_{ 1 / q } \Bigg\{ \sum_{ j = 0 }^{ K - 1 } \big\| ( P_n^l - I ) 2^{ j ( s + l ) } \mathcal{A}^m ( 2^j + \mathcal{A} )^{ - m - l } \tilde{x} \big\|_X^q \Bigg\}^{ 1 / q } \\
+ {} & C_{ 1 / q } \sup_{ n \in \mathbb{ N } } \| P_n^l - I \| \Bigg\{ \sum_{ j = K }^{ \infty } \big\| 2^{ j ( s + l ) } \mathcal{A}^m ( 2^j + \mathcal{A} )^{ - m - l } \tilde{x} \big\|_X^q \Bigg\}^{ 1 / q } \\
< {} & \frac{ \epsilon }{ 2 } + \frac{ \epsilon }{ 2 } = \epsilon, \quad n \ge N,
\end{align*}
from which (\ref{e:Besov=extrapolation-inhomogeneous-proof-Besov-in-extrapolation-inverse-q}) follows immediately. From (\ref{e:Besov=extrapolation-inhomogeneous-proof-Besov-in-extrapolation-inverse-k}) and (\ref{e:Besov=extrapolation-inhomogeneous-proof-Besov-in-extrapolation-inverse-q})
it can be seen that $P_n^l \tilde{x} \to \tilde{x}$ as $n \to \infty$, and hence, $R^{ s, A }_{ q, X } ( k, l, m )$ is dense in $\mathcal{B}^{ s, A }_{ q, X }$. Thus, we have verified (\ref{e:Besov=extrapolation-inhomogeneous-proof-Besov-in-extrapolation-inverse}).
The proof is complete.
\end{proof}

\smallskip


\subsection{Functional calculus approach}\label{Sub:Functional calculus}


Let $X$ be a Banach space and let $0 \leq \omega < \pi$. Recall that a closed linear operator $A : D ( A ) \subset X \rightarrow X$ is sectorial of angle $\omega$ if $\sigma ( A ) \subset \overline{ \Sigma_\omega }$ and 
\begin{align*}
\sup \big\{ \| z R ( z, A ) \|: z \in \mathbb{C} \setminus \overline{ \Sigma_{ \omega' } } \big\} < \infty
\end{align*}
for each $\omega' \in ( \omega, \pi )$. If this is the case, the value 
\begin{align*}
\omega_A := \min \{ \omega \in [ 0, \pi ): A \in S ( \omega ) \}
\end{align*}
is called the spectral angle of $A$. 

Let $A$ be a closed linear operator on a Banach space $X$. Recall that $A$ is non-negative if and only if it is sectorial (with spectral angle $\omega_A \leq \pi - \arcsin 1 / M_A$, where $M_A$ is the non-negativity constant of operator $A$) (see, \cite[Proposition 1.2.1]{MartinezM2001}). Thus, the theory of Besov spaces associated with non-negative operators on Banach spaces can also be developed by using the approach of functional calculus. 

Let's recall some preliminaries of the so-called primary functional calculus of sectorial operators. Let $\omega \in ( 0, \pi )$ and $A$ be sectorial with spectral angle $\omega_A < \omega$. Let $H^\infty ( \Sigma_\omega )$ be the space of bounded holomorphic function on $\Sigma_\omega$ and write 
\begin{align*}
H_0^\infty ( \Sigma_\omega ) := \big\{ f \in H^\infty ( \Sigma_\omega ): \exists \, C, \epsilon > 0 \mbox{ s.t. } | f ( z ) | \leq C \min\{ |z|^\epsilon, |z|^{ - \epsilon } \} \big\}. 
\end{align*}
For each $\psi \in H_0^\infty ( \Sigma_\omega )$, define 
\begin{align*}
\psi ( A ) := \frac{ 1 }{ 2 \pi i } \int_{ \partial \Sigma_{ \omega' } } \psi ( z ) ( z - A )^{ - 1 } \, d z, 
\end{align*}
where $\partial \Sigma_{ \omega' }$ is the boundary of a sector $\Sigma_{ \omega' }$ with $\omega' \in ( \omega, \pi )$, oriented counterclockwise. By the Cauchy integral theorem, the integral in the right-hand side of the last equality is independent of the choice of $\omega'$, and hence, $\psi ( A )$ is well defined as a bounded linear operator on the Banach space $X$. More precisely, the calculus 
\begin{align*}
\Phi : H_0^\infty ( \Sigma_\omega ) \rightarrow B ( X ) 
\end{align*}
is a homomorphism of algebras. Furthermore, one can enlarge the algebra $H_0^\infty ( \Sigma_\omega )$ as 
\begin{align*}
\mathcal{E} ( \Sigma_\omega ) := H_0^\infty ( \Sigma_\omega ) \oplus \mathbb{C} \frac{ 1 }{ 1 + z } \oplus \mathbb{C} \mathbf{1},
\end{align*}
and define 
\begin{align*}
f ( A ) := \psi ( A ) + c ( 1 + A )^{ - 1 } + d 
\end{align*}
for each function $f ( z ) := \psi ( z ) + \frac{ c }{ 1 + z } + d \in \mathcal{E} ( \Sigma_\omega )$. Such an extended calculus 
\begin{align*}
\Phi : \mathcal{E} ( \Sigma_\omega ) \rightarrow B ( X ) 
\end{align*}
is also a homomorphism of algebras \cite[Page 34, (2.7)]{HaaseM2006}, and we refer to \cite[Chapters 1 and 2]{HaaseM2006} for more information on the 
primary functional calculus for sectorial operators. 

\begin{prop}\cite[Theorem 1]{Haase2005}\label{p:Besov of Haase}
Let $A$ be a sectorial operator of angle $\omega$, and let $\omega' \in ( \omega, \pi )$ and $\mathrm{Re} \, \alpha > 0$. Take a function $0 \neq \psi \in \mathcal{E} ( \Sigma_{ \omega' } )$ such that $z^{ - \alpha } \psi ( z ) \in \mathcal{E} ( \Sigma_{ \omega' } )$. Then 
\begin{align*}
( X, D ( A ) )_{ \theta, q } = \left\{ x \in X: \int_0^\infty \big\| t^{ - \theta \alpha } \psi ( t A ) x \big\|^q \, \frac{ d t }{ t } < \infty \right\}
\end{align*}
with the equivalence of norms
\begin{align*}
\| x \|_{ ( X, D ( A ) )_{ \theta, q } } \simeq \| x \| + \left\{ \int_0^\infty \big\| t^{ - \theta \alpha } \psi ( t A ) x \big\|^q \, \frac{ d t }{ t } \right\}^{ 1 / q } 
\end{align*}
for all $\theta \in ( 0, 1 )$ and $1 \leq q \leq \infty$.
\end{prop}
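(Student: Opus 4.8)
Since the proposition is \cite[Theorem~1]{Haase2005}, the plan is to record the idea of Haase's argument in the present notation; the statement can also be deduced from Theorem~\ref{p:interpolation}. The guiding observation is that, on unwinding the hypothesis $z^{-\alpha}\psi\in\mathcal{E}(\Sigma_{\omega'})$ — writing $z^{-\alpha}\psi=g_0+c(1+z)^{-1}+d\mathbf{1}$ with $g_0\in H_0^\infty(\Sigma_{\omega'})$, the boundedness of $\psi=z^{\alpha}\big(g_0+c(1+z)^{-1}+d\mathbf{1}\big)$ together with $\RE\alpha>0$ forces $d=0$ (and $c=0$ as well when $\RE\alpha>1$) — the function $\psi$ vanishes at the origin to order at least $\RE\alpha$, is bounded on $\Sigma_{\omega'}$, and has a (possibly non-zero) limit at infinity. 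Hence $\psi(tA)$ is, for small $t$, of the same order of magnitude as the resolvent expression $\phi(tA)=(tA)^{N}(1+tA)^{-N}$ ($N$ an integer with $N>\RE\alpha$) out of which the abstract Besov quasi-norms are built, and the whole statement should reduce to a comparison of the two admissible functions $\psi$ and $\phi$.

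I would carry out this reduction first. Reading the left-hand side with the fractional domain $D(A^{\alpha})$ on the interpolation side, Theorem~\ref{p:interpolation} gives $(X,D(A^{\alpha}))_{\theta,q}=B^{\theta\RE\alpha,A}_{q,X}$; since $\theta\RE\alpha>0$, Remark~\ref{r:inhomogeneous Besov spaces-equivalent quasi-norms-continuity}(i) expresses the quasi-norm of $B^{\theta\RE\alpha,A}_{q,X}$ as $\|x\|+\big\{\int_0^\infty\|t^{\theta\RE\alpha}A^{N}(t+A)^{-N}x\|^q\,\tfrac{dt}{t}\big\}^{1/q}$, and the substitution $t\mapsto t^{-1}$ turns the integral into $\int_0^\infty\|t^{-\theta\RE\alpha}\phi(tA)x\|^q\,\tfrac{dt}{t}$ with $\phi(z)=z^{N}(1+z)^{-N}$, which again satisfies $z^{-\alpha}\phi\in\mathcal{E}(\Sigma_{\omega'})$. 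Writing $\sigma:=\theta\RE\alpha\in(0,\RE\alpha)$, it remains to prove the subordination estimate
\[
\Big\{\int_0^\infty\|t^{-\sigma}\psi(tA)x\|^q\,\tfrac{dt}{t}\Big\}^{1/q}\;\simeq\;\|x\|+\Big\{\int_0^\infty\|t^{-\sigma}\phi(tA)x\|^q\,\tfrac{dt}{t}\Big\}^{1/q}\qquad(x\in X),
\]
that is, the integral term is insensitive, up to equivalence and the additive $\|x\|$, to replacing the admissible function $\phi$ by the admissible function $\psi$.

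For the subordination estimate the plan is a Calder\'on reproducing argument. One picks $\eta\in H_0^\infty(\Sigma_{\omega'})$ vanishing to high order at $0$ and at $\infty$ and normalized by $\int_0^\infty\phi(r)\eta(r)\,\tfrac{dr}{r}=1$, so that $\int_0^\infty\phi(rz)\eta(rz)\,\tfrac{dr}{r}\equiv1$ on $\Sigma_{\omega'}$ by scale invariance; via the primary functional calculus and a density argument (using a harmless shift $A\rightsquigarrow A+\varepsilon$, legitimate by Proposition~\ref{p:translation invariance-inhomogeneous}, to arrange $0\in\rho(A)$, or else the reproducing formula \eqref{e:inhomogeneous Calderon}) this gives $x=\int_0^\infty\phi(rA)\eta(rA)x\,\tfrac{dr}{r}$ and hence $\psi(sA)x=\int_0^\infty\psi(sA)\eta(rA)\,\phi(rA)x\,\tfrac{dr}{r}$. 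The crux is the operator-norm bound $\|\psi(sA)\eta(rA)\|\lesssim\min\{(s/r)^{\mu},(r/s)^{\mu}\}$ for some $\mu>\sigma$ — obtained from the contour representation of the primary calculus, estimating the $L^1$-type integral $\int|\psi(sz)\eta(rz)|\,\tfrac{|dz|}{|z|}$ along the boundary of a sector and using the decay of $\psi$ and $\eta$, the exponent $\mu$ being controlled by the orders of $\psi$ and $\eta$ at $0$ and $\infty$ and made large enough by the choice of $\eta$ — after which Young's inequality for the multiplicative convolution on $\big((0,\infty),\tfrac{dr}{r}\big)$ yields one inequality in the display and symmetry the other. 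The main obstacle is precisely that $\psi\in\mathcal{E}(\Sigma_{\omega'})$ need not decay at infinity, so this composition bound genuinely fails for its boundary components (the $(1+z)^{-1}$ and $\mathbf{1}$ parts, equivalently a term $-c\,z(1+z)^{-1}$): these must be split off and handled directly — there they reproduce the $K$-functional and the summand $\|x\|$ — and one must check that the comparison persists over the whole range $\sigma\in(0,\RE\alpha)$, in particular when $\sigma\ge1$ (which can occur when $\RE\alpha>1$), where the naive splitting of $\psi$ into an $H_0^\infty$ part plus boundary parts is not separately convergent, so the cancellations between the pieces have to be respected. Carrying out this bookkeeping cleanly — as in \cite[Theorem~1]{Haase2005} — is the technical heart of the proof.
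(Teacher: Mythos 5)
The paper offers no proof of this proposition: it carries a \cite[Theorem 1]{Haase2005} tag in its header and is imported as a known result, with only a specialization of $\psi$ following in the text. There is therefore no internal argument to compare against, and your blind attempt is effectively a reconstruction of Haase's original proof. On that score it is accurate: the Calder\'on reproducing identity, the contour-based kernel bound $\|\psi(sA)\eta(rA)\|\lesssim\min\{(s/r)^{\mu},(r/s)^{\mu}\}$, and Young's inequality on $\big((0,\infty),\tfrac{dr}{r}\big)$ are precisely the mechanism, and you correctly isolate the two genuine difficulties --- the boundary components of $\psi\in\mathcal{E}(\Sigma_{\omega'})$ that do not decay at infinity must be split off and are exactly what produce the additive $\|x\|$, and the splitting ceases to be separately convergent when $\RE\alpha>1$. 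The added reduction through Theorem~\ref{p:interpolation} is a sound organizational device, though it relocates rather than eliminates the core work, since the subordination estimate between two admissible functions is still what must be proved.

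Two notes on the statement itself. As printed it reads $(X,D(A))_{\theta,q}$, which is inconsistent with equation~(\ref{e:Besov norm of Haase}) and with Theorem~\ref{p:interpolation} unless it is read as $(X,D(A^{\alpha}))_{\theta,q}$; you have silently made that correction, and Haase's Theorem~1 is in fact phrased for $D(A^{\alpha})$, so the printed version appears to contain a typo. Also, Haase's Theorem~1 and Theorem~\ref{p:interpolation} are both stated for real $\alpha>0$, whereas the proposition as printed allows $\RE\alpha>0$; your passage to $\RE\alpha$ in the reduction step is an extra move that neither source supplies directly and needs a short separate justification.
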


In particular, for given $\theta \in ( 0, 1 )$ and $1 \leq q \leq \infty$, applying $\psi ( z ) = z^\alpha ( 1 + z )^{ - \beta }$ with $0 < \mathrm{Re} \alpha \leq \mathrm{Re} \beta < \infty$ to Proposition \ref{p:Besov of Haase} above yields 
\begin{align*}
( X, D ( A ) )_{ \theta, q } = \bigg\{ x \in X: \int_0^\infty \big\| t^{ \theta \alpha } \cdot t^{ \beta - \alpha } A^\alpha ( t + A )^{ - \beta } x \big\|^q \frac{ d t }{ t } < \infty \bigg\}
\end{align*}
with the equivalence of norms
\begin{align}\label{e:Besov norm of Haase}
\| x \|_{ ( X, D ( A ) )_{ \theta, q } } \simeq \| x \|_X + \bigg\{ \int_0^\infty \big\| t^{ \theta \alpha } \cdot t^{ \beta - \alpha } A^{ \alpha } ( t + A )^{ - \beta } x \big\|^q \frac{ d t }{ t } \bigg\}^{ 1 / q }  
\end{align}
(see, \cite[subsection 7.3]{Haase2005}). Obviously, the norm given in the right-hand side of (\ref{e:Besov norm of Haase}) is equivalent to our Besov norm $\left\| \cdot \right\|_{ B^{ \theta \alpha, A }_{ q, X } (k, \beta - \alpha, \alpha ) }$ as that given in (\ref{e:quasi-norm of inhomogeneous Besov spaces}) due to Theorem \ref{t:inhomogeneous Besov spaces-equivalent quasi-norms-continuity} above.


An alternative approach to Besov spaces associated with operators is the so-called Mihlin functional calculus \cite[Definition 3.3]{Kriegler and Weis2016}, which is constructed in the frame of sectorial operators with particular angle $\omega = 0$ (i.e., $0$-sectorial operator for short) and bounded holomorphic functions $f$ satisfying the following Mihlin condition, 
\begin{align*}
\sup_{ t > 0 } \big| t^k f^{ ( k ) } ( t ) \big| < \infty, \quad k = 0, 1, \cdots, N.
\end{align*}
More precisely, let $\alpha > 0$ and write 
\begin{align*}
\mathcal{M}^\alpha := \{ f: \mathbb{R}_+ \mapsto \mathbb{C}: f ( e^x ) \in B^\alpha_{ \infty, 1 } \},
\end{align*}
endowned with the norm $\| f \|_{ \mathcal{M}^\alpha } := \| f_e \|_{ B^\alpha_{ \infty, 1 } }$.
Let $A$ be a sectorial operator of angle $\omega = 0$. Recall that $A$ has a bounded $\mathcal{M}^\alpha$ calculus if there is a constant $C > 0$ such that 
\begin{align*}
\| f ( A ) \| \le C \| f \|_{ \mathcal{M}^\alpha }, \quad f \in \bigcap_{ 0 < \omega < \pi } H^\infty ( \Sigma_\omega ) \cap \mathcal{M}^\alpha,
\end{align*}
See \cite[Definition 3.3]{Kriegler and Weis2016}. Moreover, define 
\begin{align*}
\mathcal{M}^\alpha_1 := \Bigg\{ f \in \mathcal{M}^\alpha: \| f \|_{ \mathcal{M}^\alpha_1 } := \sum_{ n \in \mathbb{Z} } \| f \dot{ \varphi }_n \|_{ \mathcal{M}^\alpha } < \infty \Bigg\},
\end{align*}
where $\{ \dot{ \varphi }_n \}_{ n \in \mathbb{Z} }$ is a homogeneous dyadic partition of unity on $\mathbb{R}_+$ (see \cite[Definition 2.2]{Kriegler and Weis2016}). And recall that $A$ has a bounded $\mathcal{M}^\alpha_1$ calculus if there is a constant $C > 0$ such that 
\begin{align*}
\| f ( A ) \| \le C \| f \|_{ \mathcal{M}^\alpha_1 }, \quad f \in H_*^\infty ( \Sigma_\omega ),  
\end{align*}
for some $\omega \in (0, \pi)$, where 
\begin{align*}
H_*^\infty ( \Sigma_\omega ) = \{ f \in H^\infty ( \Sigma_\omega ): \exists \, C, \epsilon > 0 \mbox{ s.t. } | f ( z ) | \le C ( 1 + | \log z |^{ - 1 - \epsilon } ) \}.  
\end{align*}
See \cite[Definition 3.8]{Kriegler and Weis2016}.
 
Suppose that the $0$-sectorial operator $A$ has a bounded $\mathcal{M}^\alpha_1$ calculus for some $\alpha > 0$, and let $\{ \dot{\varphi}_n \}_{ n \in \mathbb{Z} }$ and $\{ \varphi_n \}_{ n \in \mathbb{N}_0 }$ be a homogeneous and an inhomogeneous partition of unity on $\mathbb{R}_+$, respectively (see \cite[Definition 2.2]{Kriegler and Weis2016}). Let $s \in \mathbb{R}$ and $1 \leq q \leq \infty$. Recall that the homogeneous Besov space $\dot{B}^s_q ( A )$ and inhomogeneous Besov space $B^s_q ( A )$ are defined by  
\begin{align*}
 \dot{B}^s_q ( A ) := \left\{ x \in \dot{X}_{ - N } + \dot{X}_{ N }: \| x \|_{ \dot{B}^s_q ( A ) } := \bigg( \sum_{ n \in \mathbb{Z} } 2^{ n s q } \| \dot{\varphi}_n ( A ) x \|^q \bigg)^{ 1 / q } < \infty \right\}
\end{align*}
and 
\begin{align*}
 B^s_q ( A ) := \Bigg\{ x \in \dot{X}_{ - N } + \dot{X}_{ N }: \| x \|_{ B^s_q ( A ) } := \bigg( \sum_{ n \in \mathbb{N}_0 } 2^{ n s q } \| \varphi_n ( A ) x \|^q \bigg)^{ 1 / q } < \infty \Bigg\}
\end{align*}
(with standard modification if $q = \infty$), where $ - N < s < N$. These spaces are independent of the choice of partition of unity and index $N$, so that they are well defined as Banach spaces. We refer to the reader to \cite[Section 5]{Kriegler and Weis2016} for more information. 

\begin{prop}\cite[Theorems 5.2 and 5.3 and Remark 5.4]{Kriegler and Weis2016}\label{p:Besov of Kriegler and Weis}
Let $A$ be a sectorial operator of angle $\omega = 0$ with an $\mathcal{M}^\alpha_1$ calculus, 
and let $1 \leq q \leq \infty$. Furthermore, let $f : ( 0, \infty ) \rightarrow \mathbb{C}$ be a function satisfying $\sum_{ k \in \mathbb{Z} } \| f \dot{\varphi}_0 ( 2^{ - k } \cdot ) \|_{ \mathcal{M}^\alpha_1 } 2^{ - k s } < \infty$ and $f^{ - 1 } \dot{\varphi}_0 \in \mathcal{M}^\alpha_1$. Then, with standard modification for $q = \infty$, the following two statements hold. 
\begin{itemize}
\item[1.] For the homogeneous Besov space $\dot{B}^s_q ( A )$ with $s \in \mathbb{R}$, we have the norm equivalence
\begin{align*}
\| x \|_{ \dot{B}^s_q ( A ) } \simeq \left\{ \int_0^\infty t^{ - s q } \big\| f ( t A ) x \big\|^q \, \frac{ d t }{ t } \right\}^{ 1 / q }.
\end{align*}
\item[2.] For the inhomogeneous Besov space $B^s_q ( A )$ with $s > 0$, we have the norm equivalence
\begin{align*}
\| x \|_{ B^s_q ( A ) } \simeq \| x \| + \left\{ \int_0^\infty t^{ - s q } \big\| f ( t A ) x \big\|^q \, \frac{ d t }{ t } \right\}^{ 1 / q }.
\end{align*}
\end{itemize}
\end{prop}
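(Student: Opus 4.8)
The plan is to reduce the claimed norm equivalence to two one-sided estimates and to exploit the boundedness of the $\mathcal{M}^\alpha_1$ calculus together with the almost-orthogonality of a dyadic partition of unity. Throughout, I would fix $\omega\in(0,\pi)$ for which $A$ has a bounded $\mathcal{M}^\alpha_1$ calculus, so that $\|g(A)\|\lesssim\|g\|_{\mathcal{M}^\alpha_1}$ for admissible $g$, and use that the $\mathcal{M}^\alpha_1$ norm is dilation invariant, i.e.\ $\|g(c\,\cdot)\|_{\mathcal{M}^\alpha_1}=\|g\|_{\mathcal{M}^\alpha_1}$ for $c>0$ (a consequence of the translation invariance of $B^\alpha_{\infty,1}$ after the change of variable $\lambda\mapsto e^x$). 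I would treat the homogeneous statement~1 first and then deduce the inhomogeneous statement~2.

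For the direction $\{\int_0^\infty t^{-sq}\|f(tA)x\|^q\,dt/t\}^{1/q}\lesssim\|x\|_{\dot B^s_q(A)}$, I would insert the resolution $x=\sum_{n\in\mathbb{Z}}\dot\varphi_n(A)x$, valid in $\dot X_{-N}+\dot X_N$, to obtain $f(tA)x=\sum_n [f(t\,\cdot)\dot\varphi_n](A)x$, and observe that, after the dilation normalizing the $n$-th dyadic block, $f(t\,\cdot)\dot\varphi_n$ equals $f\dot\varphi_0(2^{-k}\,\cdot)$ up to an index shift, so that $\|[f(t\,\cdot)\dot\varphi_n](A)\|\lesssim\|f\dot\varphi_0(2^{-k}\,\cdot)\|_{\mathcal{M}^\alpha_1}$ with $k$ essentially $n+\log_2 t$. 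Splitting $\int_0^\infty=\sum_{j}\int_{2^j}^{2^{j+1}}$ and using the uniform bounds of Lemma~\ref{l:uniform boundedness compositions} to replace $t$ by $2^j$ in the estimates, this reduces to a discrete convolution inequality in $\ell_q(\mathbb{Z})$: the sequence $(2^{ns}\|\dot\varphi_n(A)x\|)_n$ convolved with $(2^{-ks}\|f\dot\varphi_0(2^{-k}\,\cdot)\|_{\mathcal{M}^\alpha_1})_k$. The summability hypothesis $\sum_k\|f\dot\varphi_0(2^{-k}\,\cdot)\|_{\mathcal{M}^\alpha_1}2^{-ks}<\infty$ then lets me apply Young's inequality for all $0<q\le\infty$ (using \eqref{e: q-inequality-series<1} when $0<q\le1$). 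For the reverse direction I would use $h:=f^{-1}\dot\varphi_0\in\mathcal{M}^\alpha_1$: since $\dot\varphi_n=\dot\varphi_n(\dot\varphi_{n-1}+\dot\varphi_n+\dot\varphi_{n+1})$ on its support, $\dot\varphi_n(A)x$ can be written as a fixed finite combination of operators $h_n(2^nA)\,f(2^{-n}A)x$ with $\|h_n\|_{\mathcal{M}^\alpha_1}\lesssim1$, whence $\|\dot\varphi_n(A)x\|\lesssim\|f(2^{-n}A)x\|$ and $\sum_n2^{nsq}\|\dot\varphi_n(A)x\|^q\lesssim\int_0^\infty t^{-sq}\|f(tA)x\|^q\,dt/t$, the last step again a discretization of $t$ via Lemma~\ref{l:uniform boundedness compositions}.

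Combining the two bounds gives statement~1. For statement~2 ($s>0$) I would add the low-frequency block: use that $\|x\|\simeq\|\varphi_0(A)x\|+\bigl(\sum_{n\ge1}2^{nsq}\|\dot\varphi_n(A)x\|^q\bigr)^{1/q}$ and split $\int_0^\infty=\int_0^1+\int_1^\infty$, where $\int_1^\infty$ is handled exactly as in the homogeneous case and $\int_0^1$ is dominated by $\|x\|$ because $f(tA)$ is uniformly bounded for $0<t\le1$ (boundedness of the calculus) and $s>0$ makes $\int_0^1 t^{-sq}\,dt/t$ contribute only a constant after comparing with the mass already captured by $\|\varphi_0(A)x\|$. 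Independence of the choice of partition of unity and of $N$, already built into the definition, then yields the stated equivalence.

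I expect the main obstacle to be the bookkeeping of dilations, namely verifying rigorously that $\|[f(t\,\cdot)\dot\varphi_n](A)\|$ is dominated by $\|f\dot\varphi_0(2^{-k}\,\cdot)\|_{\mathcal{M}^\alpha_1}$ with the correct index shift $k$; this rests on $\dot\varphi_n$ being an exact dilate of $\dot\varphi_0$ and on multiplication by the smooth dyadically supported bump $\dot\varphi_0$ being bounded on $\mathcal{M}^\alpha_1$, i.e.\ the algebra property of $B^\alpha_{\infty,1}$ transported through the logarithmic change of variables. A secondary technical point is justifying convergence of $\sum_n\dot\varphi_n(A)x$ and the interchange of this sum with $f(tA)$ in the extrapolation scale $\dot X_{-N}+\dot X_N$, which forces one to use the mapping properties of the $\mathcal{M}^\alpha_1$ calculus on the homogeneous fractional domain/range spaces rather than on $X$ alone.
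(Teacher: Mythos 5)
The paper does not supply its own proof of this proposition: it is stated with an explicit citation to Kriegler and Weis \cite[Theorems 5.2 and 5.3 and Remark 5.4]{Kriegler and Weis2016}, and the authors simply import the result. There is therefore no in-paper argument against which to compare your proposal; your task here was effectively to reconstruct the Kriegler--Weis proof, not one appearing in this manuscript.

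Your sketch does capture the structure of the Kriegler--Weis argument: expand $x$ in the Littlewood--Paley resolution $\sum_n \dot\varphi_n(A)x$, use dilation invariance of the $\mathcal{M}^\alpha_1$ norm to recognize $\|f(t\cdot)\dot\varphi_n\|_{\mathcal{M}^\alpha_1}$ as $\|f\dot\varphi_0(2^{-k}\cdot)\|_{\mathcal{M}^\alpha_1}$ with $k$ the appropriate shift, and close the forward estimate by Young's convolution inequality in $\ell_q(\mathbb{Z})$ using the hypothesis $\sum_k\|f\dot\varphi_0(2^{-k}\cdot)\|_{\mathcal{M}^\alpha_1}2^{-ks}<\infty$; then use $f^{-1}\dot\varphi_0\in\mathcal{M}^\alpha_1$ and $\dot\varphi_n(\lambda)=[f^{-1}\dot\varphi_0](2^{-n}\lambda)\,f(2^{-n}\lambda)$ for the reverse estimate. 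One point needs repair, though: you invoke Lemma~\ref{l:uniform boundedness compositions} to pass from the continuous integral over $t\in[2^j,2^{j+1}]$ to the dyadic point $t=2^j$, but that lemma concerns only resolvent expressions $A^\alpha(t+A)^{-\beta}$, not general $f(tA)$. In the Kriegler--Weis setting the discretization is instead carried by the $\mathcal{M}^\alpha_1$ control on $t\mapsto f(t\cdot)\dot\varphi_0$ uniformly over dyadic windows (equivalently, dilation invariance plus the algebra property of $\mathcal{M}^\alpha_1$ applied to $f(t\cdot)\dot\varphi_0$ versus $f(2^j\cdot)\dot\varphi_0$), so one should replace that citation by the corresponding Mihlin-calculus bound. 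The remaining technical caveats you flag — convergence of $\sum_n\dot\varphi_n(A)x$ in the extrapolation scale $\dot X_{-N}+\dot X_N$ and the boundedness of multiplication by $\dot\varphi_0$ on $\mathcal{M}^\alpha_1$ — are exactly the points that Kriegler and Weis have to address and are not gaps in your understanding, only places where a complete write-up would need the infrastructure of their Sections~2--4.
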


Let $s \in \mathbb{R}$ and fix $0 \leq \alpha \leq \beta < \infty$ such that $0 < \alpha - s < \beta < \infty$. Applying $f ( t ) = t^\alpha ( 1 + t )^{ - \beta }$ in Proposition \ref{p:Besov of Kriegler and Weis} yields 
\begin{align*}
 \| x \|_{ \dot{B}^s_q ( A ) } & \simeq \left\{ \int_0^\infty t^{ - s q } \big\| t^\alpha A^\alpha ( 1 + t A )^{ - \beta } x \big\|^q \, \frac{ d t }{ t } \right\}^{ 1 / q } \\
& = \left\{ \int_0^\infty t^{ s q } \big\| t^{ \beta - \alpha } A^\alpha ( t + A )^{ - \beta } x \big\|^q \, \frac{ d t }{ t } \right\}^{ 1 / q },
\end{align*}
which coincides with our Besov norm $\| x \|_{ \dot{B}^{ s, A }_{ q, X } ( \beta - \alpha, \alpha ) }$ as that given in (\ref{e:Besov quasi-norms-homogeneous}). On the other hand, if $s > 0$, again taking $0 \leq \alpha \leq \beta < \infty$ such that $0 < \alpha - s < \beta < \infty$ and applying $f ( t ) = t^\alpha ( 1 + t )^{ - \beta }$ in Proposition \ref{p:Besov of Kriegler and Weis} yields 
\begin{align*}
 \| x \|_{ B^s_q ( A ) } & \simeq \| x \| + \left\{ \int_0^\infty t^{ - s q } \big\| t^\alpha A^\alpha ( 1 + t A )^{ - \beta } x \big\|^q \frac{ d t }{ t } \right\}^{ 1 / q } \\
& = \| x \| + \left\{ \int_0^\infty t^{ s q } \big\| t^{ \beta - \alpha } A^\alpha ( t + A )^{ - \beta } x \big\|^q \frac{ d t }{ t } \right\}^{ 1 / q }, 
\end{align*}
which coincides with our Besov norm $\| x \|_{ B^{ s, A }_{ q, X } ( 0, \beta - \alpha, \alpha ) }$ given by (\ref{e:quasi-norm of inhomogeneous Besov spaces}).

\smallskip


\subsection{Semigroups revisited}

Let $0 < \theta \le \pi / 2$, and let $A$ be the negative generator of a bounded analytic semigroup $\mathcal{T}$ of angle $\theta$ on $X$. It is well known that $A$ is non-negative (more precisely, sectorial of angle $\pi / 2 - \theta$) on $X$ with dense domain.
By a routine calculation of complex integrals, one can verify that 
\begin{align}\label{e:uniform boundedness-powers-semigroups}
\sup_{ t > 0 } \left\| ( t A )^\alpha \mathcal{T} ( t ) \right\| < \infty 
\end{align}
for $\alpha \in \mathbb{C}_+$ (see \cite[Theorem 12.1]{Komatsu1966}).
Let $\alpha \in \mathbb{C}_+$ and $s > 0$, and write $C_\mathcal{T} := \sup_{ t \ge 0 } \| \mathcal{T} ( t ) \|$. From (\ref{e:uniform boundedness-powers-semigroups}) it can be
seen that 
\begin{align}\label{e:dyadic estimates-semigroups}
\left\| ( t A )^\alpha \mathcal{T} ( t ) \right\| \le 2^\alpha C_\mathcal{T} \left\| ( s A )^\alpha \mathcal{T} ( s ) \right\|, \quad s \le t \le 2 s,
\end{align}
by observing the well known semigroup property, i.e., $\mathcal{T} ( t + s ) = \mathcal{T} ( t ) \mathcal{T} ( s )$ for $t, s \ge 0$. Moreover, by a standard calculation of real integrals, one can also verify a semigroup version of Lemma \ref{l:representation of fractional powers-beta-weak convergence} (see \cite[Theorem 5.4]{Komatsu1967}).
In particular, one has 
\begin{align}\label{e:fractional powers-semigroups}
A^\alpha x = \frac{ 1 }{ \Gamma ( \beta - \alpha ) } \int_0^\infty t^{ - \alpha } ( t A )^\beta \mathcal{T} ( t ) x \, \frac{ d t }{ t }
\end{align}
for $x \in D ( A^{ \alpha + \epsilon } )$ with $\epsilon > 0$.

Let $s > 0$ and $0 < q \leq \infty$. For $k \in \mathbb{Z}$ and $\beta \in \mathbb{C}_+$ with $s < \RE \beta$, we define $\tilde{B}^{ s, \mathcal{T} }_{ q, X } ( k, \beta )$ to be the completion of $X$ with respect to the quasi-norm 
\begin{align}\label{e:Besov quasinorm-inhomogeneous-semigroups}
\| x \|_{ \tilde{B}^{ s, \mathcal{T} }_{ q, X } ( k, \beta ) } := \| x \| + | x |_{ \tilde{R}^{ s, \mathcal{T} }_{ q, X } ( k, \beta ) }, 
\end{align}
where 
\begin{align*}
| x |_{ \tilde{R}^{ s, \mathcal{T} }_{ q, X } ( k, \beta ) } := \Bigg\{ \sum_{ j = k }^\infty \big\| 2^{ j ( s - \beta ) } A^{ \beta } \mathcal{T} ( 2^{ - j } ) x \big\|^q \Bigg\}^{ 1 / q }.
\end{align*}


The following two propositions state that the resolvent of the underlying operator $A$ can be replaced by the semigroup generated by $A$ in the description of abstract Besov spaces.

\begin{prop}\label{p:bounded analytic semigroups-inhomogeneous}
Let $s > 0$ and $0 < q \leq \infty$ and let $A$ be the negative generator of a bounded analytic semigroup $\mathcal{T}$ on $X$. Fix $k \in \mathbb{Z}$ and $\beta \in \mathbb{C}_+$ such that $s < \RE \beta$. Then, for $x \in X$,
\begin{align*}
\| x \|_{ \tilde{B}^{ s, \mathcal{T} }_{ q, X } } := \| x \|_{ \tilde{B}^{ s, \mathcal{T} }_{ q, X } ( k, \beta ) } \simeq \| x \|_{ B^{ s, A }_{ q, X } },
\end{align*}
where $\left\| \cdot \right\|_{ \tilde{B}^{ s, \mathcal{T} }_{ q, X } ( k, \beta ) }$ is the quasi-norm on $X$ given by (\ref{e:Besov quasinorm-inhomogeneous-semigroups}).
Therefore, the quasi-Banach space $\tilde{B}^{ s, \mathcal{T} }_{ q, X } ( k, \beta )$ is independent of the choice of $k$
and $\beta$ and 
\begin{align*}
\tilde{B}^{ s, \mathcal{T} }_{ q, X } := \tilde{B}^{ s, \mathcal{T} }_{ q, X } ( k, \beta ) = B^{ s, A }_{ q, X }
\end{align*}
in the sense of equivalent quasi-norms.
\end{prop}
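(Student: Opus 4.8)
The plan is to establish the quasi-norm equivalence $\|x\|_{\tilde B^{s,\mathcal T}_{q,X}(k,\beta)}\simeq\|x\|_{B^{s,A}_{q,X}}$ for every $x\in X$; the identification of the two spaces and the independence of $\tilde B^{s,\mathcal T}_{q,X}(k,\beta)$ from $k$ and $\beta$ then follow at once from the already-proven independence of $B^{s,A}_{q,X}$ (Remark \ref{r:independence of klm-inhomogeneous}) together with the completeness of $B^{s,A}_{q,X}$ for $s>0$ (Proposition \ref{p:completeness-inhomogeneous}). Since $\mathcal T$ is a bounded analytic semigroup, $A$ is densely defined and sectorial of angle $\omega_A<\pi/2$, so $\overline{D(A)}=X$; using Proposition \ref{p:completeness-inhomogeneous}, Remark \ref{r:inhomogeneous Besov spaces-equivalent quasi-norms-continuity} and the parameter-independence of Lemma \ref{l:independence of klm-inhomogeneous}, we are free to compute $\|\cdot\|_{B^{s,A}_{q,X}}$ with any $k\in\mathbb Z$, any small $\epsilon>0$ playing the role of $\alpha$, and any $\beta\in\mathbb C_+$ with $s<\RE\beta$. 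As both $\|x\|_{B^{s,A}_{q,X}(k,\epsilon,\beta)}$ and $\|x\|_{\tilde B^{s,\mathcal T}_{q,X}(k,\beta)}$ carry the common term $\|x\|$ (up to $\|(2^k+A)^{-\epsilon}x\|\simeq\|x\|$), it suffices to prove
\[
|x|_{R^{s,A}_{q,X}(k,\epsilon,\beta)}\lesssim\|x\|+|x|_{\tilde R^{s,\mathcal T}_{q,X}(k,\beta)},\qquad
|x|_{\tilde R^{s,\mathcal T}_{q,X}(k,\beta)}\lesssim\|x\|+|x|_{R^{s,A}_{q,X}(k,\epsilon,\beta)}.
\]
We treat $0<q<\infty$; the case $q=\infty$ is identical with suprema replacing $\ell^q$-sums.

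For the first inequality I would start from the subordination identity $(\lambda+A)^{-\beta-\epsilon}=\Gamma(\beta+\epsilon)^{-1}\int_0^\infty u^{\beta+\epsilon-1}e^{-\lambda u}\mathcal T(u)\,du$, apply $A^\beta$ under the integral (legitimate by closedness of $A^\beta$, cf. \cite[Proposition 1.1.7]{ABHN2001}, since $\|A^\beta\mathcal T(u)x\|=u^{-\RE\beta}\|(uA)^\beta\mathcal T(u)x\|\le Cu^{-\RE\beta}\|x\|$ by (\ref{e:uniform boundedness-powers-semigroups})), and rewrite $A^\beta\mathcal T(u)=u^{-\beta}(uA)^\beta\mathcal T(u)$. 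After the change of variable $\tau=\lambda u$ this gives
\[
\lambda^{\epsilon}A^\beta(\lambda+A)^{-\beta-\epsilon}x=\frac{1}{\Gamma(\beta+\epsilon)}\int_0^\infty(\lambda u)^{\epsilon}e^{-\lambda u}\,\big[(uA)^\beta\mathcal T(u)x\big]\,\frac{du}{u},
\]
a multiplicative convolution of the semigroup atom $u\mapsto(uA)^\beta\mathcal T(u)x$ against the kernel $K(\tau)=\Gamma(\beta+\epsilon)^{-1}\tau^{\epsilon}e^{-\tau}$, whose modulus lies in $L^1((0,\infty);d\tau/\tau)$. Putting $\lambda=2^j$, writing $\int_0^\infty=\sum_{i\in\mathbb Z}\int_{2^{-i-1}}^{2^{-i}}$, using the dyadic estimate (\ref{e:dyadic estimates-semigroups}) to replace $(uA)^\beta\mathcal T(u)x$ by $(2^{-i}A)^\beta\mathcal T(2^{-i})x$ up to a constant on each block, and absorbing $\int_{2^{-i-1}}^{2^{-i}}|K(2^{j}u)|\tfrac{du}{u}$ into coefficients $c_{j-i}\ge0$ with $\sum_{n}c_n<\infty$, the bound reduces — exactly as in Lemma \ref{l:lp-boundedness-T} — to $\ell^q$-boundedness of convolution by an $\ell^1$ sequence on $\mathbb Z$; the low-frequency range $u\gtrsim2^{-k}$, invisible to the inhomogeneous seminorm, is estimated by $\|(uA)^\beta\mathcal T(u)x\|\le C\|x\|$ against the rapidly decaying tail of $K$ and contributes $\lesssim\|x\|$. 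This yields the first inequality.

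For the reverse inequality the crux is to represent the semigroup atom conversely as a superposition of resolvent atoms. Since $g(z)=z^\beta e^{-z}\in H_0^\infty(\Sigma_{\omega})$ for any $\omega\in(\omega_A,\pi/2)$ and $A$ carries the primary sectorial functional calculus, a Calder\'on-type reproducing formula — a Mellin-transform factorization of $g$ against an atom $h(w)=w^{\delta}(1+w)^{-\gamma-\delta}$, in the spirit of Lemma \ref{l:reproducing formulas} and Proposition \ref{p:representation of fractional powers-all} — produces, for suitable $\gamma,\delta\in\mathbb C_+$, a representation
\[
(tA)^\beta\mathcal T(t)x=c\int_0^\infty\Psi(t\lambda)\,\big[\lambda^{\gamma}A^{\delta}(\lambda+A)^{-\gamma-\delta}x\big]\,\frac{d\lambda}{\lambda}+r_t(x),
\]
where $\RE\gamma$ and $\RE\delta$ are taken large enough that the kernel $\Psi$ arising from the functional-calculus bound decays like a positive power of $\tau$ as $\tau\to0$ and faster than any power as $\tau\to\infty$, hence its modulus lies in $L^1((0,\infty);d\tau/\tau)$, and the remainder $r_t(x)$ — present because $x$ need not lie in $\overline{R(A)}$ — satisfies $\sup_{t>0}\|r_t(x)\|\lesssim\|x\|$, handled as in Proposition \ref{p:inclusions-powers-inhomogeneous} and Remark \ref{r:inhomogeneous Besov spaces-equivalent quasi-norms-continuity}(i). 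Given such a representation, the same dyadic discretization and $\ell^q$-convolution argument, now invoking (\ref{e:resolvent estimate-discrete}) in place of (\ref{e:dyadic estimates-semigroups}), yields $|x|_{\tilde R^{s,\mathcal T}_{q,X}(k,\beta)}\lesssim\|x\|+|x|_{R^{s,A}_{q,X}}$, after once more using Lemma \ref{l:independence of klm-inhomogeneous} to return to the parameters $(k,\epsilon,\beta)$. Combining the two directions gives the claimed equivalence, whence $\tilde B^{s,\mathcal T}_{q,X}(k,\beta)=B^{s,A}_{q,X}$. I expect the construction of this reproducing representation, and in particular the verification that $\Psi$ is genuinely integrable against $d\tau/\tau$ with the stated decay, to be the only genuinely delicate point; all the remaining estimates are instances of the multiplicative-convolution machinery already developed for Lemma \ref{l:lp-boundedness-T} and Theorem \ref{t:smoothness reiteration}.
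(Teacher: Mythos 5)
Your first direction (bounding the resolvent seminorm by the semigroup seminorm) is correct, and the Laplace-subordination identity $(\lambda+A)^{-\beta-\epsilon}=\Gamma(\beta+\epsilon)^{-1}\int_0^\infty u^{\beta+\epsilon-1}e^{-\lambda u}\mathcal T(u)\,du$ is a cleaner route than the paper takes (the paper instead writes $A^m(2^j+A)^{-m}x$ via (\ref{e:fractional powers-semigroups}) and works through three pieces $K_1,K_2,K_3$). Your kernel $K(\tau)=\Gamma(\beta+\epsilon)^{-1}\tau^\epsilon e^{-\tau}$ lies in $L^1(d\tau/\tau)$, and the discretized weighted coefficients $c_n2^{ns}$ behave like $2^{n(\epsilon+s)}$ as $n\to-\infty$ and decay super-exponentially as $n\to+\infty$, so the $\ell^q$-convolution argument and the low-frequency $\lesssim\|x\|$ tail both go through.

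The reverse direction as you have it, however, has a genuine gap: the scalar kernel $\Psi$ in
\begin{align*}
(tA)^\beta\mathcal T(t)x=c\int_0^\infty\Psi(t\lambda)\,\big[\lambda^{\gamma}A^{\delta}(\lambda+A)^{-\gamma-\delta}x\big]\,\frac{d\lambda}{\lambda}+r_t(x)
\end{align*}
does not exist with $\Psi\in L^1((0,\infty);d\tau/\tau)$. Specializing $A$ to a positive scalar, such a formula forces $\Psi$ to be the multiplicative deconvolution of $w^\beta e^{-w}$ by $w^\delta(1+w)^{-\gamma-\delta}$, i.e.\ its Mellin transform is (up to the remainder and a constant) $\Gamma(\zeta+\beta)\big/\big[\Gamma(\zeta+\delta)\Gamma(\gamma-\zeta)\big]$. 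By Stirling this grows like $|\tau|^{c}e^{\pi|\tau|/2}$ along any vertical line $\zeta=\sigma+i\tau$, so its inverse Mellin transform is not a function and in particular not in $L^1(d\tau/\tau)$. Enlarging $\RE\gamma,\RE\delta$ improves only the polynomial factor, not the exponential one; the obstruction is that $z^\beta e^{-z}$ decays exponentially on the sector while the resolvent atom decays only polynomially, so deconvolving the former by the latter cannot yield an integrable kernel. Your opening observation that $g(z)=z^\beta e^{-z}\in H_0^\infty(\Sigma_\omega)$ actually points to a correct (and shorter) fix, but as a \emph{pointwise} factorization rather than a convolution: write $g=h\cdot p$ with $h(w)=w^\delta(1+w)^{-\gamma-\delta}$ and $p(w)=w^{\beta-\delta}(1+w)^{\gamma+\delta}e^{-w}$. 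For $s<\RE\delta<\RE\beta$ and $\RE\gamma>0$ one has $p\in H_0^\infty(\Sigma_\omega)$ for $\omega\in(\omega_A,\pi/2)$, and the Dunford--Riesz bound is scale-invariant, giving $\sup_{t>0}\|p(tA)\|<\infty$; hence $\|(tA)^\beta\mathcal T(t)x\|=\|p(tA)h(tA)x\|\lesssim\|(tA)^\delta(1+tA)^{-\gamma-\delta}x\|$ uniformly in $t$, which at $t=2^{-j}$ bounds $|x|_{\tilde R^{s,\mathcal T}_{q,X}(k,\beta)}$ by $|x|_{R^{s,A}_{q,X}(k,\gamma,\delta)}$ term by term. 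The paper instead applies (\ref{e:representation of fractional powers-beta-domain of A-alpha+e}) to $A^\beta\mathcal T(2^{-j})x$ and splits into $J_1,J_2,J_3$, invoking (\ref{e:uniform boundedness-powers-semigroups}) only in the high-frequency piece; either route closes the argument, but the convolution formula you propose does not.
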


\begin{proof}
First we verify that 
\begin{align}\label{e:bounded analytic semigroups-inhomogeneous-T<B}
\| x \|_{ \tilde{B}^{ s, \mathcal{T} }_{ q, X } ( k, \beta ) } \lesssim \| x \|_{ B^{ s, A }_{ q, X } }. 
\end{align}
Let $x \in X$ such that $\| x \|_{ B^{ s, A }_{ q, X } } < \infty$ and fix an integer $m > \RE \beta$. Applying 
(\ref{e:representation of fractional powers-beta-domain of A-alpha+e}) to $A^\beta$ yields  
\begin{align*}
| x |_{ \tilde{R}^{ s, \mathcal{T} }_{ q, X } ( k, \beta ) } 
= {} & C \Bigg\{ \sum_{ j = k }^\infty \bigg\| \int_0^\infty 2^{ j ( s - \beta ) } \lambda^\beta
A^m ( \lambda + A )^{ - m } \mathcal{T} (2^{ - j } ) x \, \frac{ d \lambda}{ \lambda } \bigg\|^q \Bigg\}^{ 1 / q },
\end{align*}
where $C = \frac{ | \Gamma( m ) | }{ | \Gamma ( \beta ) \Gamma ( m - \beta ) | }$.
Rewriting the integral $\int_0^\infty = \sum_{ i = - \infty }^{ \infty } \int_{ 2^i }^{ 2^{ i +1 } }$ yields 
\begin{align*}
| x |_{ \tilde{R}^{ s, \mathcal{T} }_{ q, X } ( k, \beta ) } 
\lesssim \Bigg\{ \sum_{ j = k }^\infty \bigg[ \sum_{ i = - \infty }^{ \infty } \int_{ 2^i }^{ 2^{ i + 1 } } \big\| 2^{ j ( s - \beta ) } \lambda^\beta A^m ( \lambda + A )^{ - m } \mathcal{T} (2^{ - j } ) x \big\| \, \frac{ d \lambda}{ \lambda } \bigg]^q \Bigg\}^{ 1 / q }.
\end{align*}
Applying the estimate (\ref{e:resolvent estimate-discrete}) with $c = 1$ yields 
\begin{align*}
| x |_{ \tilde{R}^{ s, \mathcal{T} }_{ q, X } ( k, \beta ) } \lesssim J,
\end{align*}
where 
\begin{align*}
J = \Bigg\{ \sum_{ j = k }^\infty \bigg[ \sum_{ i = - \infty }^{ \infty } \big\| 2^{ j ( s - \beta ) } 2^{ i \beta } A^m ( 2^i + A )^{ - m } \mathcal{T} (2^{ - j } ) x \big\| \bigg]^q \Bigg\}^{ 1 / q }.
\end{align*}
It can be verified that
\begin{align}\label{e:bounded analytic semigroups-inhomogeneous-proof1}
J \lesssim \| x \|_{ B^{ s, A }_{ q, X } }.
\end{align}
We merely verify (\ref{e:bounded analytic semigroups-inhomogeneous-proof1}) in the case $0 < q \leq 1$ and the case when $1 < q \leq \infty$ can be verified analogously. To this end, let $0 < q \leq 1$. It follows from (\ref{e: q-inequality-series<1}) that 
\begin{align*}
J & \leq \Bigg\{ \sum_{ j = k }^\infty \sum_{ i = - \infty }^{ \infty } \big\| 2^{ j ( s - \beta ) } 2^{ i \beta } A^m ( 2^i + A )^{ - m } \mathcal{T} (2^{ - j } ) x \big\|^q \Bigg\}^{ 1 / q } \\
& = J_1 + J_2 + J_3,
\end{align*}
where 
\begin{align*}
J_1 & = \Bigg\{ \sum_{ j = k }^\infty \sum_{ i = - \infty }^{ k - 1 } \big\| 2^{ j ( s - \beta ) } 2^{ i \beta } A^m ( 2^i + A )^{ - m } \mathcal{T} (2^{ - j } ) x \big\|^q \Bigg\}^{ 1 / q }, \\
J_2 & = \Bigg\{ \sum_{ j = k }^\infty \sum_{ i = k }^{ j - 1 } \big\| 2^{ j ( s - \beta ) } 2^{ i  \beta } A^m ( 2^i + A )^{ - m } \mathcal{T} (2^{ - j } ) x \big\|^q \Bigg\}^{ 1 / q }, \\
J_3 & = \Bigg\{ \sum_{ j = k }^\infty \sum_{ i = j }^{ \infty } \big\| 2^{ j ( s - \beta ) } 2^{ i  \beta } A^m ( 2^i + A )^{ - m } \mathcal{T} (2^{ - j } ) x \big\|^q \Bigg\}^{ 1 / q }.
\end{align*}
As for the part $J_1$, applying (\ref{e:uniform boundedness compositions-L}) to $A^m ( 2^i + A )^{ - m }$ yields 
\begin{align*}
J_1 \lesssim {} & \Bigg\{ \sum_{ j = k }^\infty \sum_{ i = - \infty }^{ k - 1 } \big\| 2^{ j ( s - \beta ) } 2^{ i \beta } \mathcal{T} (2^{ - j } ) x \big\|^q \Bigg\}^{ 1 / q } \lesssim \| x \|,
\end{align*}
where the last inequality follows from the uniform boundedness of $\{ \mathcal{T} ( t ) \}_{ t \ge 0 }$ and the fact that $0 < s < \RE \beta$. 
As for the part $J_2$, exchanging the order of summation yields 
\begin{align*}
J_2 & = \Bigg\{ \sum_{ i = k }^\infty \sum_{ j = i + 1 }^{ \infty } \big\| 2^{ j ( s - \beta ) } 2^{ i \beta } A^m ( 2^i + A )^{ - m } \mathcal{T} (2^{ - j } ) x \big\|^q \Bigg\}^{ 1 / q } \\
& \lesssim \Bigg\{ \sum_{ i = k }^\infty \bigg[ \sum_{ j = i + 1 }^{ \infty } 2^{ j ( s - \RE \beta ) q } \bigg] \big\| 2^{ i \beta } A^m ( 2^i + A )^{ - m } x \big\|^q \Bigg\}^{ 1 / q } \\
& \simeq \Bigg\{ \sum_{ i = k }^\infty \big\| 2^{ j s } A^m ( 2^i + A )^{ - m } x \big\|^q \Bigg\}^{ 1 / q } = | x |_{ R^{ s, A }_{ q, X } ( k, 0, m ) },
\end{align*}
And as for the part $J_3$, exchanging the order of summation yields
\begin{align*}
J_3 & = \Bigg\{ \sum_{ i = k }^{ \infty } \sum_{ j = k }^i \big\| 2^{ j ( s - \beta ) } 2^{ i \beta } A^m ( 2^i + A )^{ - m } \mathcal{T} (2^{ - j } ) x \big\|^q \Bigg\}^{ 1 / q } \\
& = \Bigg\{ \sum_{ i = k }^{ \infty } \sum_{ j = k }^i \big\| 2^{ j s } 2^{ i \beta } ( 2^{ - j } A )^{ \beta } \mathcal{T} ( 2^{ - j } ) A^{ m - \beta } ( 2^i + A )^{ - m } x \big\|^q \Bigg\}^{ 1 / q }.
\end{align*}
Thanks to (\ref{e:uniform boundedness-powers-semigroups}), applying the uniform boundedness of $( 2^{ - j } A )^{ \beta } \mathcal{T} ( 2^{ - j } )$ yields 
\begin{align*}
J_3 & \lesssim \Bigg\{ \sum_{ i = k }^{ \infty } \Bigg( \sum_{ j = k }^i 2^{ j s q } \Bigg) \big\| 2^{ i \beta } A^{ m - \beta } ( 2^i + A )^{ - m } x \big\|^q \Bigg\}^{ 1 / q } \\
& \lesssim \Bigg\{ \sum_{ i = k }^{ \infty } \big\| 2^{ i ( s + \beta ) } A^{ m - \beta } ( 2^i + A )^{ - m } x \big\|^q \Bigg\}^{ 1 / q } = | x |_{ R^{ s, A }_{ q, X } ( k, \beta, m - \beta ) }.
\end{align*}
This implies that  
\begin{align*}
J \leq J_1 + J_2 + J_3 \lesssim \| x \|_{ B^{ s, A }_{ q, X } },
\end{align*}
which is the desired (\ref{e:bounded analytic semigroups-inhomogeneous-proof1}). Thus, we have verified that (\ref{e:bounded analytic semigroups-inhomogeneous-T<B}).
%
%
%

Next we verify that 
\begin{align}\label{e:bounded analytic semigroups-inhomogeneous-B<T}
| x |_{ R^{ s, A }_{ q, X } ( k, 0, m ) } \lesssim \| x \| + | x |_{ \tilde{R}^{ s, \mathcal{T} }_{ q, X } ( k, m ) } + | x |_{ \tilde{R}^{ s, \mathcal{T} }_{ q, X } ( k, 2m ) } 
\end{align}
for each integer $m > \RE \beta$. 
To this end, fix an integer $m > \RE \beta$ and let $x \in X$ such that $| x |_{ \tilde{R}^{ s, \mathcal{T} }_{ q, X } ( k, m ) }$ and $| x |_{ \tilde{R}^{ s, \mathcal{T} }_{ q, X } ( k, 2m ) }$ are both finite.
From (\ref{e:fractional powers-semigroups}) it follows that 
\begin{align*}
| x |_{ R^{ s, A }_{ q, X } ( k, 0, m ) }
\le {} & \frac{ 1 }{ \Gamma ( m ) } \Bigg\{ \sum_{ j = k }^{ \infty } \bigg[ \int_0^\infty \big\| 2^{ j s } t^m A^{ 2 m } \mathcal{T} ( t ) ( 2^j + A )^{ - m } x \big\| \, \frac{ d t }{ t } \bigg]^q \Bigg\}^{ 1 / q } \\
= {} & \frac{ 1 }{ \Gamma ( m ) } \Bigg\{ \sum_{ j = k }^{ \infty } \bigg[ \int_0^\infty \big\| 2^{ j s } t^{ - m } A^{ 2 m } \mathcal{T} ( t^{ - 1 } ) ( 2^j + A )^{ - m } x \big\| \, \frac{ d t }{ t } \bigg]^q \Bigg\}^{ 1 / q }.
\end{align*}
Rewriting $\int_0^\infty = \sum_{ i = - \infty }^{ \infty } \int_{ 2^{ i - 1 } }^{ 2^i }$ and applying (\ref{e:dyadic estimates-semigroups}) to $t^{ - m } A ^m \mathcal{T} ( 2^{ - j } )$ yields 
\begin{align*}
| x |_{ R^{ s, A }_{ q, X } }
\lesssim {} & K,
\end{align*}
where 
\begin{align*}
K = \Bigg\{ \sum_{ j = k }^{ \infty } \bigg[ \sum_{ i = - \infty }^{ \infty } \big\| 2^{ j s }2^{ - i m } A^{ 2 m } \mathcal{T} ( 2^{ - i } ) ( 2^j + A )^{ - m } x \big\| \bigg]^q \Bigg\}^{ 1 / q }.
\end{align*}
In order to verify (\ref{e:bounded analytic semigroups-inhomogeneous-B<T}), it needs merely to verify that 
\begin{align}\label{e:bounded analytic semigroups-inhomogeneous-proof2}
K \lesssim \| x \| + | x |_{ \tilde{R}^{ s, \mathcal{T} }_{ q, X } ( k, m ) } + | x |_{ \tilde{R}^{ s, \mathcal{T} }_{ q, X } ( k, 2 m ) }.
\end{align}
We merely verify (\ref{e:bounded analytic semigroups-inhomogeneous-proof2}) in the case $0 < q \leq 1$ and the case when $1 < q \leq \infty$ can be verified analogously. To this end, let $0 < q \leq 1$. It follows from (\ref{e: q-inequality-series<1}) that 
\begin{align*}
K \le {} & \Bigg\{ \sum_{ j = k }^{ \infty } \sum_{ i = - \infty }^{ \infty } \big\| 2^{ j s }2^{ - i m } A^{ 2 m } \mathcal{T} ( 2^{ - i } ) ( 2^j + A )^{ - m } x \big\|^q \Bigg\}^{ 1 / q } \\
= {} & K_1 + K_2 + K_3, 
\end{align*}
where 
\begin{align*}
K_1 = {} & \Bigg\{ \sum_{ j = k }^{ \infty } \sum_{ i = - \infty }^{ k - 1 } \big\| 2^{ j s }2^{ - i m } A^{ 2 m } \mathcal{T} ( 2^{ - i } ) ( 2^j + A )^{ - m } x \big\|^q \Bigg\}^{ 1 / q }, \\
K_2 = {} & \Bigg\{ \sum_{ j = k }^{ \infty } \sum_{ i = k }^{ j - 1 } \big\| 2^{ j s }2^{ - i m } A^{ 2 m } \mathcal{T} ( 2^{ - i } ) ( 2^j + A )^{ - m } x \big\|^q \Bigg\}^{ 1 / q }, \\
K_3 = {} & \Bigg\{ \sum_{ j = k }^{ \infty } \sum_{ i = j }^{ \infty } \big\| 2^{ j s }2^{ - i m } A^{ 2 m } \mathcal{T} ( 2^{ - i } ) ( 2^j + A )^{ - m } x \big\|^q \Bigg\}^{ 1 / q }.   
\end{align*}
As for the part $K_1$, applying (\ref{e:uniform boundedness-powers-semigroups}) to $( 2^{ - i } A )^{ 2 m } \mathcal{T} ( 2^{ - i } )$ yields 
\begin{align*}
 K_1 = {} & \Bigg\{ \sum_{ j = k }^{ \infty } \sum_{ i = - \infty }^{ k - 1 } \big\| 2^{ j s } 2^{ i m } ( 2^{ - i } A )^{ 2 m } \mathcal{T} ( 2^{ - i } ) ( 2^j + A )^{ - m } x \big\|^q \Bigg\}^{ 1 / q } \\
\lesssim {} & \Bigg\{ \sum_{ j = k }^{ \infty } \sum_{ i = - \infty }^{ j - 1 } \big\| 2^{ j ( s - m ) } 2^{ i m } 2^{ j m } ( 2^j + A )^{ - m } x \big\|^q \Bigg\}^{ 1 / q } \lesssim \| x \|.
\end{align*}
where the last inequality follows from the uniform boundedness of $2^{ j m } ( 2^j + A )^{ - m }$ due to (\ref{e:uniform boundedness compositions-M}).
As for the pret $K_2$, exchanging the order of summation yields and applying (\ref{e:uniform boundedness compositions-M}) to $2^{ j m } ( 2^j + A )^{ - m }$ yields
\begin{align*}
K_2 = {} & \Bigg\{ \sum_{ i = k }^{ \infty } \sum_{ j = i + 1 }^{ \infty } \big\| 2^{ j ( s - m ) }2^{ - i m } A^{ 2 m } \mathcal{T} ( 2^{ - i } ) 2^{ j m } ( 2^j + A )^{ - m } x \big\|^q \Bigg\}^{ 1 / q } \\
\lesssim {} & \Bigg\{ \sum_{ i = k }^{ \infty } \bigg[ \sum_{ j = i + 1 }^{ \infty } 2^{ j ( s - m ) q } \bigg] \big\|  2^{ - i m } A^{ 2 m } \mathcal{T} ( 2^{ - i } ) x \big\|^q \Bigg\}^{ 1 / q } \\
\simeq {} & \Bigg\{ \sum_{ i = k }^{ \infty } \big\|  2^{ i ( s - 2 m ) } A^{ 2 m } \mathcal{T} ( 2^{ - i } ) x \big\|^q \Bigg\}^{ 1 / q } \le | x |_{ \tilde{R}^{ s, \mathcal{T} }_{ q, X } ( k, 2cm ) }.
\end{align*}
And as for the part $K_3$, exchanging the order of summation yields and applying (\ref{e:uniform boundedness compositions-L}) to $A^m ( 2^j + A )^{ - m }$ yields
\begin{align*}
K_3 = {} & \Bigg\{ \sum_{ i = k }^{ \infty } \sum_{ j = k }^{ i } \big\| 2^{ j s }2^{ - i m } A^{ 2 m } \mathcal{T} ( 2^{ - i } ) ( 2^j + A )^{ - m } x \big\|^q \Bigg\}^{ 1 / q } \\
\lesssim {} & \Bigg\{ \sum_{ i = k }^{ \infty } \bigg( \sum_{ j = k }^{ i } 2^{ j s q } \bigg) \big\| 2^{ - i m } A^{ m } \mathcal{T} ( 2^{ - i } ) x \big\|^q \Bigg\}^{ 1 / q } \lesssim | x |_{ \tilde{R}^{ s, \mathcal{T} }_{ q, X } ( k, m ) }.  
\end{align*}
This implies that 
\begin{align*}
K \le K_1 + K_2 + K_3 \lesssim \| x \| + | x |_{ \tilde{R}^{ s, \mathcal{T} }_{ q, X } ( k, m ) } + | x |_{ \tilde{R}^{ s, \mathcal{T} }_{ q, X } ( k, 2 m ) },   
\end{align*}
which is the desired (\ref{e:bounded analytic semigroups-inhomogeneous-proof2}). Thus, we have verified (\ref{e:bounded analytic semigroups-inhomogeneous-B<T}).

Finally, from (\ref{e:bounded analytic semigroups-inhomogeneous-B<T}) and (\ref{e:bounded analytic semigroups-inhomogeneous-T<B}) it follows that 
\begin{align*}
\| x \|_{ B^{ s, A }_{ q, X } } \simeq {} & \| x \| + | x |_{ R^{ s, A }_{ q, X } ( k, 0, m ) } \\
\lesssim {} & \| x \| + | x |_{ \tilde{R}^{ s, \mathcal{T} }_{ q, X } ( k, m ) } + | x |_{ \tilde{R}^{ s, \mathcal{T} }_{ q, X } ( k, 2 m ) } \\
\lesssim {} & \| x \|_{ \tilde{B}^{ s, \mathcal{T} }_{ q, X } ( k, m ) } + \| x \|_{ \tilde{B}^{ s, \mathcal{T} }_{ q, X } ( k, m ) } \lesssim \| x \|_{ B^{ s, A }_{ q, X } },
\end{align*}
from which the desired conclusion follows immediately.
The proof is complete.
\end{proof}

A homogeneous version of Proposition \ref{p:bounded analytic semigroups-inhomogeneous} is given as follows. 

\begin{prop}\label{p:bounded analytic semigroups-homogeneous}
Let $s > 0$ and $0 < q \leq \infty$ and let $A$ be the negative generator of a bounded analytic semigroup $\mathcal{T}$ on $X$. Fix $\beta \in \mathbb{C}_+$ such that $s < \RE \beta$ and let $x \in \overline{ R ( A ) }$. If $A$ is injective, then 
\begin{align*}
| x |_{ \dot{R}^{ s, A }_{ q, X } ( 0, \beta ) } 
& \simeq \Bigg\{ \sum_{ j = - \infty }^\infty \big\| 2^{ j s } (
2^{ - j } A )^{ \beta } \mathcal{T} ( 2^{ - j } ) x \big\|_X^q \Bigg\}^{
1 / q } \\
& = \Bigg\{ \sum_{ j = - \infty }^\infty \big\| 2^{ - j s } ( 2^{
j } A )^{ \beta } \mathcal{T} ( 2^{ j } ) x \big\|_X^q \Bigg\}^{ 1 / q }
:= | x |_{ \dot{ \tilde{R} }^{ s, \mathcal{T} }_{ q, X } ( \beta ) }.
\end{align*}
If this is the case, $\dot{ B }^{ s, A }_{ q, X }$ is the completion of $\dot{ R }^{ s, A }_{ q, X } ( 0, \beta )$ with respect to the quasi-norm $\left| \cdot \right|_{ \dot{ \tilde{R} }^{ s, \mathcal{T} }_{ q, X } ( \beta ) }$.
\end{prop}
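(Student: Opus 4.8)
<br>

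The plan is to prove the homogeneous statement in close analogy with Proposition \ref{p:bounded analytic semigroups-inhomogeneous}, but with the bilateral sums and a few simplifications that come for free because there is no auxiliary norm term to control. First I would reduce, via the density argument already used repeatedly, to showing the two-sided quasi-norm equivalence $| x |_{ \dot{R}^{ s, A }_{ q, X } ( 0, \beta ) } \simeq | x |_{ \dot{ \tilde{R} }^{ s, \mathcal{T} }_{ q, X } ( \beta ) }$ for $x \in \dot{R}^{ s, A }_{ q, X } ( 0, \beta ) \cap \overline{ R ( A ) }$, after which the completion assertion is immediate from Definition \ref{d:Besov spaces-homogeneous}; the final equality in the display is just the substitution $j \mapsto -j$ and needs no argument. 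I would fix an integer $m > \RE \beta$ throughout and work with $0 < q < \infty$ first, indicating that $q = \infty$ is analogous.

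For the direction $| x |_{ \dot{ \tilde{R} }^{ s, \mathcal{T} }_{ q, X } ( \beta ) } \lesssim | x |_{ \dot{R}^{ s, A }_{ q, X } ( 0, \beta ) }$, I would follow the proof of \eqref{e:bounded analytic semigroups-inhomogeneous-T<B}: write $2^{ - j \beta } A^\beta \mathcal{T}(2^{-j})$, apply the representation \eqref{e:representation of fractional powers-beta-domain of A-alpha+e} to $A^\beta$ acting on $\mathcal{T}(2^{-j})x$ (legitimate since $\mathcal{T}(2^{-j})x \in D(A^\infty)$), split $\int_0^\infty = \sum_{i \in \mathbb{Z}} \int_{2^i}^{2^{i+1}}$, invoke the dyadic estimate \eqref{e:resolvent estimate-discrete} with $c=1$, and then sum; because the index set is now all of $\mathbb{Z}$, the three-way split $J_1,J_2,J_3$ of the inhomogeneous proof collapses — the ``$J_1$'' piece (using $\|\mathcal{T}(t)\| \lesssim 1$ and $0 < s < \RE\beta$) now also gets absorbed into the homogeneous $R$-norm by the same Hölder / $\ell_q$-monotonicity estimates rather than into $\|x\|$. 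For the reverse direction $| x |_{ \dot{R}^{ s, A }_{ q, X } ( 0, \beta ) } \lesssim | x |_{ \dot{ \tilde{R} }^{ s, \mathcal{T} }_{ q, X } ( \beta ) }$, I would mimic \eqref{e:bounded analytic semigroups-inhomogeneous-B<T}: apply \eqref{e:fractional powers-semigroups} to represent $A^\beta$ as an integral of $(tA)^{2\beta}\mathcal{T}(t)$ against $t^{-\beta}$ (using $x \in \overline{R(A)}$ and injectivity of $A$ so the homogeneous reproducing machinery applies, together with Lemma \ref{l:independence of l and m-homogeneous} to pass between the indices $\beta$ and $2\beta$), change variables $t \mapsto t^{-1}$, split dyadically, use the semigroup dyadic bound \eqref{e:dyadic estimates-semigroups}, and estimate the resulting double sum by splitting $i < j$ versus $i \geq j$ and using \eqref{e:uniform boundedness-powers-semigroups} and $s > 0$; again, bilaterality means everything is bounded by $| x |_{ \dot{ \tilde{R} }^{ s, \mathcal{T} }_{ q, X } ( \beta ) }$ (possibly with the auxiliary index $2\beta$, reconciled by Lemma \ref{l:independence of l and m-homogeneous}).

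The main obstacle I anticipate is justifying the use of the homogeneous reproducing formula / the representation \eqref{e:fractional powers-semigroups} in the reverse direction when $A$ is merely injective and not positive: one must check that $x \in \overline{R(A)}$ together with $x \in \dot R^{s,A}_{q,X}(0,\beta)$ (hence $x$ lies in a suitable fractional domain, via an analogue of Proposition \ref{p:inclusions-powers-inhomogeneous} or a direct absolute-convergence estimate using the negative-index tail of the bilateral sum) really does put $x$ in the range of validity of \eqref{e:fractional powers-semigroups} and of the homogeneous Calderón reproducing identity; the bookkeeping with the two indices $m$ and $2m$ (resp.\ $\beta$ and $2\beta$) and the repeated appeal to Lemma \ref{l:independence of l and m-homogeneous} is the other place where care is needed. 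Everything else is a routine repetition of the Hölder-inequality and geometric-series estimates already carried out in the inhomogeneous case, so I would state those steps briefly and refer back to the proof of Proposition \ref{p:bounded analytic semigroups-inhomogeneous}.
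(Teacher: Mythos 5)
Your proposal is correct and follows essentially the same route as the paper: both directions are proved by mimicking the inhomogeneous Proposition~\ref{p:bounded analytic semigroups-inhomogeneous}, using (\ref{e:representation of fractional powers-beta-domain of A-alpha+e}) together with the dyadic estimate (\ref{e:resolvent estimate-discrete}) for $|x|_{\dot{\tilde R}} \lesssim |x|_{\dot R}$, and (\ref{e:fractional powers-semigroups}) plus (\ref{e:dyadic estimates-semigroups}) for the reverse, with the bilateral two-way split $i<j$ versus $i\ge j$ replacing the three-way split and (as you note) absorbing the $J_1$-type term into the homogeneous semi-norm rather than into $\|x\|$. The technical concern you flag about applying (\ref{e:fractional powers-semigroups}) when $A$ is merely injective is not treated in any more detail in the paper's own proof, so it does not constitute a deviation from, or a gap relative to, the paper's argument.
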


\begin{proof}
Assume that $A$ is injective.  Let $x \in \overline{ R ( A ) }$ and fix an integer $m > \RE \beta$. Let $| x |_{ \dot{R}^{ s, A }_{ q, X } ( 0, \beta ) } < \infty$. Analogous to (\ref{e:bounded analytic semigroups-inhomogeneous-proof1}), by using (\ref{e:representation of fractional powers-beta-domain of A-alpha+e}) to $A^\beta$ and (\ref{e:resolvent estimate-discrete}) with $c = 1$, 
we conclude that   
\begin{align*}
| x |_{ \dot{ R }^{ s, A }_{ q, X } ( \beta ) }^\mathcal{T} \lesssim J,
\end{align*}
where 
\begin{align*}
J = \Bigg\{ \sum_{ j = - \infty }^\infty \bigg[ \sum_{ i = - \infty }^{ \infty } \big\| 2^{ j ( s - \beta ) } 2^{ i \beta } A^m ( 2^i + A )^{ - m } \mathcal{T} (2^{ - j } ) x \big\| \bigg]^q \Bigg\}^{ 1 / q }.
\end{align*}
By using the decomposition $\sum_{ i = - \infty }^{ \infty } = \sum_{ i = - \infty }^{ j - 1 } + \sum_{ i = j }^{ \infty }$ and appplying the classical inequality (\ref{e: q-inequality-series<1}) for $0 < q \le 1$ and the H\"{o}lder inequality for $1 < q < \infty$ (the case when $q = \infty$ can be verified directly by use of the corresponding estimates) and we can conclude that 
\begin{align*}
J \lesssim  | x |_{ \dot{R}^{ s, A }_{ q, X } ( 0, \beta ) }. 
\end{align*}
This implies that 
\begin{align*}
| x |_{ \dot{ R }^{ s, A }_{ q, X } ( \beta ) }^\mathcal{T} \lesssim | x |_{ \dot{R}^{ s, A }_{ q, X } ( 0, \beta ) }. 
\end{align*}

Conversely, let $| x |_{ \dot{ R }^{ s, A }_{ q, X } ( \beta ) }^\mathcal{T} < \infty$. By using  (\ref{e:fractional powers-semigroups}) and (\ref{e:dyadic estimates-semigroups}) we have 
\begin{align*}
| x |_{ \dot{R}^{ s, A }_{ q, X } ( 0, \beta ) } \lesssim K,
\end{align*}
where 
\begin{align*}
K = {} & \Bigg\{ \sum_{ j = - \infty }^{ \infty } \bigg[ \sum_{ i = - \infty }^{ \infty } \big\| 2^{ j s }2^{ - i m } A^{ 2 m } \mathcal{T} ( 2^{ - i } ) ( 2^j + A )^{ - m } x \big\| \bigg]^q \Bigg\}^{ 1 / q }.
\end{align*}
Analogously, by using the decomposition $\sum_{ i = - \infty }^{ \infty } = \sum_{ i = - \infty }^{ j - 1 } + \sum_{ i = j }^{ \infty }$ and applying the classical inequality (\ref{e: q-inequality-series<1}) for $0 < q \le 1$ and the H\"{o}lder inequality for $1 < q < \infty$ (the case when $q = \infty$ can be verified directly by use of the corresponding estimates) and we can conclude that 
\begin{align*}
K \lesssim | x |_{ \dot{ R }^{ s, A }_{ q, X } ( \beta ) }^\mathcal{T}.
\end{align*}
This implies that 
\begin{align*}
| x |_{ \dot{R}^{ s, A }_{ q, X } ( 0, \beta ) } \lesssim | x |_{ \dot{ R }^{ s, A }_{ q, X } ( \beta ) }^\mathcal{T}.
\end{align*}
The proof is complete.
\end{proof}

Let $A$ be the negative generator of a bounded $C_0$-semigroup $T$ on $X$. It is well know that $- A^\alpha$ generates a bounded analytic semigroup $\mathcal{T}_\alpha$ of angle $\frac{ \pi }{ 2 } ( 1 - \alpha )$ for each $0 < \alpha < 1$ (sometimes we call $\mathcal{T}_\alpha$ the subordinated semigroup of $T$). More precisely, 
\begin{align*}
\mathcal{T}_\alpha ( t ) = \int_0^\infty k_\alpha ( t, s ) T ( s ) \, d s, \quad t > 0, 
\end{align*}
where $k_\alpha \left( \cdot, \cdot \right)$ is the so-called subordinated kernel given by 
\begin{align*}
k_\alpha ( s, t ) = \frac{ 1 }{ \pi } \int_0^\infty \sin ( r s \sin \pi \alpha ) \exp ( - r t - r^\alpha s \cos \pi \alpha ) \, d r, \quad s, t > 0.
\end{align*}
See \cite[Corollary 3.3 (a)]{LiandChen2010}. In particular, if $T$ is bounded analytic of angle $\theta$ for some $0 < \theta \le \pi / 2$ then $- A^\alpha$ also generates a bounded analytic semigroup $\mathcal{T}_{ \alpha }$ of angle $\frac{ \pi }{ 2 } - \alpha ( \frac{ \pi }{ 2 } - \theta ) $ for each $1 < \alpha < \frac{ \pi }{ \pi - 2 \theta }$ (see \cite[Proposition 3.5 (a)]{LiandChen2010}).
The following corollary is a direct consequence of Theorem \ref{t:smoothness reiteration} and Proposition \ref{p:bounded analytic semigroups-inhomogeneous}.

\begin{cor}\label{c:bounded analytic semigroups-inhomogeneous}
Let $s > 0$ and $0 < q \le \infty$ and let $A$ be the negative generator of a bounded $C_0$-semigroup $T$ on $X$. The following statements hold.
\begin{itemize}
\item[(i)] For $0 < \alpha < 1$, $B^{ s, A }_{ q, X }$ is the completion of $X$ with respect to $\left\| \cdot \right\|_{ \tilde{B}^{ s / \alpha, \mathcal{T}_\alpha }_{ q, X } }$, where $\mathcal{T}_\alpha$ is the bounded analytic semigroup generated by $- A^\alpha$.
\item[(ii)] If $T$ is bounded analytic of angle $\theta$ for some $0 < \theta \le \pi / 2$, then $B^{ s, A }_{ q, X }$ is the completion of $X$ with respect to $\left\| \cdot \right\|_{ \tilde{B}^{ s / \alpha, \mathcal{T}_\alpha }_{ q, X } }$ for each $0 < \alpha < \frac{ \pi }{ \pi - 2 \theta }$, where $\mathcal{T}_\alpha$ is the bounded analytic semigroup generated by $- A^\alpha$.
\end{itemize}
\end{cor}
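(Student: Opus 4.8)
The plan is to read the corollary as a two-step concatenation: first apply Proposition~\ref{p:bounded analytic semigroups-inhomogeneous} with the operator $A$ replaced by its fractional power $A^\alpha$ (whose negative generates the subordinated semigroup $\mathcal{T}_\alpha$), and then use the smoothness reiteration identity of Theorem~\ref{t:smoothness reiteration} to trade the operator $A^\alpha$ and the smoothness index $s/\alpha$ for the operator $A$ and the smoothness index $s$. Recall that, by the definition preceding Proposition~\ref{p:bounded analytic semigroups-inhomogeneous}, $\tilde{B}^{\,s/\alpha,\mathcal{T}_\alpha}_{q,X}$ \emph{is} the completion of $X$ with respect to $\|\cdot\|_{\tilde{B}^{\,s/\alpha,\mathcal{T}_\alpha}_{q,X}}$, so the assertion to be proved is exactly $\tilde{B}^{\,s/\alpha,\mathcal{T}_\alpha}_{q,X}=B^{s,A}_{q,X}$ in the sense of equivalent quasi-norms.

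For (i): since $-A$ generates a bounded $C_0$-semigroup, $A$ is non-negative, hence sectorial of some angle $\omega_A\in[0,\pi)$, so in particular $\pi/\omega_A>1$. For $0<\alpha<1$ the subordination principle recalled in the text (\cite[Corollary 3.3 (a)]{LiandChen2010}) shows that $-A^\alpha$ generates a bounded analytic semigroup $\mathcal{T}_\alpha$, so Proposition~\ref{p:bounded analytic semigroups-inhomogeneous} applies to the pair $(A^\alpha,\mathcal{T}_\alpha)$ with smoothness index $s/\alpha>0$ and gives $\tilde{B}^{\,s/\alpha,\mathcal{T}_\alpha}_{q,X}=B^{\,s/\alpha,A^\alpha}_{q,X}$. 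Since $0<\alpha<1<\pi/\omega_A$, Theorem~\ref{t:smoothness reiteration}, applied with $s$ there replaced by $s/\alpha$, yields $B^{\,s/\alpha,A^\alpha}_{q,X}=B^{(s/\alpha)\alpha,A}_{q,X}=B^{s,A}_{q,X}$. Chaining the two identities proves (i). For (ii): if $T$ is bounded analytic of angle $\theta$, then $A$ is sectorial of angle $\omega_A\le \pi/2-\theta$, hence $\pi/\omega_A\ge \frac{2\pi}{\pi-2\theta}$; by \cite[Proposition 3.5 (a)]{LiandChen2010}, for $1<\alpha<\frac{\pi}{\pi-2\theta}$ the operator $-A^\alpha$ still generates a bounded analytic semigroup, and together with the range $0<\alpha\le 1$ (handled as in (i), the case $\alpha=1$ being trivial since $\mathcal{T}_1=T$) this covers all $0<\alpha<\frac{\pi}{\pi-2\theta}$. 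As $\alpha<\frac{\pi}{\pi-2\theta}\le \pi/\omega_A$, the identical two-step argument --- Proposition~\ref{p:bounded analytic semigroups-inhomogeneous} for $(A^\alpha,\mathcal{T}_\alpha)$ with index $s/\alpha$, then Theorem~\ref{t:smoothness reiteration} --- gives $\tilde{B}^{\,s/\alpha,\mathcal{T}_\alpha}_{q,X}=B^{\,s/\alpha,A^\alpha}_{q,X}=B^{s,A}_{q,X}$.

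The only genuinely non-mechanical point is the bookkeeping on the various angles: one has to check that the range of $\alpha$ for which $-A^\alpha$ generates a \emph{bounded analytic} semigroup (which is what Proposition~\ref{p:bounded analytic semigroups-inhomogeneous} requires of $\mathcal{T}_\alpha$) is contained in the admissible range $(0,\pi/\omega_A)$ of the multiplicativity/reiteration identity in Theorem~\ref{t:smoothness reiteration}; this was verified above in both cases using $\omega_A<\pi$ for (i) and $\omega_A\le\pi/2-\theta$ for (ii). Beyond that verification the corollary is a formal consequence of the two quoted results and requires no additional estimates, so I would present it compactly in the form above.
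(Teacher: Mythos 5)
Your argument is correct and is exactly the intended one: the paper itself introduces the corollary with the sentence that it is "a direct consequence of Theorem \ref{t:smoothness reiteration} and Proposition \ref{p:bounded analytic semigroups-inhomogeneous}," and your chain $\tilde{B}^{s/\alpha,\mathcal{T}_\alpha}_{q,X}=B^{s/\alpha,A^\alpha}_{q,X}=B^{s,A}_{q,X}$, together with the angle bookkeeping ($\omega_A\le\pi/2$ in (i), $\omega_A\le\pi/2-\theta$ in (ii)) to guarantee that $-A^\alpha$ generates a bounded analytic semigroup and that $\alpha$ lies in the admissible range of Theorem \ref{t:smoothness reiteration}, is precisely that deduction.
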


\begin{rem}
By Proposition \ref{p:bounded analytic semigroups-inhomogeneous} and Remark \ref{r:inhomogeneous Besov spaces-equivalent quasi-norms-continuity} (i), we obtain directly \cite[Theorem 5.3]{Komatsu1967} for $s > 0$ and $1 \le q \le \infty$. Also, see \cite[Chapter 1, Section 4]{AshyralyevM1994} for a homogeneous version for abstract Besov norms with $0 < s < 1$ and $1 \le q \le \infty$. Moreover, thanks to Corollary \ref{c:bounded analytic semigroups-inhomogeneous}, one can obtain a variety of abstract Besov spaces by specifying bounded analytic semigroups or their subordinated semigroups on concrete function spaces. See Examples \ref{E:Gaussian semigroup} and \ref{E:Poisson semigroup} below for the classical Besov spaces associated with the fractional Laplacians. We also refer the reader to \cite{ChenZM2017} for some applications of the classical Besov spaces associated with the fractional Laplacians, \cite{DeLeon and Torrea2019,Bui2020,Cao2020} for Besov spaces associated with Schr\"{o}dinger operators,  \cite[Definition 4.1 and Theorem 4.3]{Komatsu1967} for Besov spaces associated with bounded $C_0$-semigroups on Banach spaces, and \cite{Alonso-Ruiz2020} and references therein for Besov spaces associated with heat semigroup on Dirichlet spaces.
\end{rem}

Finally,  
we present some explicit examples to illustrate that, to some extent, 
the fractional powers of (unbounded) operators are indeed a generalization of 
some singular integrals operators (for example, the Riesz potential
and Bessel potential) 
on function spaces,  
and therefore we achieve the
classical Besov spaces $B^{ s }_{ p, q } ( \mathbb{R}^n )$ by applying the Laplacian to abstract Besov spaces. 


Let $n \in \mathbb{N}$ and $1 < p < \infty$, and let $\Delta_p$ be the 
Laplacian on $L^p ( \mathbb{R}^n )$ with maximal domain, i.e., 
\begin{align*}
\Delta_p f := {} &\Delta f, \\
D ( \Delta_p ) := {} & \left\{ f \in L^p ( \mathbb{R}^n ): \Delta f \in L^p ( \mathbb{R}^n ) \right\}, 
\end{align*}
where $\Delta = \sum_{ j = 1 }^{ n } \frac{ \partial^2 }{ \partial x_j^2 }$ is understood in the distributional sense. 
It is well known that $\Delta_p$ in injective and non-negative on $L^p ( \mathbb{R}^n )$, and therefore $\Delta_p$ has dense domain and dense range  
due to the fact that $L^p ( \mathbb{R}^n )$ is reflexive (see \cite[Corollary 1.1.4 (v)]{MartinezM2001} or \cite[Proposition 2.1.1 (h)]{HaaseM2006}). Moreover, the inverse $\Delta_p^{ - 1 }$ of the Laplacian $\Delta_p$ is an unbounded operator on $L^p ( \mathbb{R}^n )$, 
so that $0 \in \sigma ( \Delta_p )$ (see the Riesz potential below).
%



Thanks to the non-negativity and injectivity of
$- \Delta_p$,
the fractional power $( - \Delta_p )^\alpha$ of $- \Delta_p$, i.e., the so-called fractional Laplacian, can be defined for each $\alpha \in \mathbb{C}$ via the Balakrishnan-Komatsu operator as shown in Section \ref{sub:Fractional powers of operators} above. 
Alternatively, the fractional Laplacian also can be characterized via the Fourier transform (see, for example, \cite[Chapter V]{SteinM1970}) or suitable harmonic extension \cite{Caffarelli2007}.
For more information on representations and estimates of the fractional Laplacian, we refer the reader to, for example, \cite{Samko1998,Hytonen2003,Cordoba2005,Frank2008,Miao2008,Abdellaoui2016,BucurM2016,Kwasnicki2017,Abatangelo2020,Case2020,Lischke2020} and references therein.


Now we turn to explicit representations of the fractional Laplacian via singular integral operators.
The following representations (i), (ii) and (iii) are quite standard and we refer the reader to \cite[Chapter V]{SteinM1970} and \cite[Section 12.2]{MartinezM2001} for more information on the Riesz potentials and Bessel potentials.
\begin{itemize}
\item[(i)] Let $0 < s < 2$. The fractional Laplacian $( - \Delta_p )^{ s / 2 }$ 
can be written as a singular integral operator in the following way: 
\begin{align*}
\big( ( - \Delta_p )^{ s / 2 } f \big) ( x ) = c_{ n, s } \mbox{ p.v.} \int_{ \mathbb{R}^n } \frac{ f ( x ) - f ( y ) }{ | x - y |^{ n + s } } \, d y, \quad x \in \mathbb{R}^n, 
\end{align*} 
where $f \in \mathcal{S} ( \mathbb{R}^n )$ and $c_{ n, s } = \frac{ 2^s \Gamma( \frac{ n + s }{ 2 } ) }{ \pi^{ \frac{ n }{ 2 } } | \Gamma ( - \frac{ s }{ 2 } ) | }$ is a normalization constant. Moreover, $( - \Delta_p )^{ s / 2 }$ admits a closed extension to $L^p ( \mathbb{R}^n )$ for $1 < p < \infty$.
\item[(ii)] Let $0 < s < n$.
It can be verified that $( - \Delta_p )^{ - s / 2 } = I_s$, i.e., the so-called Riesz potential which is given by 
\begin{align*}
( I_s f ) ( x ) = d_{ n, s } \int_{ \mathbb{R}^n } \frac{ f ( y ) }{ | x - y |^{ n - s } } \, d y, \quad x \in \mathbb{R}^n, 
\end{align*}
where $f \in \mathcal{S} ( \mathbb{R}^n )$ and $d_{ n, s } = \frac{ \Gamma ( \frac{ n - s }{ 2 } ) }{ 2^s \pi^{ \frac{ n }{ 2 } } \Gamma ( \frac{ s }{ 2 } ) }$ is a normalization constant. Moreover, $( - \Delta_p )^{ - s / 2 }$ admits a bounded $L^p$-$L^q$ extension
for $1 < p < n / s$ and $1 / q = 1 / p - s / n$.
\item[(iii)] It is clear that the translation $I - \Delta_p$ of $- \Delta_p$ is positive, so that $( I - \Delta_p )^{ - s / 2 }$ is bounded on $L^p ( \mathbb{R}^n )$ for each $s \in \mathbb{C}_+$. More precisely, $( I - \Delta_p )^{ - s / 2 } = B_s$ for $s \in \mathbb{C}_+$, where $B_s$ is the so-called Bessel potential defined by $B_s f = G_s * f$ with the Bessel kernel $G_s$ given by 
\begin{align*}
G_s ( x ) = \frac{ 1 }{ ( 4 \pi )^{ s / 2 } \Gamma ( s / 2 ) } \int_{ 0 }^\infty e^{ - \frac{ \pi | x |^2 }{ y } - \frac{ y }{ 4 \pi } } y^{ \frac{ s - n }{ 2 } - 1 } \, d y, \quad x \in \mathbb{R}^n \setminus \{ 0 \}.
\end{align*}
\end{itemize}
Moreover, applying $A = - \Delta_p$ to (\ref{e:representation of fractional powers-all}) yields a unified resolvent representation of the fractional Laplacian, as shown in (iv) below.
\begin{itemize}
\item[(iv)] Let $\alpha, \beta \ge 0$ such that $- n < - \alpha < s < \beta$. From (\ref{e:representation of fractional powers-all}) it follows that 
\begin{align*}
( - \Delta_p )^{ s / 2 } f = C \int_0^\infty \lambda^{ s / 2 } \lambda^\alpha ( - \Delta_p )^\beta ( \lambda - \Delta_p )^{ - \alpha - \beta } f \, \frac{ d \lambda }{ \lambda }
\end{align*}
for $f \in \mathcal{S} ( \mathbb{R}^n )$, where $C = \frac{ \Gamma ( \alpha + \beta ) }{ \Gamma ( \alpha + s ) \Gamma ( \beta - s ) }$. In particular, we can obtain (i) and (ii) above by specifying indices $\alpha$ and $\beta$. 
\end{itemize}

 


It is well known that $D ( \Delta_p ) = W^{ 2, p } ( \mathbb{R}^n )$, i.e., the Sobolev space on $\mathbb{R}^n$. In general, it is also well known that $D \left( ( - \Delta_p )^{ \alpha / 2 } \right) = W^{ \alpha, p } ( \mathbb{R}^n ) = L^p_\alpha ( \mathbb{R}^n )$ for $\alpha \in \mathbb{C}_+$, where $W^{ \alpha, p } ( \mathbb{R}^n )$ and $L^p_\alpha ( \mathbb{R}^n )$ are the so-called fractional Sobolev space and Bessel potential space, respectively (see \cite[Section 12.3]{MartinezM2001}). 
Moreover, applying the negative
Laplacian and fractional Laplacian to
our abstract Besov spaces gives the classical Besov spaces, as shown in Examples \ref{E:Gaussian semigroup} and \ref{E:Poisson semigroup} below.

\begin{example}[Gaussian semigroup]\label{E:Gaussian semigroup}
Let $s > 0$, $0 < q \le \infty$ and $1 < p < \infty$. 
It is well known that 
$\Delta_p$ is the generator of the Gaussian semigroup, a bounded analytic semigroup of angle $\pi / 2$, on $L^p ( \mathbb{R}^n )$. 
Applying $- \Delta_p$ to Theorem \ref{t:smoothness reiteration} and Proposition \ref{p:bounded analytic semigroups-inhomogeneous} yields the classical Besov spaces on $\mathbb{R}^n$, i.e., 
\begin{align*}
B^{ s, (- \Delta_p)^\alpha }_{ q, L^p ( \mathbb{R}^n ) } = B^{ 2 \alpha s }_{ p, q } ( \mathbb{R}^n ), \quad \alpha > 0,
\end{align*}
(in the sense of equivalent quasi-norms)
due to the fact that $B^{ s, - \Delta_p }_{ q, L^p ( \mathbb{R}^n ) } = B^{ 2 s }_{ p, q } ( \mathbb{R}^n )$ (see \cite[Section 2.12.2, Theorem (i)]{TriebelM1983-2010}).
\end{example}

In particular, applying $\alpha = 1 / 2$ in Example \ref{E:Gaussian semigroup} yields the well known description of $B^{ s }_{ p, q } ( \mathbb{R}^n )$ via the Poisson semigroup (also, see
\cite[Section 2.12.2, Theorem (ii)]{TriebelM1983-2010}). Moreover, observe that

\begin{example}[Poisson semigroup]\label{E:Poisson semigroup}
Let $s > 0$, $0 < q \le \infty$ and $1 < p < \infty$. Write $A_p := - H \frac{ \partial }{ \partial x }$, where $H$ is the Hilbert transform on $L^p ( \mathbb{R}^n )$. It is known that $A_p$ is the square root of the Laplacian, i.e., $A_p = - ( - \Delta_p )^{ 1 / 2 }$ with $D ( A_p ) = W^{ 1, p } ( \mathbb{R}^n )$ and that $A_p$ is the generator of the Poisson semigroup on $L^p ( \mathbb{R}^n )$, a bounded analytic semigroup of angle $\pi / 2$. Applying $- A_p$ to Theorem \ref{t:smoothness reiteration} and Example \ref{E:Gaussian semigroup} also yields the classical Besov spaces on $\mathbb{R}^n$, i.e.,
\begin{align*}
B^{ s, - A_p }_{ q, L^p ( \mathbb{R}^n ) } = B^{ s, ( - \Delta_p )^{ 1 / 2 } }_{ q, L^p ( \mathbb{R}^n ) } = B^{ s / 2, - \Delta_p }_{ q, L^p ( \mathbb{R}^n ) } = B^{ s }_{ p, q } ( \mathbb{R}^n ).
\end{align*}
\end{example}

\smallskip

\end{document}